\newtheorem{thm}{Theorem}[section]
\newtheorem{prop}[thm]{Proposition}
\newtheorem{cor}[thm]{Corollary}
\newtheorem{lemma}[thm]{Lemma}
\newtheorem{defn}[thm]{Definition}
\theoremstyle{remark}
\newtheorem{remark}[thm]{Remark}
\numberwithin{equation}{section}
\newcommand{\R}{\mathbb R}
\newcommand{\Sp}{\mathbb S^{d-1}}
\newcommand{\abs}[1]{\left\vert#1\right\vert}
\newcommand{\eps}{\varepsilon}
\newcommand{\grad} {\nabla}
\newcommand{\lap} {\Delta}
\newcommand{\dd} {\; \mathrm{d}}
\newcommand{\dt}{\; \mathrm{d} t}
\newcommand{\dx}{\; \mathrm{d} x}
\newcommand{\dv}{\; \mathrm{d} v}
\newcommand{\un}{\mathbf{1}}
\newcommand{\linenotell}{\mathcal{L}}
\DeclareMathOperator*{\osc}{osc}
\DeclareMathOperator*{\esssup}{esssup}
\DeclareMathOperator*{\essinf}{essinf}
\DeclareMathOperator{\supp}{supp}
\DeclareMathOperator{\Lip}{Lip}
\newcommand{\E}{\mathcal E}
\newcommand{\Ex}{\E}
\newcommand{\Esym}{{\E}^{\text{sym}}}
\newcommand{\Eskew}{{\E}^{\text{skew}}}
\newcommand{\Lv}{L_v}
\newcommand{\angulars}{{2s}}
\newcommand{\cone} {\Xi}
\newcommand{\Radius} {{\bar R}}
\newcommand{\kernels} {\mathcal{K}_0}
\newcommand{\Mp}{\mathcal M^+}
\newcommand{\Mm}{\mathcal M^-}
\title{The Weak Harnack inequality for the Boltzmann equation without cut-off}
\author{Cyril Imbert}
\address[C.~Imbert]{CNRS \& Department of Mathematics and Applications, \'Ecole Normale Sup\'erieure (Paris) \\
45 rue d'Ulm, 75005 Paris, France}
\email{Cyril.Imbert@ens.fr}
\author{Luis Silvestre} 
\thanks{LS is supported in part by NSF grants DMS-1254332 and DMS-1362525.}
\address[L.~Silvestre]{Mathematics
    Department, University of Chicago, Chicago, Illinois 60637, USA}
\email{luis@math.uchicago.edu}
\date{\today}
\begin{document}
\begin{abstract}
We obtain the weak Harnack inequality and H\"older estimates for a large class of kinetic integro-differential equations. We prove that the Boltzmann equation without cut-off can be written in this form and satisfies our assumptions provided that the mass density is bounded away from vacuum and mass, energy and entropy densities are bounded above. As a consequence, we derive a local H\"older estimate and a quantitative
  lower bound for solutions of the (inhomogeneous) Boltzmann equation
  without cut-off. 
\end{abstract}
\maketitle

\setcounter{tocdepth}{1}
\tableofcontents

\section{Introduction}

The main result in this article is a version of the weak Harnack inequality, in
the style of De Giorgi, Nash and Moser, for kinetic
integro-differential equations. As a consequence, we derive local
H\"older estimates and a quantitative lower bound for the
inhomogeneous Boltzmann equation without cut-off.

Our estimates are local in the sense that they only require the
equation to hold in a bounded domain. 

The Boltzmann equation has the form
\[
f_t + v \cdot \grad_x f = Q (f,f)  \quad \text{ for } \; 
t \in (-1,0], \; x \in B_1, \; v \in B_1.
\]
Here, the function $f=f(t,x,v)$ must be defined for $t \in (-1,0]$,
$x \in B_1$ and $v \in \R^d$ in order to make sense of the nonlocal right hand side $Q(f,f)$.

We recall that   Boltzmann's collision operator $Q(f,f)$ is defined as follows
\[ Q (f,f) = \int_{\R^d} \int_{\Sp} (f(v'_*)f(v') - f(v_*)f(v)) B(|v-v_*|,\cos \theta) \dd v_* \dd \sigma\]
where $v_*'$ and $v'$ are given by 
\[ v' = \frac{v+v_*}2 + \frac{|v-v_*|}2 \sigma \quad \text{ and } \quad 
v'_* = \frac{v+v_*}2 - \frac{|v-v_*|}2 \sigma \]
and $\cos \theta$ (and $\sin (\theta/2)$) is \emph{defined} as 
\[ \cos \theta := \frac{v-v_*}{|v-v_*|} \cdot \sigma 
\qquad \left( \text{ and } \quad \sin (\theta/2) := \frac{v'-v}{|v'-v|} \cdot \sigma \right) .\]
We assume that the cross-section $B$ satisfies
\begin{equation}\label{assum:B}
B(r,\cos \theta) = r^\gamma b(\cos \theta) \quad \text{ with} \quad b (\cos \theta) 
\approx|\sin (\theta/2)|^{-(d-1)-2s}
\end{equation}
with $\gamma \in  (-d,1]$ and $s \in (0,1)$.

The equation describes the density of particles at a specific time
$t$, point in space $x$ and with velocity $v$. This model stands at a
mesoscopic level, in between the microscopic description of
interactions between individual particles, and the macroscopic models
of fluid dynamics. 

We define the hydrodynamic quantities
\begin{align*}
\text{(mass density)} \qquad M (t,x) &:= \int f(t,x,v) \dd v, \\
\text{(energy density)} \qquad E (t,x) & := \int f(t,x,v) |v|^2 \dd v, \\
\text{(entropy density)} \qquad H (t,x) & := \int  f \ln f (t,x,v) \dd v. 
\end{align*} 
These are the only quantities associated with a solution $f$ which are meaningful at a macroscopic scale. Under some asymptotic regime, the hydrodynamic quantities in the Boltzmann equation formally converge to solutions of the compressible Euler equation, which is known to develop singularities in finite time (see for example \cite{bardos1991}). Because of this fact, one could speculate that the Boltzmann equation may develop singularities as well. From this point of view, the best regularity result that one would expect is that if the hydrodynamic quantities are under appropriate control, then the solution $f$ will be smooth. In other words, that every singularity of $f$ would be observable at the macroscopic scale.

It is proved in \cite{luis} that when $M(t,x)$, $E(t,x)$ and $H(t,x)$ are uniformly bounded above, and in addition $M(t,x)$ is bounded below by a positive constant, 
then the solution $f$ satisfies the $L^\infty$ a priori estimate
depending on those bounds only. The result in this paper goes a
step further by proving a H\"older modulus of continuity, in all
variables, under the same assumptions.

\begin{thm}[H\"older continuity]\label{thm:holder-boltzmann}
  Assume $s \in (0,1)$, $\gamma \in (-d,1]$, $\gamma + 2s \le 2$ and let $f$ be a non-negative solution of
  the Boltzmann equation for all $t \in (-1,0]$, $x \in B_1$ and
  $v \in B_1$. Assume that $f$ is essentially bounded in
  $(-1,0] \times B_1 \times \R^d$ and there are positive constants
  $M_0$, $M_1$, $E_0$ such that for all $(t,x)$ we have
  $M_1 \leq M(t,x) \leq M_0$ and $E(t,x) \leq E_0$ for all
  $(t,x) \in (-1,0] \times B_1$, then $f$ is H\"older continuous in
  $(-1/2,0] \times B_{1/2} \times B_{1/2}$ with
  \[ \|f\|_{C^\alpha((-1/2,0] \times B_{1/2} \times B_{1/2})} \leq
  C \]
  where $C>0$ and $\alpha \in (0,1)$ are constants
    depending on dimension, the $L^\infty$ bound of $f$, $M_0$, $M_1$
    and $E_0$.
\end{thm}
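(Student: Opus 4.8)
\medskip
\noindent\emph{Plan of proof.} The plan is to recast the Boltzmann equation as a kinetic integro-differential equation of the type covered by the weak Harnack inequality and the attendant interior H\"older estimate proved in the body of this paper, and then to verify that, under the stated hydrodynamic bounds, all the structural assumptions hold with constants depending only on $d$, $\|f\|_{L^\infty}$, $M_0$, $M_1$ and $E_0$. Concretely, following the Carleman-type representation of the collision operator used in \cite{luis}, for each frozen $(t,x)$ one writes
\[
Q(f,f)(t,x,v) \;=\; \linenotell_{K_f} f(t,x,v) \;+\; c_f(t,x,v)\, f(t,x,v),
\]
where $\linenotell_{K_f} f(v) := \mathrm{P.V.}\!\int_{\R^d}\bigl(f(v')-f(v)\bigr)\,K_f(t,x,v,v')\dd v'$ is a linear nonlocal operator in $v$, the kernel $K_f$ being an explicit nonnegative functional of $f(t,x,\cdot)$ (essentially an integral of $f$ over hyperplanes through $v$, weighted by the cross-section), and $c_f(t,x,v) = c_{d,\gamma}\int_{\R^d} f(t,x,v_*)\,\abs{v-v_*}^\gamma\dd v_*$. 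Thus $f$ solves $f_t + v\cdot\grad_x f = \linenotell_{K_f} f + c_f f$ on $(-1,0]\times B_1\times B_1$, and the whole task reduces to checking the kernel and coefficient hypotheses.

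First I would record the upper bounds. From the explicit formula for $K_f$ together with $M(t,x)\le M_0$, $E(t,x)\le E_0$ and $\|f\|_{L^\infty}$ (and $\gamma>-d$ for local integrability), one obtains, for $v\in B_1$ and $r\le 1$, the $(2s)$-order bounds $\int_{\R^d\setminus B_r(v)} K_f\dd v' \lesssim r^{-2s}$ and $\int_{B_r(v)}\abs{v'-v}^2 K_f\dd v' \lesssim r^{2-2s}$, that the asymmetry $K_f(v,v')-K_f(v',v)$ generates a bounded drift, and that $c_f$ is bounded on $v\in B_1$. It is precisely here that the hypothesis $\gamma+2s\le 2$ enters: it guarantees that this drift and the zeroth-order term $c_f f$ are subordinate to the $(2s)$-order diffusion, so that the equation sits in the admissible class. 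Next comes the ellipticity from below, which is the core of the argument: I would show that $M(t,x)\ge M_1>0$ together with $E(t,x)\le E_0$ prevents the mass of $f(t,x,\cdot)$ from concentrating on any thin set, so that along a cone of directions $\cone$ — of solid opening bounded below uniformly in $(t,x)$ — the kernel $K_f$ satisfies the non-degeneracy (coercivity) required of its symmetric part. This is the pointwise-in-$(t,x)$ counterpart of the classical coercivity of Boltzmann's collision operator.

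It remains to handle the velocity localization. The equation holds only for $v\in B_1$, whereas $\linenotell_{K_f}$ integrates over all of $\R^d$; but for $v\in B_{1/2}$ the part of $\linenotell_{K_f} f$ coming from $\set{v'\notin B_1}$ is, by the tail bound $\int_{\abs{v'-v}\ge 1/2} K_f\dd v' \lesssim 1$ together with the $L^\infty$ bound on $f$ that controls the far-field values, an honest bounded source term. Consequently, restricted to $v\in B_1$, $f$ satisfies a kinetic integro-differential equation with merely measurable coefficients meeting all the hypotheses of our main results, with the advertised dependence of the constants. Applying the interior H\"older estimate for such equations on, say, $(-1/2,0]\times B_{1/2}\times B_{1/2}$ then yields the claimed bound.

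I expect the principal obstacle to be the lower ellipticity bound on $K_f$ along a cone of uniform opening: this is the only place where $M_1$ truly enters, and it requires both the non-concentration-of-mass estimate and a geometric argument identifying the directions $v'-v$ that receive enough mass from $f(t,x,\cdot)$, all uniformly in $(t,x)$. A secondary, more technical, point is keeping track of the velocity weights $\langle v\rangle$ in the kernel estimates (harmless once one has localized to $v\in B_1$) and confirming that the skew-symmetric part and the zeroth-order term remain admissible exactly under $\gamma+2s\le 2$.
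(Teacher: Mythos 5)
Your proposal is correct and follows essentially the same route as the paper: write $Q(f,f)=\linenotell_{K_f}f + c_f f$, absorb the zeroth-order term $c_f f$ into the bounded source $h$, verify the upper bounds, cancellation conditions and the cone-based coercivity of $K_f$ from the hydrodynamic bounds, and invoke the interior H\"older estimate for the general kinetic integro-differential equation. The only small imprecision is the role you assign to $\gamma+2s\le 2$: in the paper it is used to bound the moment $\int f(z)\abs{z-v}^{\gamma+2s}\dd z$ by mass and energy in the kernel upper bounds \eqref{e:Kabove}, rather than to subordinate the drift and zeroth-order terms, whose control only requires $\gamma>-d$, $M_0$, $E_0$ and $\|f\|_{L^\infty}$.
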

\begin{remark}
  Theorem~\ref{thm:holder-boltzmann} also holds true in any cylinder
  $Q \subset \R \times \R^d \times \R^d$.  In this case, constants $C$
  and $\gamma$ also depends on the center of the cylinder and its
  radius.
\end{remark}

Note that the value of the entropy $H(t,x)$ is bounded above by some
constant $H_0$ depending only on $M_0$, $E_0$ and $\|f\|_{L^\infty}$
so we do not need to include the hypothesis $H(t,x) \leq H_0$ in
Theorem \ref{thm:holder-boltzmann} . Recall also that
$\|f\|_{L^\infty}$ is bounded above for $t>0$ in terms of $M_0$,
$M_1$, $E_0$ and $H_0$, according to the result in \cite{luis},
provided that $\gamma + 2s > 0$. So, at least in this range of values
of $\gamma$, the H\"older modulus of continuity depends on the values
of $M_0$, $M_1$, $E_0$ and $H_0$ only.

The best regularity results previously available for the inhomogeneous
Boltzmann equation without cut-off give us $C^\infty$ regularity
depending on the assumption that the solution has infinite moments and
belongs to the space $H^5$ with respect to all variables ($v$, $x$ and
$t$) \cite{amuxyCRAS2009}, \cite{amuxy2012}, \cite{chen-he-2012}. Of
course this is a much more stringent assumption than what we need for
our Theorem \ref{thm:holder-boltzmann} to hold. We make further
comments about these and other related results in Section
\ref{subsec:relatedresults}.

We also obtain a quantitative lower bound for the solution $f$.

\begin{thm}[Lower bound] \label{thm:lowerbound} Let $f$ be a
  non-negative supersolution of the Boltzmann equation in
  $[0,T] \times B_R \times B_R$. Under the same assumptions on
  $\gamma$, $s$ and $f$ as in Theorem \ref{thm:holder-boltzmann}, we
  have the lower bound
  \[ \inf_{[T/2,T] \times B_{R/2} \times B_{R/2}} f \geq c(R). \]
  The constant $c(R)$ depends on $T$, $R$, $\gamma$, $s$, $d$, $M_0$,
  $M_1$, $E_0$, and $\|f\|_{L^\infty}$.
\end{thm}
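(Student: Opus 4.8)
The plan is to view $f$ as a nonnegative supersolution of a kinetic integro-differential equation and then apply the weak Harnack inequality proved in this paper, finitely many times, chaining in time.

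\textbf{Reduction to an integro-differential supersolution.} Following the computations in \cite{luis}, we write Boltzmann's collision operator in the form
\[ Q(f,f)(v) \;=\; \linenotell_v f(v) \;+\; c_f(v)\, f(v), \qquad
\linenotell_v g(v) := \int_{\R^d} \bigl( g(v') - g(v)\bigr) K_f(t,x,v,v') \dd v' ,\]
the integral understood in the principal value sense when $s \ge 1/2$. Here $c_f \ge 0$ is the zeroth-order coefficient produced by the non-cutoff cancellation (essentially $f$ convolved in velocity with $|\cdot|^\gamma$, hence nonnegative because $f \ge 0$), and the kernel $K_f$ satisfies the coercivity, upper bound and cancellation hypotheses required by the general theory, uniformly in $(t,x)$, thanks to the bounds $M_1 \le M \le M_0$, $E \le E_0$ and the induced bound $H \le H_0$. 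Since $f$ is a supersolution and $c_f f \ge 0$,
\[ f_t + v \cdot \grad_x f - \linenotell_v f \;=\; \bigl(f_t + v\cdot\grad_x f - Q(f,f)\bigr) + c_f f \;\ge\; 0 \quad \text{in } [0,T]\times B_R \times B_R ,\]
so $f$ is a nonnegative supersolution of the bare kinetic integro-differential equation in that cylinder (the nonlocal term only uses the global nonnegativity of $f$, consistently with the local nature of our estimates). Using the scaling of the Boltzmann equation, which changes $M_0, M_1, E_0$ and $\|f\|_{L^\infty}$ by explicit powers of $R$, we may assume $R$ is of order $1$.

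\textbf{A quantitative lower bound somewhere.} Fix a time $t_0$ and a point $x_0 \in B_{3R/4}$. From $E(t_0,x_0) \le E_0$ we get $\int_{\{|v| > \rho\}} f(t_0,x_0,v)\dd v \le E_0/\rho^2$, so for a suitable $\rho = \rho(M_1, E_0)$ we have $\int_{B_\rho} f(t_0,x_0,v)\dd v \ge M_1/2$. Covering $B_\rho$ by a controlled number of balls of the fixed radius $r_0$ appearing in the weak Harnack inequality and using $f \le \|f\|_{L^\infty}$, we find a velocity ball $B_{r_0}(v_0)$, with $|v_0|\le\rho$, inside which $f(t_0,x_0,\cdot)$ exceeds a definite threshold on a set of definite measure. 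One application of the weak Harnack inequality (more precisely of its ``measure-to-pointwise'' consequence, obtained by bounding below the $L^{p_0}$ average on its left-hand side), localized around $(t_0,x_0,v_0)$, then upgrades this to a genuine pointwise lower bound $f \ge \ell_0 > 0$ on a small kinetic cylinder sitting at times slightly after $t_0$, with $\ell_0$ depending only on $d$, $\gamma$, $s$, $M_1$, $E_0$ and $\|f\|_{L^\infty}$.

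\textbf{Chaining.} The cylinder produced above has a fixed size in the kinetic scaling, so to reach all of $[T/2,T]\times B_{R/2}\times B_{R/2}$ we iterate: each pointwise lower bound already established plays the role of the ``mass'' input for a further application of the weak Harnack inequality, advancing in time and covering the $x$- and $v$-balls by overlapping kinetic cylinders, all kept inside $[0,T]\times B_R\times B_R$. After a number of steps $N = N(R,T,d,\gamma,s)$ depending only on the listed parameters, one obtains $\inf_{[T/2,T]\times B_{R/2}\times B_{R/2}} f \ge c(R) > 0$ with the stated dependence. The analytic input --- the structural bounds on $K_f$ and the weak Harnack inequality itself --- is already in hand; the delicate part is the geometric bookkeeping: organizing the chain of kinetic cylinders so that the transport drift $v\cdot\grad_x$ is correctly tracked, the intermediate cylinders never leave $[0,T]\times B_R\times B_R$, the mass is always captured in a velocity region of the right (fixed) scale (which forces the pigeonhole-plus-energy truncation above), and the number of steps --- hence the degradation of $c(R)$ as $R$ and $T$ vary --- is kept under explicit control. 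This last, essentially combinatorial, point is the main obstacle rather than any new analytic estimate.
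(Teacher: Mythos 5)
Your reduction and your first step match the paper's proof: one writes $Q(f,f)=\Lv f + c_f f$ with $c_f f\ge 0$, so $f$ is a supersolution of the bare equation; the bounds on $M,E$ (plus $\|f\|_{L^\infty}$, or entropy as in Lemma \ref{l:lifted-set}) give, \emph{for every} $(t,x)$, a set of velocities of definite measure where $f\ge\ell$; integrating this over a space-time slab produces a lower bound on $\int_{Q^-}f^\eps$ for at least one slanted cylinder, and Theorem \ref{thm:whi} then yields $f\ge c$ on some small cylinder $Q_{r_0}(\tilde z)$. (Be slightly careful in your pigeonhole step: fixing a single $(t_0,x_0)$ is not enough to bound the $L^\eps$ average over $Q^-$, which is a space-time-velocity integral; you need the mass lower bound uniformly in $(t,x)$, which you do have.)

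The genuine gap is in your chaining step, which you yourself flag as ``the main obstacle'' and then do not carry out. Iterating Theorem \ref{thm:whi} to spread the lower bound across $B_{R/2}$ in velocity is not routine here: $Q^+$ and $Q^-$ in the weak Harnack inequality share the \emph{same} velocity ball $B_{r_0}$ and the same slant, and $Q^-$ sits a full unit of time (in the rescaled variables) before $Q^+$. To pass a lower bound from a cylinder slanted with velocity $v_0$ into the $Q^-$ of an application centered at a nearby $v_1$, the two slanted cylinders must overlap on a set of positive measure in $(t,x,v)$; their $x$-sections drift apart at rate $|v_0-v_1|$ over the mandatory time lag, and the radii in $x$ are only $r_0^{1+2s}$, so overlap is not automatic and controlling the number of steps uniformly is exactly the hard point. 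The paper does not chain the weak Harnack inequality at all for this step: it propagates the initial lower bound to $[T/2,T]\times B_R\times B_R$ in essentially one stroke, using the barrier function of Lemma \ref{l:barrier} when $s<1/2$, and the combination of Lemma \ref{l:intermediate-set-local} with Lemma \ref{lem:linfty} when $s\ge 1/2$. A further point your scheme glosses over: for the Boltzmann kernel the constants $\lambda,\Lambda$ degrade as $|v|\to\infty$, so any chaining whose \emph{geometry} (number and placement of cylinders) depended on $\lambda,\Lambda$ would be circular in its dependence on $R$; the paper explicitly notes that the geometric settings of Lemmas \ref{l:barrier}, \ref{l:intermediate-set-local} and \ref{lem:linfty} are independent of $\lambda$ and $\Lambda$, which is precisely why those tools are used instead of a Harnack chain. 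To complete your argument you would either have to import those propagation lemmas or prove a velocity-chaining lemma for slanted cylinders that resolves the drift issue with a step count independent of the ellipticity constants.
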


It has been a longstanding issue to find appropriate lower bounds for
the solutions of the Boltzmann equation. The best result available is
perhaps from the work of Mouhot \cite{mouhot2005}. He obtains an
explicit exponentially decaying lower bound for the Boltzmann equation
without cut-off. He makes strong a priori regularity assumptions on the
solution $f$, in addition to the assumptions that we make in this
paper. We do not provide an explicit formula for $c(R)$. Its precise decay as $R \to \infty$ will be the subject of future work.


\begin{remark}
  If $\gamma + 2s >2$, similar results can be obtained by further
  assuming that the $(\gamma+2s)$-momentum of the function $f$ is
  finite at every point $(t,x)$.
\end{remark}

\subsection{A linear kinetic integro-differential equation}

The main result of this paper concerns a general kinetic
integro-differential equation. The results for the Boltzmann equation
described above follow as corollaries. We study an equation of the
form
\begin{equation} \label{e:main}
 f_t + v \cdot \grad_x f = \Lv f + h
\end{equation}
for $t \in (-1,0]$,  $x \in B_1$ and $v \in B_1$,
where $\Lv f$ is a linear integro-differential operator in the velocity variable
of the following form
\[ 
\Lv f (t,x,v) = PV \int_{\R^d} (f(t,x,v') - f(t,x,v)) K(t,x,v,v') \dd v' 
\]
for a  locally bounded function $h$
and a measurable kernel
$K : [-1,0] \times B_1 \times B_{\Radius} \times \R^d
\to [0,+\infty)$
satisfying appropriate assumptions that we describe below.

For every value of $t$ and $x$, the kernel $K(t,x,v,w)$ is a
non-negative function of $v$ and $w$. We assume that the following
conditions hold for every value of $t$ and $x$ (we omit $t$ and $x$
dependence to clean up the notation).

Let us fix a $\Radius \geq 1$. We will make assumptions on the kernel
$K(v,v')$ for $v \in B_{\Radius}$. We need to pick $\Radius$ slightly
larger than one for technical reasons that will be apparent in Section
\ref{sec:reduction}.

Our first assumption is a coercivity condition on $\Lv$. We assume
that there exists $\lambda>0$ and $\Lambda>0$ such that, for any
function $f:\R^d \to \R$ supported in $B_{\Radius}$,
\begin{equation} \label{e:Klower}
 \lambda 
\iint_{\R^d \times \R^d} \frac{|f(v)-f(v')|^2}{|v-v'|^{d+2s}} \dd v \dd v' 
\leq - \int_{\R^d} \Lv f(v) \,  f(v)  \dd v  + \Lambda \|f\|_{L^2(\R^d)}^2.
\end{equation}
This coercivity condition is well known to hold for the Boltzmann
equation when the function $f$ has bounded mass, energy and entropy
density, and its mass is also away from vacuum (see
\cite{lions1998,villani1999,advw} and the discussion below). The
proofs in the literature are based on Fourier analysis. We provide a
proof in the appendix which follows by a direct geometric computation
in physical variables.

In the case $s < 1/2$, we also make the following nondegeneracy assumption.
\begin{equation} \label{e:Knondegeneracy} 
\inf_{|e|=1} \int_{B_r(v)}  ((v'-v)\cdot e)_+^2 K(v,v') \dd v' \geq \lambda r^{2-2s} \qquad
  \text{ for every value of $v \in B_\Radius$}.
\end{equation}
Here, when we write $(w\cdot e)_+^2$, we mean $((w \cdot e)_+)^2 = \max(w \cdot e,0)^2$.

The coercivity condition would be obviously true if $K$ is symmetric
(i.e. $K(v,v') = K(v',v)$) and $K(v,v') \geq \lambda |v-v'|^{-d-2s}$.
These assumptions are not satisfied by the Boltzmann kernel a priori. 

For some kernels (not necessarily coming from the Boltzmann equation) it might be difficult to
check whether the coercivity condition \eqref{e:Klower} holds. The nondegeneracy assumption \eqref{e:Knondegeneracy} is usually very easy to check in explicit examples of kernels $K$. We do not know of any example of a kernel which satisfies \eqref{e:Knondegeneracy} but not \eqref{e:Klower}. It is natural to conjecture this implication (modulo adjusting $\lambda$ by a fixed factor).

The second assumption is a weak upper bound on the kernel $K$. 
\begin{equation} 
\label{e:Kabove}
 \begin{cases}
(i) & \int_{\R^d \setminus B_r(v)} K (v,v') \dd v' \leq \Lambda r^{-2s}  \text{ for any } 
r>0 \text{ and } v \in B_{\Radius} \\[1ex]
(ii) & \int_{B_{\Radius} \setminus B_r(v')} K (v,v') \dd v \leq \Lambda r^{-2s}  \text{ for any } 
r>0 \text{ and } v' \in B_{\Radius}.
\end{cases}
\end{equation}
Note that if $K(v,v') \lesssim |v-v'|^{-d-2s}$, then the assumption
\eqref{e:Kabove} holds. Our assumption only concerns average values of
$K$ on the complementary set of balls. Therefore, a kernel containing
a singular part is allowed. We will see that the Boltzmann kernel
satisfies \eqref{e:Kabove} even though
$K(v,v') \lesssim |v-v'|^{-d-2s}$ may not hold a priori.

Note that both inequalities in \eqref{e:Kabove} would be the same if
$K$ were symmetric.  But we do \emph{not} assume symmetry of the kernel. That is
$K(v,v') \neq K(v',v)$ in general. The symmetry assumption is very
common for integro-differential equations because
it represents the fact that the equation is in \emph{divergence
  form}. It is equivalent to the operator $\Lv$ being self adjoint. We
explain this concept in Subsection \ref{sub:review-ide}.

The following assumptions provide a mild control on the
anti-symmetric part of the kernel.

We assume that 
\begin{equation} \label{e:Kcancellation0} 
\forall v\in B_{7\Radius/8}, \qquad 
\left| PV \int_{B_{\Radius/8}(v)} \left( K(v,v') - K (v',v)\right) \dd v' \right| \leq \Lambda .
\end{equation}

Moreover, if $s \geq 1/2$, we need to assume the following extra cancellation.
\begin{equation} \label{e:Kcancellation1} 
\forall r \in (0,\Radius/8], \forall v\in B_{7\Radius/8}, \qquad 
\left| PV \int_{B_r(v)}
    (v-v') \left( K(v,v') - K (v',v)\right) \dd v' \right| \leq  \Lambda (1+r^{1-2s}).
\end{equation}
When $K$ is symmetric, the left hand
sides in \eqref{e:Kcancellation0} and \eqref{e:Kcancellation1} are
identically zero and therefore the assumptions trivially hold.

When $s > 1/2$, if the assumption \eqref{e:Kcancellation1} holds for
$r=\bar R$ and in addition \eqref{e:Kabove} holds, then we observe
that \eqref{e:Kcancellation1} automatically holds for all
$r \in (0,\bar R]$. The requirement that the inequality
\eqref{e:Kcancellation1} holds for all $r \in (0,\bar R]$, as opposed
to only $r=\bar R$, only makes a difference for the case $s=1/2$. We
discuss the scaling properties of our assumptions in subsections
\ref{sub:com} and \ref{sub:invariant}.

  When we apply our results to the Boltzmann equation, the kernel $K$
depends on the solution $f$ and is determined by the
  formula
\[ \int_{\R^d} \int_{\partial B_1} f(v_\ast') ( g(v') - g(v) )
B(|v-v_\ast|, \theta) \dd \sigma \dd v_\ast = \int_{\R^d} (g(v') -
g(v)) K_f (v,v') \dd v'.\]
In this way,
\[ Q(f,g) = \int (g(v') - g(v)) K_f(v,v') \dd v' + (\text{lower order
  terms}).\]
The constant $\lambda$ in the assumption \eqref{e:Klower} depend only
on the mass, energy and entropy densities of $f$. The constant
$\Lambda$ in \eqref{e:Kabove}, \eqref{e:Kcancellation0}
and \eqref{e:Kcancellation1}, depends only on the mass and energy
density of $f$ when $\gamma \in [0,1]$. It depends on further
integrability properties of $f$ when $\gamma < 0$ (they are bounded in
terms of $\|f\|_{L^\infty}$ for example). All these assumptions will
be verified in Section \ref{sec:kernel}.

\subsection{Main results}
\label{subsec:main}

The notion of weak solution will be made precise by the end of Section \ref{sec:reduction}.
\begin{thm}[H\"older continuity]\label{thm:holder}
  Assume the kernel is non-negative and there exist $\lambda >0$ and
  $\Lambda >0$ such that \eqref{e:Klower}, \eqref{e:Kabove} and
  \eqref{e:Kcancellation0} hold true with $\Radius=2$. If $s \ge 1/2$,
  we also assume \eqref{e:Kcancellation1}; if $s < 1/2$, we also
  assume \eqref{e:Knondegeneracy}.  Let $f$ be a solution of
  \eqref{e:main} for all $t \in (-1,0]$, $x \in B_1$ and $v \in B_1$.
  Assume that $f$ is essentially bounded in
  $(-1,0] \times B_1 \times \R^d$.  Then $f$ is H\"older continuous in
  $(-1/2,0] \times B_{1/2} \times B_{1/2}$ with
  \[ \|f\|_{C^\gamma((-1/2,0] \times B_{1/2} \times B_{1/2})} \leq C
  \left( \|f\|_{L^\infty((-1,0] \times B_{1} \times \R^d )} +
    \|h\|_{L^\infty((-1,0] \times B_{1} \times B_{1})} \right) \]
  where $C>0$ and $\gamma \in (0,1)$ are constants only depending on
  dimension, $\lambda$ and $\Lambda$.
\end{thm}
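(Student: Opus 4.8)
The plan is to follow the De Giorgi--Nash--Moser strategy adapted to the kinetic, nonlocal setting, which decomposes the H\"older estimate into two main ingredients: a \emph{weak Harnack inequality} (or at least a lower bound / measure-to-pointwise estimate for nonnegative supersolutions) and an \emph{oscillation decay} iteration in geometrically shrinking kinetic cylinders. First I would set up the correct scaling: the equation \eqref{e:main} is invariant under the kinetic (Galilean) scaling $(t,x,v)\mapsto (r^{2s}t, r^{1+2s}x, rv)$ together with the translation $x \mapsto x - t v_0$ that follows free transport, and I would check that assumptions \eqref{e:Klower}, \eqref{e:Kabove} are preserved by this scaling while \eqref{e:Kcancellation0}, \eqref{e:Kcancellation1} only improve (they are subcritical, as the excerpt notes). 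This lets me reduce to working in unit-size kinetic cylinders centered at arbitrary points, and it is the reason $\Radius$ needs to be taken slightly above $1$ — after translating the velocity variable to recenter, the relevant velocity ball may poke slightly outside $B_1$, so we carry the margin up to $B_2$.

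The core analytic steps, in order, are: (i) an energy / Caccioppoli inequality for subsolutions of \eqref{e:main}, obtained by testing the equation against $\varphi^2 (f-k)_+$ with $\varphi$ a cutoff in $(t,x,v)$; here the coercivity \eqref{e:Klower} produces the good $H^s_v$-Gagliardo seminorm term, the upper bounds \eqref{e:Kabove} and the cancellation bounds \eqref{e:Kcancellation0}--\eqref{e:Kcancellation1} control the error terms coming from the nonlocality and the lack of symmetry of $K$, and the transport term $v\cdot\grad_x$ is handled by the usual integration by parts that moves the derivative onto the cutoff. (ii) A gain of integrability: combine the $L^\infty_t L^2_{x,v}$ plus $L^2_t H^s_v$ control from step (i) with the averaging/velocity-averaging-type or kinetic Sobolev embedding built from the transport structure to get an $L^{2+\epsilon}$ (or local $L^\infty$) bound — i.e. a De Giorgi first lemma, saying that if $f\le 1$ on the large cylinder and the measure of $\{f>1/2\}$ is small enough, then $f \le 1/2$ on a smaller cylinder. (iii) A De Giorgi second lemma / isoperimetric "intermediate value" lemma: if $f \le 1$, $f$ is "mostly" below $0$ on a bottom slice and "sometimes" above $1/2$, then the measure of the intermediate set $\{0<f<1/2\}$ is bounded below — this is where the nondegeneracy assumption \eqref{e:Knondegeneracy} (for $s<1/2$) enters, supplying the missing ellipticity in the velocity variable when the Gagliardo seminorm alone is too weak. (iv) Iterate (i)--(iii) to get the oscillation decay $\osc_{Q_{r/2}} f \le (1-\theta)\osc_{Q_r} f + C r^{\beta}\|h\|_\infty$, and sum the geometric series to conclude H\"older continuity on $(-1/2,0]\times B_{1/2}\times B_{1/2}$ with the stated dependence of $C$ and $\gamma$ on $d,\lambda,\Lambda$ only.

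I would organize the propagation-of-information-across-scales carefully because the transport term couples $x$ and $v$: the oscillation does not decay on a ball but on a kinetic cylinder of the form $Q_r = \{|t|\le r^{2s}\}\times\{|x - t v_0|\le r^{1+2s}\}\times\{|v-v_0|\le r\}$, and one must chase how a pointwise bound obtained near the "final time" of a cylinder gets transported to control the solution on an earlier, spatially-shifted cylinder. A technical subtlety worth isolating as a lemma is the treatment of the nonlocal "tail": since $f$ is only assumed bounded on $(-1,0]\times B_1\times\R^d$, the far-velocity contribution $\int_{|v'-v| \text{ large}}(f(v')-f(v))K(v,v')\dd v'$ must be absorbed into the right-hand side using \eqref{e:Kabove}(i), which is why the estimate necessarily contains $\|f\|_{L^\infty((-1,0]\times B_1\times\R^d)}$ on the right.

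The hardest part, I expect, is step (iii) — the measure/isoperimetric lemma in this kinetic nonlocal framework — together with making the velocity-averaging gain in step (ii) quantitative and uniform in the parameters. The difficulty is that the only coercive quantity we control is a fractional seminorm in $v$ alone; transferring that into regularity (or even just integrability gain) in $x$ and $t$ requires exploiting the transport operator $\bdary_t + v\cdot\grad_x$ as a hypoelliptic companion, and one has to be careful that all constants depend only on $d,\lambda,\Lambda$ and not on the solution. The role of \eqref{e:Knondegeneracy} when $s<1/2$, and the precise way \eqref{e:Kcancellation0}--\eqref{e:Kcancellation1} are used to kill the non-divergence error terms in the Caccioppoli estimate, are the two places where I would be most careful to get the book-keeping right.
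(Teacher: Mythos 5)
Your outline is the right family of argument (De Giorgi--Nash--Moser with kinetic scaling, Caccioppoli estimate, gain of integrability, first De Giorgi lemma, measure lemma, oscillation iteration), and steps (i), (ii) and (iv) match the paper: the energy estimate is Lemma \ref{lem:caccio}, the gain of integrability is obtained by Duhamel's formula for the fractional Kolmogorov equation rather than averaging lemmas (the paper deliberately avoids velocity averaging, so your "kinetic Sobolev / averaging" step should be pinned down to the fundamental solution $J$ and Young's inequality, Proposition \ref{l:kolmogorov-youngs}), and the final iteration is Corollary \ref{c:improvement_of_oscillation} plus the scaling argument in Section \ref{sec:holder}. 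The genuine gap is in step (iii) and in what you omit after it. The isoperimetric/intermediate-value lemma is proved in the paper by compactness and uses crucially that a nontrivial indicator function cannot lie in $H^s$ when $s\ge 1/2$; for $s<1/2$ this is false and the argument collapses, so \eqref{e:Knondegeneracy} cannot be inserted there to "supply the missing ellipticity." For $s<1/2$ the paper abandons the isoperimetric route altogether and instead uses \eqref{e:Knondegeneracy} to build an explicit subsolution barrier (Section \ref{sec:barriers}, a pointwise Krylov--Safonov-type device), which propagates a lower bound already known on a small cylinder but gives no measure-theoretic dichotomy.

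This forces the second missing ingredient: with only a barrier in hand, passing from the hypothesis of the oscillation alternative (the good set has measure $\ge \tfrac12 |\tilde Q_-|$) to the pointwise lower bound requires the full weak Harnack inequality, which the paper proves by combining the propagation lemma with a custom ink-spots covering theorem for slanted kinetic cylinders (Section \ref{sec:ink-spot}). Your proposal skips this covering step entirely. It is not a routine omission: the natural cylinders are slanted in $x$ with slope depending on the center velocity, so a Calder\'on--Zygmund decomposition is impossible, and the delayed cylinders $\bar Q^m$ of two nearby cubes can overlap in new ways; controlling this (Lemma \ref{l:expansion-overlap}) is one of the paper's main technical contributions. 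For $s\ge 1/2$ your route (iterate the isoperimetric lemma finitely many times, then apply the first lemma) does close, and is essentially the paper's Lemma \ref{l:propagation-in-measure}; but as written your proof does not cover $s<1/2$, which is precisely the range relevant to most soft-potential Boltzmann kernels.
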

This theorem is in fact derived from the following estimate. 
\begin{thm}[Weak Harnack inequality]\label{thm:whi} 
  There are constants $r_0$, $R_1>1$, $\eps$ and $C$ so that the
  following proposition holds.  Assume the kernel is non-negative and
  there exist $\lambda >0$ and $\Lambda >0$ such that
  \eqref{e:Klower}, \eqref{e:Kabove} and \eqref{e:Kcancellation0}
  holds true with $\Radius=2R_1$. If $s \ge 1/2$, we also assume
  \eqref{e:Kcancellation1}; if $s < 1/2$, we also assume
  \eqref{e:Knondegeneracy}.  Assume that $f$ is a non-negative
  {supersolution} of \eqref{e:main} in
  $(-1,0] \times B_{R_1^{1+2s}} \times B_{R_1}$. Then
  \[ \left( \int_{Q^-} f^\eps (t,x,v) \dv \dx \dt \right)^{1/\eps} 
\leq C \left( \inf_{Q^+} f + \|h\|_{L^\infty ((-1,0]\times B_1 \times B_1)} \right) \] 
where 
\[ Q^+ = (-r_0^{2s},0] \times B_{r_0^{1+2s}} \times B_{r_0} \quad
\text{ and } \quad Q^- = (-1,-1+r_0^{2s}] \times B_{r_0^{1+2s}} \times
B_{r_0} \]
(see Figure~\ref{fig:whi}) and the constants $C>0$, $\eps >0$, only
depend on dimension, $s$, $\lambda$ and $\Lambda$. The constants $r_0$
and $R_1$ depend on dimension and $s$ only (not on $\lambda$ and
$\Lambda$).
\end{thm}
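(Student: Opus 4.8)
The plan is to follow the De Giorgi--Nash--Moser strategy, adapted to the kinetic integro-differential setting. The weak Harnack inequality is obtained by combining two ingredients: a measure-to-pointwise estimate (often called the $L^\eps$-to-$L^\infty$ bound, or the first De Giorgi lemma / ink-spots-type lemma), and a "measure of positivity spreads" mechanism that transfers the bound on $\int f^\eps$ over the lower cylinder $Q^-$ to the infimum over the upper cylinder $Q^+$. Concretely, I would first establish a local upper bound for subsolutions (so that $(1+\text{const}-f)_+$ or $(f+\delta)^{-1}$-type quantities can be controlled): if $g$ is a bounded subsolution of \eqref{e:main} on a cylinder, then $\sup_{Q^+} g \le C(\|g_+\|_{L^2(Q)} + \|h\|_\infty)$. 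This comes from a Caccioppoli/energy inequality using the coercivity \eqref{e:Klower}, the upper bounds \eqref{e:Kabove}, and the cancellation bounds \eqref{e:Kcancellation0}--\eqref{e:Kcancellation1} to handle the non-divergence (antisymmetric) part of $\Lv$, combined with the gain of integrability provided by the kinetic averaging/Sobolev embedding in $(t,x,v)$ that makes the nonlocal diffusion in $v$ plus transport hypoelliptic. The technical role of $\Radius$ slightly larger than $1$, and of the large radius $R_1$ and the box dimensions $B_{R_1^{1+2s}}$ in $x$ versus $B_{R_1}$ in $v$, is exactly to accommodate the scaling of this hypoelliptic smoothing (the Galilean/kinetic scaling $t \sim r^{2s}$, $x \sim r^{1+2s}$, $v\sim r$) and the fact that information travels in $x$ via the transport term.

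Next I would prove the key nonlinear ingredient: if $f\ge 0$ is a supersolution and $|\{f \ge 1\}\cap Q^-|$ is not too small (a fixed fraction of $|Q^-|$), then $\inf_{Q^+} f \ge \mu > 0$ for some universal $\mu$, up to the additive $\|h\|_\infty$ term. This is the heart of the argument and where most of the work lies. The standard route: apply the local upper-bound estimate to $w = (\log(f+\|h\|_\infty+\delta))_-$ or to $w=(\lambda_0 - \log(f+\cdots))_+$, which is a subsolution of an equation with a controlled right-hand side (here one needs a "log-estimate" / energy inequality for $\log f$, producing an $L^1$ or BMO-type bound on $\log f$ in the interior — this is where the coercivity \eqref{e:Klower} and, when $s<1/2$, the nondegeneracy \eqref{e:Knondegeneracy} enter to guarantee a genuine $H^s$-gain in $v$). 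Combined with a De Giorgi isoperimetric/oscillation lemma that says a supersolution cannot jump from "$\le \delta$ on a large set" to "$\ge$ something" on another large set without an intermediate set of definite measure, one iterates (the standard De Giorgi iteration on the levels $\delta 2^{-k}$) to conclude $f \ge \mu$ on $Q^+$. The passage of positivity from $Q^-$ (near time $-1$) to $Q^+$ (near time $0$) across the full cylinder is driven by the transport term $v\cdot\grad_x f$ together with the velocity-diffusion $\Lv$; one needs a covering/chaining argument (an "ink-spots" lemma in the kinetic geometry) to propagate a positive-measure set forward in time and across space, using the nondegeneracy of the kinetic scaling.

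Finally, the weak Harnack inequality follows from the two pieces by a standard measure-theoretic argument (as in Krylov--Safonov / De Giorgi--Moser): the $L^\eps$ norm of $f$ on $Q^-$ being large forces $|\{f \ge \tau\}\cap Q^-|$ to be a definite fraction of $|Q^-|$ for some $\tau$ comparable (via the doubling of the "positivity spreads" estimate over dyadic scales) to that $L^\eps$ norm; then the positivity-spreading step gives $\inf_{Q^+} f \gtrsim \tau$. The precise bookkeeping of constants — that $r_0, R_1$ depend only on $d$ and $s$ while $C,\eps$ additionally depend on $\lambda,\Lambda$ — is exactly what one expects from this scheme, since $r_0,R_1$ are geometric (they come from the covering and the kinetic scaling, decided before one sees the ellipticity constants), whereas $C$ and $\eps$ absorb the quantitative smoothing whose strength is measured by $\lambda/\Lambda$. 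I expect the main obstacle to be the handling of the non-symmetric (non-divergence-form) part of the kernel $K$ inside the energy and log estimates: one does not have an exact integration-by-parts identity, so each Caccioppoli-type inequality must carefully commute test functions past $\Lv$ and control the commutator error using only the averaged cancellation bounds \eqref{e:Kcancellation0}--\eqref{e:Kcancellation1} and the weak upper bound \eqref{e:Kabove}(ii) on $K(\cdot,v')$, rather than pointwise kernel bounds — and then checking that these subcritical error terms are genuinely absorbable by the critical coercive term after using the hypoelliptic gain of integrability.
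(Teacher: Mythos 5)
Your overall architecture (local $L^2$--to--$L^\infty$ bound, a positivity-propagation step, a kinetic ink-spots covering, and a Krylov--Safonov-style assembly into geometric decay of superlevel sets) matches the skeleton of the paper's proof. The genuine gap is in the heart of the argument: the mechanism you propose for propagating positivity. You suggest the Moser route --- apply the local bound to $(\log(f+\delta))_-$ to get a BMO-type estimate on $\log f$, and combine it with a De Giorgi isoperimetric lemma iterated over dyadic levels. Both halves of this are problematic here. The isoperimetric (intermediate-value) lemma, in the form needed, relies on the fact that a characteristic function of a nontrivial set cannot lie in $H^s$; this is only true for $s\geq 1/2$, so the scheme collapses for $s<1/2$. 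And the log-estimate in the kinetic setting is not a routine Caccioppoli computation: to convert an $L^1$ or BMO bound on $\log f$ in $(t,x,v)$ into a measure estimate one needs a Poincar\'e inequality adapted to the Galilean Lie group $(t,x,v)\mapsto(t_0+t,x_0+x+tv_0,v_0+v)$, which is a substantial piece of machinery that you neither construct nor can import from the stated hypotheses (this is precisely the hard step in the Wang--Zhang second-order theory that the present paper is designed to avoid). The paper instead splits the range of $s$: for $s<1/2$ it builds explicit supersolution barriers (Lemma \ref{l:barrier}), exploiting that $\Lv\varphi$ is computable pointwise on Lipschitz functions and the nondegeneracy \eqref{e:Knondegeneracy}; for $s\geq 1/2$ it proves the intermediate-value lemma by a compactness argument on the fractional Kolmogorov equation (Lemma \ref{l:intermediate-set-global}), where the $H^{1/2}$ obstruction is exactly what makes the limit function take only the values $0$ and $1$ and yields a contradiction along transport characteristics.

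A second, smaller gap: your final assembly is stated as a one-step implication (``$\int_{Q^-}f^\eps$ large forces a superlevel set of definite measure, hence $\inf_{Q^+}f\gtrsim\tau$''), but the $L^\eps$ integrability requires the full geometric decay $|\{f>\bar M^k\}\cap Q^-|\leq C(1-\delta')^k$, proved by induction. The inductive step needs two quantitative facts you do not supply: (i) any cylinder $Q_r(z_0)$ on which $f>\bar M^{k+1}$ in large measure must have radius $r\lesssim \bar M^{-k/(2sp)}$, which comes from the polynomially decaying lower bound of Corollary \ref{cor:smoothpropagation} (stacked propagation along expanding slanted cylinders reaching $Q^+$, where $\inf f\leq 1$); and (ii) the delayed cylinder $\bar Q^m$ is contained in $\{f>\bar M^k\}$, again by stacked propagation. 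The ink-spots theorem is then applied to the superlevel sets themselves, with a leakage correction for the cylinders that exit $Q_1$ --- not to ``propagate a positive-measure set forward in time'' as you describe. Without the quantitative stacked-propagation estimate linking all scales $r$ to the value of $f$ at $Q^+$, the covering argument alone does not close the induction.
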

\begin{figure}
\setlength{\unitlength}{1in} 
\begin{picture}(2.903 ,1.000)
\put(0,0){\includegraphics[height=1.000in]{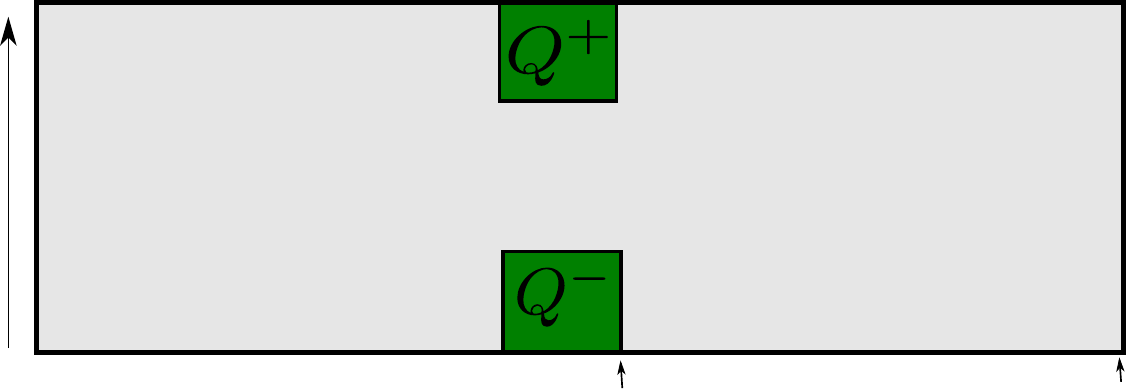}}
\put(1.57,-0.083){$r_0$}
\put(2.8,-0.102){$R_1$}
\put(-0.6,0.479){$t \in (-1,0]$}
\end{picture}
\caption{The geometric setting of the weak Harnack inequality}
\label{fig:whi}
\end{figure}

\subsection{Comments on the results and related works}

\label{subsec:relatedresults}

\subsubsection{Difficulties related to this problem}

This subsection is our attempt to explain and compare the main
challenges that we faced proving the main results in this paper, and
the new ideas that were introduced.

We start by reviewing some recent developments about parabolic kinetic
equations in divergence form, with rough coefficients. In some sense,
our main theorems are an integro-differential counterpart of these previous
results. The equations have the form
\[ f_t + v \cdot \nabla_x f = \frac{\partial}{\partial v_i} \left(
  a_{ij} \frac{\partial}{\partial v_j} f \right).\]
The diffusion coefficient $a_{ij} = a_{ij}(t,x,v)$ is assumed to be
uniformly elliptic. No regularity assumption should be made in
$a_{ij}$, otherwise the equation may fit into the more classical
hypoelliptic theory, and would not imply such interesting results for
the Landau equation. Pascucci and Polidoro \cite{pp} obtained the
local $L^\infty$ estimate for this equation using Moser's
method. Continuing in that direction, Wang and Zhang obtained H\"older
estimates in \cite{zhang2008}, \cite{wangzhang2009} and
\cite{wangzhang2011}. Their proof is quite involved. A highly
nontrivial step is to obtain an appropriate formulation of a Poincar\'e
inequality adapted to the Lie group action related to the equation.

A simplified proof, following the method of De Giorgi, was recently
obtained by Golse, Imbert, Mouhot and Vasseur \cite{gimv}. In this
paper a version De Giorgi's isoperimetric inequality is obtained by a
compactness argument. We use that idea for the case $s \in
[1/2,1)$.
This general method is not applicable to the case $s < 1/2$, since it
uses crucially that the characteristic function of a nontrivial set
can never be in $H^s$. We do not use velocity averaging lemmas like in
\cite{gimv} anywhere in this paper. Instead, we take advantage of more
elementary properties of the fractional Kolmogorov equation.

The first step in the proof of De Giorgi, Nash and Moser, which
consists of a local $L^\infty$ estimate, needs to be formulated
appropriately to hold for integro-differential equations with
degenerate kernels. 
Our proof in Section \ref{sec:fdl}
follows a properly adapted version of De Giorgi's iteration. We do not
use either averaging lemmas, or hypoelliptic estimates for the $x$
variable (like in \cite{gimv} or \cite{pp}). Instead, we iterate an
improvement of integrability obtained directly from the fundamental
solution to the fractional Kolmogorov equation.

In the second part of the proof of the theorem of De Giorgi, Nash and
Moser we take different strategies depending on whether
$s \in (0,1/2)$ or $s \in [1/2,1)$. In the first case, we construct a barrier function to propagate lower bounds as in the method by Krylov and Safonov for nondivergence equations. 
When $s \in [1/2,1)$, the
proof is based on a measure estimate of intermediate sets (as in De
Giorgi's original work) obtained by compactness (as in \cite{gimv}),
but using a more direct approach based on the fractional
Kolmogorov equation instead of hypoelliptic estimates and averaging
lemmas. We could not find a single method that works for the full range $s \in (0,1)$ for general integro-differential equations. However, in the case of the Boltzmann equation, the method used for the range $s \in (0,1/2)$ actually works for the full range, as we explain below.

The kernel $K_f$, from the Boltzmann equation, satisfies the extra symmetry condition $K(v,v+w) = K(v,v-w)$ which we do not use in this paper. We chose not to take advantage of this condition in order to have the most natural result for general integro-differential equations.  Using this assumption would allow us to simplify some of the proofs. Most importantly, the barriers of Section \ref{sec:barriers} would hold for the full range $s \in (0,1)$ and therefore the results from Section \ref{sec:intermediate} would be unnecessary. Moreover, the proof of Lemma \ref{lem:gain-int} could be done more easily using a similar function $g$ as in the proof of Lemma \ref{lem:caccio}. The commutator estimates of Lemmas \ref{l:commutator-s<1/2} and \ref{l:commutator-s>1/2} would not be necessary anywhere.

One of the main ideas in the work of Caffarelli, Chan and Vasseur
\cite{CCV} about parabolic integro-differential equations (not
kinetic) is how they formulate De Giorgi's isoperimetric lemma in the
integro-differential setting. Their original method is purely
nonlocal. It does not work for second order equations. It uses
crucially that $\E(g_+,g_-) \gtrsim \|g_+\|_{L^1} \|g_-\|_{L^1}$,
where $\E$ is a bilinear form like the one we define in Section
\ref{sec:bilinear}. In our context, this is not true for two
reasons. First, because we have the additional variable $x$ that
plays no explicit role in the integral diffusion and is not
\emph{seen} by the bilinear form $\E$. Secondly, because the
assumptions that we make in the kernels are too mild for this
condition to hold even in the space homogeneous case. In \cite{CCV},
they assume that the kernel $K$ is symmetric and
$K(v,v') \gtrsim \lambda |v-v'|^{-d-2s}$ for every value of $v$ and
$v'$.

Our equation \eqref{e:main} involves three different variables: $t$,
$x$ and $v$. It reduces to a more standard parabolic
integro-differential equation when $f$ is constant in $x$. The
diffusion takes place with respect to the variable $v$ only. The
equation includes the kinetic transport term $v \cdot \nabla_x f$,
which somehow transfers the regularization effect from the $v$
variable to the $x$ variable. The variable $x$ has to be dealt with
differently to the $t$ and $v$ variables. For example it has a
different scaling and it is affected by translations of the function
with respect to the $v$ variable. One major difficulty that it brings
is in the proof of the ink-spots theorem. The original ink-spots
covering by Krylov and Safonov was for non-kinetic parabolic equations
without the extra variable $x$. Including this extra variable changes
the geometry. The natural parabolic cylinders, which are invariant by
the Lie group acting on the equation, are oblique in the variable
$x$. With this geometry, there is no chance to apply a
Calder\'on-Zygmund decomposition like in \cite{is} because we cannot
tile the space with slanted cylinders with varying slopes. We need a
custom made version of the ink-spots covering theorem, which is
developed in Section \ref{sec:ink-spot}. See that section for further
explanation on the difficulties and ideas involved in this covering
result.

When we apply our main results to the Boltzmann equation in Theorems
\ref{thm:lowerbound} and \ref{thm:holder-boltzmann}, we only want to
assume a priori some minimal physically relevant information on
$f$. We assume a control, for all $t$ and $x$, of the mass, energy and
entropy densities. Under these assumptions, there is very little one
can say about the Boltzmann collision kernel $K_f$. We are forced to
work with very general, non-symmetric, and possibly singular
kernels. This paper would be much simpler if we made a convenience
assumption like $K(v,v') = K(v',v) \approx |v-v'|^{-d-2s}$, but it
would not suffice to apply the result to the Boltzmann equation. It is
not a priori obvious what assumptions the Boltzmann kernel will
satisfy. In Section \ref{sec:boltzmann}, we prove that $K_f$ satisfies
\eqref{e:Klower}, \eqref{e:Kabove}, \eqref{e:Kcancellation0} and
\eqref{e:Kcancellation1}. Our assumptions \eqref{e:Kcancellation0} and
\eqref{e:Kcancellation1} allow us to consider non-symmetric kernels
whose anti-symmetric part is as singular as the symmetric one in
absolute value, but contains some cancellation. Up to the authors'
knowledge, this is the first time such a condition appears in the
literature of integro-differential equations.

The estimate for the bilinear form given in Theorem \ref{t:upperbound}
is interesting in itself and new.  It tells us that the bilinear form
$\langle \Lv f, g\rangle$ is bounded in $H^s \times H^s$ assuming the
very mild, and easy to check, conditions on the kernel $K$ given in
\eqref{e:Kabove}, \eqref{e:Kcancellation0} and
\eqref{e:Kcancellation1}.  Such an estimate is reminiscent of some
others proved specifically for the Boltzmann equation, see for
instance \cite{amuxy2010}, \cite{alexandre2009review},
\cite{morimoto2009regularity}, \cite{chen-he-2012}.  Here, the
estimate is proved for a very general bilinear form associated with a
non-symmetric integro-differential operator.  Note that in previous
works in integro-differential equations, the upper bound of Theorem
\ref{t:upperbound} was included as an assumption together with
\eqref{e:Kabove} and symmetry (see \cite{kassmann2013regularity}).

\subsubsection{Boltzmann without cut-off}
The main results of this paper apply to the Boltzmann equation without
cut-off in the inhomogeneous setting.

In the case of moderately soft potentials, which corresponds to
$\gamma+2s > -2$, an a priori estimate in $L^\infty$ is given in
\cite{luis}. In that case, we obtain a H\"older modulus of continuity
depending on the bounds on $M(t,x)$, $E(t,x)$ and $H(t,x)$ only. For
very soft potentials, Theorem \ref{thm:holder-boltzmann} gives us a
H\"older modulus of continuity provided that we know a priori that $f$
is bounded. Note that our estimates do not depend on any further regularity assumption on the initial data.

 Since Carlo Cercignani in 1969, it is believed that the Boltzmann collision operator without cut-off has a regularizing effect. Some similarities with the fractional Laplacian operator in the velocity variable have been observed in the form of coercivity estimates. This is the first time that ideas originating in the work of De Giorgi and Nash for parabolic equations are applied in the context of the Boltzmann equation.

The first results for the Boltzmann equations without cut-off that
indicate a regularization effect appear in the study of the entropy
dissipation.  A lower bound for the entropy dissipation with respect
to a fractional Sobolev norm is first obtained \cite{lions1998} and
improved in \cite{villani1999}.  The optimal space $H^s$ is finally
obtained in \cite{advw}. We can also deduce a coercivity estimate from
the proof in this paper. The coercivity estimate, which we mention in
Proposition \ref{p:lowerbound}, essentially says that the Boltzmann
collision operator satisfies the assumption \eqref{e:Klower}. It plays
an essential role in most of the works concerning the regularization
effect of the Boltzmann equation without cut-off. The proof of the
coercivity estimate in \cite{advw} is done using Fourier analysis
after reducing the problem to the case of Maxwellian molecules
($\gamma=0$).  There is a simplified proof, also using Fourier
analysis and in particular the Littlewood-Paley decomposition, in
\cite{aes2005} and \cite{aes2009}. These proofs are considerably
easier in the Maxwellian case ($\gamma=0$), because they use Bobylev's
formula. We give a new alternative proof in the Appendix
\ref{sec:appendix} based on the geometric understanding of the
Boltzmann kernel. All computations are done in physical variables.
Our proof works in the same way for any value of $\gamma$. It
transparently gives us an estimate with respect to the same
anisotropic weighted Sobolev spaces as in \cite{gressman2011global}.

The coercivity estimate implies some gain of regularity for the
Boltzmann equation without cut-off. In the space homogeneous case,
iterating this gain of regularity, it is known that solutions belong
to the Schwartz class for all positive times. This result holds under
rather general cross section assumptions, including essentially hard
and moderately soft potentials in the non-cut-off case. See
\cite{dw2004}, \cite{aes2005}, \cite{aes2009}, \cite{huo2008},
\cite{morimoto2009regularity} and \cite{chen2011smoothing}.

For the spacially inhomogeneous case without cut-off, one can also
obtain some regularization effect combining the coercivity with
hypoelliptic estimates. Iterating such estimates leads to the
$C^\infty$ regularity of solutions. However, it is necessary to impose
significant conditional regularity in order to start the iteration. The
best regularity results available require the assumptions that
$\langle v \rangle^k f(t,x,v)$ belongs to $H^5([0,T],\R^3,\R^3)$ for
all values of $k \in \mathbb N$, and in addition the mass density is
assumed to be bounded below. Under these assumptions, they prove that
$f$ belongs to the Schwartz class for positive time in
\cite{amuxyCRAS2009}, \cite{amuxy2010}, \cite{chen-he-2012}.

It may be interesting to compare the current state of the regularity
results for the Boltzmann equation with the classical development of
nonlinear elliptic equations. Hilbert's 19\textsuperscript{th} problem
consisted in the regularity of minimizers of smooth convex functionals
in $H^1$ (see \cite{wiki:hilbert19}). These minimizers solve a
nonlinear elliptic equation in divergence form. From the beginning of
the century (starting by the work of Bernstein \cite{bernstein1904}),
people proved that solutions were analytic provided that some conditional
regularity assumption was satisfied. The assumptions were
progressively improved through the years. By iterating the Schauder
estimates, it was possible to prove that solutions were analytic
starting from a $C^{1,\alpha}$ estimate. However, variational
techniques only provided a weak solution in $H^1$. It was a long
standing problem to bridge that gap, and it was finally achieved
independently by De Giorgi \cite{e1957sulla} and Nash
\cite{nash1958continuity}. Our result in this paper plays the role, in
the context of the inhomogeneous non-cut-off Botlzmann equation, of
the results of De Giorgi and Nash for elliptic and parabolic
equations.  Unfortunately, there is still a gap between what we prove
($C^\alpha$ regularity) and what is necessary to iteratively obtain
$C^\infty$ regularity of the solution by current methods ($H^5$
regularity plus infinite moments). So, more work is necessary.

In \cite{luis}, results from general integro-differential equations
are applied to the Boltzmann equation. There is an $L^\infty$
estimate, a H\"older estimate and a lower bound. However, the last two
apply only to the space homogeneous case. The results in this paper
are proved with different techniques compared to \cite{luis}. In this
work, we develop a result in the flavor of De Giorgi, Nash and Moser
theorem for equations in divergence form. The results in \cite{luis}
use the methods from \cite{ss} which are in the flavor of
Krylov-Safonov theory for equations in nondivergence form. The
coercivity estimate plays no role in \cite{luis}, and it certainly
does here. Our result in Theorem \ref{thm:holder} complements the
$L^\infty$ estimate from \cite{luis}.

In \cite{amuxy2012}, the authors prove that if the inital data is sufficiently nice, the Boltzmann equation admits a unique smooth solution locally in time. For small perturbations around a Maxwellian, the equation is known to have global
smooth solutions \cite{gs2010}, \cite{gressman2011global},
\cite{amuxy-CRAS-2010}, \cite{amuxy2011}. 
As far as existence of weak solutions is concerned, Alexandre and
Villani prove in \cite{av2002} the existence of a certain type of renormalized solution. Neither the uniqueness nor the regularity of these
solutions is well understood. They prove that the family of solutions is compact using the entropy dissipation estimate.

The study of the regularity of solutions is relevant for most aspects
of the qualitative analysis of the Boltzmann equation without cutoff. For example, Desvillettes and Villani prove in \cite{dv2005} that the
solutions converge to equilibrium, at a specific rate, provided that the solution remains
smooth.

\medskip
We consider this paper to be an important step towards a longer term
goal to prove the following conjecture. We believe that if $f$ is a
solution to the Boltzmann equation with $\gamma+2s \in (0,2]$ and such
that $0 < M_1 \leq M(t,x) \leq M_0$, $E(t,x) \leq E_0$ and
$H(t,x) \leq H_0$, then $f$ should be $C^\infty$ for positive time.

\medskip

It is not at all clear whether the assumption $\gamma+2s > 0$ is
necessary to obtain regularity. However, the $L^\infty$ estimate for
very soft potentials is out of reach by current methods without
further assumptions. This is also the case for the space homogeneous
Boltzmann equation.

It would be possible to study the precise
behavior of the constants $\lambda$ and $\Lambda$ for which
\eqref{e:Klower}, \eqref{e:Kabove}, \eqref{e:Kcancellation0} and
\eqref{e:Kcancellation1} hold and obtain a global weighted $C^\alpha$
estimate using a scaling argument as in Remark \ref{r:rate}. However,
this estimate also depends on the $L^\infty$ norm of $f$. It is to be
expected that the solution $f$ should decay exponentially for large
velocities, in addition to the $L^\infty$ bound given in
\cite{luis}. See \cite{irene} for a result in that direction in the
space homogeneous case. A better decay in $f$ for large velocities
would imply a better $C^\alpha$ estimate for large velocities. Because
of that, we postpone the analysis of large velocities to future work
when the decay of $f$ is better understood. The local result provided
in Theorem \ref{thm:holder} provides the right tool to study the
$C^\alpha$ estimate for large velocities in terms of the decay of $f$.

\subsubsection{Regularity theory for integro-differential equations}

\label{sub:review-ide}

The study of H\"older estimates and the Harnack inequality for
integro-differential equations of the form
\[ f_t(t,v) = \int_{\R^d} (f(t,v') - f(t,v)) K(t,v,v') \dd v' \]
is a very active area of current research. It developed originally
motivated by problems in probability, with applications to mathematical finance \cite{tankov2003financial} and physics \cite{metzler2004restaurant}. The main technical novelty
of this work is our study of a kinetic equation with this kind of
diffusion. Our equation has the extra variable $x$, and the transport
term $v \cdot \nabla_x f$, without any explicit diffusion in
$x$. Previous H\"older estimates for integro-differential equations
may be applied to the Boltzmann equation, at most, in the space
homogeneous case only. Yet, even in the space homogeneous case, the
results in this paper present novelties. The assumptions we make on
the kernel \eqref{e:Klower}, \eqref{e:Kabove},
\eqref{e:Kcancellation0} and \eqref{e:Kcancellation1} are more general
than in previous works about integro-differential equations. Because
of that, our main results in Theorems \ref{thm:holder} and
\ref{thm:whi} are new even in the space homogeneous case. In this
subsection, we review and compare the literature about
integro-diferential diffusions. We stress that all previous results apply to the space homogeneous case only.

The interest in H\"older estimates and Harnack inequalities started
from the study of regularization properties of classical parabolic
equations of second order. For equations in divergence form (like
$f_t = \partial_i a_{ij}(t,v) \partial_j f$), the estimates were
originally obtained independently by De Giorgi \cite{e1957sulla} and
Nash \cite{nash1958continuity}, and later reproved by Moser
\cite{moser1964harnack}. For equations in nondivergence form (like
$f_t = a_{ij}(t,v) \partial_{ij} f$) the result was obtain much later
by Krylov and Safonov \cite{ks}. The techniques used for equations in
divergence or nondivergence form are very different. In the former
case, the equation's structure is amenable to variational methods, and
energy estimates in Sobolev spaces. In the latter case, tools like the
Alexandroff estimate and explicit barrier functions are used for the
proofs. Both types of results, with their corresponding approaches,
have their counterparts for integro-differential equations. In this
paper, we use the variational structure of the equation and work with
localized energy estimates. These are ideas for equations in
divergence form. However, we use some ideas that originated in the
study of equations in nondivergence form, like the ink-spots theorem
and barrier functions. Below, we review other results for
integro-differential equations following each approach.

A second order operator in divergence form
$f \mapsto \partial_i (a_{ij}(t,v) \partial_j f)$ is characterized by the
fact that it is self-adjoint in $L^2$. For integro-differential
operators, this is reflected in a symmetry condition for the kernel:
$K(v,v') = K(v',v)$. A second order operator in nondivergence form
$f \mapsto a_{ij}(t,v) \partial_{ij} f$ has the convenient property
that it returns a bounded function when evaluated in a smooth function
$f$. For integro-differential operators, this is reflected in a
different symmetry condition $K(v,v+w) = K(v,v-w)$. The Boltzmann
collision kernel has the symmetry condition that corresponds to
equations in nondivergence form. This structure is exploited in
\cite{luis} to obtain H\"older estimates in the space homogeneous
case, and $L^\infty$ estimates for the full equation. In this paper we
apply techniques for equations in divergence form. We include
assumptions \eqref{e:Kcancellation0} and \eqref{e:Kcancellation1}
which measure how much the kernel $K$ is allowed to depart from being
symmetric (as in $K(v,v') = K(v',v)$).

The Harnack inequality and H\"older estimates for integro-differential
equations in \emph{divergence} form has a long history with several
major contributions. Some results in this direction are
\cite{komatsu1988}, \cite{barlow2009}, \cite{kassmann2009},
\cite{chen2011}, \cite{CCV}, \cite{fk}, \cite{kassmann2013regularity} and
\cite{dyda2015regularity}. There is a small survey on the subject in
\cite{kassmann2013regularity}. In these papers the kernel $K$ satisfies the symmetry
condition $K(v,v') = K(v',v)$ plus some ellipticity assumptions. It is
perhaps clear that there is some room in the methods for a lower order
asymetric part in $K$. Our assumptions \eqref{e:Kcancellation0} and
\eqref{e:Kcancellation1} allow us to consider a non-symmetric kernel
$K$ whose asymetric part is as singular as the symmetric part. We
require a control of the asymetric part in terms of cancellation
conditions, which is new.

A natural ellipticity condition on the kernel is to assume that it is
comparable with the fractional Laplacian. The classical assumption
would be $K(v,v') \approx |v-v'|^{-d-2s}$ for every value of $v$ and
$v'$. This assumption is made in \cite{komatsu1988},
\cite{barlow2009}, \cite{kassmann2009}, \cite{chen2011} and
\cite{CCV}. The results were extended to a much more general class of
kernels in \cite{fk}, \cite{kassmann2013regularity} and \cite{dyda2015regularity}. The
assumptions there are essentially equivalent to our assumptions
\eqref{e:Klower} (the lower bound on the bilinear form in the
symmetric case) and \eqref{e:Kabove} (the upper bound on the kernel),
plus the result of our Lemma~\ref{lem:upper-sym} (the upper bound for
the bilinear form). It is a new contribution of this paper that
Lemma~\ref{lem:upper-sym} follows from \eqref{e:Kabove}. We also prove
in Theorem \ref{t:upperbound} that the integro-differential operator
$\Lv$ is bounded in $H^s$ to $H^{-s}$ for a non-symmetric kernel
satisfying \eqref{e:Kabove}, \eqref{e:Kcancellation0} and
\eqref{e:Kcancellation1}. The proof is significantly more complicated
in the non-symmetric case.

The study of integro-differential equations in \emph{nondivergence}
form followed a parallel path using different tools. These are the
H\"older estimates and the Harnack inequality for kernels satisfying
the other symmetry condition: $K(v,v+w) = K(v,v-w)$. There are also
many important results in this direction including
\cite{basslevin2002}, \cite{song2004}, \cite{basskassmann2005},
\cite{basskassmann2005tams}, \cite{silvestre2006},
\cite{caffarellisilvestre2009}, \cite{silvestre2011}, \cite{bci}, \cite{bcci}, 
\cite{bjorlandcaffarellifigalli2012}, \cite{caffarellisilvestre2014},
\cite{changdavila2012}, \cite{changdavila2014},
\cite{changdavila2016}, \cite{kassmannrangschwab2014} and
\cite{ss}. The majority of these results make the pointwise assumption
on the kernel $K(v,v') \approx |v-v'|^{-d-2s}$, and therefore are not
directly applicable to the Boltzmann equation. It is only in
\cite{bjorlandcaffarellifigalli2012}, \cite{kassmannrangschwab2014}
and \cite{ss} that more singular kernels are considered. The
assumptions in \cite{ss} are sufficiently general to be applicable to
the space homogeneous Boltzmann equation. Our result is for equations
in \emph{divergence} form, and thus none of these papers either
implies or follows from ours. Interestingly, we use some of the ideas
for nondivergence equations. Most importantly, the ink-spots theorem
that we develop in Section \ref{sec:ink-spot} is a generalization of a
similar covering argument in \cite{ss}.

We stress that our main regularity result in Theorem \ref{thm:holder}
requires the equation to hold in a bounded domain only. The parameters
$\lambda$ and $\Lambda$ in the assumptions \eqref{e:Klower},
\eqref{e:Kabove}, \eqref{e:Kcancellation0} and
\eqref{e:Kcancellation1} will deteriorate as $|v| \to \infty$ in the
case of the Boltzmann equation.

\subsection{Organization of the article}

We set our notation and further analyze our assumptions in
Section~\ref{sec:notation}.  The relationship between our main results
and the Boltzmann equation is discussed in
Section~\ref{sec:boltzmann}, where we prove in particular that the
Boltzmann kernel satisfies the assumptions listed above.  The analysis
of the operator $\Lv$ and its associated bilinear form $\Ex$ is done
in Section~\ref{sec:bilinear}. This section should be interesting in
itself. This is where the generality of our assumptions on the kernels
is reflected. All the results in Section \ref{sec:bilinear} would be
straight forward if we assumed that the kernels satisfy
$K(v,v') = K(v',v)$ and $K(v,v') \approx |v-v'|^{-d-2s}$.  The core of
the proof of the Weak Harnack inequality and H\"older estimates for
integro-differential equations is done in sections \ref{sec:fdl},
\ref{sec:barriers}, \ref{sec:intermediate}, \ref{sec:growth},
\ref{sec:ink-spot} and \ref{sec:main}.  Section~\ref{sec:reduction}
contains fairly unsurprising statements that are technically necessary
for the completeness of the rest of our proofs. Experts will probably
skim through this section quickly.  The appendix~\ref{sec:appendix}
contains a new proof of the coercivity bound for the Boltzmann
equation (Subsection~\ref{subsec:lowerbound}) and some technical
lemmas (Subsection~\ref{subsec:technical}).

\section{Preliminaries}
\label{sec:notation}

\subsection{Notation}

For a real number $a$, $a^+ = \max(a,0)$. 

A constant is called \emph{universal} if it only depends on dimension and the constants $s$, $\lambda$
and $\Lambda$ in the assumptions \eqref{e:Klower}, \eqref{e:Kabove}, 
\eqref{e:Kcancellation0} and \eqref{e:Kcancellation1}.

When we write $a \lesssim b$, we mean that there exists a universal constant
$C$, so that $a \leq C b$. We write $a \approx b$ when both $a \lesssim b$ and $b \lesssim a$ hold.

When we write $\dot H^s(\Omega)$ for some $\Omega \subset \R^d$, we
mean the space whose norm is given by
\[ \|f\|_{\dot H^s(\Omega)}^2 := \iint_{\Omega \times \Omega} \frac{|f(v') -f(v)|^2}{|v-v'|^{d+2s}} \dd v' \dd v.\]

The space $H^s(\Omega)$ is the one corresponding to the norm
\[ \|f\|_{H^s(\Omega)}^2 := \|f\|_{\dot H^s(\Omega)}^2 +
\|f\|_{L^2(\Omega)}^2.\]
The space $H^s_0(\Omega)$ is obtained by completing the space of
$C^\infty$ functions in $\R^d$ supported in $\Omega$ with respect to
the norm $\|\cdot\|_{H^s(\Omega)}$. When $\Omega=\R^d$,
$H^s_0(\Omega) = H^s(\Omega)$.  We also define $H^{-s}(\Omega)$ as the
dual of $H^s_0(\Omega)$.

It is well known that
$\|f\|_{\dot H(\R^d)}^2 = \int_{\R^d} |\xi|^{2s} |\hat f(\xi)|^2 \dd
\xi$.
Moreover, $f \in H^{-s}(\R^d)$ if and only if
$f = g_1 + (-\Delta)^{s/2} g_2$ with $g_1, g_2 \in
L^2(\R^d)$.
Similarly, $f$ is in the dual of $\dot H^s(\R^d)$ if
$f = (-\Delta)^{s/2} g$ for some function $g \in L^2(\R^d)$.

Note also that if $f: \R^d \to \R$ is supported in $B_1$, then
$\|f\|_{H^s(\R^d)}$, $\|f\|_{H^s(B_2)}$ and $\|f\|_{\dot H^s(B_2)}$
are all equivalent.

\subsection{First consequences of assumptions}
\label{sub:com}

After an obvious readjustment of constants (depending
  on $d$ and $s$), the assumption \eqref{e:Kabove} is equivalent to
  the following
\[
 \begin{cases}
(i) & \int_{B_{2r}(v) \setminus B_r(v)} K (v,v') \dd v' \leq \Lambda r^{-2s}  \text{ for any } 
r>0 \text{ and } v \in B_\Radius \\[1ex]
(ii) & \int_{B_\Radius \cap B_{2r}(v') \setminus B_r(v')} K (v,v') \dd v \leq \Lambda r^{-2s}  \text{ for any } 
r>0 \text{ and } v' \in B_\Radius .
\end{cases}
\]
It is also equivalent to
\[
 \begin{cases}
(i) & \int_{B_r(v)} |v-v'|^2 K (v,v') \dd v' \leq \Lambda r^{2-2s}  \text{ for any } 
r>0 \text{ and } v \in B_\Radius \\[1ex]
(ii) & \int_{B_\Radius \cap B_r(v')} |v-v'|^2 K (v,v') \dd v \leq \Lambda r^{2-2s}  \text{ for any } 
r>0 \text{ and } v' \in B_\Radius.
\end{cases}
\]
We use the three forms of the assumption \eqref{e:Kabove}
indistinctively in different parts of the paper.

As we mentioned before, when $s>1/2$, if the assumption
\eqref{e:Kcancellation1} holds for some value of $r=r_0$ and also
\eqref{e:Kabove} holds, then \eqref{e:Kcancellation1} also holds for
any other value $r \in (0,\Radius/8]$. The reason is the following
computation. We write it for the case $r<r_0$. The case $r>r_0$
follows similarly.
\begin{align*} 
 \left| PV \int_{B_r}
    (v-v') \left( K(v,v') - K (v',v)\right) \dd v' \right| &\leq  \left| PV \int_{B_{r_0}}
    (v-v') \left( K(v,v') - K (v',v)\right) \dd v' \right| \\ &\phantom{=} + \left| PV \int_{B_{r_0} \setminus B_r}
    (v-v') \left( K(v,v') - K (v',v)\right) \dd v' \right|, \\
&\leq \Lambda (1+r_0^{1-2s} ) + \int_{\R^d \setminus B_r} |v-v'| (K(v,v')+K(v',v)) \dd v', \\
&\leq \Lambda (1+r_0^{1-2s} + r^{1-2s}).
\end{align*}
The last inequality is a consequence of \eqref{e:Kabove}. Note that,
for the case $s=1/2$, this last integral may be divergent and thus the
assumption \eqref{e:Kcancellation1} is made so that the inequality
holds for all values of $r$ in the range $(0,\Radius/8]$.

\subsection{Invariant transformations}
\label{sub:invariant}

If $f$ satisfies the equation \eqref{e:main} for some kernel $K$
satisfying \eqref{e:Klower}, \eqref{e:Kabove}, \eqref{e:Kcancellation0} and
\eqref{e:Kcancellation1}, then the scaled function
$f_r(t,x,v) = f(r^{2s} t, r^{2s+1} x, r v)$ satisfies a modified
equation
\[ \partial_t f_r + v \nabla_x f_r + \tilde \Lv f_r = h_r,\]
where
\begin{align*}
h_r(t,x,v) &= r^{2s} h(r^{2s} t, r^{2s+1} x, r v), \\
K_r(t,x,v,v') &= r^{d+2s} K(r^{2s} t, r^{2s+1} x, r v).
\end{align*}
For any $r \in [0,1]$, the kernel $K_r$ satisfies the assumptions \eqref{e:Klower}, 
\eqref{e:Kabove}, \eqref{e:Kcancellation0} and
\eqref{e:Kcancellation1} with a larger radius $\Radius/r$ instead of $\Radius$. Moreover,
$\|h_r\|_{L^\infty(Q_1)} \leq r^{2s} \|h\|_{L^\infty(Q_1)} \leq
\|h\|_{L^\infty(Q_1)} $.  

The equation is also invariant under the family of transformations
$\mathcal{T}_{z_0}$. Here
$z_0 = (t_0,x_0,v_0) \in \R \times \R^d \times \R^d$.
\begin{align*}
 \mathcal{T}_{z_0} (t,x,v) = (t_0+t, x_0 +x + t v_0 ,v_0+v) = z_0 \circ z , \\
\mathcal{T}_{z_0}^{-1} (t,x,v) = (t-t_0,x-x_0 - (t-t_0)v_0,v-v_0) = z_0^{-1} \circ z
\end{align*}
(see Figure~\ref{fig:transformation}).
\begin{figure}
\includegraphics[height=4cm]{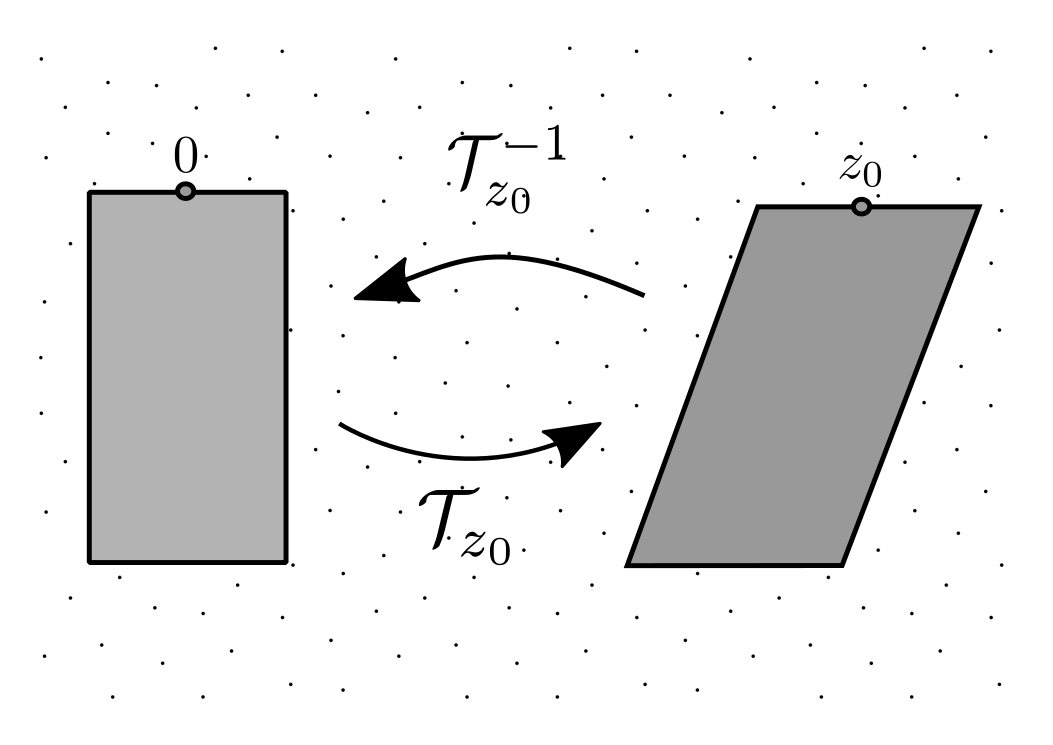}
\caption{The transformation leaving the equation invariant. On the
  left, a straight cylinder centered at the origin.  On the right a
  slanted cylinder centered at $z_0$.  }
\label{fig:transformation}
\end{figure}
Indeed, the product $\circ$ induces a Lie group structure on
$\R \times \R^d \times \R^d$. We remark that
\[ (T,0,0) \circ (t,x,v) = (t+T,x,v), \]
that is to say, translation in time coincides with a left Lie product. 

Because of the scaling and the group action that keep our
class of equations invariant, we are forced to work with
\emph{slanted} cylinders: for a given center $z_0=(t_0,x_0,v_0)$ and
some radius $r>0$ by the following formula
\begin{equation}\label{defi:cylinder}
  Q_r (z_0) = \{ (t,x,v): -r^{2s} \le t-t_0 \le 0, |v-v_0| < r, |x-x_0 - (t-t_0) v_0| < r^{1+2s} \}.
\end{equation}
Remark that for $z_0=0$, 
\[ Q_r = Q_r(0) = (-r^{2s},0] \times B_{r^{2s+1}} \times B_r .\]

\subsection{The fractional Kolmogorov equation}

In this subsection we review the fractional Kolmogorov's equation:
  \begin{equation} \label{e:kolmogorov}
 f_t + v \cdot \grad_x f + (-\lap_v)^{s} f = h.
\end{equation}
The previous Lie group structure also preserves this
  equation. There is a fundamental solution $J(t,x,v)$ which has the
  following form
\[ 
J(t,x,v) = c_d \, \frac1{t^{d+d/s}} \, \mathcal{J} \left( \frac x
  {t^{1+1/2s}} , \frac v {t^{1/{2s}}} \right).
\]
The function $\mathcal{J}$ can be computed
explicitly in Fourier variables by the formula
\[ \hat {\mathcal J} (\varphi,\xi) = \exp \left( -\int_0^1 |\xi-\tau \varphi|^{2s} \dd \tau \right).\]
In the physical variables $x$ and $v$, the formula for $\mathcal J$ is not explicit. However, some simple properties can be deduced from classical considerations. We collect them in the following proposition.
\begin{prop}[Fundamental solution of the fractional Kolmogorov equation] \label{p:kolmogorov-kernel}
The functions $J$ and $\mathcal J$ have the following properties.
\begin{enumerate}
\item The function $\mathcal J$ is $C^\infty$ and decays polynomially at infinity. Moreover, $\mathcal J$ and all its derivatives are integrable in $\R^{2d}$.
\item For every $t>0$, $\int_{R^{2d}} J(t,x,v) \dd v \dd x = 1$.
\item Both functions are nonnegative: $J \geq 0$ and $\mathcal J \geq 0$.
\item For any $p \geq 1$, we have
\begin{align*} 
 \|J(t,\cdot,\cdot) \|_{L^p(\R^{2d})} &=  t^{-d(1+1/s)(1-1/p)}\, \|\mathcal J\|_{L^p(\R^{2d})}, \\
 \|(-\Delta)^{s/2} J(t,\cdot,\cdot) \|_{L^p(\R^{2d})} &=  t^{-d(1+1/s)(1-1/p)-1/2} \, \|(-\Delta)^{s/2} \mathcal J\|_{L^p(\R^{2d})}.
\end{align*}
In particular, for $p_\star = (2d(1+s)+2s)/(2d(1+s)+s) \in (1,2)$, we have $\|J(t,\cdot,\cdot)\|_{L^{p_\star}(R^{2d})} \leq C t^{1/2 - 1/p_\star}$ and \[ \|(-\Delta)_v^{s/2} J(t,\cdot,\cdot)\|_{L^{p_\star}(\R^{2d})} \leq C t^{-1/p_\star}.\]
\end{enumerate}
\end{prop}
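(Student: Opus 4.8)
The plan is to establish each of the four properties of $J$ and $\mathcal{J}$ from the explicit Fourier-side formula $\hat{\mathcal{J}}(\varphi,\xi) = \exp(-\int_0^1 |\xi - \tau\varphi|^{2s}\dd\tau)$ together with the scaling Ansatz for $J$.

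\begin{proof}[Proof sketch]
First, for property (1), I would observe that the exponent $\psi(\varphi,\xi) := \int_0^1 |\xi - \tau\varphi|^{2s}\dd\tau$ is a continuous, nonnegative function that grows at infinity: along any ray $(\varphi,\xi) = \rho(\varphi_0,\xi_0)$ with $(\varphi_0,\xi_0)$ on the unit sphere in $\R^{2d}$, we have $\psi(\rho\varphi_0,\rho\xi_0) = \rho^{2s}\int_0^1 |\xi_0 - \tau\varphi_0|^{2s}\dd\tau$, and the integral is bounded below by a positive constant uniformly in $(\varphi_0,\xi_0)$ (one checks the map $(\varphi_0,\xi_0) \mapsto \int_0^1|\xi_0 - \tau\varphi_0|^{2s}\dd\tau$ is continuous and strictly positive on the compact unit sphere — it vanishes only if $\xi_0 = \tau\varphi_0$ for a.e.\ $\tau$, impossible for unit vectors). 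Hence $\hat{\mathcal J}$ decays faster than any polynomial, which gives $\mathcal J \in C^\infty$ and, since all derivatives of $\hat{\mathcal J}$ also have rapid decay (each derivative brings down polynomially growing factors in $(\varphi,\xi)$ times $\hat{\mathcal J}$), every polynomial $\times$ derivative of $\hat{\mathcal J}$ is integrable, so $\mathcal J$ and all its derivatives decay polynomially and are integrable. For property (2), $\int_{\R^{2d}} J(t,x,v)\dd x\dd v$ is the value of the Fourier transform of $J(t,\cdot,\cdot)$ at the origin; since $J$ solves \eqref{e:kolmogorov} with $h=0$ and initial data a Dirac mass, its Fourier transform at frequency zero is conserved and equals $1$ (alternatively, plug the scaling form into the Fourier formula and evaluate at $\varphi = \xi = 0$, getting $\exp(0) = 1$).

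For property (3), nonnegativity, I would invoke the probabilistic interpretation: $J$ is the transition density of the process $(X_t, V_t)$ where $V_t$ is a symmetric $2s$-stable Lévy process and $X_t = \int_0^t V_\tau\dd\tau$, so $J \geq 0$; equivalently, $J$ is obtained as a limit of convolutions / time-ordered compositions of nonnegative stable kernels with shear maps, each of which preserves nonnegativity. Since $\mathcal J$ is just $J(1,\cdot,\cdot)$ up to the constant $c_d$ and the dilation, $\mathcal J \geq 0$ as well. For property (4), the $L^p$ identities follow by a direct change of variables in the scaling formula $J(t,x,v) = c_d t^{-(d+d/s)}\mathcal J(x/t^{1+1/2s}, v/t^{1/2s})$: substituting $x = t^{1+1/2s}\bar x$, $v = t^{1/2s}\bar v$ produces a Jacobian $t^{d(1+1/2s)}\cdot t^{d/2s} = t^{d(1+1/s)}$, and combining the $p$-th power of the prefactor with this Jacobian, then taking the $p$-th root, yields the exponent $-d(1+1/s)(1-1/p)$. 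For the $(-\Delta_v)^{s/2}$ version, $(-\Delta_v)^{s/2}$ scales like $t^{-1/2}$ under $v \mapsto t^{1/2s}\bar v$ (since it carries $2s$-order one-half of a derivative in $v$), which accounts for the extra $t^{-1/2}$; the resulting constants $\|\mathcal J\|_{L^p}$ and $\|(-\Delta)^{s/2}\mathcal J\|_{L^p}$ are finite by property (1). The specific claim with $p_\star$ is then arithmetic: plugging $p = p_\star = (2d(1+s)+2s)/(2d(1+s)+s)$ into $-d(1+1/s)(1-1/p)$ one computes $1 - 1/p_\star = s/(2d(1+s)+2s)$ and $d(1+1/s) = d(s+1)/s$, whose product is $1/p_\star - 1/2$ after simplification (and $-d(1+1/s)(1-1/p_\star) - 1/2 = -1/p_\star$ for the second estimate).

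The main obstacle is property (3): nonnegativity of $J$ is not visible from the Fourier formula directly, and a fully rigorous argument requires either carefully setting up the Lévy process $(X_t,V_t)$ and identifying $J$ as its transition density, or approximating the fractional Kolmogorov semigroup by an operator splitting (alternate free transport steps $f \mapsto f(x - \delta v, v)$ and stable-jump steps $f \mapsto e^{-\delta(-\Delta_v)^s}f$, each manifestly positivity-preserving) and passing to the limit $\delta \to 0$. Everything else is either a standard stationary-phase/rapid-decay estimate (property 1), a conservation-law observation (property 2), or a change-of-variables bookkeeping exercise (property 4). I would present property (3) via the process interpretation as the cleanest route, citing the standard theory of stable processes and integrated Lévy processes, and spend the bulk of the written proof on the decay estimates of property (1), which is what the later sections actually rely on quantitatively.
\end{proof}
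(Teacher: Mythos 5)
The paper offers no proof of this proposition at all --- it is introduced with ``some simple properties can be deduced from classical considerations'' and stated without argument --- so there is no in-paper route to compare against. Your sketch is the standard way to supply one, and the parts that the rest of the paper actually uses quantitatively check out: the scaling identities in (4) follow exactly as you say from the substitution $x = t^{1+1/2s}\bar x$, $v = t^{1/2s}\bar v$ (Jacobian $t^{d(1+1/s)}$, and $(-\Delta_v)^{s/2}$ contributing $t^{-1/2}$), and the arithmetic with $p_\star$ is correct ($1-1/p_\star = s/(2d(1+s)+2s)$, so $d(1+1/s)(1-1/p_\star) = 1/p_\star - 1/2$). Properties (2) and (3) are handled appropriately; for (3) the process interpretation or the Trotter splitting you describe are both legitimate, and some such argument is genuinely needed since positivity is invisible on the Fourier side.

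The one step that would fail as written is in property (1): the parenthetical claim that \emph{all} derivatives of $\hat{\mathcal J}$ have rapid decay because ``each derivative brings down polynomially growing factors.'' Differentiating $e^{-\psi}$ with $\psi(\varphi,\xi)=\int_0^1|\xi-\tau\varphi|^{2s}\dd\tau$ also brings down derivatives of $\psi$, and these are singular on the resonant cone $\{\xi=\tau\varphi,\ \tau\in[0,1]\}$: for instance $\partial_\xi^2\psi\sim\int_0^1|\xi-\tau\varphi|^{2s-2}\dd\tau$ already blows up like $\mathrm{dist}(\xi,\R\varphi)^{2s-1}$ when $s<1/2$, and high enough derivatives blow up for every $s\in(0,1)$. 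This non-smoothness of $\hat{\mathcal J}$ is precisely why $\mathcal J$ decays only polynomially rather than like a Schwartz function, so a correct write-up must either (a) use only finitely many derivatives of $\hat{\mathcal J}$, checking they remain integrable across the singular cone (which has codimension $d-1$), to extract a fixed polynomial decay rate $|(x,v)|^{-N}$ with $N>2d$ sufficient for integrability of $\mathcal J$ and each fixed $\partial^\alpha\mathcal J$, or (b) bypass Fourier decay altogether via the representation of $\mathcal J$ as the joint density of $(\int_0^1 V_\tau\dd\tau, V_1)$ for a $2s$-stable process $V$, inheriting the known $|v|^{-d-2s}$-type tails. Either repair is routine, but the argument as stated proves too much and would, if taken literally, contradict the polynomial (non-rapid) decay asserted in the proposition itself.
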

The initial value problem \eqref{e:kolmogorov} is solved by the formula
\begin{equation} \label{e:duhamel}
\begin{aligned}
  f(t,x,v) = &\int_{\R^d} \int_{\R^d} f_0(y,w) J(t,x-y- tw,v-w) \dd w
  \dd y \\ &+\int_{0} ^t \int_{\R^d} \int_{\R^d} h(\tau,y,w)
  J(t-\tau,x-y-(t-\tau)w,v-w) \dd w \dd y \dd \tau
\end{aligned}
\end{equation}

We define the \emph{modified} convolution $\ast_t$ by the formula
\[ h \ast_t j (x,v) = \iint h(y,w) j(x-y-tw,v-w) \dd w \dd y.\]
If we make the change of variables $\tilde j(x,v) = j(x+tv,v)$, then
$h \ast_t j(x,v) = h \ast \tilde j (x-tv,v)$. Thus, the
\emph{modified} convolution is the same as the usual convolution
conjugated by that change of variables (of Jacobian one). We observe
that this convolution satisfies the usual Young's inequality:
\begin{equation}\label{eq:young-modified}
  \left\Vert \iint h(y,w) j(x-y-tw,v-w) \dd w \dd y \right\Vert_{L^r_{x,v}} \leq \|h\|_{L^p_{x,v}} \|j\|_{L^q_{x,v}} 
\qquad \text{ independenly of } t.
\end{equation}
Here $1+1/r = 1/p + 1/q$.

The following proposition is simply a consequence of Young's inequality.
\begin{prop}[Gain of integrability] \label{l:kolmogorov-youngs}
  Let $f$ be the solution of \eqref{e:kolmogorov} in
  $[0,T] \times \R^{2d}$, with $f(0,x,v) = f_0(x,v) \in
  L^2(\R^{2d})$.
  Assume $h \in L^2([0,T] \times \R^d, H^{-s}(\R^d))$. Then
  \[ \|f\|_{L^q([0,T] \times \R^{2d})} \leq C(T) \left(
    \|f_0\|_{L^2(\R^{2d})} + \|h\|_{L^2([0,T]\times \R^{d},
      H^{-s}(\R^d) ) } \right) \]
  for any $q$ such that $1/q > 1/p_\star - 1/2$ and $p_\star$ is the
  one from Proposition \ref{p:kolmogorov-kernel}.
\end{prop}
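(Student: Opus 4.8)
The plan is to read off $f$ from the Duhamel representation \eqref{e:duhamel} and estimate each resulting term using the modified Young inequality \eqref{eq:young-modified} together with the $L^p$ decay of $J$ from Proposition~\ref{p:kolmogorov-kernel}. Write $f = f^{(1)} + f^{(2)}$ with
\[
 f^{(1)}(t,\cdot,\cdot) = f_0 \ast_t J(t,\cdot,\cdot), \qquad
 f^{(2)}(t,\cdot,\cdot) = \int_0^t h(\tau,\cdot,\cdot) \ast_{t-\tau} J(t-\tau,\cdot,\cdot) \dd \tau .
\]
Fix $q \geq 2$ with $1/q > 1/p_\star - 1/2$ and let $p \in [1,2]$ solve $1 + 1/q = 1/p + 1/2$, so that $1 - 1/p = 1/2 - 1/q$. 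Since $\mathcal J$ is smooth, integrable and polynomially decaying (Proposition~\ref{p:kolmogorov-kernel}(1)), both $\mathcal J$ and $(-\Delta)^{s/2}_v \mathcal J$ lie in $L^p(\R^{2d})$, and Proposition~\ref{p:kolmogorov-kernel}(4) gives $\|J(t,\cdot,\cdot)\|_{L^p} \lesssim t^{-d(1+1/s)(1/2-1/q)}$.

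For the first term, \eqref{eq:young-modified} gives $\|f^{(1)}(t,\cdot,\cdot)\|_{L^q_{x,v}} \leq \|f_0\|_{L^2} \, \|J(t,\cdot,\cdot)\|_{L^p_{x,v}} \lesssim t^{-d(1+1/s)(1/2-1/q)} \|f_0\|_{L^2}$. Taking the $L^q$ norm in $t$ over $[0,T]$, the time integral is finite precisely when $q\, d(1+1/s)(1/2-1/q) < 1$, i.e. $q < 2 + 2s/(d(s+1))$, which is exactly the hypothesis $1/q > 1/p_\star - 1/2$ after an elementary computation with the explicit value of $p_\star$. Hence $\|f^{(1)}\|_{L^q([0,T]\times\R^{2d})} \leq C(T) \|f_0\|_{L^2}$.

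For the Duhamel term, first split $h(\tau,\cdot,\cdot) = h_1(\tau,\cdot,\cdot) + (-\Delta)^{s/2}_v h_2(\tau,\cdot,\cdot)$ with $\|h_1(\tau)\|_{L^2}^2 + \|h_2(\tau)\|_{L^2}^2 \lesssim \|h(\tau)\|_{L^2_x H^{-s}_v}^2$ for a.e.\ $\tau$, the splitting being measurable in $\tau$. The contribution of $h_1$ is estimated exactly like $f^{(1)}$: by Minkowski's inequality in $\tau$ and \eqref{eq:young-modified} its $L^q_{x,v}$ norm at time $t$ is bounded by the convolution on $[0,T]$ of $\sigma \mapsto \sigma^{-d(1+1/s)(1/2-1/q)}$ with $\tau \mapsto \|h_1(\tau)\|_{L^2} \in L^2([0,T])$, and Young's inequality in time closes it with room to spare. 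For the contribution of $h_2$ one must move $(-\Delta)^{s/2}_v$ off $h_2$ and onto $J$; after the shear change of variables $(x,v)\mapsto(x-(t-\tau)v,v)$ (unit Jacobian) the modified convolution becomes an ordinary one through which $(-\Delta)^{s/2}_v$ commutes, the profile $\mathcal J$ being replaced by the equally smooth and polynomially decaying profile $\mathcal J(a+b,b)$. One is left with $\int_0^t h_2(\tau) \ast_{t-\tau} \bar J(t-\tau) \dd \tau$, where $\|\bar J(\sigma,\cdot,\cdot)\|_{L^p_{x,v}} \lesssim \sigma^{-d(1+1/s)(1/2-1/q) - 1/2}$ by the scaling in Proposition~\ref{p:kolmogorov-kernel}(4) applied to the sheared profile; Minkowski, \eqref{eq:young-modified} and Young's inequality in time then bound the relevant norm by $C(T) \|h_2\|_{L^2([0,T]\times\R^{2d})}$, the borderline time-integrability condition for $\sigma^{-d(1+1/s)(1/2-1/q)-1/2}$ being once again exactly $1/q > 1/p_\star - 1/2$. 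Summing the contributions of $f^{(1)}$, $h_1$ and $h_2$ proves the estimate.

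The only step that is not pure bookkeeping is this transfer of the fractional Laplacian through the modified convolution: because $\ast_t$ entangles the $x$ and $v$ variables, $(-\Delta)^{s/2}_v$ does not commute with it directly, and one has to conjugate by the shear $(x,v)\mapsto(x-tv,v)$ first. Everything else reduces to Young's inequality and the scaling of $J$, which is why the statement is announced as simply a consequence of Young's inequality.
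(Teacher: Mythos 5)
Your proof is correct and follows essentially the same route as the paper: the same Duhamel decomposition $f = f_0\ast_t J + \int_0^t h_1\ast_{t-\tau}J + \int_0^t h_2 \ast_{t-\tau}(\cdots)$, the same splitting $h = h_1 + (-\Delta)^{s/2}_v h_2$, and the same combination of the modified Young inequality in $(x,v)$ with Young's inequality in time, with identical exponent arithmetic. The one point where you are actually more careful than the paper is the transfer of $(-\Delta)^{s/2}_v$ onto the kernel through the shear conjugation (the paper writes $(-\Delta)^{s/2}_v J$ directly, whereas the exact object is the fractional Laplacian of the sheared profile $\mathcal J(a+b,b)$, which as you note satisfies the same $L^p$ bounds).
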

\begin{proof}
Since $h \in L^2([0,T] \times \R^d, H^{-s}(\R^d))$, then there exists $h_1$ and $h_2$ in $L^2([0,T] \times \R^{2d})$. so that $h=h_1 + (-\Delta)^{s/2}_v h_2$ and
\[ \|h_1\|_{L^2} + \|h_2\|_{L^2} \approx \|h\|_{L^2([0,T] \times \R^d, H^{-s}(\R^d))}.\]

We use the formula \eqref{e:duhamel} to solve \eqref{e:kolmogorov}. Let us write $f(t,x,v) = f_1(t,x,v) + f_2(t,x,v) + f_3(t,x,v)$, where
\begin{align*} 
 f_1(t,\cdot,\cdot) &:= f_0 \ast_t J(t,\cdot,\cdot),\\
f_2(t,\cdot,\cdot) &:= \int_0^t h_1(\tau) \ast_{(t-\tau)} J(t-\tau,\cdot, \cdot) \dd \tau, \\
f_3(t,\cdot,\cdot) &:= \int_0^t  h_2(\tau) \ast_{(t-\tau)} (-\Delta)^{s/2}_v J(t-\tau,\cdot, \cdot) \dd \tau.
\end{align*}

Let $p \in [1, p_\star)$ be the number such that $1/q = 1/p -
1/2$. Applying Young's inequality for each value of $t$, we have
\begin{align*} 
 \|f_1(t,\cdot,\cdot) \|_{L^q} &\leq \|f_0\|_{L^2} \|\mathcal J\|_{L^{p}} t^{1/2-\alpha}, \\
 \|f_2(t,\cdot,\cdot) \|_{L^q} &\leq \int_0^t \|h_1(\tau)\|_{L^2} \|\mathcal  J\|_{L^{p}} (t-\tau)^{1/2-\alpha} \dd \tau, \\
 \|f_3(t,\cdot,\cdot) \|_{L^q} &\leq \int_0^t \|h_2(\tau)\|_{L^2} \|(-\Delta)^{s/2}\mathcal  J\|_{L^{p}} (t-\tau)^{-\alpha} \dd \tau.
\end{align*}
Here $\alpha = d(1+1/s)(1-1/p) + 1/2 < 1/p_\star < 1/p$ since $p < p_\star$. Moreover, $q (1/2 - \alpha) > -1$ so $f_1 \in L^q([0,T] \times R^{2d})$ with
\[ \|f_1\|_{L^q([0,T] \times R^{2d})} \leq C T^{(1/2 - \alpha)+1/q} \|f_0\|_{L^2}.\]

We estimate the other two terms applying Young's inequality once again
\begin{align*} 
 \|f_2\|_{L^q([0,T] \times \R^{2d})} &\leq C \|h_1\|_{L^2([0,T],R^{2d})} T^{1/2 + 1/p - \alpha} , \\
 \|f_3\|_{L^q([0,T] \times \R^{2d})} &\leq C \|h_2\|_{L^2([0,T],R^{2d})} T^{1/p - \alpha}.
\end{align*}
This finishes the proof.
\end{proof}

\begin{remark}
The power $p$ in Lemma \ref{l:kolmogorov-youngs} can also be taken equal to $p_\star$ by using the weak-type Young's inequality in place of the usual Young's inequality for convolutions and a finer analysis of the $L^{p_\star,\infty}$ norm of $J$. Since we do not need a sharp result  in this paper, we prefer to keep this lemma as elementary as possible.
\end{remark}

\section{The Boltzmann kernel}
\label{sec:kernel}
\label{sec:boltzmann}

In this subsection, we explain why the Boltzmann collision operator
associated with inverse power-law potentials (see \eqref{assum:B})
satisfy the assumptions we made on the kernel as soon the quantities $M(t,x)$, $E(t,x)$ and $H(t,x)$ defined in the introduction are under control.

\subsection{The collision operator as an integro-differential operator plus a lower order term.}

It is classical to observe that $B$ can be replaced with any
$\tilde B$ satisfying for all $k,\sigma \in \Sp$,
\[ B(r,k \cdot \sigma) + B(r,-k \cdot \sigma) = \tilde B(r,k \cdot
\sigma) + \tilde B(r,-k \cdot \sigma).\]
For this reason, we can (and do) follow \cite{luis} and assume
\begin{equation}\label{assum:B-bis}
  \text{If } k \cdot \sigma < 0, \text{ then} \quad B(r, k \cdot \sigma ) 
\approx r^\gamma |\cos (\theta /2)|^{\gamma + 2s + 1}
\end{equation}
where $\cos (\theta/2) := \frac{v-v_*}{|v-v_*|} \cdot \frac{v-v_*'}{|v-v_*-|}$.

We split $Q$ in $Q_1$ and $Q_2$ as follows: $Q(f,g) = Q_1 (f,g)+Q_2(f,g)$ with 
\[\begin{cases}
Q_1 (f,g) &= \iint f'_* (g'-g) B \dd v_* \dd \sigma, \\[1ex]
Q_2(f,g) &= \left( \iint (f'_*-f_*) B \dd v_* \dd \sigma \right) g .
\end{cases}\]
Such a decomposition appears for instance in \cite{v:review,luis}. 

The term $Q_1$ can be rewritten using Carleman coordinates \cite{carleman}. 
\begin{lemma}[The integro-differential operator
  \cite{luis}] \label{l:Kformula} The term $g \mapsto Q_1(f,g)$
  corresponds to some linear operator $\Lv g$ with $K=K_f$ given by
\begin{equation} \label{defi:kf}
 K_f (v,v') = \frac{2^{d-1}}{|v'-v|} \int_{w \perp v'-v} f(v+w) B(r, \cos \theta) r^{-d+2} \dd w 
\end{equation}
where 
\[ r^2 = |v'-v|^2 + |w|^2 \quad \text{ and } \quad 
\cos \theta = \frac{v-v'-w}{|v-v'-w|}\cdot \frac{v'-v-w}{|v'-v-w|}.\]
\end{lemma}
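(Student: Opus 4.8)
\emph{Proof strategy.} The plan is to carry out the classical Carleman change of variables \cite{carleman} in the integral defining $Q_1(f,g)(v)$, trading the variables $(v_*,\sigma)\in\R^d\times\Sp$ for the pair $(v',w)$, where $v'$ ranges over $\R^d$ and $w:=v'_*-v$ is constrained to lie in the hyperplane orthogonal to $v'-v$. First I would record the elementary geometry that makes this substitution legitimate. From the definitions of $v'$ and $v'_*$ one has momentum conservation $v'+v'_*=v+v_*$, hence $v_*=v'+w$ and $v'_*=v+w$; moreover $(v'-v)\cdot(v'_*-v)=\tfrac14\bigl(|v-v_*|^2-|v-v_*|^2\bigr)=0$, so $w\perp(v'-v)$ and, these two vectors being orthogonal, $|v-v_*|^2=|v'-v|^2+|w|^2=:r^2$. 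Solving the defining relation for $\sigma$ gives $\sigma=\bigl((v'-v)-w\bigr)/r$, whence $\cos\theta=\frac{v-v_*}{|v-v_*|}\cdot\sigma=\frac{(v-v'-w)\cdot(v'-v-w)}{r^2}$, which is the expression in the statement since $|v-v'-w|=|v'-v-w|=r$. This rewrites every factor of the integrand in the new variables; the only remaining issue is the Jacobian.

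I would compute the Jacobian in two steps. Fixing $v$ and $v_*$, the map $\sigma\mapsto v'=\tfrac{v+v_*}2+\tfrac{|v-v_*|}2\,\sigma$ is a dilation of ratio $|v-v_*|/2$ followed by a translation, so it sends $\Sp$ onto the sphere $\Sigma_{v_*}$ having $[v,v_*]$ as a diameter, with $\dd\sigma=(2/|v-v_*|)^{d-1}\dd S_{\Sigma_{v_*}}(v')$, where $\dd S$ denotes surface measure. It then remains to express the measure $\dd v_*\,\dd S_{\Sigma_{v_*}}(v')$, carried by the $(2d-1)$-dimensional manifold $N=\{(v',v_*):(v'-v)\cdot(v'-v_*)=0\}$, in terms of the measure $\dd v'\,\dd w$ coming from the other parametrization $(v',w)\mapsto(v',v'+w)$ of $N$. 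Setting $F(v',v_*):=(v'-v)\cdot(v'-v_*)$, a direct expansion shows $F=|v'-\tfrac{v+v_*}2|^2-\tfrac14|v-v_*|^2$, so $\{F=0\}$ is exactly $\Sigma_{v_*}$ for each fixed $v_*$. Using the delta-function/coarea calculus one has, as distributions on $\R^d\times\R^d$, $\dd v'\,\dd w=|v'-v|\,\delta(F)\,\dd v'\,\dd v_*$ (integrating $\delta$ against $(v'-v)\cdot(\,\cdot\,)$ pins $v_*$ to the affine hyperplane $v'+(v'-v)^\perp$ with weight $|v'-v|^{-1}$) and $\dd v_*\,\dd S_{\Sigma_{v_*}}(v')=|\grad_{v'}F|\,\delta(F)\,\dd v'\,\dd v_*$. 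Since $\grad_{v'}F=2v'-v-v_*=v'-v-w$ has length $r$, dividing these identities yields $\dd v_*\,\dd S_{\Sigma_{v_*}}(v')=\frac{r}{|v'-v|}\,\dd v'\,\dd w$, hence $\dd\sigma\,\dd v_*=\dfrac{2^{d-1}\,r^{2-d}}{|v'-v|}\,\dd v'\,\dd w$.

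Plugging this into $Q_1(f,g)(v)=\iint f(v'_*)\bigl(g(v')-g(v)\bigr)B(|v-v_*|,\cos\theta)\dd\sigma\dd v_*$, using $f(v'_*)=f(v+w)$ and $B(|v-v_*|,\cos\theta)=B(r,\cos\theta)$, and carrying out the $w$-integration over $(v'-v)^\perp$ before the $v'$-integration gives $Q_1(f,g)(v)=\int_{\R^d}\bigl(g(v')-g(v)\bigr)K_f(v,v')\dd v'$ with $K_f$ exactly as in \eqref{defi:kf}; the principal value, needed when $s\ge1/2$, is respected by the substitution and by the first-order vanishing of $g(v')-g(v)$ at $v'=v$. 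The main obstacle is the bookkeeping in the second step: one must fix, consistently, the normalizations of the three lower-dimensional measures involved — $\dd\sigma$ on $\Sp$, surface measure on the moving sphere $\Sigma_{v_*}$, and Lebesgue measure $\dd w$ on the moving hyperplane $(v'-v)^\perp$ — and the cleanest way to do so without sign or constant errors is through the $\delta(F)$ representation above, exploiting that the two parametrizations of $N$ share the single constraint function $F$.
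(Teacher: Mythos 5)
Your proposal is correct and follows the same route as the paper: the paper proves Lemma \ref{l:Kformula} by invoking the Carleman change of variables recorded as Lemma \ref{lem:cdv} in the appendix (cited from \cite{luis}), which is exactly the identity $\dd\sigma\,\dd v_* = 2^{d-1}r^{2-d}|v'-v|^{-1}\,\dd w\,\dd v'$ you derive. The only difference is that you additionally supply a self-contained derivation of that Jacobian via the $\delta(F)$ representation, whereas the paper states the change-of-variables formula without proof; your geometric identifications ($v_*=v'+w$, $v'_*=v+w$, $r^2=|v'-v|^2+|w|^2$, and the formula for $\cos\theta$) and the resulting constant $2^{d-1}r^{2-d}/|v'-v|$ all check out.
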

The proof of the previous lemma is simply a change of variables to
Carleman coordinates, see \cite{luis}. It is recalled in Appendix for
the reader's convenience, see Lemma~\ref{lem:cdv}.  The term
$Q_2(f,g)$ is of lower order because of the cancellation lemma
\cite{villani1999},\cite{advw}.
\begin{lemma}[Cancellation \cite{villani1999},
  \cite{advw}] \label{l:classic-cancellation} The following formula
  holds true for any $v \in \R^d$,
\[ \iint (f'_*-f_*) B \dd v_* \dd \sigma = C_b |\cdot|^\gamma \star f (v)\]
with 
\[  C_b = \int_{\Sp} \bigg\{ \frac{2^{(d+\gamma)/2}}{(1+\sigma \cdot e
  )^{(d+\gamma)/2}} -1 \bigg\} b (\sigma \cdot e) \dd \sigma\]
for any $e \in \Sp$. 
\end{lemma}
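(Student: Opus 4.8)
The plan is to compute the angular integral $\iint (f'_* - f_*) B \dd v_* \dd \sigma$ by integrating first over the sphere for fixed $v$ and $v_*$, and then performing a change of variables in the $v_*$ integral that maps $v_*'$ back to a translated copy of $v$. First I would fix $v$ and write the integral as $\int_{\R^d} \left( \int_{\Sp} f(v_*') B(|v-v_*|,\cos\theta) \dd \sigma \right) \dd v_* - \int_{\R^d} f(v_*) \left( \int_{\Sp} B(|v-v_*|,\cos\theta) \dd\sigma \right) \dd v_*$. The second term is straightforward: the inner angular integral depends only on $r = |v-v_*|$, producing $r^\gamma \int_{\Sp} b(\sigma \cdot e)\dd\sigma$ times $f(v_*)$ (which after integrating is a constant times $|\cdot|^\gamma \star f$), but note the angular integral of $b$ alone diverges — so one must keep the two terms together and exploit the cancellation, which is exactly the point of the lemma.

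The key step is the change of variables in the first term. For fixed $v$, parametrize so that $v_*' $ traces out $\R^d$ as $(v_*, \sigma)$ vary; the standard trick (going back to the classical derivation of the cancellation lemma) is to fix the direction and set $v_*' = v + $ (something). Concretely, writing $u = v - v_*$ and using $v_*' = \frac{v+v_*}{2} - \frac{|u|}{2}\sigma = v - \frac{u}{2} - \frac{|u|}{2}\sigma$, I would, for each fixed unit vector, perform the substitution that replaces the pair $(|u|, \sigma)$ by the new velocity variable $z := v - v_*'$. The Jacobian of this map, together with the factor $B = r^\gamma b(\cos\theta)$ and the identity $1 + \sigma\cdot e = 2\cos^2(\theta/2)$ expressed in terms of $z$, yields precisely the weight $\frac{2^{(d+\gamma)/2}}{(1+\sigma\cdot e)^{(d+\gamma)/2}}$ after accounting for the $|z|^\gamma$ versus $|u|^\gamma$ discrepancy. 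One checks that $|z| = |v - v_*'| = |u|\cos(\theta/2)$, so $|u|^\gamma = |z|^\gamma \cos(\theta/2)^{-\gamma}$, and the Jacobian of $u \mapsto z$ on the relevant cone contributes the remaining powers of $\cos(\theta/2)$ and the dimensional constant $2^{(d+\gamma)/2}$.

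After this substitution, the first term becomes $\int_{\R^d} f(z') \cdot |v - z'|^\gamma \cdot \left(\text{fixed function of the angle}\right) \dd z'$ where the angular function, when combined with the $-f_*$ term's weight $b(\sigma\cdot e)$, produces the bracketed expression $\frac{2^{(d+\gamma)/2}}{(1+\sigma\cdot e)^{(d+\gamma)/2}} - 1$ inside the definition of $C_b$. The convergence of $\int_{\Sp}\{\cdots\} b\dd\sigma$ near $\theta = 0$ follows because the bracket vanishes like $\theta^2$ there (as $1 + \sigma\cdot e \to 2$), which beats the $\theta^{-(d-1)-2s}$ singularity of $b$ precisely when $s < 1$, matching our hypothesis. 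The main obstacle I expect is bookkeeping the change of variables carefully — tracking all the powers of $\cos(\theta/2)$ coming from the Jacobian, the factor $r^{-d+2}$ hidden in the Carleman form versus the direct form, and confirming that $C_b$ is genuinely independent of the choice of $e \in \Sp$ (which follows by rotational invariance of the sphere integral). This is a known computation (it appears in \cite{villani1999} and \cite{advw}), so I would cite those references for the detailed verification rather than reproduce every line.
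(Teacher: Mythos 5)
The paper states this lemma without proof, simply citing \cite{villani1999} and \cite{advw}, so there is no internal argument to compare against; your sketch is the standard change-of-variables proof from those references (fix the deviation angle, substitute $v_*\mapsto v_*'$ with Jacobian and length distortion given by powers of $\cos(\theta/2)$, and check integrability of $C_b$ near $\theta=0$ using that the bracket vanishes quadratically while $b\,\sin^{d-2}\theta\sim\theta^{-1-2s}$), and it is correct. Since you too ultimately defer to the cited references for the detailed bookkeeping, your treatment matches the paper's.
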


\subsection{Coercivity bound}

We prove this lower bound in the Appendix \ref{subsec:lowerbound}. This
is a well known result in the Boltzmann literature.
\begin{prop}[Lower bound
  \cite{advw,gressman2011global}] \label{p:lowerbound}
  Let $g: \R^d \to \R$ be a function supported in $B_\Radius$. Then
\[ c \|g\|_{\dot H^s(\R^d)}^2 \leq - \int_{\R^d} Q(f,g)(v) g(v) \dd v + C
\|g\|_{L^2(\R^d)}^2.\]
The constants $c$ and $C$ depend on the mass, energy and entropy of
$f$, the dimension $d$ and the radius $\Radius$. In other words, $K_f$
satisfies \eqref{e:Klower} as soon as mass, energy and entropy of $f$
are bounded. In the case of the mass, we also need it to be bounded below.
\end{prop}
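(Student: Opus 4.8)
The plan is to establish the coercivity bound
\eqref{e:Klower} for $Q$ via a direct geometric computation in physical
variables, avoiding the Fourier-analytic machinery of \cite{advw}. The
starting point is the Carleman representation of $Q_1$ from
Lemma~\ref{l:Kformula}, which writes $-\int Q_1(f,g)(v)g(v)\dd v$ as
a bilinear form associated with the kernel $K_f$ in \eqref{defi:kf}.
First I would symmetrize: write $-\int Q_1(f,g)\,g$ as
$\tfrac12\iint (g(v)-g(v'))^2 K_f^{\mathrm{sym}}(v,v')\dd v\dd v'$ plus a
commutator term controlled by the antisymmetric part of $K_f$; the
antisymmetric piece is shown (using the cancellation structure, much as
in the proof of Lemma~\ref{l:classic-cancellation}) to be bounded by
$C\|g\|_{L^2}^2$, so it can be absorbed into the lower-order term on the
right-hand side. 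The contribution of $Q_2(f,g)$ is $C_b(|\cdot|^\gamma
\star f)(v)\,g(v)^2$ by Lemma~\ref{l:classic-cancellation}; since
$|\cdot|^\gamma\star f$ is bounded (for $\gamma\ge 0$ by the mass bound,
for $\gamma<0$ by mass together with an integrability bound on $f$), this
is again $\lesssim \|g\|_{L^2}^2$ and harmless.

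The heart of the matter is therefore a lower bound for the symmetric
Dirichlet form
\[
\iint_{\R^d\times\R^d}\frac{|g(v)-g(v')|^2}{|v-v'|^{d+2s}}\,
\Big(|v-v'|^{d+2s}K_f^{\mathrm{sym}}(v,v')\Big)\dd v\dd v'
\gtrsim \|g\|_{\dot H^s}^2 .
\]
To get this I would show that the weight $|v-v'|^{d+2s}K_f^{\mathrm{sym}}(v,v')$
is bounded below by a fixed constant on a set of directions $\frac{v'-v}
{|v'-v|}$ of positive measure, uniformly in $v$ ranging over a bounded
region and in $v'$ at each scale $|v-v'|=\rho$. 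Concretely, from
\eqref{defi:kf} and \eqref{assum:B-bis}, $K_f(v,v')$ is an integral of
$f(v+w)$ over the hyperplane $w\perp(v'-v)$ against a weight comparable
to $r^{\gamma+2s}\,r^{-d+2}|\cos(\theta/2)|^{\gamma+2s+1}$; since
$r^2=|v-v'|^2+|w|^2$, the integrand is $\gtrsim |v-v'|^{-d-2s}$ times a
positive density in $w$, provided $f$ has mass near the hyperplane
that is bounded below. The key geometric fact — this is the classical
mechanism behind coercivity — is that if $f$ has bounded mass, energy
and entropy with mass bounded below, then for \emph{most} choices of the
direction $e=(v'-v)/|v'-v|$, the integral of $f$ over an affine slab
perpendicular to $e$ through (a neighborhood of) $v$ is bounded below by
a positive constant: a set of small total mass cannot have most of its
mass concentrated near a single hyperplane, because a function with
bounded entropy and energy cannot be too concentrated. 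Averaging this
lower bound over the admissible directions recovers the full fractional
seminorm $\|g\|_{\dot H^s}^2$, after splitting the $v'$-integral into
dyadic annuli $|v-v'|\sim\rho$ and using that on each annulus a
positive-measure cone of directions contributes.

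I expect the main obstacle to be making the ``most directions are good''
statement quantitative with constants depending only on the mass, energy
and entropy of $f$ (and $\Radius$, $d$), uniformly in $v\in B_\Radius$
and across all scales $\rho$. This requires a careful covering/measure
argument: one must rule out, with explicit constants, the possibility
that the slices of $f$ are simultaneously small in many directions, and
then transfer the resulting pointwise-in-direction lower bound on
$K_f^{\mathrm{sym}}$ through the dyadic decomposition without losing the
critical scaling $\rho^{-d-2s}$. The $\gamma<0$ case needs the extra
care that $r^\gamma$ degenerates for large $|w|$, so the slab over which
one harvests mass of $f$ must be chosen at a fixed scale (using the
energy bound to localize $f$); and the borderline behavior $\gamma+2s\le
2$ is what keeps the commutator and $Q_2$ terms genuinely lower order.
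Once the directional lower bound is in hand, the assembly into
\eqref{e:Klower} is routine, and the statement of
Proposition~\ref{p:lowerbound} follows with $c$, $C$ depending on the
stated quantities.
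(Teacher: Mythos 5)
Your overall architecture matches the paper's: Carleman representation of $Q_1$ via Lemma \ref{l:Kformula}, the cancellation lemma to dispose of $Q_2$ and of the antisymmetric part of $K_f$, and a geometric lower bound $K_f(v,v')\gtrsim |v-v'|^{-d-2s}$ for $(v'-v)/|v'-v|$ in a positive-measure set of directions $A(v)$, obtained by slicing the set $\{f>\ell\}\cap B_r$ (whose existence follows from the mass, energy and entropy bounds, Lemma \ref{l:lifted-set}) with hyperplanes perpendicular to $v'-v$. The gap is in the last step. Knowing that $K_f(v,v')\gtrsim|v-v'|^{-d-2s}$ only on a cone of directions depending on $v$ gives a \emph{degenerate} Dirichlet form: ``averaging over the admissible directions'' on each dyadic annulus yields only the cone-restricted quantity $\iint_{(v'-v)/|v'-v|\in A(v)}|g(v)-g(v')|^2|v-v'|^{-d-2s}\dd v'\dd v$, which does not control $\|g\|_{\dot H^s}^2$: the full seminorm requires the differences $|g(v_1)-g(v_2)|^2$ for \emph{all} pairs, including those with $v_2-v_1$ pointing outside $A(v_1)$, and a kernel supported on a thin cone of directions simply does not see them.

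The paper closes this gap with a chaining argument (Lemmas \ref{l:line_and_cone}, \ref{l:cone_to_cone} and \ref{l:genericlowerbound}): for any pair $v_1,v_2$, because all the cones $\cone(v)$ are generated by the \emph{same} set $F=\{f>\ell\}\cap B_r$, the intersection $\cone(v_1)\cap\cone(v_2)\cap B_{C|v_1-v_2|}(v_2)$ has measure $\gtrsim|v_1-v_2|^d$; for $z$ in that intersection one uses $|g(v_1)-g(z)|^2+|g(v_2)-g(z)|^2\geq \tfrac12|g(v_1)-g(v_2)|^2$ together with the kernel lower bound at both $v_1$ and $v_2$ to recover $|g(v_1)-g(v_2)|^2|v_1-v_2|^{-2s}$, and then integrates over all pairs with a polar decomposition. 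This two-cone intersection step is the essential idea missing from your sketch. Correspondingly, you misplace the main obstacle: quantifying that ``most directions are good'' is a soft Fubini argument (as the direction runs over a big circle, its perpendicular hyperplanes sweep out $F$, so a positive fraction of directions capture a positive fraction of $|F|$); the real difficulty is transferring the direction-dependent lower bound to arbitrary pairs of points, which is exactly what the chaining accomplishes.
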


The assumption \eqref{e:Knondegeneracy}, which we need in the case $s \in (0,1/2)$ is clearly satisfied by the Boltzmann kernel. This follows as consequence of Lemma 4.8 in \cite{luis}.

\subsection{Upper bounds}

In this paragraph, we justify that the Boltzmann kernel satisfies
\eqref{e:Kabove}.  We recall that \eqref{e:Kabove}-(i) was already
proved in \cite{luis}. Recall the equivalent formulations of
\eqref{e:Kabove} explained in Section \ref{sec:notation}.
\begin{lemma}[Upper bound \eqref{e:Kabove}-(i)
  {\cite[Corollary~4.4]{luis}}] \label{p:upperbound-1} Assume
  $\gamma + 2s \le 2$. Then for all $r>0$ and $v \in B_\Radius$,
  \[ \int_{B_{2r}(v) \setminus B_{r}(v)} K_f (v,v') \dd v' \lesssim
  r^{-2s} \left( \int_{\R^d} f(z) |z-v|^{\gamma+2s} \dd z \right).\]
  In particular, $K_f$ satisfies \eqref{e:Kabove}-(i) with
    $\Lambda$ that depends only on
    $\|f \star |\cdot|^{\gamma + 2s}\|_{L^\infty(B_2)}$. More precisely,  if
    $\gamma +2 s \in [0,2]$, then $\Lambda$ in \eqref{e:Kabove}-(i)
    depends only on mass and energy; if $\gamma + 2 s \le 0$, then it
    depends on mass, dimension, $\gamma, s$ and $\|f\|_{L^\infty}$. 
\end{lemma}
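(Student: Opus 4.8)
The plan is to start from the explicit Carleman representation of the kernel $K_f$ given in Lemma~\ref{l:Kformula}, namely
\[
K_f(v,v') = \frac{2^{d-1}}{|v'-v|} \int_{w \perp v'-v} f(v+w)\, B(r,\cos\theta)\, r^{-d+2} \dd w,
\]
with $r^2 = |v'-v|^2 + |w|^2$, and to integrate this over the dyadic annulus $B_{2r}(v)\setminus B_r(v)$. The key structural observation is that $(v',w)$, with $v' - v$ and $w$ perpendicular, serve as orthogonal coordinates on $\R^d$ via the point $z = v + (v'-v) + w$; after using the bound \eqref{assum:B-bis} for $B(r,\cos\theta)$ (the only relevant regime, since the $k\cdot\sigma>0$ part can be discarded by the reduction to $\tilde B$) we get $B(r,\cos\theta) r^{-d+2} \lesssim r^{\gamma} (\text{angular factor}) r^{-d+2}$, and the angular factor $|\cos(\theta/2)|^{\gamma+2s+1}$ translates, in Carleman variables, to a power of $|v'-v|/r$. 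Tracking the powers carefully, the integrand becomes comparable to $f(z)\, |z-v|^{\gamma+2s}\, |v'-v|^{-d-2s}\, |w|^{\text{something}}$ times the measure factor, and the whole change of variables from $(v',w)$ to $z$ has bounded Jacobian.

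Concretely, the steps I would carry out are: (1) write $K_f(v,v')$ via Lemma~\ref{l:Kformula}; (2) substitute the bound \eqref{assum:B-bis}, so that $B(r,\cos\theta)\lesssim r^\gamma (|v'-v|/r)^{\gamma+2s+1}$ up to constants, since $\cos(\theta/2) = \frac{v-v_*}{|v-v_*|}\cdot\frac{v-v_*'}{|v-v_*'|}$ works out to $|v'-v|/r$ in Carleman coordinates; (3) combine the powers of $r$ and $|v'-v|$ and recognize that integrating $w$ over the hyperplane perpendicular to $v'-v$, together with integrating $v'$ over the annulus, exactly reconstructs an integral over $z\in\R^d$ of $f(z)$ against $|z-v|^{\gamma+2s}$, with a leftover factor $r^{-2s}$ coming from the annulus radius (this is where the restriction $\gamma+2s\le 2$ is used, to keep the relevant exponent integrable near the hyperplane); (4) conclude
\[
\int_{B_{2r}(v)\setminus B_r(v)} K_f(v,v')\dd v' \lesssim r^{-2s}\int_{\R^d} f(z)\,|z-v|^{\gamma+2s}\dd z,
\]
and finally (5) bound $\int f(z)|z-v|^{\gamma+2s}\dd z = f\star|\cdot|^{\gamma+2s}(v)$: when $\gamma+2s\in[0,2]$ this is controlled by mass and energy since $|z-v|^{\gamma+2s}\lesssim 1 + |z|^2 + |v|^2$ for $v\in B_\Radius$, and when $\gamma+2s\le 0$ the singularity of $|z-v|^{\gamma+2s}$ at $z=v$ is integrable against $f\in L^\infty$ locally while the tail is controlled by mass, giving a bound in terms of mass, $d$, $\gamma$, $s$ and $\|f\|_{L^\infty}$.

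The main obstacle is step (3): carefully verifying that the change of variables $(v',w)\mapsto z = v' + w - v$ (with $w$ ranging over the hyperplane $(v'-v)^\perp$) really has bounded Jacobian and that, after collecting all powers of $r$, $|v'-v|$ and $|w|$, the exponents align so that the $w$-integral over the perpendicular hyperplane converges and produces precisely $|z-v|^{\gamma+2s}$ with the clean prefactor $r^{-2s}$; this is exactly the geometric bookkeeping of Carleman coordinates and is where the hypothesis $\gamma+2s\le 2$ enters. Since this computation is essentially the one already carried out in \cite[Corollary~4.4]{luis}, I would cite that reference for the detailed change of variables and focus the exposition on assembling the consequences \eqref{e:Kabove}-(i) and the dependence of $\Lambda$ on the hydrodynamic quantities.
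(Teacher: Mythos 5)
Two framing remarks first. The paper never proves this lemma: it recalls it verbatim from \cite[Corollary~4.4]{luis} and only writes out the companion bound (ii) in Lemma~\ref{l:Kabove2}. So your decision to defer the Carleman bookkeeping to that reference is consistent with what the paper does, your overall outline (Carleman representation, pointwise bound on $B$, reconstruction of an integral in $z$, then the mass/energy control of $f\star|\cdot|^{\gamma+2s}$) is the right one, and your step (5) is correct as written.

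However, the one piece of the computation you do spell out contains an error that would derail step (3) if carried out as described. In Carleman coordinates $v_*'=v+w$ and $v_*=v'+w$ with $w\perp(v'-v)$, so
\[
\cos(\theta/2)=\frac{v-v_*}{|v-v_*|}\cdot\frac{v-v_*'}{|v-v_*'|}=\frac{-(v'-v)-w}{r}\cdot\frac{-w}{|w|}=\frac{|w|}{r},
\]
not $|v'-v|/r$ (it is $|\sin(\theta/2)|$ that equals $|v'-v|/r$); this is exactly the identification used in the paper's proof of Lemma~\ref{l:Kcancellation1}. With the correct identification, \eqref{assum:B-bis} gives
\[
\frac{B(r,\cos\theta)}{|v'-v|\,r^{d-2}}\approx\frac{|w|^{\gamma+2s+1}}{|v'-v|\,r^{d+2s-1}}\le\frac{|w|^{\gamma+2s+1}}{|v'-v|^{d+2s}}
\]
since $r\ge|v'-v|$, which is precisely the simplified bound \eqref{e:Kfsimplified}. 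From there the ``bookkeeping'' is not a Jacobian-one change of variables but the identity \eqref{e:cdv1} of Lemma~\ref{l:change-of-variables} applied on each sphere $\partial B_\rho(v)$, $\rho\in(r,2r)$: since the integrand $f(v+w)|w|^{\gamma+2s+1}$ depends only on $w$,
\[
\int_{\partial B_\rho(v)}\int_{w\perp v'-v}f(v+w)|w|^{\gamma+2s+1}\dd w\dd S(v')=\omega_{d-2}\,\rho^{d-1}\int_{\R^d}f(z)|z-v|^{\gamma+2s}\dd z,
\]
and $\int_r^{2r}\rho^{-d-2s}\rho^{d-1}\dd\rho\approx r^{-2s}$ finishes the estimate. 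With your identification $\cos(\theta/2)=|v'-v|/r$ the powers of $|w|$ and $|v'-v|$ come out wrong and the hyperplane integral does not reproduce $|z-v|^{\gamma+2s}$. A minor additional point: the hypothesis $\gamma+2s\le 2$ is not about integrability near the hyperplane (that only requires $\gamma+2s>-d$); it is simply inherited from the cited result.
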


We can now derive  \eqref{e:Kabove}-(ii).
\begin{lemma}[Upper bound \eqref{e:Kabove}-(ii)] \label{l:Kabove2}
  Assume $\gamma + 2s \le 2$.  Then for all $v' \in B_\Radius$ and $r>0$,
  \[ \int_{\R^d \setminus B_r(v')} K_f(v,v') \dd v \lesssim 
r^{-2s} \left( \int_{\R^d} f(z) |z-v'|^{\gamma+2s} \dd z \right).\]
  In particular, $K_f$ satisfies \eqref{e:Kabove}-(ii) with $\Lambda$
  that only depends on
  $\|f \star |\cdot |^{\gamma +2s} \|_{L^\infty(B_\Radius)}$.  More precisely, if
  $\gamma +2 s \in [0,2]$, it depends only on mass and energy; if
  $\gamma + 2 s \le 0$, $\Lambda$ then it depends on mass, dimension,
  $\gamma, s$ and $\|f\|_{L^\infty}$.
\end{lemma}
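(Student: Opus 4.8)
The plan is to combine the Carleman representation \eqref{defi:kf} of $K_f$ with the reduced cross-section \eqref{assum:B-bis}, and then carry out a change of variables in the $v$-integral that interchanges the roles of the two velocity arguments; the resulting quantity is then estimated by an elementary angular integral. This mirrors the proof of Lemma~\ref{p:upperbound-1} in \cite{luis}, with the two velocity variables swapped. Throughout, I write $L := |z-v'|$.

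\emph{Step 1 (simplify the kernel).} In \eqref{defi:kf} I set $z = v+w$, so that the integral runs over the affine hyperplane $H_{v,v'} := \{z : (z-v)\perp (v'-v)\}$. Since $w\perp(v'-v)$, one has the geometric identities $|z-v'|^2 = |v-v'|^2 + |z-v|^2$, $\cos(\theta/2) = |z-v|/|z-v'|$, and the ``$r$'' of \eqref{defi:kf} equals $|z-v'|$. Inserting \eqref{assum:B-bis} and arguing exactly as in \cite{luis} (so that, after the reduction, $K_f$ obeys a pointwise upper bound), I get
\[
K_f(v,v') \lesssim \frac1{|v-v'|}\int_{H_{v,v'}} f(z)\,\frac{|z-v|^{\gamma+2s+1}}{|z-v'|^{d+2s-1}}\dd z .
\]
\emph{Step 2 (change of variables).} The geometric point is that, for $v'$ fixed, the condition $(z-v)\perp(v'-v)$ says precisely that $v$ lies on the sphere $S(z,v')$ with diameter $[z,v']$. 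A coarea (equivalently, Dirac-mass) computation gives, for any nonnegative $G$,
\[
\int_{\R^d}\int_{H_{v,v'}} G(v,z)\dd z\,\dv = \int_{\R^d}\int_{S(z,v')} G(v,z)\,\frac{|v-v'|}{|z-v'|}\dd\sigma(v)\,\dd z ,
\]
with $\dd\sigma$ the surface measure on $S(z,v')$ and Jacobian $|v-v'|/|z-v'|$. Applying this to the bound of Step 1, restricted to $\{|v-v'|>r\}$, and simplifying the powers of $|z-v'|$, I obtain
\[
\int_{\R^d\setminus B_r(v')} K_f(v,v')\dv \lesssim \int_{\R^d} f(z)\left(\int_{\substack{v\in S(z,v')\\|v-v'|>r}} \frac{|z-v|^{\gamma+2s+1}}{|z-v'|^{d+2s}}\dd\sigma(v)\right)\dd z .
\]

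\emph{Step 3 (angular estimate and conclusion).} It remains to check that the inner integral is $\lesssim r^{-2s}L^{\gamma+2s}$. Parametrizing $S(z,v')$ by $v = \tfrac12(z+v') + \tfrac12 L\,\omega$, $\omega\in\Sp$, and writing $t = \omega\cdot\frac{z-v'}{L}$, one has $|z-v|^2 = \tfrac12 L^2(1-t)$, $|v-v'|^2 = \tfrac12 L^2(1+t)$ and $\dd\sigma(v) = (\tfrac12 L)^{d-1}\dd\omega$; the constraint $|v-v'|>r$ becomes $1+t > \mu := 2r^2/L^2$, which is void unless $\mu\le 2$, i.e. $r\le L$. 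A direct computation collapses the inner integral to a constant multiple of $L^{\gamma}\int_{\max(-1,\mu-1)}^{1}(1-t)^{(\gamma+2s+d-2)/2}(1+t)^{(d-3)/2}\dd t$. The last integral is bounded by a constant depending only on $d,\gamma,s$: the exponent of $1-t$ exceeds $-1$ because $\gamma+2s>-d$, and $(1+t)^{(d-3)/2}$ is integrable near $t=-1$ for $d\ge2$ (for $d=1$ the cutoff $t>\mu-1$ gives at worst a harmless logarithm). Since $\mu\le2$ in the relevant range, that constant is $\le C\mu^{-s} = C'(L/r)^{2s}$, so the inner integral is $\lesssim r^{-2s}L^{\gamma+2s}$, and plugging back yields $\int_{\R^d\setminus B_r(v')} K_f(v,v')\dv \lesssim r^{-2s}\int_{\R^d} f(z)\,|z-v'|^{\gamma+2s}\dd z$. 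The ``in particular'' assertion then follows as in Lemma~\ref{p:upperbound-1}: for $v'\in B_\Radius$ and $\gamma+2s\in[0,2]$ one bounds $|z-v'|^{\gamma+2s}\lesssim 1+|z|^2$ to control the right side by mass and energy, and for $\gamma+2s\le0$ one splits the integral over $B_1(v')$ and its complement, estimating the first piece by $\|f\|_{L^\infty}$ (using $\gamma+2s>-d$) and the second by the mass.

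\emph{Main obstacle.} The only genuinely delicate point is the change of variables of Step 2 and the correct identification of its Jacobian $|v-v'|/|z-v'|$; everything after that is bookkeeping. One should also be a little careful about the range of $\theta$ on which \eqref{assum:B-bis} is used in Step 1, but this is exactly the reduction already performed in \cite{luis} in the derivation of \eqref{defi:kf} and of Lemma~\ref{p:upperbound-1}.
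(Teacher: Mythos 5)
Your Steps 2 and 3 are sound, and they are in substance the paper's own computation in different coordinates: the exchange of the roles of $v$ and $z$, with Jacobian $|v-v'|/|z-v'|$, is exactly what the paper achieves by writing $\int_{\R^d\setminus B_r(v')}\dv=\int_r^\infty\int_{\partial B_\rho(v')}$ and invoking \eqref{e:cdv2} of Lemma~\ref{l:change-of-variables}, and your angular variable $t$ is the paper's radial variable $\rho$ via $1+t=2\rho^2/|z-v'|^2$. The handling of the ``in particular'' clause is also fine.

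The gap is in Step~1. The pointwise bound you extract, carrying the factor $|z-v'|^{-(d+2s-1)}$, is strictly \emph{smaller} than the established bound
\[
K_f(v,v')\ \approx\ |v-v'|^{-d-2s}\int_{w\perp(v'-v)}f(v+w)\,|w|^{\gamma+2s+1}\dd w,
\]
which is Corollary~4.2 of \cite{luis}, is restated as \eqref{e:Kfsimplified} in the appendix, and is the starting point of the paper's proof of Lemma~\ref{l:Kabove2}. The two differ by the factor $\bigl(|v-v'|/|z-v'|\bigr)^{d+2s-1}$, which tends to $0$ as $v\to v'$ for fixed $z$; since \eqref{e:Kfsimplified} is a two-sided estimate, your formula is \emph{not} an upper bound for $K_f$. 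A quick sanity check makes this visible: your bound would give $K_f(\cdot,v')=O(|v-v'|^{-1})$ near the diagonal, hence $\int_{\R^d}K_f(v,v')\dv<\infty$, which is false for a non-cutoff kernel and is precisely why the statement must carry a factor $r^{-2s}$ that blows up as $r\to0$. The source of the error is reading \eqref{assum:B-bis} too literally on the whole hyperplane: as printed, that display describes a \emph{bounded} angular factor (a cutoff kernel) and cannot be the intended normalization; the reliable reduction is the one carried out in \cite{luis}. The symptom in your write-up is that the truncation $|v-v'|>r$ plays no real role — you obtain $r^{-2s}$ ``for free'' from $\mu\le 2$ rather than from the near-diagonal singularity of the kernel.

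The good news is that the fix is entirely local and leaves your architecture intact. Running Steps~2--3 with the correct bound replaces $|z-v'|^{-(d+2s-1)}$ by $|v-v'|^{-(d+2s-1)}$, so the angular integrand becomes $(1-t)^{(\gamma+2s+d-2)/2}(1+t)^{-1-s}$ instead of $(1-t)^{(\gamma+2s+d-2)/2}(1+t)^{(d-3)/2}$; the integral over $t>\mu-1$ is then genuinely of order $\mu^{-s}=C(L/r)^{2s}$ (it diverges as $\mu\to0$, as it must), and the same conclusion $\int_{\R^d\setminus B_r(v')}K_f(v,v')\dv\lesssim r^{-2s}\int f(z)\,|z-v'|^{\gamma+2s}\dd z$ follows. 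After this correction your argument coincides, line for line, with the paper's estimate $\int_r^{|z|}\rho^{-2s-1}(|z|^2-\rho^2)^{(d-2+\gamma+2s)/2}\dd\rho\le r^{-2s}|z|^{d-2+\gamma+2s}/(2s)$.
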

\begin{proof}
According to the formula for $K(v,v')$ in terms of $f$ (Corollary 4.2 in \cite{luis}),
\[ K_f(v,v') \approx |v-v'|^{-d-2s} \left( \int_{w \perp (v-v')} f(v+w)
  |w|^{\gamma+2s+1} \dd w \right).\]
Without loss of generality, let us take $v'=0$ in order to simplify
the notation. Therefore
\[ \int_{\R^d \setminus B_r} K_f(v,0) \dd v \lesssim \int_r^\infty \rho^{-d-2s} \int_{\partial B_\rho} 
\int_{w \perp v} f(v+w) |w|^{\gamma+2s+1} \dd w \dd S(v) \dd \rho.\]
Applying \eqref{e:cdv2} from Lemma \ref{l:change-of-variables},
\begin{align*} 
 \int_{\R^d \setminus B_r} K_f(v,0) \dd v &
\lesssim \int_r^\infty \rho^{-2s-1} \int_{\R^d \setminus B_\rho} f(z) \frac{(|z|^2-\rho^2)^{\frac{d-2+\gamma+2s}2}}{|z|^{d-2}} \dd z \dd \rho, \\
&= \int_{\R^d \setminus B_r} \frac{f(z)}{|z|^{d-2}} \left( \int_r^{|z|} \rho^{-2s-1} (|z|^2-\rho^2)^\frac{d-2+\gamma+2s}2 \dd \rho \right)  \dd z,\\
&\leq \int_{\R^d \setminus B_{r}} \frac{f(z)}{|z|^{d-2}} \left( r^{-2s} |z|^{d-2+\gamma+2s} \right) \dd z, \\
&= r^{-2s} \int_{\R^d \setminus B_r} f(z) |z|^{\gamma+2s} \dd z. \qedhere
\end{align*}
\end{proof}

\subsection{The  cancellation assumptions}

In this paragraph, we justify that the kernel associated with the
Boltzmann equation satisfies the cancellation assumptions
\eqref{e:Kcancellation0} and \eqref{e:Kcancellation1}.

The first cancellation condition, assumption~\eqref{e:Kcancellation0},
is essentially the cancellation lemma, which is well known in the
kinetic community.
\begin{lemma}[Classical cancellation lemma] \label{l:Kcancellation0}
The kernel $K_f$ satisfies for all $v \in \R^d$,
\[ 
\left| PV \int_{\R^d}  (K_f(v,v')-K_f(v',v)) \dd v' \right| 
\leq C \left( \int_{\R^d} f(z) |z-v|^{\gamma}   \dd z \right).
\]
In particular, $K_f$ satisfies \eqref{e:Kcancellation0} with
  $\Lambda$ that only depends on
  $\|f \star |\cdot|^{\gamma} \|_{L^\infty(B_{\bar R})}$. More precisely, if
  $\gamma \in [0,2]$, $\Lambda$ in \eqref{e:Kcancellation0} depends
  only on upper bounds on mass and energy; if $\gamma \le 0$, it
  depends on mass, dimension, $\gamma$ and $\|f\|_{L^\infty}$.
\end{lemma}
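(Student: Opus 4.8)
The plan is to compute the principal-value integral $PV \int_{\R^d}(K_f(v,v') - K_f(v',v))\dd v'$ by recognizing it, up to a change of variables, as the ``divergence part'' of the collision operator, i.e. the quantity already controlled by the classical cancellation Lemma \ref{l:classic-cancellation}. First I would reduce to $v=0$ by translation invariance, so that we must bound $PV\int (K_f(0,v') - K_f(v',0))\dd v'$. The key observation is that by definition of $K_f$ (see Lemma \ref{l:Kformula} and the surrounding discussion), for a test function $g$ one has $\int (g(v') - g(v)) K_f(v,v') \dd v' = \iint f'_*(g'-g) B \dd v_* \dd\sigma = Q_1(f,g)(v)$; taking $g \to \mathbf{1}$ in an appropriate limiting sense is not literally legitimate because $\mathbf 1$ is not admissible, so instead I would test against $g$ localized and track the principal value carefully, or — cleaner — write
\[
PV\int_{\R^d} (K_f(v,v') - K_f(v',v))\dd v' = PV\int_{\R^d} K_f(v,v')\dd v' - PV\int_{\R^d} K_f(v',v)\dd v'
\]
and compute each piece. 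The second piece, $\int K_f(v',v)\dd v'$, using the Carleman representation \eqref{defi:kf} and the change-of-variables lemma (Lemma \ref{l:change-of-variables}, already invoked in the proof of Lemma \ref{l:Kabove2}), is exactly the object that appears on the left-hand side of the cancellation Lemma \ref{l:classic-cancellation}, namely $\iint (f'_* - f_*) B \dd v_* \dd\sigma$ evaluated appropriately; the first piece contributes a symmetric term.

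Concretely, the cleanest route is: substitute the formula \eqref{defi:kf} for $K_f$ into both integrals, perform in each the same Carleman-type change of variables used to pass between $\sigma$-coordinates and $(w,v')$-coordinates, and observe that the difference $K_f(v,v') - K_f(v',v)$ integrated in $v'$ reassembles (after the change of variables) into $\iint (f(v'_*) - f(v_*)) B(|v-v_*|,\cos\theta) \dd v_* \dd\sigma$, which is precisely $Q_2(f,\mathbf 1)(v)/\mathbf 1$ up to the $g$ factor — that is, the bracket in $Q_2$. By Lemma \ref{l:classic-cancellation} this equals $C_b (|\cdot|^\gamma \star f)(v)$, and since $|C_b| < \infty$ (it is a finite constant depending only on $b$, $d$, $\gamma$ because the integrand $\frac{2^{(d+\gamma)/2}}{(1+\sigma\cdot e)^{(d+\gamma)/2}} - 1$ vanishes to the right order as $\sigma \cdot e \to -1$, compensating the $|\sin(\theta/2)|^{-(d-1)-2s}$ singularity of $b$ when $s < $ something and otherwise by the stated form \eqref{assum:B-bis}), we obtain exactly
\[
\left| PV\int_{\R^d}(K_f(v,v') - K_f(v',v))\dd v'\right| \leq |C_b| \int_{\R^d} f(z)|z-v|^\gamma \dd z,
\]
which is the claimed bound with $C = |C_b|$.

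The translation of this $L^\infty$ bound on $f \star |\cdot|^\gamma$ into the stated dependence on hydrodynamic quantities is routine: for $\gamma \in [0,2]$ one splits $|z-v|^\gamma \lesssim 1 + |z-v|^2 \lesssim 1 + |v|^2 + |z|^2$ and bounds $\int f \dd z$ by mass and $\int f|z|^2\dd z$ by energy, with $|v|$ bounded since $v \in B_{\Radius}$; for $\gamma < 0$ one separates the region $|z-v| < 1$, where $|z-v|^\gamma$ is integrable and the integral is controlled by $\|f\|_{L^\infty}$, from $|z-v|\geq 1$, where $|z-v|^\gamma \leq 1$ and one uses mass.

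I expect the main obstacle to be making the formal manipulation ``test the kernel identity against $g = \mathbf 1$'' rigorous, since $\mathbf 1$ is not a legitimate test function and the principal value must be handled with care — in particular, one must check that the singular contributions near $v' = v$ in $K_f(v,v')$ and $K_f(v',v)$ cancel in the principal-value sense before one can identify the difference with the (absolutely convergent) expression $C_b(|\cdot|^\gamma \star f)$. This is precisely the content of the classical cancellation lemma's proof, so the cleanest exposition simply cites Lemma \ref{l:classic-cancellation} after establishing the change-of-variables identity that rewrites $PV\int (K_f(v,v')-K_f(v',v))\dd v'$ as $\iint (f'_* - f_*) B \dd v_* \dd \sigma$; alternatively one can reprove it directly from \eqref{defi:kf} using \eqref{e:cdv2}, mimicking the computation in the proof of Lemma \ref{l:Kabove2} but keeping the antisymmetric combination.
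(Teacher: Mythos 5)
Your proposal is correct and follows essentially the same route as the paper: write both $K_f(v,v')$ and $K_f(v',v)$ via the Carleman representation \eqref{defi:kf}, note that they share the same factor $B(r,\cos\theta)/(|v'-v|r^{d-2})$ (since $r$ and $\cos\theta$ are unchanged under swapping $v$ and $v'$), so the difference recombines into a single integral of $f(v+w)-f(v'+w)$, then undo the change of variables of Lemma~\ref{lem:cdv} to recognize $\iint (f'_*-f_*)B\,\dd\sigma\,\dd v_*$ and apply Lemma~\ref{l:classic-cancellation}. One caution: the intermediate suggestion to split the principal value into $PV\int K_f(v,v')\,\dd v' - PV\int K_f(v',v)\,\dd v'$ and ``compute each piece'' would not work since each piece diverges separately, but you correctly abandon that in favor of keeping the antisymmetric combination, which is exactly what the paper does.
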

\begin{proof}
Let $P(v)$ denote $PV \int (K_f(v',v) - K_f(v,v')) \dd v'$. 
In view of the definition of $K_f$, we have
\begin{align*}
P(v) & = 2^{d-1} \int_{\R^d}  \dd v' \left( \int_{w \perp v'-v}  f (v+w) 
\frac{B(r,\cos \theta)}{|v'-v| r^{d-2}} \dd w - \int_{w \perp v'-v} 
 f(v'+w) \frac{B(\tilde r,\cos \tilde \theta)}{|v'-v| {\tilde r}^{d-2}} \dd w \right)\\
& = 2^{d-1}  \int_{\R^d}  \int_{w \perp v'-v}  (f (v+w)-f(v'+w)) 
\frac{B(r,\cos \theta)}{|v'-v| r^{d-2}} \dd w \dd v' 
\end{align*}
since $r=\tilde r$ and $\cos \theta = \cos \tilde \theta$. Using now \eqref{cdv:v'} from 
Lemma~\ref{lem:cdv}, we
get
\[ \nonumber P(v) = \int_{\R^d} \int_{\Sp} (f(v'_*)-f(v_*)) B(r,\cos
\theta) \dd \sigma \dd v_*. \]
The  cancellation Lemma~\ref{l:classic-cancellation} tells us that
\[ P(v) = c \int_{\R^d} |v-w|^\gamma f(w) \dd w.\]

The proof is now complete.
\end{proof}

\begin{lemma}[More subtle cancellation lemma] \label{l:Kcancellation1}
The two following properties hold true for any $R>0$,
\begin{align}
\label{e:cancellation1-zeroterm} 
PV \int_{B_R(v)} (v'-v) K_f(v,v') \dd v' &= 0, \\
\label{e:cancellation1-diffterm} 
\left| PV \int_{B_R(v)} (v'-v) K_f(v,v') \dd v \right| 
&\leq C \left( \int_{\R^d} f(z) |z-v'|^{1+\gamma}   \dd z \right).
\end{align}
In particular, the kernel satisfies \eqref{e:Kcancellation1} and
$\Lambda$ only depends on $\|f \star |\cdot|^{1+\gamma}\|_{L^\infty(B_{\bar R})}$.
More precisely, if $\gamma \in [-1,1]$, $\Lambda$ in
\eqref{e:Kcancellation1} depends only on upper bounds on mass and
energy; if $\gamma \le -1$, it depends on  mass, dimension, $\gamma$
and $\|f\|_{L^\infty}$.
\end{lemma}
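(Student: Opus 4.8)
The plan is to handle the two assertions separately. The identity \eqref{e:cancellation1-zeroterm} rests on the symmetry of $K_f(v,\cdot)$ about its first argument — the relation $K_f(v,v+w)=K_f(v,v-w)$ mentioned in the introduction. It is immediate from the Carleman formula \eqref{defi:kf}: the plane $\{w\perp v'-v\}$, the radius $r$ with $r^2=|v'-v|^2+|w|^2$, the cosine $\cos\theta=(|w|^2-|v'-v|^2)/r^2$, and the factor $f(v+w)$ all depend on $v'$ only through $|v'-v|$ and the line spanned by $v'-v$, so $K_f(v,2v-v')=K_f(v,v')$. Hence $v'\mapsto(v'-v)K_f(v,v')$ is odd under the involution $v'\mapsto 2v-v'$, which preserves Lebesgue measure and every ball $B_\varepsilon(v)$; its principal value integral over $\R^d$ therefore vanishes. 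Applying the same symmetry with the roles of the two arguments exchanged also gives $PV\int_{\R^d}(v'-v)K_f(v',v)\dd v=0$ for every $v'$.

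For \eqref{e:cancellation1-diffterm} I would begin by subtracting this last vanishing term, which rewrites the quantity of interest as $PV\int_{\R^d}(v'-v)\,(K_f(v,v')-K_f(v',v))\dd v$. This reduction is essential rather than cosmetic: $K_f(v,v')$ and $K_f(v',v)$ share the same leading singularity as $v\to v'$, and only their difference is mild enough. Inserting \eqref{defi:kf} for both kernels and using that $r$, $\cos\theta$ and the plane $\{w\perp v-v'\}$ are common to $K_f(v,v')$ and $K_f(v',v)$ (only the argument of $f$ changing), one gets, exactly as in the proof of Lemma~\ref{l:Kcancellation0}, that this equals $2^{d-1}\,PV\int_{\R^d}\int_{w\perp v-v'}(v'-v)(f(v+w)-f(v'+w))\,B(r,\cos\theta)\,|v-v'|^{-1}r^{2-d}\dd w\dd v$. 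Now I would apply the change of variables \eqref{cdv:v'} of Lemma~\ref{lem:cdv} — the same one used in the proof of Lemma~\ref{l:Kcancellation0}, but with $v'$ in the role of the fixed velocity — together with the momentum conservation identity $v'-v=\bar v'_*-\bar v_*$, to arrive at
\[ PV\int_{\R^d}(v'-v)K_f(v,v')\dd v=\int_{\R^d}\int_{\Sp}(\bar v'_*-\bar v_*)\,(f(\bar v_*)-f(\bar v'_*))\,B(|v'-\bar v_*|,\cos\theta)\dd\sigma\dd\bar v_*, \]
where $(v',\bar v_*)$ and $(v,\bar v'_*)$ are the pre- and post-collisional velocity pairs attached to $\sigma$.

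The remaining task — which I expect to be the heart of the matter — is to bound this last integral by $C\int_{\R^d}f(z)|z-v'|^{1+\gamma}\dd z$; this is a first-moment version of the classical cancellation Lemma~\ref{l:classic-cancellation}. One cannot estimate the two halves of $f(\bar v_*)-f(\bar v'_*)$ separately: since $|\bar v'_*-\bar v_*|\approx|v'-\bar v_*|\,|\sin(\theta/2)|$ and $b(\cos\theta)\approx|\sin(\theta/2)|^{-(d-1)-2s}$, the piece carrying $f(\bar v'_*)$ is not absolutely integrable near grazing collisions once $s\ge1/2$. I would resolve this exactly as in the proof of the classical cancellation lemma \cite{villani1999,advw}: for each fixed $\sigma$, the standard pre/post change of variables $\bar v_*\mapsto\bar v'_*$ (whose Jacobian is a function of $\cos\theta$, non-degenerate away from the harmless end $\cos\theta=-1$) replaces $B$ by a modified kernel and leaves a combination whose angular singularity is integrable; integrating in $\sigma$ yields a vector-valued kernel whose norm is bounded by $C|z-v'|^{1+\gamma}$, the single extra power of $|\sin(\theta/2)|$ supplied by the momentum weight being precisely what upgrades the $|z-v'|^{\gamma}$ of Lemma~\ref{l:Kcancellation0} to $|z-v'|^{1+\gamma}$. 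The delicate point is that the weight $\bar v'_*-\bar v_*$ itself depends on the collision parameter $\sigma$, so one must keep track of how it transforms under $\bar v_*\mapsto\bar v'_*$ and check that the non-integrable contributions genuinely cancel; an alternative is to do the estimate first for smooth $f$, where $\int_{\Sp}(\bar v'_*-\bar v_*)(f(\bar v_*)-f(\bar v'_*))B\dd\sigma$ is already absolutely convergent for every $s<1$, and then recover the bound in terms of moments of $f$ alone and pass to the limit. Finally, observing that $|z-v'|^{1+\gamma}$ is controlled by mass and energy when $\gamma\ge-1$ and by $\|f\|_{L^\infty}$ plus a moment otherwise gives the stated dependence of $\Lambda$.
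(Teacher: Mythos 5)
Your treatment of \eqref{e:cancellation1-zeroterm} is correct and coincides with the paper's (the symmetry $K_f(v,v+w)=K_f(v,v-w)$ read off from the Carleman formula), and your reduction of \eqref{e:cancellation1-diffterm} to the antisymmetric part followed by the passage back to $\sigma$-variables is sound: it mirrors the proof of Lemma \ref{l:Kcancellation0}. The gap sits precisely at the step you call the heart of the matter, and it is \emph{not} dispatched ``exactly as in the classical cancellation lemma.'' After the pre/post change of variables at fixed $\sigma$, writing $z$ for the integration variable and $v'$ for the fixed velocity, the two weight vectors to be compared are $\tfrac{|v'-z|}{2}(k-\sigma)$ with $k=(v'-z)/|v'-z|$ (from the $f(\bar v_*)$ term) and $\tfrac{|v'-\psi_\sigma(z)|}{2}(k_\psi-\sigma)$ with $k_\psi=(v'-\psi_\sigma(z))/|v'-\psi_\sigma(z)|$ (from the transformed $f(\bar v'_*)$ term). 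These differ at order $\theta$, not $\theta^2$, since $k_\psi-k=O(\theta)$; against $b(\cos\theta)\approx|\sin(\theta/2)|^{-(d-1)-2s}$ and the surface measure $\theta^{d-2}\dd\theta$ this leaves $\theta^{-2s}$, still non-integrable for $s\ge 1/2$. So the pointwise-in-$\sigma$ cancellation that drives the classical lemma is insufficient here. What actually saves the estimate is a different mechanism: the $O(\theta)$ part of each weight vector is perpendicular to the axis $v'-z$ and averages to zero over the azimuthal directions of $\sigma$ (the map $\sigma\mapsto\psi_\sigma(z)$ is equivariant under rotations fixing $v'$ and $z$), leaving only an $O(\theta^2)$ component along the axis. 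Without identifying this azimuthal cancellation, and checking its compatibility with the principal-value prescription, the bound is not established. Your fallback (smooth $f$, then a limiting argument) does not circumvent this: for smooth $f$ the integral converges absolutely but the resulting bound involves derivatives of $f$, and recovering a bound by $\int f(z)|z-v'|^{1+\gamma}\dd z$ alone is exactly the original problem.

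For comparison, the paper avoids the $\sigma$-representation entirely and never invokes the cancellation lemma for \eqref{e:cancellation1-diffterm}. It writes $K_f(v,0)$ in Carleman form, integrates $v$ over spheres $\partial B_r$, and applies identity \eqref{e:cdv3} of Lemma \ref{l:change-of-variables}: the average of $v\in\partial B_r$ subject to $v+w=z$ with $w\perp v$ is $r^2 z/|z|^2$. This single geometric fact supplies an extra factor $r^2/|z|^2$ relative to the unweighted computation of Lemma \ref{l:Kabove2}, turning the radial integral from the divergent $\int_0^{|z|} r^{-1-2s}\dd r$ into the convergent $\int_0^{|z|} r^{1-2s}\dd r$, after which absolute values can be taken freely and the bound $C\int f(z)|z|^{1+\gamma}\dd z$ drops out. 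To complete your route you should either supply the azimuthal-averaging argument sketched above, or switch to the Carleman computation, where the cancellation is packaged once and for all in \eqref{e:cdv3}.
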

\begin{proof}
  The first identity \eqref{e:cancellation1-zeroterm} is obvious from
  the symmetry property: $K_f(v,v+w) = K_f(v,v-w)$. The difficulty is
  thus to justify the second identity
  \eqref{e:cancellation1-diffterm}.

  Without loss of generality, let us assume $v'=0$.  In view of
  Lemma~\ref{l:Kformula} (coming from \cite{luis}), the kernel $K_f$
  can be written for $v'=0$ as follows,
\[ K_f (v,0) = \frac{2^{d-1}}{|v|}\int_{\{w: w\perp v\}} f(v + w) B(r,\cos \theta) {\frac1{r^{d-2}}}\dd w \]
where $r^2 = |v|^2 + |w|^2 = |z|^2$ and $z=v+w$ and
\[ \cos \theta = \widehat{v+w} \cdot \widehat{w-v} = \frac{|w|^2-|v|^2}{|v+w|^2} 
= \frac{|z|^2-2|v|^2}{|z|^2} .\]
The way $b(\cos \theta)$ is modified for $\cos \theta <0$, implies 
that 
\[ {\frac1{|v|r^{d-2}}}B(r,\cos \theta) \approx |v|^{-d - 2 s}
|w|^{\gamma + 2s +1 }.\]
Since $r$ and $\cos \theta$ only depend on $|z|$ and $|v|$,
this implies that there exists $A (|z|,|v|)$ such that
\[ {\frac1{|v|r^{d-2}}} B(r,\cos \theta) = A (|z|,|v|) |v|^{-d - 2 s} |w|^{\gamma + 2s +1 }\]
and a constant $C_A>1$ such that for all $z,v$,
\[ C_A^{-1} \le A (|z|,|v|) \le C_A.\]
In the following computation, the definition of $r$ changes. We write $r = |v|$. We integrate in $v$ first on spheres $\partial B_r$ and then with respect to the radius $r$.
\begin{align*} 
\abs{ PV \int_{{B_R}} v K_f(v,0) \dd v} &= \abs{ \int_0^{{R}} r^{-d-2s} \int_{\partial B_r}  \int_{w \perp v} 
v A(|v+w|,|v|) f(v+w) |w|^{\gamma+2s+1} \dd w \dd v \dd r }\\
\intertext{We use the change of variables \eqref{e:cdv3} of Lemma \ref{l:change-of-variables}. Note that $|w|^2 + r^2 = |z|^2$.}
\abs{ PV \int_{{B_R}} v K_f(v,0) \dd v}&= \omega_{d-2} \abs{ \int_0^{ R} r^{1-2s} \int_{\R^d \setminus B_r} 
A(|z|,r) z f(z) \frac{(|z|^2-r^2)^{\frac{d-2+\gamma+2s}2}}{|z|^{d}} \dd z \dd r } \\
&= \omega_{d-2} \abs{ \int_{\R^d} z f(z) |z|^{-d} \left( \int_0^{{ \min(|z|,R) } } r^{1-2s} 
A(|z|,r) (|z|^2-r^2)^{\frac{d-2+\gamma+2s}2} \dd r \right) \dd z}\\
&\leq \omega_{d-2} C_A \int_{\R^d} f(z) |z|^{-1+\gamma+2s} \left( \int_0^{{ \min(|z|,R) }} r^{1-2s}  \dd r \right) \dd z\\
&\leq C \int_{\R^d} f(z) |z|^{1+\gamma}  \dd z.
\end{align*}
The proof is now complete. 
\end{proof}
\begin{remark}
  There is a subtle cancellation that allows this proof to work. The whole point of
  this lemma is that the principal value of the integral is bounded
  around the origin. The reader will notice that here we end up with
  an integral of the form $\int_0^{|z|} r^{1-2s} \dd r$.  In the proof
  of Lemma \ref{l:Kabove2}, we end up with an intergrand $r^{-1-2s}$
  which is not integrable around the origin. The difference originates
  in Lemma \ref{l:change-of-variables} given in Appendix. The third
  identity in that lemma incorporates an extra cancellation due to the
  fact that the average values of $v \in \partial B_r$ so that
  $v+w = z$, for some $w \perp v$, is $r^2 z / |z|^2$.
\end{remark}

\begin{remark}
Note that the cancellation condition given in Lemma \ref{l:Kcancellation1} is slightly stronger than \eqref{e:Kcancellation1} since the right hand side is bounded independently of $R$ even when $s > 1/2$. Moreover, a rate of convergence to zero as $R \to 0$ can be deduced from the proof.
\end{remark}

\subsection{Proofs of Theorems \ref{thm:holder-boltzmann} and \ref{thm:lowerbound}}

In this subsection we explain how Theorems \ref{thm:holder-boltzmann}
and \ref{thm:lowerbound} follow from Theorems \ref{thm:holder} and
\ref{thm:whi}. Theorem \ref{thm:holder-boltzmann} is indeed a straight forward
application of Theorem \ref{thm:holder}.

\begin{proof}[Proof of Theorem \ref{thm:holder-boltzmann}]
The Boltzmann equation can be written in the form
\[ f_t + v \cdot \nabla_x f = \left( \int_{\R^d} (f(v') -f(v)) K_f(v,v') \dd v' \right) + c \left( \int_{\R^d} f(v-w) |w|^\gamma \dd w \right) f.\]
Thus, if we define
\[ h := c \left( \int_{\R^d} f(v-w) |w|^\gamma \dd w \right) f,\]
then $h \in L^\infty$ with its norm bounded in terms of $\|f\|_{L^\infty}$ and $M_0$.

Moreover, from Proposition \ref{p:lowerbound} and Lemmas
\ref{p:upperbound-1}, \ref{l:Kabove2}, \ref{l:Kcancellation0} and
\ref{l:Kcancellation1}, the kernel $K_f$ satisfies the assumptions
\eqref{e:Klower}, \eqref{e:Kabove}, \eqref{e:Kcancellation0} and
\eqref{e:Kcancellation1}. Thus, the proof is finished as a corollary
of Theorem \ref{thm:holder}.
\end{proof}

Theorem \ref{thm:lowerbound} follows mostly from Theorem \ref{thm:whi}. We use some other results which are presented later in this article which allow us to extend the lower bound to an arbitrary radius $R>0$.
\begin{proof}[Proof of Theorem \ref{thm:lowerbound}]
Without loss of generality, we assume $T=4$. The general case follows by scaling.

  Like in the proof above of Theorem \ref{thm:holder-boltzmann}, we
  have that $f$ is a supersolution of \eqref{e:main} for some $h \geq
  0$. In particular,
\[ f_t + v \cdot \nabla_x f \geq \Lv f. \]
According to Lemma \ref{l:lifted-set}, there is an $R_0 > 0$, $m>0$ and $\ell>0$ so that for all $(t,x)$, 
\[ | \{v \in B_{R_0} : f(t,x,v) \geq \ell\} | \geq m.\]
Let $r_0$ be the one from Theorem \ref{thm:whi}.  We have that
\[ \int_{[0,r_0^{2s}] \times B_{r_0^{1+2s}} \times B_{R_0}} f^\eps \dd
v \dd x \dd t \geq \ell^\eps m r_0^{2s + (1+2s)d}.\]
It is possible to cover the set
$[0,r_0^{2s}] \times B_{r_0^{1+2s}} \times B_{R_0}$ with $N$ slanted
cylinders $Q_{r_0}(z)$ with $N \le (R_0/r_0)^{2d} / c$ for some
universal constant $c>0$. This implies that there must be some point
$z = (r_0^{2s},x,v) \in \{r_0^{2s}\} \times B_{r_0^{2s}(r_0+R_0)}
\times B_{R_0}$ so that
\[ \int_{Q_{r_0}(z)} f^\eps \dd v \dd x \dd t \geq c \ell^\eps m r_0^{2s + (3+2s)d} / R_0^{2d}.\]

Applying Theorem \ref{thm:whi} (properly translated), we get
\[ \inf_{Q_{r_0}(\tilde z)} f \geq c, \]
for some constant $c>0$ and $\tilde z \in \{1\} \times B_{r_0^{1+2s}+R_0 } \times B_{R_0}$.

This bound below in $Q_{r_0}(\tilde z)$ is propagated to
$[2,4] \times B_R \times B_R$, for any arbitrary $R>0$ using the
barrier function from Lemma \ref{l:barrier} if $s < 1/2$ or the
combination of Lemmas \ref{l:intermediate-set-local} and
\ref{lem:linfty} if $s \geq 1/2$.

Note that the geometric setting of Lemmas \ref{l:barrier},
\ref{l:intermediate-set-local} and \ref{lem:linfty} are independent of
the constants $\lambda$ and $\Lambda$. This is important since these
ellipticity constants depend on $R$.
\end{proof}

\section{Study of a bilinear form}
\label{sec:bilinear}

This section is devoted to the study of a general bilinear form $\Ex$
associated with a kernel $K$ through the following formula,
\[
\Ex (\varphi,g) =  - \int (\Lv \varphi)(v) g (v) \dd v =
\lim_{\eps \to 0} \left( \iint_{|v-v'| > \eps} (\varphi(v) -
  \varphi(v')) g(v) K(v,v') \dd v' \dd v \right) .
\]
In the remainder of this section, we abuse notation by ignoring the
limit as $\eps \to 0$. This means that some integrals corresponding to
the odd part of $K$ may need to be understood in the principal value
sense.
Indeed, we recall that the operator $\Lv \varphi$ is given by the
formula
\begin{equation} \label{e:Lv} \Lv \varphi(v) := \lim_{\eps \to 0}
  \int_{\R^d \setminus B_\eps(v)} (\varphi(v') - \varphi(v)) K(v,v')
  \dd v'.
\end{equation}
The limit does not necessarily converge for every value of $v$, even
if $\varphi$ is smooth. The correct understanding of $\Lv \varphi$ as
a distribution is obtained through the analysis of the bilinear form
$\Ex$ done in this section.

When we study the equation \eqref{e:main}, the bilinear form $\Ex$
will be computed for functions $\varphi$ and $g$ depending on values
of $t$ and $x$. The kernel, and consequently also the bilinear form,
depend on $t$ and $x$. In this section we study properties of bilinear
forms like this that will be applied for every fixed value of $t$ and
$x$.

In this section, we also assume that the kernel $K$ is defined for all
values of $v \in \R^d$ and our assumptions hold uniformly. This is
convenient for the exposition and some of the proofs. In Section
\ref{sec:reduction}, we will show that any kernel satisfying
\eqref{e:Klower}, \eqref{e:Kabove}, \eqref{e:Kcancellation0} and
\eqref{e:Kcancellation1} can be extended to all values of $v \in \R^d$ to satisfy a global version of these
assumptions. So, a posteriori, our approach is not limiting.

Since it is not necessary for $K$ to be non-negative for the results in
this section to hold, and they may be used elsewhere, we restate here
our main assumption allowing sign changing kernels.  We make the
following assumptions for some parameter $s \in (0,1)$ and a constant
$\Lambda$.
\begin{equation} 
\label{e:Kabove-signed}
\forall v \in \R^d, \forall r>0, \qquad 
 \begin{cases}
(i) & \int_{\R^d \setminus B_r(v)} |K (v,v')| \dd v' \leq \Lambda r^{-2s}   \\[1ex]
(ii) & \int_{\R^d \setminus B_r(v)} |K (v',v)| \dd v' \leq \Lambda r^{-2s}.  
\end{cases}
\end{equation}

We also state a global version of the cancellation assumptions
\eqref{e:Kcancellation0} and \eqref{e:Kcancellation1}.
\begin{equation}
\label{e:Kcancellation0G}
\forall v \in \R^d, \qquad \left| PV \int_{\R^d} (K(v,v')-K(v',v))  \dd v' \right| 
\le \Lambda .
\end{equation}

In the case $s \ge 1/2$, we also assume that for all $R>0$,
\begin{equation}
\label{e:Kcancellation1G}
\forall v \in \R^d, \qquad \left| PV \int_{B_R(v)} (K(v,v')-K(v',v)) (v-v') \dd v' \right| 
\le \Lambda (1+R^{1-2s}).
\end{equation}

The main result of this section will be that the bilinear form $\Ex$
is bounded in $H^s \times H^s$ provided that \eqref{e:Kabove-signed},
\eqref{e:Kcancellation0G} and \eqref{e:Kcancellation1G} hold. We also
show some other estimates that we will use.

\subsection{Estimates in $H^s$}

The main result of this section is the fact that the bilinear form
$\Ex$ is bounded in $H^s \times H^s$ as soon as our assumptions
\eqref{e:Kabove-signed}, \eqref{e:Kcancellation0G} and
\eqref{e:Kcancellation1G} hold. We state it in the following theorem.
\begin{thm}[Estimate in $H^s$] \label{t:upperbound} Let $K$ satisfy
  \eqref{e:Kabove-signed} and \eqref{e:Kcancellation0G}. If
  $s \geq 1/2$, we also assume that it satisfies
  \eqref{e:Kcancellation1G}. There then exists a constant $C$
  depending only on $s$, $d$ and $\Lambda$, so that
\[ \Ex(f,g) \leq C \|f\|_{H^s} \|g\|_{H^s}.\] 
\end{thm}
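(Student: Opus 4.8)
The plan is to split the kernel into a symmetric part $K_{\mathrm{sym}}(v,v') = \tfrac12(K(v,v')+K(v',v))$ and an antisymmetric part $K_{\mathrm{asym}}(v,v') = \tfrac12(K(v,v')-K(v',v))$, and to estimate the two resulting bilinear forms separately. For the symmetric part, write
\[
\Ex_{\mathrm{sym}}(f,g) = \tfrac12 \iint (f(v)-f(v'))(g(v)-g(v')) K_{\mathrm{sym}}(v,v') \dd v' \dd v,
\]
which is the standard divergence-form manipulation available because $K_{\mathrm{sym}}$ is symmetric. Then Cauchy--Schwarz gives $\Ex_{\mathrm{sym}}(f,g) \le \Ex_{\mathrm{sym}}(f,f)^{1/2} \Ex_{\mathrm{sym}}(g,g)^{1/2}$, so it suffices to bound the quadratic form $\Ex_{\mathrm{sym}}(f,f) \le C \|f\|_{\dot H^s}^2$. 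This last bound follows by a dyadic decomposition of the region $\{|v-v'| \approx 2^{-k}\}$: on each shell, $|f(v)-f(v')|^2 \lesssim$ is controlled and the mass of $K_{\mathrm{sym}}$ on the complement of $B_r(v)$ is $\lesssim \Lambda r^{-2s}$ by \eqref{e:Kabove-signed}, which is exactly the bound one needs to sum the geometric series and recover the Gagliardo seminorm — this is the content of the lemma referred to as \texttt{lem:upper-sym} in the introduction.

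The antisymmetric part is the genuinely new difficulty. Here one cannot symmetrize $g$, so instead write
\[
\Ex_{\mathrm{asym}}(f,g) = \iint (f(v)-f(v')) g(v) K_{\mathrm{asym}}(v,v') \dd v' \dd v,
\]
and Taylor-expand: $f(v)-f(v') = -\grad f(v)\cdot(v'-v) + (\text{quadratic remainder})$ on the near-diagonal region, while on the far region one uses the decay of $K_{\mathrm{asym}}$ directly. In the region $|v-v'| \gtrsim 1$ (or more precisely outside a unit ball), the cancellation assumption \eqref{e:Kcancellation0G} lets one integrate out $v'$ against $K_{\mathrm{asym}}(v,v')$ and bound the result by $\Lambda$ pointwise, yielding a term controlled by $\|f\|_{L^2}\|g\|_{L^2}$. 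On the near-diagonal region, the zeroth-order contribution $g(v)\,f(v) \int K_{\mathrm{asym}}(v,v')\dd v'$ is again handled by \eqref{e:Kcancellation0G}, and the first-order term $g(v)\,\grad f(v)\cdot \int (v-v') K_{\mathrm{asym}}(v,v')\dd v'$ is where \eqref{e:Kcancellation1G} enters (this is precisely why that extra hypothesis is needed only for $s\ge 1/2$, since for $s<1/2$ the first-order term is already controlled by the symmetric-type bound and need not be singled out). The remaining second-order remainder is absorbed using $|v-v'|^2 K_{\mathrm{asym}}$ integrability, which again comes from \eqref{e:Kabove-signed} in its third equivalent form from Section~\ref{sub:com}.

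The main obstacle, as the authors flag in the introduction, is the antisymmetric estimate: one must carefully split into near- and far-diagonal pieces so that in each piece the appropriate cancellation condition is the one that applies, and one must interpolate the commutator-type terms back into a clean $\|f\|_{H^s}\|g\|_{H^s}$ bound rather than, say, $\|f\|_{H^{2s}}\|g\|_{L^2}$. I expect this to require a Littlewood--Paley or dyadic frequency decomposition of $f$ and $g$, estimating the interaction of pieces at frequencies $2^j$ and $2^k$ and summing; the gain in each block comes from the fact that the antisymmetric part, despite being as singular as the symmetric one, only couples to the \emph{mean} of the difference quotient rather than its full size, which is quantitatively captured by \eqref{e:Kcancellation0G}--\eqref{e:Kcancellation1G}. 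Once both the symmetric and antisymmetric contributions are bounded by $C\|f\|_{H^s}\|g\|_{H^s}$, adding them proves the theorem.
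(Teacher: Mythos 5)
Your outline follows the same route as the paper: split $K$ into symmetric and antisymmetric parts, bound $\Esym$ by a dyadic-shell argument using \eqref{e:Kabove-signed} (this is Lemma \ref{lem:upper-sym}; your Cauchy--Schwarz reduction works once you replace $K_{\mathrm{sym}}$ by $|K_{\mathrm{sym}}|$), and handle $\Eskew$ by Taylor expansion near the diagonal with \eqref{e:Kcancellation0G} and \eqref{e:Kcancellation1G} absorbing the zeroth- and first-order moments, followed by a Littlewood--Paley summation. However, the two steps you defer to ``I expect this to require\dots'' are exactly where the work lies, and as sketched they do not close.

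First, your second-order remainder. Bounding $f(v')-f(v)+\nabla f(v)\cdot(v-v')$ pointwise by $|v-v'|^2$ times a second derivative and invoking the $|v-v'|^2 K$ moment from \eqref{e:Kabove-signed} yields an estimate in terms of $\|D^2 f\|_{L^\infty}$, not an $L^2$ bound. To get $\|\ell_0^2\|_{L^2}\lesssim R^{2-2s}\|D^2 f\|_{L^2}$ on the near-diagonal region $B_R(v)$, the paper uses a symmetrized Taylor remainder, $f(v')-f(v)-\tfrac12(\nabla f(v)+\nabla f(v'))\cdot(v'-v)$, controlled by an \emph{average} of $|D^2 f|$ over the ball with diameter $[v,v']$ (Lemma \ref{l:multi-path}), and then proves two-sided integrability of the resulting auxiliary kernel $A(v,w)$ using both lines of \eqref{e:Kabove-signed}. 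Without some averaged representation of the remainder, this step fails. Second, the frequency summation. The correct block estimate is $\|\Lv\Delta_i f\|_{L^2}\lesssim 2^{si}\|\Delta_i f\|_{H^s}$ (obtained from Lemmas \ref{l:using-Df} and \ref{l:using-D2f} after optimizing the splitting radius $R$ blockwise), but pairing this naively against $\Delta_j g$ gives $\sum_{i,j}2^{si}\|\Delta_i f\|_{L^2}\|\Delta_j g\|_{L^2}$, whose $i\gg j$ part produces precisely the $\|f\|_{H^{2s}}\|g\|_{L^2}$ loss you are worried about. The fix is to write $\E(\Delta_i f,\Delta_j g)=\langle \Delta_i f,\Lv^t\Delta_j g\rangle$ whenever $i>j$, i.e.\ always let the operator act on the \emph{lower}-frequency factor; this requires the same $L^2$ estimates for the adjoint $\Lv^t$, which differ from those for $\Lv$ by a multiplication operator controlled exactly by \eqref{e:Kcancellation0G}. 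Your proposal never mentions the adjoint, so the summation as described would not converge to $\|f\|_{H^s}\|g\|_{H^s}$.
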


It is convenient for some of our proofs to spit $\Ex$ into the
symmetric and anti-symmetric part of $K$. Let
\[ \Ex(\varphi,g) = \Esym (\varphi,g) + \Eskew (\varphi,g) \]
with 
\begin{align*}
\Esym(\varphi,g) &= \frac12 \iint (\varphi(v) - \varphi(v')) (g(v)-g(v'))  K(v,v') \dd v' \dd v,  \\
\Eskew(\varphi,g) &= \frac12 PV \iint (\varphi(v) - \varphi(v')) (g(v)+g(v'))  K(v,v') \dd v' \dd v.
\end{align*}
Note that $\Ex = \Esym$ and $\Eskew=0$ when the symmetry condition
$K(v,v') = K(v',v)$ holds. Likewise, when $K$ is anti-symmetric
(i.e. $K(v,v') = -K(v',v)$) then $\Esym=0$ and $\Eskew =
\Ex$.
Consequently, writing $K$ as the sum of its symmetric plus
anti-symmetric part corresponds to writing $\Ex$ as the sum of $\Esym$
and $\Eskew$.

We will prove Theorem \ref{t:upperbound} estimating $\Esym$ and $\Eskew$ separately. Note that, because of the density of smooth functions in $H^s$, it suffices to prove  Theorem \ref{t:upperbound} when $g$ and $\varphi$ are smooth.

\subsubsection{Estimate of the symmetric part}

\begin{lemma}[Estimate of the symmetric part] \label{lem:upper-sym} Let $K$ be a kernel satisfying
  \eqref{e:Kabove-signed}. Then, there exists a
  constant depending only on $\Lambda$, $s$ and dimension, so that for
  any function $g \in H^s(\R^d)$,
\[ \Esym(g,g) \leq C \|g\|_{\dot{H}^s}^2.\]
\end{lemma}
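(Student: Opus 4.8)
The object to control is
\[
\Esym(g,g) = \frac12 \iint_{\R^d \times \R^d} (g(v) - g(v'))^2 K(v,v') \dd v' \dd v,
\]
and I want to bound it by $C\|g\|_{\dot H^s}^2 = C \iint |g(v)-g(v')|^2 |v-v'|^{-d-2s} \dd v' \dd v$. The only information available on $K$ is the ``average'' decay \eqref{e:Kabove-signed}, so the natural idea is a dyadic decomposition of the region of integration according to the size of $|v-v'|$. Write $\R^d \times \R^d = \bigcup_{k \in \Z} A_k$ with $A_k = \{(v,v') : 2^{-k-1} \le |v-v'| < 2^{-k}\}$ (equivalently annuli $B_{2^{-k}}(v) \setminus B_{2^{-k-1}}(v)$ in the $v'$ variable for each $v$), and split $\Esym(g,g) = \sum_k I_k$ accordingly. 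On $A_k$ one has $|v-v'| \approx 2^{-k}$, so $|v-v'|^{-d-2s} \approx 2^{k(d+2s)}$; the goal is to show $\sum_k I_k \lesssim \|g\|_{\dot H^s}^2$.

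\textbf{Key steps.} First I would symmetrize in a way that exposes the $L^1$-type bound on $K$: since $(g(v)-g(v'))^2 \le 2 g(v)^2 + 2 g(v')^2$ is too lossy, instead I keep the square and use the elementary bound
\[
(g(v) - g(v'))^2 K(v,v') \le (g(v)-g(v'))^2 \, \tfrac12\big(|K(v,v')| + |K(v',v)|\big) + \text{(a genuinely signed remainder that vanishes by relabeling)} ,
\]
i.e. after relabeling $v \leftrightarrow v'$ in half the integral, $\Esym(g,g) \le \frac12 \iint (g(v)-g(v'))^2 \, \tfrac12(|K(v,v')|+|K(v',v)|) \dd v' \dd v$, because $K$ need not be nonnegative but the symmetric combination entering $\Esym(g,g)$ only sees $\tfrac12(K(v,v')+K(v',v))$ which is dominated in absolute value by $\tfrac12(|K(v,v')|+|K(v',v)|)$. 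This is exactly why assumption \eqref{e:Kabove-signed} is stated for both $K(v,v')$ and $K(v',v)$. Second, on each dyadic shell, for fixed $v$, $\int_{B_{2^{-k}}(v)\setminus B_{2^{-k-1}}(v)} |K(v,v')| \dd v' \le \Lambda (2^{-k-1})^{-2s} \approx \Lambda 2^{2ks}$ by \eqref{e:Kabove-signed}(i), and symmetrically using (ii). Third, I split $(g(v)-g(v'))^2 \le 2 g(v)^2 + 2 g(v')^2$ \emph{within each shell} and integrate: the $g(v)^2$ term gives $\int g(v)^2 \big(\int_{\text{shell}} |K(v,v')| \dd v'\big) \dd v \lesssim 2^{2ks} \|g\|_{L^2}^2$, which is NOT summable in $k$ — so this crude split fails and must be replaced by the finite-difference estimate. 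The correct move is to NOT split the square, and instead compare directly: since $|v-v'|^{-d-2s} \gtrsim 2^{k(d+2s)}$ on the shell,
\[
I_k \lesssim 2^{2ks} \iint_{A_k} (g(v)-g(v'))^2 \, \frac{|K(v,v')|+|K(v',v)|}{2} \cdot \frac{2^{-kd}}{2^{-kd}} \dd v' \dd v,
\]
and here I need to trade the $L^1$-bound on $K$ over the shell against the full $\dot H^s$ integrand. The clean way: bound $I_k \le 2^{k(d+2s)} 2^{-kd} \sup_v\big(\int_{A_k,v} |K| \big) \cdot (\text{something})$ does not close either. The genuinely correct approach is to interpolate: use \eqref{e:Kabove-signed} only to get that the measure $K(v,v')\dd v'$ restricted to the shell has total mass $\lesssim 2^{2ks}$, hence by Cauchy–Schwarz in the shell is dominated by $2^{ks}$ times the $L^2(\dd v')$ norm over a set of $v'$-measure $\lesssim 2^{-kd}$; combined with Fubini this yields $I_k \lesssim \iint_{A_k} |g(v)-g(v')|^2 |v-v'|^{-d-2s}\dd v'\dd v$ up to a universal constant, and then $\sum_k I_k \lesssim \|g\|_{\dot H^s}^2$ by monotone convergence since the $A_k$ tile $\R^d \times \R^d$.

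\textbf{Main obstacle.} The delicate point is making the per-shell estimate lose no powers of $2^k$: a naive $(g(v)-g(v'))^2 \le 2g(v)^2 + 2g(v')^2$ split costs a factor $2^{2ks}\|g\|_{L^2}^2$ which is summable only from one side and divergent from the other, so one cannot afford it. The resolution is that $\int_{\text{shell}(v)} |K(v,v')| \dd v' \lesssim 2^{2ks}$ must be paired with the \emph{finite difference} $(g(v)-g(v'))^2$ and the \emph{measure} $2^{-kd}$ of the shell so that the product reconstitutes precisely $|v-v'|^{-d-2s} \dd v' \dd v$ on $A_k$. Concretely: for fixed $v$, $\int_{\text{shell}(v)} (g(v)-g(v'))^2 |K(v,v')| \dd v' \le \|(g(v)-g(\cdot))^2\|_{L^\infty(\text{shell})} \cdot 2^{2ks}$ is again lossy; instead one keeps it as an integral and observes $2^{2ks} = 2^{k(d+2s)}\cdot 2^{-kd} \asymp |v-v'|^{-d-2s} |\text{shell}(v)|$, and since $|K(v,v')|\dd v'$ has mass $\lesssim |\text{shell}|\cdot 2^{k(d+2s)}$ relative to Lebesgue measure on the shell \emph{on average}, a rearrangement / Jensen argument (or simply Tonelli after writing $|K| \dd v' = \frac{|K|\dd v'}{|v-v'|^{-d-2s}\dd v'}\cdot |v-v'|^{-d-2s}\dd v'$ and bounding the ratio by its average $\lesssim \Lambda$) gives $I_k \lesssim \Lambda \iint_{A_k} (g(v)-g(v'))^2 |v-v'|^{-d-2s}\dd v'\dd v$. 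Summing over $k$ finishes it. I expect the write-up to hinge on stating this ratio-bound cleanly; using assumption \eqref{e:Kabove-signed}(ii) as well is what makes the argument symmetric in $v \leftrightarrow v'$ and hence legitimate for the non-symmetric kernel.
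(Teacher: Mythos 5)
Your dyadic decomposition into shells $r \le |v-v'| < 2r$ is exactly the paper's starting point, and you correctly identify that the crude split $(g(v)-g(v'))^2 \le 2g(v)^2 + 2g(v')^2$ cannot close. But the step you ultimately rely on to finish — writing $|K|\dd v'$ as a density against $|v-v'|^{-d-2s}\dd v'$ and ``bounding the ratio by its average $\lesssim \Lambda$'' — is not a valid inequality, and this is precisely where the real difficulty of the lemma lives. The hypothesis \eqref{e:Kabove-signed} controls only the \emph{total mass} of $|K(v,\cdot)|$ on a shell, not a pointwise ratio. If, for a fixed $v$, the kernel concentrates its allowed mass $\Lambda r^{-2s}$ near a single point $v_0'$ of the shell where $|g(v)-g(v_0')|^2$ greatly exceeds its shell average, then
\[
\int_{\mathrm{shell}} (g(v)-g(v'))^2\,|K(v,v')|\dd v' \approx \Lambda r^{-2s}\,|g(v)-g(v_0')|^2
\]
can be much larger than $\Lambda \int_{\mathrm{shell}} (g(v)-g(v'))^2\,|v-v'|^{-d-2s}\dd v'$. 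Replacing a measure by one with the same total mass but spread out uniformly only works under an integral against a \emph{constant} (or monotone-compatible) weight; against the arbitrary nonnegative weight $(g(v)-g(v'))^2$ it fails. Your alternative Cauchy--Schwarz sketch has the same problem: it would require $L^2$ control of $K$ on the shell, which is not assumed.

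The idea you are missing is the paper's auxiliary-point averaging. For $(v,v')$ in a shell with $m=(v+v')/2$, one writes
\[
|g(v)-g(v')|^2 \;\lesssim\; \fint_{B_{r/4}(m)} \Bigl( |g(v')-g(w)|^2 + |g(w)-g(v)|^2 \Bigr) \dd w ,
\]
which replaces the pointwise finite difference by an average over a full ball of volume $\approx r^d$. After Fubini, the term containing $|g(w)-g(v)|^2$ frees the variable $v'$, which is integrated against $K(v,\cdot)$ over (a subset of) $B_{2r}(v)\setminus B_r(v)$ using \eqref{e:Kabove-signed}(i); the term containing $|g(v')-g(w)|^2$ frees the variable $v$, integrated against $K(\cdot,v')$ using \eqref{e:Kabove-signed}(ii). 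Each produces the factor $\Lambda r^{-2s}$, and the remaining double integral of $|g(v)-g(w)|^2$ over $\{r/4 \le |v-w| < 5r/4\}$, divided by the volume $r^d$ of the averaging ball, reconstitutes the $\dot H^s$ integrand $|v-w|^{-d-2s}$ on a comparable (fattened) shell. Summing over dyadic scales, the fattened shells overlap with bounded multiplicity, giving $\Esym(g,g) \lesssim \Lambda \|g\|_{\dot H^s}^2$. It is this insertion of a third point $w$ — not a ratio or rearrangement argument — that converts the averaged mass bound on $K$ into a bound by the averaged finite differences; without it the per-shell comparison you assert is simply false. Note also that the correct first reduction is not a symmetrization of $K$ but simply replacing $K$ by $|K|$, which can only increase $\Esym(g,g)$ since the integrand $(g(v)-g(v'))^2$ is nonnegative.
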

\begin{proof}
  Without loss of generality, we can assume $K \geq 0$
  here. Otherwise, the value of $\Esym(g,g)$ would only increase if we
  replace $K(v,v')$ by $|K(v,v')|$. 
We write
\begin{equation} \label{e:Esym_as_sum}
 \Esym(g,g) =  \sum_{k=-\infty}^\infty P(2^k)
\end{equation}
where, for any $r >0$, 
\[
P(r) := \iint_{\{ (v,v') \in \R^d \times \R^d : r \leq |v-v'| < 2r\}}  |g(v') - g(v)|^2 K(v,v') \dd v' \dd v.
\]
The key of this proof is to estimate $P(r)$ with a similar
expression involving the kernel $|v-v'|^{-d-2s}$ instead. 

For any values of $v$ and $v'$, let $m = (v+v')/2$, we introduce
an auxiliary point $w \in B_{r/4}(m)$. From the triangle inequality
$|g(v') - g(v)|^2 \leq 2|g(v') - g(w)|^2 + 2|g(w) - g(v)|^2$. Then
\begin{align*}
  P(r) &\lesssim \frac 1 {r^d}  \iint_{\{ (v,v') \in \R^d \times \R^d : r \leq |v-v'| < 2r\}} 
         \int_{B_{r/4}(m)} \left(|g(v') - g(w)|^2 + |g(w) - g(v)|^2 \right) K(v,v') \dd w \dd v' \dd v,\\
  \intertext{we change the order of integration for each term in the integrand,}
       &\leq \frac 1 {r^d} \iint_{r/4 \leq |v'-w| < 5r/4} |g(v') - g(w)|^2 \left( \int_{\Omega_{v',w}} K(v,v') \dd v\right) \dd w \dd v' \\
       & \qquad \qquad + \frac 1 {r^d} \iint_{r/4 \leq |v-w| < 5r/4} |g(w) - g(v)|^2 \left( \int_{\Omega_{v,w}} K(v,v') \dd v' \right) \dd w \dd v.
         \intertext{Here the set $\Omega_{v,w}$ contains all values of $v'$ that correspond to any given pair $(v,w)$. 
         We will only use that $\Omega_{v,w} \subset B_{2r}(v) \setminus B_r(v)$. Both terms are bounded by the same expression 
         using each line in \eqref{e:Kabove-signed}. Thus,}
         P(r) &\lesssim \frac \Lambda {r^{d+2s}} \iint_{r/4 \leq |v-w| < 5r/4} |g(v') - g(w)|^2 \dd w \dd v 
                \lesssim \Lambda \iint_{r/4 \leq |v-w| < 5r/4} \frac{|g(v') - g(w)|^2}{|v-w|^{d+2s}} \dd w \dd v.
\end{align*}

Applying this estimate for each term in \eqref{e:Esym_as_sum}, we get the desired estimate. 
\end{proof}

\subsubsection{Estimates of the anti-symmetric part}

Finding an appropriate upper bound for $\Ex$ when $K$ is not symmetric
is more complicated than for $\Esym$. The cancellation assumptions
\eqref{e:Kcancellation0G} and \eqref{e:Kcancellation1G} are
necessary. We will prove the estimates differently for the case
$s \in (0,1/2)$ and $s \in [1/2,1)$. Note that the hypothesis
\eqref{e:Kcancellation1G} is only used in the later case.

\begin{lemma}[Estimate of $\Lv f$ for $s <1/2$] \label{l:using-Df}
  Assume $s \in (0,1/2)$. Let $K$ be a kernel satisfying
  \eqref{e:Kabove-signed}. The following estimate holds
\[
\|\Lv f\|_{L^2} \leq 
 C \|f\|_{L^2}^{\frac{1-2s}{1+2s}} \|f\|_{\dot H^{s+1/2}}^{\frac{4s}{1+2s}}.
\]
\end{lemma}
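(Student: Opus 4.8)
The plan is to split $\Lv f$ at a cutoff radius $\rho>0$, bound the near and far pieces in $L^2$ by powers of $\rho$, and then optimize in $\rho$. Since $s<1/2$, for $f$ smooth and compactly supported the integral defining $\Lv f$ converges absolutely with no principal value needed: $|f(v')-f(v)|\lesssim|v-v'|$ near $v$, and a dyadic summation of \eqref{e:Kabove-signed}(i) gives $\int_{B_r(v)}|v-v'|\,|K(v,v')|\dd v'\lesssim\Lambda r^{1-2s}<\infty$. By density of such $f$ in $L^2(\R^d)\cap\dot H^{s+1/2}(\R^d)$ it suffices to treat this case. I would write $\Lv f=A_\rho+B_\rho$, where $A_\rho(v)$ is the integral over $B_\rho(v)$ and $B_\rho(v)$ the integral over $\R^d\setminus B_\rho(v)$.

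For the far piece, write $B_\rho(v)=\int_{\R^d\setminus B_\rho(v)}f(v')K(v,v')\dd v'-f(v)\int_{\R^d\setminus B_\rho(v)}K(v,v')\dd v'$. The second term has $L^2$ norm $\lesssim\Lambda\rho^{-2s}\|f\|_{L^2}$ by \eqref{e:Kabove-signed}(i). For the first term I would apply Schur's test: \eqref{e:Kabove-signed}(i) bounds the row integral $\int_{\R^d\setminus B_\rho(v)}|K(v,v')|\dd v'\le\Lambda\rho^{-2s}$ and \eqref{e:Kabove-signed}(ii) bounds the column integral $\int_{\R^d\setminus B_\rho(v')}|K(v,v')|\dd v\le\Lambda\rho^{-2s}$, so the associated operator is bounded on $L^2$ with norm $\le\Lambda\rho^{-2s}$. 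Hence $\|B_\rho\|_{L^2}\lesssim\Lambda\rho^{-2s}\|f\|_{L^2}$.

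For the near piece I would use Cauchy--Schwarz with the weight $|v-v'|$:
\[ |A_\rho(v)|^2\le\Big(\int_{B_\rho(v)}|v-v'|\,|K(v,v')|\dd v'\Big)\Big(\int_{B_\rho(v)}\frac{|f(v')-f(v)|^2}{|v-v'|}\,|K(v,v')|\dd v'\Big)\lesssim\Lambda\rho^{1-2s}\int_{B_\rho(v)}\frac{|f(v')-f(v)|^2}{|v-v'|}\,|K(v,v')|\dd v', \]
the first factor being bounded by the dyadic estimate again — this is exactly where $s<1/2$ enters, to make $|v-v'|^{1}$ integrable against $|K|$ near the diagonal. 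Integrating in $v$, it remains to show $\iint_{|v-v'|<\rho}\frac{|f(v')-f(v)|^2}{|v-v'|}|K(v,v')|\dd v'\dd v\lesssim\Lambda\|f\|_{\dot H^{s+1/2}}^2$. For this I would run the auxiliary-point argument from the proof of Lemma~\ref{lem:upper-sym}: with $m=(v+v')/2$, average $|f(v')-f(v)|^2\le2|f(v')-f(w)|^2+2|f(w)-f(v)|^2$ over $w\in B_{|v-v'|/4}(m)$, then exchange the order of integration. In the term with $|f(w)-f(v)|^2$, fixing $v$ and $w$ forces $|v-v'|\approx|w-v|$, so \eqref{e:Kabove-signed}(i) integrates $K$ out over the relevant annulus, leaving $\iint\frac{|f(w)-f(v)|^2}{|w-v|^{d+2s+1}}\dd w\dd v=\|f\|_{\dot H^{s+1/2}}^2$; the exponent $d+2s+1$ comes out precisely because the Cauchy--Schwarz weight was $|v-v'|^{1}$. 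The term with $|f(v')-f(w)|^2$ is symmetric and uses \eqref{e:Kabove-signed}(ii). This gives $\|A_\rho\|_{L^2}\lesssim\Lambda\rho^{(1-2s)/2}\|f\|_{\dot H^{s+1/2}}$.

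Combining the two pieces yields $\|\Lv f\|_{L^2}\lesssim\Lambda\big(\rho^{(1-2s)/2}\|f\|_{\dot H^{s+1/2}}+\rho^{-2s}\|f\|_{L^2}\big)$ for every $\rho>0$; choosing $\rho=\big(\|f\|_{L^2}/\|f\|_{\dot H^{s+1/2}}\big)^{2/(1+2s)}$ balances the two terms and produces the stated inequality (the cases $\|f\|_{\dot H^{s+1/2}}\in\{0,\infty\}$ being trivial). The main obstacle is the near-piece estimate: the assumption \eqref{e:Kabove-signed} controls only averages of $K$ and allows a singular part, so $K$ cannot be bounded pointwise by $|v-v'|^{-d-2s}$; the auxiliary-point averaging is precisely the device that replaces the rough measure $|K(v,v')|\dd v'$ by the clean kernel of $\dot H^{s+1/2}$, and keeping track of the power of $\rho$ along the way is what makes the final interpolation exponents match.
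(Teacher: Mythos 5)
Your proof is correct and follows essentially the same route as the paper: the same three-way splitting at a cutoff radius (your Schur test for the far piece is exactly the paper's Cauchy--Schwarz plus Fubini), the same weighted Cauchy--Schwarz on the near piece reducing matters to the kernel $|v-v'|^{-1}K(v,v')$, which satisfies \eqref{e:Kabove-signed} with $s+1/2$ in place of $s$, and the same optimization in the cutoff. The only cosmetic difference is that you re-run the auxiliary-point averaging argument for the near piece, whereas the paper simply cites Lemma~\ref{lem:upper-sym} for the modified kernel.
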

\begin{proof}
For some $R >0$, to be determined below, let us write $\Lv f = \ell_0 + \ell_1 + \ell_2$, where
\begin{align*} 
 \ell_0(v) &= \int_{B_R(v)} (f(v') - f(v)) K(v,v') \dd v', \\
 \ell_1(v) &= \int_{\R^d \setminus B_R(v)} f(v') K(v,v') \dd v', \\
 \ell_2(v) &= -\left( \int_{\R^d \setminus B_R(v)} K(v,v') \dd v' \right) f(v).
\end{align*}

We prove the estimate for each one of the three terms.

Let us start with $\ell_2$, which is the easiest. In this case, obviously,
\[ \|\ell_2\|_{L^2} \leq \left( \sup_v \int_{\R^d \setminus B_R(v)}
  K(v,v') \dd v' \right) \|f\|_{L^2} \leq \Lambda R^{-2s}
\|f\|_{L^2}.\]

The estimate for $\ell_1$ involves the Cauchy-Schwarz inequality and an application of Fubini's
theorem. In this case we use the second line of \eqref{e:Kabove-signed}.
\begin{align*} 
 \|\ell_1\|_{L^2}^2 &= \int_{\R^d} \left( \int_{\R^d\setminus B_R (v)} f(v') K(v,v') \dd v' \right)^2 \dd v, \\
&\leq \int_{\R^d} \left( \int_{\R^d\setminus B_R (v)} K(v,v') \dd v' \right) \left( \int_{\R^d\setminus B_R (v)} f(v')^2 K(v,v') \dd v' \right) \dd v, \\
&\leq \Lambda R^{-2s} \int_{\R^d} f(v')^2 \left( \int_{\{v : |v'-v|>R\}} K(v,v') \dd v\right)  \dd v' \leq \Lambda^2 R^{-4s} \|f\|_{L^2}^2.
\end{align*}

We estimate $\ell_0$ using the Cauchy-Schwarz inequality together with \eqref{e:Kabove-signed} and comments from  Subsection~\ref{sub:com}.
\begin{align*}
\|\ell_0\|_{L^2}^2 &= \int_{\R^d} \left( \int_{B_R(v)} (f(v')-f(v)) K(v,v') \dd v' \right)^2 \dd v, \\
&\leq \int_{\R^d} \left( \int_{B_R(v)} (f(v')-f(v))^2 |v-v'|^{-1} K(v,v') \dd v' \right)\left( \int_{B_R(v)} |v-v'| K(v,v') \dd v' \right) \dd v , \\
&\leq \Lambda R^{1-2s} \iint_{|v-v'|<r} (f(v')-f(v))^2 |v-v'|^{-1} K(v,v') \dd v' \dd v .
\end{align*}

The kernel $|v-v'|^{-1} K(v,v')$ satisfies \eqref{e:Kabove} with $s+1/2$ instead of $s$. Then, we apply Lemma \ref{lem:upper-sym} to get
\[ \|\ell_0\|_{L^2}^2 \lesssim R^{1-2s} \|f\|_{\dot H^{s+1/2}} ^2 .\]

The proof is finished choosing $R = (\|f\|_{L^2} / \| f\|_{H^{s+1/2}} )^{2/(1+2s)}$.
\end{proof}

The estimate for $\|\Lv f\|_{L^2}$ when $s \geq 1/2$ is harder to obtain. We will use the following auxiliary kernel.
\[ A(v,w) = 
\int_{\{v' \in B_R (v): (v'-v)\cdot(w-v) \geq |w-v|^2\}}  \frac{|v'-v|^{d-2} K(v,v')}{|w-v|^{d-2} |w-v'|^{d-2}} \dd v'.
 \]
\begin{lemma}[Estimates on the auxiliary kernel]
Let $K$ be a kernel satisfying \eqref{e:Kabove-signed} and $s \geq 1/2$. We have
\begin{equation} \label{e:Akernel-bis}
 \int_{B_R(v)} |A(v,w)| \dd w \lesssim R^{2-2s} \quad \text{ and } \quad \int_{B_R(w)} |A(v,w)| \dd v \lesssim R^{2-2s}.
\end{equation}
\end{lemma}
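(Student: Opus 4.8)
The strategy is to reduce both estimates in \eqref{e:Akernel-bis} to the upper bound assumption \eqref{e:Kabove-signed} by changing the order of integration. Recall that for each $v$, the variable $v'$ in the definition of $A(v,w)$ ranges over $B_R(v)$, so when we integrate $|A(v,w)|$ in $w$ over $B_R(v)$ (resp. in $v$ over $B_R(w)$), we get a triple integral in $v$, $v'$ and $w$ which we want to rearrange so that the $K(v,v')$ factor is integrated last against its complementary-ball assumption \eqref{e:Kabove-signed}.

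\emph{First inequality.} Fix $v$ and exchange the $w$ and $v'$ integrals:
\[
\int_{B_R(v)} |A(v,w)| \dd w \leq \int_{B_R(v)} |K(v,v')| \, |v'-v|^{d-2} \left( \int_{\{w : (v'-v)\cdot(w-v) \geq |w-v|^2\}} \frac{\dd w}{|w-v|^{d-2} |w-v'|^{d-2}} \right) \dd v'.
\]
The geometric constraint $(v'-v)\cdot(w-v) \geq |w-v|^2$ forces $w$ to lie in the ball of diameter $[v,v']$; in particular $|w-v| \le |v'-v|$ and $|w-v'| \le |v'-v|$, so the inner integral is a convergent integral (both exponents $d-2 < d$) over a region of diameter $|v'-v|$. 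A scaling computation gives that the inner integral is $\lesssim |v'-v|^{2-(d-2)} = |v'-v|^{4-d}$ when $d \ge 5$; in low dimensions one checks directly that it is $\lesssim |v'-v|^{2-(d-2)}$ as well, since near $w=v$ and near $w=v'$ the singularities $|w-v|^{-(d-2)}$ and $|w-v'|^{-(d-2)}$ are integrable and away from those points the integrand is bounded by $|v'-v|^{-2(d-2)}$ on a set of measure $\lesssim |v'-v|^d$. Hence
\[
\int_{B_R(v)} |A(v,w)| \dd w \lesssim \int_{B_R(v)} |K(v,v')| \, |v'-v|^{d-2} \cdot |v'-v|^{4-d} \dd v' = \int_{B_R(v)} |K(v,v')| \, |v'-v|^{2} \dd v'.
\]
Now the third (equivalent) form of \eqref{e:Kabove} recalled in Subsection~\ref{sub:com} — namely $\int_{B_r(v)} |v-v'|^2 |K(v,v')| \dd v' \lesssim \Lambda r^{2-2s}$, valid here with $|K|$ in the signed setting — gives the bound $\lesssim \Lambda R^{2-2s}$, as desired.

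\emph{Second inequality.} This is the same computation with the roles of $v$ and $v'$ essentially swapped, using the \emph{second} line of \eqref{e:Kabove-signed}. Fix $w$; for the inner region to be nonempty we need $v' \in B_R(v)$ and the constraint $(v'-v)\cdot(w-v)\ge|w-v|^2$, which again keeps $|w-v|, |w-v'| \le |v'-v|$ and in particular forces $v, v'$ to lie within distance $\lesssim R$ of $w$ once we have integrated the admissible $w$'s. Integrating $|A(v,w)|$ in $v$ over $B_R(w)$, exchanging the $v'$ and $v$ integrals, and using the same pointwise bound on the geometric kernel yields
\[
\int_{B_R(w)} |A(v,w)| \dd v \lesssim \iint |K(v,v')| \, |v'-v|^{2} \, \un_{\{v' \in B_{CR}(w),\ |v'-v| \le R\}} \dd v \dd v',
\]
and then, slicing in dyadic annuli $|v-v'| \approx 2^{-k} R$ and applying the second line of \eqref{e:Kabove-signed} in the form $\int_{B_{2\rho}(v')\setminus B_\rho(v')} |K(v,v')| \dd v \lesssim \Lambda \rho^{-2s}$, we sum $\sum_{2^{-k}R \le R} (2^{-k}R)^2 \cdot (2^{-k}R)^{-2s} \lesssim R^{2-2s}$ (the sum converges since $2-2s > 0$). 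This gives the second bound.

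\emph{Main obstacle.} The only genuinely delicate point is the estimate of the purely geometric inner integral $\int \frac{\dd w}{|w-v|^{d-2}|w-v'|^{d-2}}$ over the ball of diameter $[v,v']$: one must confirm it is finite (both exponents are $<d$, so yes) and extract the correct power $|v'-v|^{4-d}$ of the separation by scaling, taking a little extra care in dimensions $d \le 4$ where $4-d \ge 0$ — but in all cases the bound $|v'-v|^{2-(d-2)}$ is what scaling predicts and what one needs to marry up with the third form of \eqref{e:Kabove}. Everything else is Fubini plus the already-established equivalent forms of the upper bound assumption. Note also that the restriction $s \ge 1/2$ is not actually needed for \eqref{e:Akernel-bis} itself — it is the hypothesis under which this auxiliary kernel is introduced — and the convergence of the dyadic sum only uses $s < 1$.
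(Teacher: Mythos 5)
Your treatment of the first inequality is correct and is essentially the paper's argument: Fubini in $w$, the observation that the constraint $(v'-v)\cdot(w-v)\ge |w-v|^2$ confines $w$ to the ball with diameter $[v,v']$, the scaling evaluation of the purely geometric integral as $C|v-v'|^{4-d}$, and then the equivalent form of \eqref{e:Kabove-signed} giving $\int_{B_R(v)}|K(v,v')|\,|v-v'|^2\dd v'\lesssim R^{2-2s}$.

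The second inequality, however, is \emph{not} "the same computation with the roles swapped", and your argument for it has a genuine gap. In the first estimate the variable $w$ appears only in the geometric factor, so the innermost integral $\int \frac{\dd w}{|w-v|^{d-2}|w-v'|^{d-2}}$ can be computed in isolation and produces the power $|v-v'|^{4-d}$. In the second estimate $w$ is a \emph{fixed parameter} and the integration variables are $v$ and $v'$, both of which also enter $K(v,v')$; there is no third integration left to absorb the singular factors $|w-v|^{-(d-2)}$ and $|w-v'|^{-(d-2)}$. Your displayed reduction therefore amounts to the pointwise bound $\frac{|v'-v|^{d-2}}{|w-v|^{d-2}|w-v'|^{d-2}}\lesssim |v'-v|^{2}$, which is false (take $v$ close to $w$). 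Moreover, even if one grants the displayed intermediate bound, it does not yield the conclusion: for each fixed $v'$ the inner integral $\int_{|v-v'|\le R}|K(v,v')|\,|v-v'|^{2}\dd v\lesssim R^{2-2s}$, but you must then still integrate over $v'\in B_{CR}(w)$, which produces an extra factor of order $R^{d}$, giving $R^{d+2-2s}$ rather than $R^{2-2s}$.

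The correct route (the paper's) exploits the geometric constraint differently. Taking $w=0$, the constraint says $0$ lies in the ball with diameter $[v,v']$, hence $|v|\le|v-v'|$ and $|v'|\le|v-v'|$. One splits $|v-v'|^{d-2}\lesssim |v|^{d-2}+|v'|^{d-2}$, so that in each resulting term one of the two singular denominators cancels. In the term carrying $|v|^{d-2}$ the integrand becomes $|K(v,v')|\,|v'|^{2-d}$; freezing $v'$ and using that the admissible $v$ satisfy $|v-v'|\ge|v'|$, the second line of \eqref{e:Kabove-signed} gives $\int|K(v,v')|\dd v\lesssim |v'|^{-2s}$, and then $\int_{B_{2R}}|v'|^{2-d-2s}\dd v'\approx R^{2-2s}$ (this is where $s<1$ enters). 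The other term is handled symmetrically, freezing $v$ and using the first line of \eqref{e:Kabove-signed} with $|v-v'|\ge|v|$. The point is that the locally integrable weight $|v'|^{2-d}$ (not the full volume of $B_{CR}(w)$) is what compensates the tail bound $|v'|^{-2s}$; discarding the geometric factors destroys exactly this compensation.
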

\begin{proof}
  The first of the two inequalities in \eqref{e:Akernel-bis} is a
  relatively straight forward computation using \eqref{e:Kabove-signed}. Let us choose $v=0$
  without loss of generality. We have
\begin{align*} 
 \int_{B_R} |A(0,w)| \dd w &\leq \int_{B_R}  \int_{\{v' \in B_R : 
v'\cdot w \geq |w|^2\}}  \frac{|v'|^{d-2} |K(0,v')|}{|w|^{d-2} |w-v'|^{d-2}} \dd v' \dd w, \\
&= \int_{B_R} |v'|^{d-2} |K(0,v')| \left( \int_{B_{|v'|/2}(v'/2)}  \frac{1}{|w|^{d-2} |w-v'|^{d-2}} \dd w \right) \dd v', \\
&= C \int_{B_R } |K(0,v')| |v'|^2 \dd v' \lesssim C R^{2-2s}.
\end{align*}

Let us move to the second inequality in \eqref{e:Akernel-bis}. Assume without loss of generality that $w=0$. We have
\begin{align*} 
 \int_{B_R} |A(v,0)| \dd v &\leq \int_{B_R} \int_{\{v' \in B_R(v) : v' \cdot v \leq 0\}}  \frac{|v'-v|^{d-2} |K(v,v')|}{|v|^{d-2} |v'|^{d-2}} \dd v' \dd v \leq I_1 + I_2.
\end{align*}
From the triangle inequality
$|v'-v|^{d-2} \lesssim |v|^{d-2} + |v'|^{d-2}$, we can estimate the
above integral by $I_1 + I_{2}$, where $I_{1}$ and $I_{2}$ are defined
below.
We analyze both terms using \eqref{e:Kabove-signed} and Fubini's
theorem.
\begin{align*}
I_{1} :&= \int_{B_R} \int_{\{v' : |v'-v| < R \text{ and } v' \cdot v \leq 0\} }  \frac{|v|^{d-2} |K(v,v')|}{|v|^{d-2} |v'|^{d-2}} \dd v' \dd v, \\
&=\int_{B_{2R}} \int_{\{v : |v-v'|<R \text{ and } v' \cdot v \leq 0\} \cap B_R}  \frac{ |K(v,v')|}{ |v'|^{d-2}} \dd v \dd v', \\
&\lesssim R^{-2s} \int_{B_{2R}} |v'|^{2-d} \dd v' \lesssim R^{2-2s}.
\end{align*}

We now consider
\[I_{2} := \int_{B_R} \int_{\{v' : |v'-v| < R \text{ and } v' \cdot v \leq 0\} }  \frac{|v'|^{d-2} |K(v,v')|}{|v|^{d-2} |v'|^{d-2}} \dd v' \dd v.\] 
The computation that proves that $I_{2} \lesssim R^{2-2s}$ is almost identical integrating in $v'$ first and in $v$ second.

This concludes the estimate for every term involved in \eqref{e:Akernel-bis}.
\end{proof}
We will use the following lemma from multivariate calculus when
proving estimates on the operator $\Lv f$ associated with the kernel
$K$.
\begin{lemma}[The multi-path lemma] \label{l:multi-path} Let $f: \R^d \to \R$ be any twice differentiable
  function. The following inequality holds for any pair of points
  $v, v' \in \R^d$.
\begin{align}
\label{e:sing2}
\left\vert f(v')-f(v) -  \frac{\nabla f (v) + \nabla f(v')} 2 \cdot (v'-v) \right\vert & \le \frac{1}{d\omega_{d}} |v-v'|^{d-2} \int_{B_R(m)}
\frac{|D^2 f (w)|}{|w-v|^{d-2}|w-v'|^{d-2}} \dd w .
\end{align}
Here $R = |v-v'|/2$ and $m = (v+v')/2$. Thus, $B_R(m)$ is the ball with diameter from $v$ to $v'$.
\end{lemma}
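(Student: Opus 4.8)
\emph{Setup.} The quantity to bound, $D(v,v') := f(v')-f(v)-\tfrac12\big(\nabla f(v)+\nabla f(v')\big)\cdot(v'-v)$, is the trapezoidal error of the line integral $\int_{[v,v']}\nabla f\cdot d\ell$. The ``multi-path'' idea is that $f(v')-f(v)$ equals the integral of $\nabla f$ along \emph{any} path from $v$ to $v'$; the plan is to use the broken path $v\to w\to v'$ for every auxiliary point $w$. Expanding $\nabla f$ to first order with integral remainder along each of the two segments $[v,w]$ and $[w,v']$ and subtracting yields, for every $w\in\R^d$, the exact identity
\[ D(v,v') \;=\; \big(\nabla f(v)-\nabla f(v')\big)\cdot(w-m) \;+\; T_v(w)\;-\;T_{v'}(w), \]
where $T_p(w):=f(w)-f(p)-\nabla f(p)\cdot(w-p)=\int_0^1(1-\tau)\,\langle D^2f\big(p+\tau(w-p)\big)(w-p),\,w-p\rangle\dd\tau$ is the first-order Taylor remainder of $f$ at $p$; the only algebra used is $(w-v)-\tfrac12(v'-v)=w-m=(w-v')+\tfrac12(v'-v)$.

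\emph{Symmetric averaging.} Next I would average this identity over $w\in B_R(m)$ against a weight $\mu\ge 0$ invariant under the reflection $w\mapsto 2m-w$. Since $R=|v-v'|/2$ and $m=(v+v')/2$, the points $v$ and $v'$ are antipodal on $\partial B_R(m)$, so this reflection maps $B_R(m)$ onto itself and exchanges $|w-v|$ with $|w-v'|$; thus the symmetric choice $\mu(w)=|w-v|^{2-d}|w-v'|^{2-d}$ is admissible, and $C:=\int_{B_R(m)}\mu\,\dd w$ is finite (equal to a dimensional constant times $(|v-v'|/2)^{4-d}$, $\mu$ being locally integrable up to $\partial B_R(m)$). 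Because $w-m$ is odd under the reflection, the first-order term drops out, leaving $D(v,v')=\tfrac1C\int_{B_R(m)}\big(T_v(w)-T_{v'}(w)\big)\mu(w)\dd w$. Substituting the integral formula for $T_v$ and $T_{v'}$, using Tonelli's theorem, and changing variables $u=v+\tau(w-v)$ (respectively $u=v'+\tau(w-v')$) at each fixed $\tau\in(0,1)$, whose image is a ball contained in $B_R(m)$ by convexity with $|w-v|=|u-v|/\tau$ and $|w-v'|=|u-p_\tau|/\tau$ where $p_\tau:=(1-\tau)v+\tau v'$ runs along the segment $[v,v']$, this rewrites $D(v,v')=\int_{B_R(m)}\langle D^2f(u),\Theta(u)\rangle\dd u$ for an explicit matrix kernel $\Theta$ assembled from $\tau$-integrals of the rank-one matrices $(u-v)\otimes(u-v)$ and $(u-v')\otimes(u-v')$.

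\emph{The hard part.} The proof is then completed by controlling this representation by $\tfrac1{d\omega_d}|v-v'|^{d-2}\int_{B_R(m)}|D^2f(u)|\,|u-v|^{2-d}|u-v'|^{2-d}\dd u$. This last estimate is the heart of the matter and the main obstacle: the two contributions coming from $T_v$ and $T_{v'}$ separately produce $\tau$-kernels that are singular along the segment $[v,v']$ and near the endpoints $v,v'$ (there the factors $|u-p_\tau|^{2-d}$ fail to be integrable in $\tau$ when $d\ge 3$), so $\Theta$ cannot be estimated by bounding each piece crudely in operator norm. One must keep the signs and exploit the cancellation produced both by the symmetric averaging and by combining the two changes of variables, together with the fact that at each $u$ the kernel $\Theta(u)$ is paired with $D^2f(u)$ only along the directions $u-v$ and $u-v'$. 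Concretely, I would reorganise the $\tau$-integral and pass to polar coordinates centred at $v$ and at $v'$, reducing matters to a one-dimensional estimate along each chord of $B_R(m)$ issuing from an endpoint, with the residual singularity confined to the single direction $(v'-v)/|v'-v|$ and absorbed upon integrating over the sphere of directions; the accumulated dimensional constant then has to be tracked to produce exactly $1/(d\omega_d)$. The identity of the first paragraph, the vanishing of the first-order term, the finiteness of $C$, and the changes of variables are all routine bookkeeping; the singular, cancellation-dependent estimate of $\Theta$ near $[v,v']$ and near the endpoints is where the real work lies.
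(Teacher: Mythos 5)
Your multi-path identity $D(v,v') = (\nabla f(v)-\nabla f(v'))\cdot(w-m) + T_v(w) - T_{v'}(w)$ and the symmetric-averaging step are correct, but the argument stops at the decisive point: the bound on the matrix kernel $\Theta$ is never established. You yourself observe that with the weight $\mu(w)=|w-v|^{2-d}|w-v'|^{2-d}$ the change of variables $u=v+\tau(w-v)$ produces the factor $|u-p_\tau|^{2-d}$, whose $\tau$-integral is singular along the whole segment $[v,v']$ (non-integrable there for $d\ge 4$), so neither piece of your representation is separately dominated by the right-hand side of \eqref{e:sing2}. Appealing to an unspecified cancellation between the $T_v$ and $T_{v'}$ contributions is not a proof: $D^2f$ carries no sign, so at the end the two pieces must be controlled in absolute value, and you give no mechanism by which the segment singularity disappears. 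As written, the proof does not close; moreover the difficulty is an artifact of your choice of weight, not an intrinsic feature of the lemma. (The worry about reproducing the exact constant $1/(d\omega_d)$ is a non-issue: the lemma is only used up to dimensional constants, and the paper's own proof tracks none.)

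The paper's proof is your scheme with $\mu\equiv 1$, and with that choice no cancellation is needed. Since $\fint_{B_R(m)}(w-m)\dd w=0$, averaging the identity uniformly compares both $f(v)+\nabla f(v)\cdot\frac{v'-v}{2}$ and $f(v')+\nabla f(v')\cdot\frac{v-v'}{2}$ to the single pivot $\fint_{B_R(m)}f$, and each difference is bounded \emph{separately in absolute value}: writing $T_v(w)$ with its integral remainder along the ray from $v$ to $w$ and using polar coordinates centred at $v$ (which lies on $\partial B_R(m)$, so the ball is star-shaped about $v$ and every such ray stays inside it), Fubini in the radial variable gives
$\bigl\vert \fint_{B_R(m)} T_v(w)\dd w\bigr\vert \lesssim \int_{B_R(m)} |D^2 f(w)|\,|w-v|^{2-d}\dd w$,
a kernel singular only at the single point $v$ and integrable there. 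The factor $|w-v'|^{2-d}\ge (2R)^{2-d}$ is then inserted for free (every $w\in B_R(m)$ satisfies $|w-v'|\le |v-v'|$), producing exactly the kernel $|v-v'|^{d-2}|w-v|^{2-d}|w-v'|^{2-d}$, and the triangle inequality applied to the two pieces finishes the proof. To repair your write-up, either rerun your own computation with the uniform weight, or actually carry out the cancellation estimate you invoke — the former is a few lines, the latter you have not done.
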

\begin{proof}
For any $w \in B_R(m)$, we write 
\[ \left\vert f(w)-f(v) - \nabla f(v)  \cdot (w-v) \right\vert  \le |w-v|
\int_{0}^{|w-v|} |D^2 f| (v + z \widehat{w-v}) \dd z.\]
where $\widehat{w-v} = (w-v)/|w-v|$.  In particular, computing with
spherical coordinates the integral in the first line below with origin
at $w=v$,
\begin{align*}
\left \vert \left( \fint_{B_R(m)} f(w) \dd w \right)  - f(v) - \nabla f(v) \cdot \frac{(v'-v)}2 \right \vert 
&= \left \vert \fint_{B_R(m)} f(w) - f(v) - \nabla f(v) \cdot (w-v) \dd w \right \vert, \\
& \le \fint_{B_R(m)} |w-v| \left\{ \int_0^{|w-v|} |D^2 f| (v + \zeta \widehat{w-v}) \dd \zeta \right\} \dd w \\
& \lesssim \int_{B_R(m)} \frac{|D^2 f| (w)}{|w-v|^{d-2}} \dd w.
\end{align*}
This implies that 
\[ \left \vert \left( \fint_{B_R(m)} f(w) \dd w \right)  - f(v) - \nabla f(v) \cdot \frac{(v'-v)}2 \right \vert  \lesssim R^{d-2} \int_{B_R(m)} \frac{|D^2 f|
  (w)}{|w-v|^{d-2}|w-v'|^{d-2}} \dd w .\]
Exchanging the role of $v$ and $v'$ and subtracting the resulting
inequalities yields \eqref{e:sing2}.
\end{proof}

\begin{lemma}[Estimate of $\Lv f$ for $s \ge 1/2$] \label{l:using-D2f}
  Assume $s \in [1/2,1)$. Let $K$ be an anti-symmetric kernel (i.e.
  $K(v,v') = -K(v',v)$) satisfying \eqref{e:Kabove-signed} and
  \eqref{e:Kcancellation1G}. The following
  estimate holds
\[
\|\Lv f\|_{L^2} \leq C \|f\|_{L^2}^{1-s} \|D^2f\|_{L^2}^{s}+ \Lambda \|\nabla f \|_{L^2}.
\]
\end{lemma}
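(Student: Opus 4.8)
The plan is to mimic the structure of the proof of Lemma \ref{l:using-Df}: split $\Lv f$ into a near part and a far part at some radius $R>0$ to be optimized, estimate each in $L^2$, and then choose $R$ to balance the two contributions. The far part, i.e. the piece $\int_{\R^d \setminus B_R(v)} (f(v')-f(v)) K(v,v') \dd v'$, is treated exactly as the terms $\ell_1$ and $\ell_2$ in Lemma \ref{l:using-Df}: by Cauchy--Schwarz and Fubini, using both lines of \eqref{e:Kabove-signed}, it is bounded by $C \Lambda R^{-2s} \|f\|_{L^2}$. The new difficulty is the near part, which for a merely $\dot H^{s+1/2}$-type bound would not suffice now that $s \geq 1/2$; here we must exploit the antisymmetry of $K$ and the extra cancellation \eqref{e:Kcancellation1G}.

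For the near part, write
\[
\int_{B_R(v)} (f(v')-f(v)) K(v,v') \dd v' = \int_{B_R(v)} \left( f(v')-f(v) - \frac{\nabla f(v)+\nabla f(v')}{2}\cdot(v'-v) \right) K(v,v') \dd v' + \int_{B_R(v)} \frac{\nabla f(v)+\nabla f(v')}{2}\cdot(v'-v)\, K(v,v') \dd v'.
\]
In the second integral, split the numerator as $\nabla f(v)\cdot(v'-v)$ plus $\tfrac12(\nabla f(v')-\nabla f(v))\cdot(v'-v)$; the first of these, upon integrating in $v'$ over $B_R(v)$, is exactly $\nabla f(v)$ paired against $PV\int (v'-v)K(v,v') \dd v'$, which is controlled in $L^2_v$ by $\Lambda\|\nabla f\|_{L^2}$ thanks to \eqref{e:Kcancellation1G} (here using antisymmetry so that $K(v,v')-K(v',v)=2K(v,v')$). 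The term $\tfrac12(\nabla f(v')-\nabla f(v))\cdot(v'-v)$ is $O(|v-v'|^2|D^2f|)$ along the segment, so after Cauchy--Schwarz against $|v-v'|^2 K$ and the third form of \eqref{e:Kabove-signed} (from Subsection \ref{sub:com}) it contributes $\lesssim R^{1-s}\,$-type powers times a second-derivative quantity — more precisely it is handled like $\ell_0$ with the kernel $|v-v'|^2 K$, which satisfies the upper bound with $s+1$ in place of $s$. For the first integral (the one with the multi-path error) we apply Lemma \ref{l:multi-path} to bound the integrand by $C|v-v'|^{d-2}\int_{B_R(m)}\frac{|D^2f(w)|}{|w-v|^{d-2}|w-v'|^{d-2}}\dd w$, insert this into $\int_{B_R(v)}|K(v,v')|\dd v'$, and recognize the resulting kernel acting on $|D^2 f|$ as precisely the auxiliary kernel $A(v,w)$ (after relabeling); the bounds \eqref{e:Akernel-bis} on $\int |A(v,w)|\dd w$ and $\int|A(v,w)|\dd v$ are what make Schur's test applicable, giving an $L^2 \to L^2$ bound of order $R^{2-2s}$ on $|D^2f|$.

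Collecting the estimates, the near part is bounded by $C(R^{2-2s}\|D^2 f\|_{L^2} + R^{1-s}\|D^2f\|_{L^2})$ plus $\Lambda\|\nabla f\|_{L^2}$, and the far part by $C\Lambda R^{-2s}\|f\|_{L^2}$; since $R\le 1$ can be assumed (or the $R^{2-2s}$ term dominates $R^{1-s}$ anyway in the relevant regime) we get roughly $\|\Lv f\|_{L^2}\lesssim R^{-2s}\|f\|_{L^2}+R^{2-2s}\|D^2f\|_{L^2}+\Lambda\|\nabla f\|_{L^2}$. Optimizing in $R$ by setting $R=(\|f\|_{L^2}/\|D^2f\|_{L^2})^{1/2}$ yields the exponents $\|f\|_{L^2}^{1-s}\|D^2f\|_{L^2}^{s}$, which is the claimed bound.

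\textbf{Main obstacle.} The delicate point is the identification of the multi-path remainder term with the auxiliary kernel $A(v,w)$ and the verification that the domain restriction $(v'-v)\cdot(w-v)\ge |w-v|^2$ built into $A$ is compatible with $w$ ranging over the ball $B_R(m)$ with diameter $[v,v']$; one must bookkeep carefully which of the two variables plays the role of the "base point" so that both inequalities in \eqref{e:Akernel-bis} can be invoked for Schur's test, and track that the antisymmetry of $K$ is genuinely needed both in the $\nabla f(v)\cdot(v'-v)$ cancellation and in pairing the two symmetrized gradient contributions. The rest is routine Cauchy--Schwarz, Fubini, and power counting.
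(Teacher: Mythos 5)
Your proposal follows essentially the same route as the paper: the same far/near split at radius $R$ with the far part bounded by $\Lambda R^{-2s}\|f\|_{L^2}$, the same three-way decomposition of the near part into the gradient-difference term, the term $\nabla f(v)\cdot\int(v'-v)K\,\dd v'$ controlled by \eqref{e:Kcancellation1G}, and the multi-path remainder handled via Lemma \ref{l:multi-path} together with the Schur-test bounds \eqref{e:Akernel-bis} on $A(v,w)$, followed by the same choice $R=(\|f\|_{L^2}/\|D^2f\|_{L^2})^{1/2}$. The one loose point is your final tally $R^{1-s}\|D^2f\|_{L^2}$ for the gradient-difference term, which would give the wrong exponents after optimizing in $R$; the paper instead keeps this term as $R^{1-s}\|\nabla f\|_{\dot H^s}$ (obtained by exactly the Cauchy--Schwarz mechanism you describe, applied to $\nabla f$ with the kernel $(v-v')K(v,v')$, which satisfies \eqref{e:Kabove-signed} with $s-1/2$ in place of $s$) and then interpolates $\|\nabla f\|_{\dot H^s}\lesssim\|f\|_{L^2}^{(1-s)/2}\|D^2f\|_{L^2}^{(1+s)/2}$, which does produce the claimed $\|f\|_{L^2}^{1-s}\|D^2f\|_{L^2}^{s}$ once the optimal $R$ is substituted.
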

\begin{proof}
  We write $\Lv f = \ell_0 + \ell_1 +
  \ell_2$ like in Lemma \ref{l:using-Df}.
  The estimates $\| \ell_1 \|_{L^2} \leq \Lambda R^{-2s} \|f\|_{L^2}$ and $\| \ell_2 \|_{L^2}\leq \Lambda R^{-2s} \|f\|_{L^2}$
  follow an identical proof. The estimate for $\| \ell_0 \|_{L^2}$ is
  different.

Recall that
\[  \ell_0(v) = \int_{B_R(v)} (f(v') - f(v)) K(v,v') \dd v' \]
We write $\ell_0 =\ell_0^0+\ell_0^1 + \ell_0^2$ with
\begin{align*}
\ell_0^0 &= \frac 12 \int_{B_R(v)}  \left(\nabla f (v') - \nabla f(v)\right) (v'-v) K (v,v') \dd v' ,\\
\ell_0^1 & =  \nabla f (v) \int_{B_R(v)} (v'-v) K (v,v') \dd v',\\
\ell_0^2 & = \int_{B_R(v)} \left(f(v') - f(v)-  \frac{\nabla f (v) + \nabla f(v')} 2 \cdot (v'-v) \right) K(v,v') \dd v'.
\end{align*}
The same argument that gives us the upper bound for $\ell_0$ in Lemma \ref{l:using-Df} gives us in this case
\[ \|\ell_0^0\|_{L^2} \leq C R^{1-s} \|\nabla f\|_{\dot H^{s}} \lesssim R^{1-s} \|f\|_{L^2}^{(1-s)/2} \|D^2 f\|_{L^2}^{(1+s)/2} . \]
Indeed, $\ell_0^0$ equals the same as $\ell_0$ in the proof of Lemma
\ref{l:using-Df} with $\nabla f$ instead of $f$ and $(v-v')K(v,v')$
instead of $K(v,v')$. The fact that these are vector valued functions
does not affect the proof. Note that $(v-v') K(v,v')$ satisfies
\eqref{e:Kabove-signed} with $s-1/2$ instead of $s$. The second inequality is an elementary interpolation.

The cancellation assumption~\eqref{e:Kcancellation1G} says that 
\begin{align*}
 \| \ell_0^1 \|_{L^2} &\le \Lambda (R^{1-2s}+1) \| \nabla f \|_{L^2} \\
& \le \Lambda R^{1-2s}  \|f\|_{L^2}^\frac12 \|D^2 f \|_{L^2}^\frac12 + \Lambda \|\nabla f \|_{L^2}. 
\end{align*}
In order to estimate $\ell_0^2$, we use Lemma~\ref{l:multi-path}. We write 
\begin{align*}
\|\ell_0^2\|_{L^2}^2 &= \int_{\R^d} \left( \int_{B_R(v)} \left( f(v')-f(v)-\frac{\nabla f(v) + \nabla f(v')}2 \cdot (v'-v)  \right) K(v,v') \dd v' \right)^2 \dd v, \\
&\lesssim \int_{\R^d} \left( \int_{B_R(v)} \int_{B_r(m)} |D^2f(w))| \, \frac{|v'-v|^{d-2} K(v,v')}{|w-v|^{d-2} |w-v'|^{d-2}} \dd w \dd v' \right)^2 \dd v, \\
\intertext{using Fubini's theorem}
&= \int_{\R^d} \left( \int_{B_R(v)} |D^2 f(w)| \left( \int_{\{v' : (v'-v)\cdot(w-v) \geq |w-v|^2\}}  \frac{|v'-v|^{d-2} K(v,v')}{|w-v|^{d-2} |w-v'|^{d-2}} \dd v' \right) \dd w \right)^2 \dd v.
\end{align*}
In view of the definition of $A(v,w)$, we can use \eqref{e:Akernel-bis} and get
\begin{align*}
\|\ell_0^2\|_{L^2}^2 &\leq \int_{\R^d} \left( \int_{B_R} |D^2 f(w)| A(v,w) \dd w \right)^2 \dd v, \\
&\leq \int_{\R^d} \left( \int_{B_R} A(v,w) \dd w \right) \left( \int_{B_R} |D^2f(w)|^2 A(v,w) \dd w \right) \dd v, \\
&\leq C R^{2-2s} \int_{\R^d} |D^2f(w)|^2 \left( \int_{|v-w|<\R} A(v,w) \dd v \right) \dd w \leq C R^{2(2-2s)} \|D^2 f\|_{L^2}^2.
\end{align*}
Choosing $R = \|f\|_{L^2}^\frac12 / \|D^2 f\|_{L^2}^\frac12$ completes the proof.
\end{proof}

We can now prove the main result of this section. 
\begin{proof}[Proof of Theorem~\ref{t:upperbound}]

  We prove the upper bound applying Lemmas~\ref{l:using-Df} and
  \ref{l:using-D2f} to both operators $\Lv$ and its adjoint $\Lv^t$, and doing some
  sort of interpolation. Note that $\Lv^t$ has the same form as $\Lv$
  plus a correction which is bounded from $L^2$ to $L^2$ (thanks to
  the cancellation assumption~\eqref{e:Kcancellation0G}), so Lemmas
  \ref{l:using-Df} and \ref{l:using-D2f} apply to $\Lv^t$ as well. Indeed,
\[ \Lv^t f(v) = \int_{\R^d} (f(v') - f(v)) K(v',v) \dd v' + \left( \int_{\R^d} K(v',v) - K(v,v') \dd v' \right) f(v).\]

  The following interpolation is probably classical. We prove it using
  Littlewood-Paley theory.  Since we have already obtained the
  estimate for $\Esym$ in Lemma \ref{lem:upper-sym}, we are only left
  to prove the estimate for $\Eskew$.  In the case $s \in (0,1/2)$,
  the proof below gives the estimate for $\Ex$ right away. For
  $s \in [1/2,1)$ the proof below applies to $\Eskew$ only.

  Let $\Delta_i$ be the Littlewood-Paley projectors. We use the
  convention that all low modes are enclosed in $\Delta_0$. That is
  $f = \sum_{i=0}^\infty \Delta_i f$, with the index $i$ being
  non-negative. We use the fact that for any $s \geq 0$,
\[ \|\Delta_i f\|_{H^s} \approx 2^{is} \|\Delta_i f\|_{L^2} \]
Moreover, from Lemma \ref{l:using-Df}, if $s \in (0,1/2)$,
\[
\|\Lv \Delta_i f\|_{L^2} \lesssim \|\Delta_i f\|_{L^2}^{\frac{1-2s}{1+2s}} \|\Delta_i f\|_{\dot H^{s+1/2}}^{\frac{4s}{1+2s}} \lesssim 2^{s i} \|\Delta_i f\|_{H^s}
\]
From Lemma \ref{l:using-D2f}, if $s \in [1/2,1)$,
\[
\|\Lv \Delta_i f\|_{L^2} \lesssim \|\Delta_i f\|_{L^2}^{1-s}  \|\Delta_i f\|_{H^2}^{s} + \|\Delta_i f\|_{H^1} \lesssim 2^{s i} \|\Delta_i f\|_{H^s}
\]
The same estimates hold for $\Lv^t$ in the place of $\Lv$.

Therefore,
\begin{align*} 
\Ex(f,g) &= \sum_{ij} \Ex(\Delta_i f, \Delta_j g), \\
&= \sum_{i \leq j} \langle \Lv \Delta_i f, \Delta_j g \rangle + \sum_{i > j} \langle \Delta_i f,L^t \Delta_j g \rangle, \\
&\lesssim \sum_{i , j} 2^{-s |i-j|} \|\Delta_i f\|_{H^s} \|\Delta_j g\|_{H^s}, \\
&= \sum_{k=0}^\infty 2^{-sk} \sum_{i=0}^\infty \|\Delta_i f\|_{H^s} \|\Delta_{i+k} g\|_{H^s} 
+ \|\Delta_{i+k} f\|_{H^s} \|\Delta_i g\|_{H^s}, \\
&\leq \sum_{k=0}^\infty 2^{-sk+1} \left( \sum_{i=0}^\infty \|\Delta_i f\|_{H^s}^2 \right)^{1/2} 
\left( \sum_{i=0}^\infty \|\Delta_j g\|_{H^s}^2 \right)^{1/2}, \\
&\lesssim \|f\|_{H^s} \|g\|_{H^s}.
\end{align*}
The proof is now complete.
\end{proof}

\subsection{A generalized cancellation lemma}

As a preparation for the next subsection, we prove the following generalized cancellation lemma. 
\begin{lemma}[Generalized cancellation] \label{lem:g-cancellation} Let
  $K$ be a kernel satisfying \eqref{e:Kabove-signed}; if $s \ge 1/2$, we also assume that $K$
  satisfies \eqref{e:Kcancellation1G}. Let $\varphi$ be a bounded
  $C^2$ function. Then
\[PV 
\int_{\R^d} (\varphi(v') - \varphi(v)) [K(v,v') - K (v',v)] \dd v' 
\leq C \|\varphi\|_{C_2},
\]
for some constant $C$ depending on $\Lambda$ and dimension.
\end{lemma}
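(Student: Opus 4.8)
The plan is to fix $v \in \R^d$, abbreviate $\tilde K(v,v') := K(v,v') - K(v',v)$, and split the integral in $v'$ into the far region $\{|v'-v|\ge 1\}$ and the near region $\{|v'-v|<1\}$. The far region will be harmless: the two lines of \eqref{e:Kabove-signed} give $\int_{\R^d\setminus B_1(v)} |\tilde K(v,v')|\dd v' \le 2\Lambda$, so that part of the integral is absolutely convergent and bounded by $4\Lambda\|\varphi\|_{L^\infty}$. Everything then reduces to estimating the principal value over $B_1(v)$, and I expect the argument to bifurcate at $s=1/2$, exactly as in Lemmas~\ref{l:using-Df} and~\ref{l:using-D2f}.

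In the range $s\in(0,1/2)$ no cancellation is needed. Using $|\varphi(v')-\varphi(v)|\le\|\nabla\varphi\|_{L^\infty}|v'-v|$ and splitting $B_1(v)$ into dyadic annuli $B_{2^{-k}}(v)\setminus B_{2^{-k-1}}(v)$ as in the proof of Lemma~\ref{lem:upper-sym}, the $k$-th annulus contributes $\lesssim 2^{-k}\,\Lambda\,2^{2sk}$ by \eqref{e:Kabove-signed}, and $\sum_{k\ge 0}2^{-(1-2s)k}<\infty$ because $1-2s>0$; hence the near-field integral is absolutely convergent and bounded by $C\Lambda\|\varphi\|_{C^2}$. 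This already settles the lemma for $s<1/2$.

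In the range $s\in[1/2,1)$ I would Taylor expand around $v$: for $|v'-v|<1$ write $\varphi(v')-\varphi(v)=\nabla\varphi(v)\cdot(v'-v)+G(v,v')$ with $|G(v,v')|\le\tfrac12\|D^2\varphi\|_{L^\infty}|v'-v|^2$. The remainder $G$ is still integrated against $\tilde K$ absolutely, since the dyadic sum becomes $\sum_{k\ge 0}2^{-2k}\,\Lambda\,2^{2sk}=\Lambda\sum_{k\ge 0}2^{-2(1-s)k}<\infty$ as $s<1$, contributing $\lesssim\Lambda\|D^2\varphi\|_{L^\infty}$. For the first-order term I would write
\[ PV\int_{B_1(v)}(v'-v)\,\tilde K(v,v')\dd v' = PV\int_{\R^d}(v'-v)\,\tilde K(v,v')\dd v' - \int_{\R^d\setminus B_1(v)}(v'-v)\,\tilde K(v,v')\dd v', \]
and bound the first term on the right by $\Lambda$ directly from \eqref{e:Kcancellation1G}, and the second term by $C\Lambda$ using \eqref{e:Kabove-signed} (after integrating its tail bound over $r\ge 1$). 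Multiplying by $\nabla\varphi(v)$ this contributes $\lesssim\Lambda\|\nabla\varphi\|_{L^\infty}$, and adding the three pieces together finishes the proof, with a constant depending only on $\Lambda$ and $d$.

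The step I expect to be the main obstacle is the first-order term when $s\ge 1/2$: because the anti-symmetric part of $K$ is allowed to be as singular as the symmetric one, the product $\nabla\varphi(v)\cdot(v'-v)\,\tilde K(v,v')$ is \emph{not} absolutely integrable in $v'$ near $v'=v$, so it cannot simply be discarded — the principal value must be retained and absorbed through the cancellation hypothesis \eqref{e:Kcancellation1G}. This is exactly the mechanism that forces \eqref{e:Kcancellation1} to be imposed only in the regime $s\ge 1/2$, mirroring the split already seen in Lemmas~\ref{l:using-Df} and~\ref{l:using-D2f}. Note, finally, that \eqref{e:Kcancellation0G} is not needed anywhere in this lemma.
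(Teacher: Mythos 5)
Your argument is correct and is essentially the paper's own proof: the tail over $\{|v'-v|\ge 1\}$ is handled by \eqref{e:Kabove-signed} and the boundedness of $\varphi$, the near field uses a first-order bound plus \eqref{e:Kabove-signed} when $s<1/2$, and for $s\ge 1/2$ a Taylor expansion whose quadratic remainder is absorbed by \eqref{e:Kabove-signed} and whose gradient term is controlled by \eqref{e:Kcancellation1G}. The only point where you are (slightly more explicitly than the paper) skating over a detail is the truncation of the principal value from $\R^d$ to $B_1(v)$ in the first-order term, whose tail $\int_{\R^d\setminus B_1(v)}|v'-v|\,|K(v,v')-K(v',v)|\dd v'$ is borderline at $s=1/2$ under \eqref{e:Kabove-signed} alone — the paper's proof makes the same implicit reduction, so this is not a defect of your approach relative to the original.
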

\begin{proof}
  The proof is a direct computation. We estimate the tail of the
  integral using \eqref{e:Kabove-signed} together with
  the boundedness of $\varphi$. Then, we estimate the integral in
  $B_1$ using \eqref{e:Kcancellation1G} and the smoothness of
  $\varphi$. We write the proof for the case $2s \geq 1$ first, and
  later indicate its simplification when $2s <1$.
\begin{align*}
PV \int (\varphi(v') &- \varphi(v)) [K(v,v') - K (v',v)] \dd v' \\
\leq & PV \int_{B_1} (\varphi(v') - \varphi(v)) [K(v,v') - K (v',v)] \dd v' + C \Lambda \|\varphi\|_{L^\infty}, \\
\leq &PV \int_{B_1} (v'-v) \nabla \varphi(v) [K(v,v') - K (v',v)] \\
& + \|D^2 \varphi\|_\infty  |v-v'|^2 |K(v,v') - K (v',v)| \dd v' + C \Lambda \|\varphi\|_{L^\infty}, \\
\leq &C \Lambda \|\varphi\|_{C^2}.
\end{align*}
For the last inequality we used that thanks to \eqref{e:Kabove-signed}, 
\[ \int_{B_1} |v'-v|^2 [K(v,v') - K (v',v)] \dd v' \lesssim \Lambda,\]
and thanks to \eqref{e:Kcancellation1G},
\[PV \int_{B_1}  (v'-v) [K(v,v') - K (v',v)] \dd v' \leq \Lambda.\]

When $s < 1/2$, we do not need to use \eqref{e:Kcancellation1G}. We simply use \eqref{e:Kabove-signed}
to get
\begin{align*} 
  \int_{B_1} (\varphi(v') - \varphi(v)) [K(v,v') - K (v',v)] \dd v' 
&\leq \int_{B_1} |v-v'| [\varphi]_{C^1} [K(v,v') - K (v',v)] \dd v' , \\
&\leq C \Lambda [\varphi]_{C^1}. \qedhere
\end{align*}
\end{proof}

\begin{remark}
  Lemma \ref{lem:g-cancellation} tells us in particular
  that when $K$ is anti-symmetric, the operator $\Lv f$ is well
  defined pointwise. The same cannot be said for a symmetric
  kernel of $s \geq 1/2$. When $K$ is a symmetric kernel assuming only
  \eqref{e:Kabove-signed}, the value of $\Lv f(v)$ is not necessarily
  defined pointwise, even if $f$ is smooth. It is only through $\Esym$
  that we can define $\Lv$ as an operator from $H^s$ to $H^{-s}$.
\end{remark}

\subsection{Estimate focusing on the smoothness of only one function}

In this section we obtain an estimate for $\Ex(\varphi,g)$ taking
maximum advantage of the smoothness of $\varphi$, and not so much on
the smoothness of $g$.

\begin{lemma}[Second upper bound for $\Ex$] \label{lem:upper-2} Let
  $K$ satisfy \eqref{e:Kabove-signed} and
  \eqref{e:Kcancellation0G}. If $s \geq 1/2$, we also assume
  \eqref{e:Kcancellation1G}. For any two functions $g \in H^s (\R^d) \cap L^1 (\R^d)$
  and $\varphi \in C^2$ with $g \ge 0$ and any $\eps>0$, we have
\begin{equation}\label{e:Kupper2}
\Ex(\varphi,g)
\le \eps \|g\|_{\dot H^s}^2 + C \eps^{-1}
  \|\varphi\|_{C^1}^2 |\{v \in \R^d : g(v) >0 \}| + C \|\varphi\|_{C^2} \|g\|_{L^1}.
\end{equation}
\end{lemma}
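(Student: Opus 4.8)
The plan is to decompose $K$ into its symmetric and anti-symmetric parts, $K = K^{\text{sym}} + K^{\text{skew}}$, so that $\Ex(\varphi,g) = \Esym(\varphi,g) + \Eskew(\varphi,g)$, and estimate each piece separately. For the symmetric part we want to use the bilinearity and the polarization trick: since $\Esym$ is a symmetric bilinear form and (after replacing $K$ by $|K|$) nonnegative, Lemma~\ref{lem:upper-sym} applies to both $\varphi$ and $g$, and Cauchy--Schwarz for the bilinear form gives $\Esym(\varphi,g) \le \Esym(\varphi,\varphi)^{1/2}\Esym(g,g)^{1/2} \lesssim \|\varphi\|_{\dot H^s}\|g\|_{\dot H^s}$. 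This is not quite what we want because we only wish to pay $\|\varphi\|_{C^1}^2$ times the measure of the support of $g$, not $\|\varphi\|_{\dot H^s}$. The fix is to localize: since $g \ge 0$, the integration in $\Esym(\varphi,g) = \tfrac12\iint(\varphi(v)-\varphi(v'))(g(v)-g(v'))K^{\text{sym}}\dd v'\dd v$ only sees pairs $(v,v')$ with at least one of $v,v'$ in $\{g>0\}$. One splits $\Esym(\varphi,g) \le \eps\, \Esym(g,g) + \eps^{-1}\,\widetilde{\Esym}(\varphi,\varphi)$ by Young, where in $\widetilde{\Esym}(\varphi,\varphi)$ the outer variable $v$ is restricted to $\{g>0\}$ (using $g(v)g(v')\ge 0$ to keep only the diagonal-type terms and arranging so that one endpoint lies in the support). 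Then $\widetilde{\Esym}(\varphi,\varphi) \lesssim \|\varphi\|_{C^1}^2 \int_{\{g>0\}}\big(\int |v-v'|^2 |K^{\text{sym}}(v,v')| \dd v' \big)\dd v \lesssim \|\varphi\|_{C^1}^2 |\{g>0\}|$ using the Subsection~\ref{sub:com} form of \eqref{e:Kabove-signed}; one also has to absorb the far-away contribution $|v-v'|\ge 1$ using the boundedness of $\varphi$, which again costs $\|\varphi\|_{L^\infty}^2 |\{g>0\}| \le \|\varphi\|_{C^1}^2|\{g>0\}|$. And $\eps\,\Esym(g,g) \le \eps \|g\|_{\dot H^s}^2$ by Lemma~\ref{lem:upper-sym}. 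That handles $\Esym$.

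For the anti-symmetric part $\Eskew(\varphi,g) = \tfrac12 PV\iint(\varphi(v)-\varphi(v'))(g(v)+g(v'))(K(v,v')-K(v',v))\dd v'\dd v$, the idea is to pull out the $g$ factor and recognize the generalized cancellation lemma. Write $\Eskew(\varphi,g) = \int g(v)\,\Big(PV\int(\varphi(v)-\varphi(v'))(K(v,v')-K(v',v))\dd v'\Big)\dd v$ after the standard symmetrization/relabeling that converts the $(g(v)+g(v'))$ form into a single $g(v)$ against the $PV$-integral of $(\varphi(v)-\varphi(v'))$ against the anti-symmetric part of the kernel. By Lemma~\ref{lem:g-cancellation}, the inner $PV$-integral is bounded pointwise by $C\|\varphi\|_{C^2}$, hence $|\Eskew(\varphi,g)| \le C\|\varphi\|_{C^2}\|g\|_{L^1}$, which is exactly the last term on the right of \eqref{e:Kupper2}. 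One must be a little careful that the relabeling step is legitimate at the level of principal values, but this is the same manipulation already used to define $\Eskew$ and to relate $\langle \Lv f,g\rangle$ to $\Ex(f,g)$; the $PV$ is controlled because Lemma~\ref{lem:g-cancellation} gives an integrable (indeed bounded) integrand after integrating the singular part against the $C^2$ increment of $\varphi$.

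The main obstacle I expect is the localization of the symmetric part: making precise the claim that, because $g\ge 0$, one only ``sees'' $\varphi$ near $\{g>0\}$ and can therefore charge $\|\varphi\|_{C^1}^2|\{g>0\}|$ rather than a global $\dot H^s$-norm of $\varphi$. Concretely one has to bookkeep the two regimes $|v-v'| < 1$ (Taylor-expand $\varphi$, use the second-moment form of \eqref{e:Kabove-signed}) and $|v-v'|\ge 1$ (use $\|\varphi\|_{L^\infty}$ and the tail bound in \eqref{e:Kabove-signed}), and in each case keep track of which endpoint is forced into $\{g>0\}$ so that Fubini and the two lines of \eqref{e:Kabove-signed} both get used — exactly as in the proof of Lemma~\ref{lem:upper-sym}, but now with the extra factor $g$. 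The anti-symmetric estimate, by contrast, is essentially immediate once Lemma~\ref{lem:g-cancellation} is invoked.
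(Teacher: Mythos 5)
Your proposal is correct and follows essentially the same route as the paper: the symmetric part is handled exactly as in the paper's proof, via the identity $|g(v)-g(v')|\le(\chi_{g>0}(v)+\chi_{g>0}(v'))|g(v)-g(v')|$, Young's inequality with $\eps$, Lemma~\ref{lem:upper-sym} for the $\eps\|g\|_{\dot H^s}^2$ term, and the pointwise bound $\int(\varphi(v)-\varphi(v'))^2|K(v,v')|\dd v'\lesssim\|\varphi\|_{C^1}^2$ from \eqref{e:Kabove-signed}; and the anti-symmetric part is reduced, by the same relabeling, to $\int g(v)\,PV\!\int(\varphi(v)-\varphi(v'))(K(v,v')-K(v',v))\dd v'\dd v$ and bounded by $C\|\varphi\|_{C^2}\|g\|_{L^1}$ via Lemma~\ref{lem:g-cancellation}. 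No gaps.
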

\begin{proof}
Recall that $\Ex =  \Esym +  \Eskew$. We estimate each term separately.

In order to estimate $\Esym$, we apply the following elementary identity
\[ |g(v) - g(v')| \leq (\chi_{g>0}(v) + \chi_{g>0}(v')) |g(v) - g(v')|;\]
we then get
\begin{align*} 
 \Esym (\varphi,g) &\leq  \iint |\varphi(v) - \varphi(v')|\chi_{g>0}(v) |g(v) - g(v')| K(v,v') \dd v' \dd v, \\
&\leq \eps \iint (g(v) - g(v'))^2 K(v,v') \dd v' \dd v 
+  (4\eps)^{-1} \iint (\varphi(v) - \varphi(v'))^2 \chi_{g>0}(v) K(v,v') \dd v' \dd v, \\
&= \eps \Esym(g,g) + (4\eps)^{-1} \int \chi_{g>0}(v) \left( \int (\varphi(v) - \varphi(v'))^2 K(v,v') \dd v' \right) \dd v,\\
\intertext{using Lemma \ref{lem:upper-sym} and the assumption \eqref{e:Kabove-signed},}
&\leq \eps C \|g\|_{\dot H^s}^2 + C\eps^{-1} \|\varphi\|_{C^1}^2  \int \chi_{g>0} \dd v .
\end{align*} 

As far as $\Eskew$ is concerned, we first rewrite it as follows
\begin{align*}
 \Eskew(\varphi,g)  =& \frac14 \iint (\varphi(v) - \varphi(v')) (g(v)+g(v'))  (K(v,v')-K(v',v)) \dd v' \dd v   \\
 =& \frac12\iint (\varphi(v) - \varphi(v')) g(v)  \left( K (v,v')-K(v',v)  \right)\dd v'\dd v  \\
=& \frac12\int g(v) \left\{ PV \int_{\R^d} (\varphi(v) - \varphi(v'))  \left( K (v,v')-
K(v',v)  \right)\dd v' \right\} \dd v , \\
\intertext{using Lemma \ref{lem:g-cancellation},}
&\leq C \|\varphi\|_{C^2} \int  g(v) \dd v.
\end{align*}
Combining the upper bounds for $\Esym$ and $\Eskew$, we conclude the proof.
\end{proof}

\subsection{Commutator estimates}

\begin{lemma}[Commutator estimate for $s \in (0,1/2)$] \label{l:commutator-s<1/2} Let us assume $s \in (0,1/2)$
  and that $K$ satisfies \eqref{e:Kabove-signed}.  Let $D$ be a closed set and $\Omega$ open so that
  $D \Subset \Omega \subset \R^d$. Let $\varphi$ be a smooth function
  supported in $D$ and $f \in H^s(\Omega) \cap L^\infty(\R^d)$. We
  have the following commutator estimate
\[ \Lv [\varphi f] - \varphi \Lv f= h_1 + h_2,\]
with
\begin{align*}
\|h_1\|_{L^\infty(\R^d)} &\lesssim \|\varphi\|_{L^\infty} \|f\|_{L^\infty(\R^d)} d(D,\R^d\setminus \Omega)^{-2s}, \\
\|h_1\|_{L^2(\R^d \setminus \Omega)} &\lesssim \|\varphi\|_{L^\infty}  \|f\|_{L^2(D)} d(D,\R^d\setminus \Omega)^{-2s}, \\
\| h_2\|_{L^2(\R^d)} &\lesssim \|\varphi \|_{C^1} \|f\|_{L^2(\Omega)}.
\end{align*}
Moreover, $h_2 = 0$ outside $\Omega$. Whenever $\Omega = \R^d$, we can consider $d(D,\R^d\setminus \Omega) = +\infty$ and $h_1 = 0$.
\end{lemma}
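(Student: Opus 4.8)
\emph{Plan.} The first move is to recast the commutator so that the diagonal singularity is tamed by the regularity of $\varphi$ rather than that of $f$. Expanding $\Lv[\varphi f](v)=\int(\varphi(v')f(v')-\varphi(v)f(v))K(v,v')\dd v'$ and $\varphi(v)\Lv f(v)=\int(\varphi(v)f(v')-\varphi(v)f(v))K(v,v')\dd v'$ and subtracting gives, formally,
\[ \Lv[\varphi f](v) - \varphi(v)\,\Lv f(v) = \int_{\R^d}(\varphi(v') - \varphi(v))\,f(v')\,K(v,v')\dd v' =: C(v). \]
For $s<1/2$ this integral converges absolutely for every $v$, even though the two terms on the left need not be defined pointwise for $f\in L^\infty$ only: near the diagonal $|\varphi(v')-\varphi(v)|\le\|\nabla\varphi\|_{L^\infty}|v-v'|$ while $\int_{B_r(v)}|v-v'|\,|K(v,v')|\dd v'\lesssim \Lambda r^{1-2s}$ (dyadic summation of the first line of \eqref{e:Kabove-signed}, convergent precisely because $1-2s>0$), and away from the diagonal $\int_{\R^d\setminus B_1(v)}|K(v,v')|\dd v'\le\Lambda$. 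So $C(v)$ is taken as the meaning of the commutator.

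Next, set $\delta:=d(D,\R^d\setminus\Omega)\in(0,+\infty]$ and split $C=h_2+h_1$ by cutting at radius $\delta$:
\[ h_2(v):=\int_{B_\delta(v)}(\varphi(v')-\varphi(v))f(v')K(v,v')\dd v', \qquad h_1(v):=\int_{\R^d\setminus B_\delta(v)}(\varphi(v')-\varphi(v))f(v')K(v,v')\dd v', \]
each piece converging absolutely by the estimates above; when $\Omega=\R^d$ one has $\delta=+\infty$ and $h_1\equiv0$. The support statements all follow from the observation that $\varphi(v')-\varphi(v)$ vanishes unless $v\in D$ or $v'\in D$, together with the fact that any point at distance $<\delta$ from $D$ lies in $\Omega$. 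Hence: (i) if $v\notin\Omega$ the integrand of $h_2$ vanishes identically (both $v,v'\notin D$), so $h_2\equiv0$ on $\R^d\setminus\Omega$; (ii) if $v\notin\Omega$ then $|v-v'|\ge\delta$ for every $v'\in D$, so $h_1(v)=\int_D\varphi(v')f(v')K(v,v')\dd v'$; (iii) wherever the integrand of $h_2$ is nonzero, \emph{both} $v$ and $v'$ lie in $\Omega$, which is what will make $\|f\|_{L^2(\Omega)}$, rather than $\|f\|_{L^2(\R^d)}$, appear at the end.

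The estimates for $h_1$ are then immediate. For the $L^\infty$ bound, $|h_1(v)|\le 2\|\varphi\|_{L^\infty}\|f\|_{L^\infty}\int_{\R^d\setminus B_\delta(v)}|K(v,v')|\dd v'\le 2\Lambda\|\varphi\|_{L^\infty}\|f\|_{L^\infty}\delta^{-2s}$ by the first line of \eqref{e:Kabove-signed}. For the $L^2(\R^d\setminus\Omega)$ bound, use (ii) to write $h_1(v)=\int_D\varphi f K\dd v'$ there, apply Cauchy--Schwarz in $v'$ to factor out $(\int_D|K(v,v')|\dd v')^{1/2}\le(\Lambda\delta^{-2s})^{1/2}$ (since $D\subset\R^d\setminus B_\delta(v)$), then Fubini and the transpose bound $\int_{\R^d\setminus\Omega}|K(v,v')|\dd v\le\Lambda\delta^{-2s}$ for $v'\in D$ (second line of \eqref{e:Kabove-signed}, as $\R^d\setminus\Omega\subset\R^d\setminus B_\delta(v')$); this yields $\|h_1\|_{L^2(\R^d\setminus\Omega)}\lesssim\Lambda\delta^{-2s}\|\varphi\|_{L^\infty}\|f\|_{L^2(D)}$.

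Finally, for $h_2$ split its integral once more at radius $\min(1,\delta)$. On $\{|v-v'|<\min(1,\delta)\}$ use $|\varphi(v')-\varphi(v)|\le\|\nabla\varphi\|_{L^\infty}|v-v'|$ and Cauchy--Schwarz with the weight $|v-v'|\,|K(v,v')|$, invoking $\int_{B_1(v)}|v-v'|\,|K(v,v')|\dd v'\lesssim\Lambda$ and, after Fubini, $\int_{B_1(v')}|v-v'|\,|K(v,v')|\dd v\lesssim\Lambda$ (the same dyadic summation, again using $1-2s>0$). On $\{\min(1,\delta)\le|v-v'|<\delta\}$, nonempty only when $\delta>1$, use $|\varphi(v')-\varphi(v)|\le 2\|\varphi\|_{L^\infty}$ and Cauchy--Schwarz with weight $|K(v,v')|$, invoking $\int_{\R^d\setminus B_1(v)}|K|\dd v'\le\Lambda$ and its transpose. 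By (iii), in both pieces the integrand is supported in $\{v'\in\Omega\}$, so $\|h_2\|_{L^2(\R^d)}\lesssim\|\varphi\|_{C^1}\|f\|_{L^2(\Omega)}$ with constants depending only on $\Lambda,s,d$; together with (i) this finishes the proof. I expect the only delicate point to be the bookkeeping in the splitting: one must verify that cutting at radius $\delta$ routes \emph{all} of the $\delta^{-2s}$ loss into $h_1$ while leaving $h_2$ with a genuine $L^2\to L^2$ estimate, and that this $L^2\to L^2$ estimate closes only because $s<1/2$ makes $r\mapsto\int_{B_r(v)}|v-v'|\,|K(v,v')|\dd v'$ controlled by $\Lambda r^{1-2s}$; for $s\ge1/2$ this exact step fails, which is why the companion Lemma~\ref{l:commutator-s>1/2} is needed.
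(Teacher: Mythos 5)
Your proposal is correct and follows essentially the same route as the paper's proof: rewrite the commutator as $\int(\varphi(v')-\varphi(v))f(v')K(v,v')\dd v'$, split it at a radius comparable to $d(D,\R^d\setminus\Omega)$, bound the far part pointwise and in $L^2(\R^d\setminus\Omega)$ via Cauchy--Schwarz, Fubini and both lines of \eqref{e:Kabove-signed}, and bound the near part by Cauchy--Schwarz with the weight $|\varphi(v')-\varphi(v)|\,|K(v,v')|$, which is integrable in each variable precisely because $s<1/2$. The only (cosmetic) difference is that the paper cuts at half the distance and introduces the intermediate set $E=D+B_r$, whereas you cut at the full distance and track the support of $h_2$ directly.
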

\begin{proof}
From the formula \eqref{e:Lv}, we get
\[ C[\varphi,f](v) := \Lv [\varphi f](v) - \varphi(v) \Lv f(v) = \int_{\R^d} f(v') (\varphi(v') - \varphi(v)) K(v,v') \dd v'.\]

Let $r = d {(D,\R^d \setminus \Omega)}/2$, and let
$E = D + B_r$. Thus, we have $D \Subset E \Subset \Omega$, with
$d(D,\R^d \setminus E) = r$ and $d(D, \R^d \setminus \Omega) = r$.

We define
\[ h_1(v) = \int_{\R^d \setminus B_r(v)} f(v') (\varphi(v') - \varphi(v)) K(v,v') \dd v', \qquad h_2(v) = \int_{B_r(v)} f(v') (\varphi(v') - \varphi(v)) K(v,v') \dd v'.\]

From \eqref{e:Kabove-signed}, for any value of $v \in \R^d$, we have
\[ |h_1(v)| \leq 2\|f\|_{L^\infty} \|\varphi\|_{L^\infty}  \Lambda r^{-2s}\]
which is the first inequality. 

When $v \notin D$, we have $\varphi(v) = 0$. Therefore, the integrand in $C[\varphi,f](v)$ is nonzero only for $v' \in D$. We thus have for $v \notin \Omega \supset D$,
\[ h_1(v) = \int_{D} f(v') \varphi(v')  K(v,v') \dd v'\]
Therefore
\begin{align*} 
 \int_{\R^d \setminus \Omega} h_1(v)^2 \dd v &= \int_{\R^d \setminus \Omega} \left( \int_D f(v') \varphi(v')  K(v,v') \dd v'\right)^2 \dd v,\\
&\leq \|\varphi\|_{L^\infty}^2 \int_{\R^d \setminus \Omega} \left( \int_D f(v')^2 |K(v,v')| \dd v'\right)\left( \int_D |K(v,v')| \dd v'\right) \dd v, \\
\intertext{using \eqref{e:Kabove-signed},}
&\leq \|\varphi\|_{L^\infty}^2  \Lambda r^{-2s} \int_{\R^d \setminus \Omega} \int_D f(v')^2 |K(v,v')| \dd v' \dd v, \\
&\leq \|\varphi\|_{L^\infty}^2 \Lambda r^{-2s} \int_D f(v')^2 \left( \int_{|v-v'| > r} |K(v,v')| \dd v \right) \dd v = \Lambda^2 r^{-4s} \|\varphi\|_{L^\infty}^2 \|f\|_{L^2(D)}^2.
\end{align*}
This gives us the second inequality.

In order to estimate $\|h_2\|_{L^2}$, we use Cauchy Schwarz.
\begin{align*}
  \|h_2\|_{L^2}^2 &= \int_E \left( \int_{B_r(v)} f(v') (\varphi(v') - \varphi(v)) K(v,v') \dd v' \right)^2 \dd v, \\
                  &\leq \int_E \left( \int_{B_r(v)} f(v')^2 |\varphi(v') - \varphi(v)|\, |K(v,v')| \dd v' \right) \left( \int_{B_r(v)} |\varphi(v') - \varphi(v)| \, |K(v,v')| \dd v' \right) \dd v, \\
\end{align*}

Since $\varphi$ is bounded and $C^1$, then \eqref{e:Kabove-signed} implies (note that $s < 1/2$), 
\begin{align*} 
 \int_{\R^d} |\varphi(v') - \varphi(v)| \, |K(v,v')| \dd v' \lesssim \|\varphi\|_{C^1} \qquad \text{for every value of } v \in \R^d, \\
 \int_{\R^d} |\varphi(v') - \varphi(v)| \, |K(v,v')| \dd v \lesssim \|\varphi\|_{C^1} \qquad \text{for every value of } v' \in \R^d.
\end{align*}
Therefore,
\begin{align*}
  \|h_2\|_{L^2}^2 &\lesssim  \|\varphi\|_{C^1} \int_E \left( \int_{B_r(v)} f(v')^2 |\varphi(v') - \varphi(v)|\, |K(v,v')| \dd v' \right) \dd v, \\
                  &\lesssim  \|\varphi\|_{C^1} \int_\Omega f(v')^2 \left( \int_{E \cap B_r(v')} |\varphi(v') - \varphi(v)|\, |K(v,v')| \dd v \right) \dd v',\\
                  &\lesssim \|\varphi\|_{C^1}^2 \|f\|_{L^2(\Omega)}^2.
\end{align*}

\end{proof}

\begin{lemma}[Commutator estimate for $s \in
  [1/2,1)$]
  \label{l:commutator-s>1/2} Let us assume $s \in [1/2,1)$ and that
  $K$ satisfies \eqref{e:Kabove-signed} and \eqref{e:Kcancellation1G}.
  Let $D$ be a closed set, and $\Omega$ open so that $D \Subset \Omega \subset \R^d$. Let $\varphi$ be a smooth
  function supported in $D$ and
  $f \in H^s(\Omega) \cap L^\infty(\R^d)$. We have the following
  commutator estimate
\[ \Lv [\varphi f] - \varphi \Lv f = h_1 + h_2 + (-\Delta)^{s/2} h_3,\]
with
\begin{align*}
\|h_1\|_{L^\infty(\R^d)} &\lesssim \|\varphi\|_{L^\infty} \|f\|_{L^\infty(\R^d)} (d(D,\R^d\setminus \Omega)+d(v,D))^{-2s}, \\
\|h_1\|_{L^2(\R^d \setminus \Omega)} &\lesssim \|\varphi\|_{L^\infty} \|f\|_{L^2(D)} d(D,\R^d\setminus \Omega)^{-2s}, \\
\| h_2\|_{L^2(\R^d)} &\lesssim \|\varphi \|_{C^2} \|f\|_{H^s(\Omega)}, \\
\| h_3\|_{L^2(\R^d)} &\lesssim \|\varphi \|_{C^2} \|f\|_{L^2(\Omega)}.
\end{align*}
Moreover, $h_2 = 0$ outside $\Omega$. Whenever $\Omega = \R^d$, we can
consider $d(D,\R^d\setminus \Omega) = +\infty$ and $h_1 = 0$.
\end{lemma}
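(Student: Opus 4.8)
The plan is to follow the proof of Lemma~\ref{l:commutator-s<1/2}, keeping the same far/near splitting, and then to work considerably harder on the near part, where the first moment of $K$ is no longer integrable. As before, write
\[ C[\varphi,f](v) := \Lv[\varphi f](v) - \varphi(v)\Lv f(v) = \int_{\R^d} f(v')\,(\varphi(v')-\varphi(v))\, K(v,v') \dd v', \]
(first for smooth $f$, then by density in $H^s$), set $r = d(D,\R^d\setminus\Omega)/2$ and $E = D+B_r$, and define $h_1(v)=\int_{\R^d\setminus B_r(v)} f(v')(\varphi(v')-\varphi(v))K(v,v')\dd v'$. The bounds on $h_1$ are obtained exactly as in Lemma~\ref{l:commutator-s<1/2} using only \eqref{e:Kabove-signed}; the one refinement is that when $v\notin D$ the integrand is supported in $\{v'\in D\}$, which sits at distance $\gtrsim d(D,\R^d\setminus\Omega)+d(v,D)$ from $v$, producing the sharper pointwise bound. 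Since $v\notin\Omega$ forces $B_r(v)\cap D=\emptyset$ and hence $\varphi\equiv 0$ on $B_r(v)\cup\{v\}$, one also gets $C[\varphi,f](v)=h_1(v)$ there, so $h_2,h_3$ may be taken to vanish outside $\Omega$ (and the near-field contributions below are anyway supported in $E\subset\Omega$).

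For the near part $\int_{B_r(v)} f(v')(\varphi(v')-\varphi(v))K(v,v')\dd v'$ I would Taylor expand $\varphi$ at $v$. The second-order remainder $\varphi(v')-\varphi(v)-\nabla\varphi(v)\cdot(v'-v)$, of size $\lesssim \|\varphi\|_{C^2}|v'-v|^2$, is harmless: Cauchy--Schwarz against the weight $|v'-v|^2|K(v,v')|$, the moment form of \eqref{e:Kabove-signed} recalled in Subsection~\ref{sub:com}, and Fubini bound this term in $L^2(\R^d)$ by $\|\varphi\|_{C^2}\|f\|_{L^2(\Omega)}$; it goes into $h_2$. The genuine difficulty is the linear term
\[ \nabla\varphi(v)\cdot\int_{B_r(v)} f(v')\,(v'-v)\, K(v,v')\dd v', \]
because $\int_{B_r(v)}|v'-v|\,|K(v,v')|\dd v'$ is \emph{not} controlled when $s\ge 1/2$, so one cannot pass to absolute values; this is exactly where \eqref{e:Kcancellation1G} is used. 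Split $K=K^{\mathrm{sym}}+K^{\mathrm{skew}}$ and $f(v')=f(v)+(f(v')-f(v))$. The piece $f(v)\,\nabla\varphi(v)\cdot PV\!\int_{B_r(v)}(v'-v)K^{\mathrm{skew}}(v,v')\dd v'$ is controlled pointwise by $C\|\varphi\|_{C^1}|f(v)|$, peeling off the tail with \eqref{e:Kabove-signed} and using \eqref{e:Kcancellation1G}; the symmetric companion $f(v)\,\nabla\varphi(v)\cdot\int(v'-v)K^{\mathrm{sym}}$, together with the error made in replacing $\varphi(v')-\varphi(v)$ by its linearization inside the full operator, is handled through Lemma~\ref{lem:g-cancellation}. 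All of these land in $L^2$ and feed $h_2$.

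The remaining pieces carry the factor $f(v')-f(v)$. To locate the $(-\Delta)^{s/2}h_3$ component I would pair the commutator against an arbitrary test function $g$, antisymmetrize (for $K^{\mathrm{skew}}$) or symmetrize (for $K^{\mathrm{sym}}$) in $v\leftrightarrow v'$ so that $g$ also appears as a difference $g(v)-g(v')$, and then apply Cauchy--Schwarz, distributing the factor $|v'-v|$ between the two sides. One of the two sides is forced to carry a weight that raises the Sobolev exponent strictly above $s$; since $f$ is only known in $H^s$, that excess is put on $g$, and the resulting estimate $\lesssim \|\varphi\|_{C^2}\|f\|_{\dot H^s}\|g\|_{\dot H^s}$ exhibits the corresponding part of the commutator as $(-\Delta)^{s/2}h_3$ with $\|h_3\|_{L^2}\lesssim\|\varphi\|_{C^2}\|f\|_{L^2(\Omega)}$; the other Cauchy--Schwarz groupings give bounds of the form $\lesssim\|\varphi\|_{C^2}\|f\|_{H^s(\Omega)}\|g\|_{L^2}$ and so contribute to $h_2$. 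Throughout, one uses that the shifted kernels $|v-v'|^{-1}K$, $|v-v'|^2 K$ and $(v-v')K$ satisfy the moment/tail form of \eqref{e:Kabove-signed} with the appropriately shifted exponent, so that Lemma~\ref{lem:upper-sym} applies.

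The main obstacle — and the place where the hypothesis $s\ge 1/2$ truly bites — is the linear near-field term: one must squeeze enough cancellation out of \eqref{e:Kcancellation1G} and out of the (anti)symmetrization to never demand more than $H^s$-regularity of $f$, which is precisely why a genuinely new summand $(-\Delta)^{s/2}h_3$, absent in the case $s<1/2$, must appear. A secondary, purely bookkeeping point: if $\Omega=\R^d$ the scale $r$ is infinite, in which case one instead splits at a scale comparable to $\mathrm{diam}(D)$, which is legitimate because $\varphi$ is supported in $D$ and makes $h_1=0$.
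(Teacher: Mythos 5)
Your far/near splitting and the treatment of $h_1$ coincide with the paper's, and your instinct that the symmetric part must ultimately be handled by duality (producing the $(-\Delta)^{s/2}h_3$ term) is also the paper's. But there is a genuine gap in your near-field analysis: the pointwise Taylor expansion of $\varphi$ forces you to confront the term
\[ f(v)\,\nabla\varphi(v)\cdot PV\!\int_{B_r(v)}(v'-v)\,K^{\mathrm{sym}}(v,v')\dd v', \]
and you claim it is ``handled through Lemma~\ref{lem:g-cancellation}.'' It is not: that lemma, like \eqref{e:Kcancellation1G} itself, controls only the \emph{antisymmetric} combination $K(v,v')-K(v',v)$ and says nothing about first moments of $K^{\mathrm{sym}}$. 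For a merely symmetric kernel satisfying \eqref{e:Kabove-signed} with $s\ge 1/2$ this principal value need not be bounded --- it need not even converge (take $K^{\mathrm{sym}}(v,v')=|v-v'|^{-d-2s}h(v+v')$ with $h$ bounded but with a directional discontinuity at $2v$; the angular average of $(v'-v)h$ then produces $\int_\eps^r\rho^{-2s}\dd\rho$). This is precisely the content of the remark following Lemma~\ref{lem:g-cancellation}, which warns that for symmetric kernels with $s\ge1/2$ the operator is not defined pointwise even on smooth functions. Nor can the isolated term be rescued by duality afterwards: pairing $f(v)g(v)\nabla\varphi(v)$ against $(v'-v)K^{\mathrm{sym}}$ and symmetrizing would require fractional regularity of the product $fg\nabla\varphi$, hence of $f$ and $g$ simultaneously, which fits neither the $h_2$ bound (needs only $\|g\|_{L^2}$) nor the $h_3$ bound (needs only $\|f\|_{L^2}$).

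The fix --- and it is what the paper does --- is to never linearize $\varphi$ for the symmetric part. One keeps the full difference $\varphi(v')-\varphi(v)$ and estimates $\tilde h_2^{s}(v)=\int_{B_r(v)}f(v')(\varphi(v')-\varphi(v))K_s(v,v')\dd v'$ by duality against $g\in H^s$: using $K_s(v,v')=K_s(v',v)$ and the antisymmetry of $\varphi(v')-\varphi(v)$ under $v\leftrightarrow v'$, the pairing symmetrizes into two terms carrying $f(v)\,(g(v)-g(v'))\,(\varphi(v')-\varphi(v))$ and $g(v)\,(f(v')-f(v))\,(\varphi(v')-\varphi(v))$; Cauchy--Schwarz, the second-moment form of \eqref{e:Kabove-signed} applied to $(\varphi(v')-\varphi(v))^2|K_s|$, and Lemma~\ref{lem:upper-sym} then yield $\|\varphi\|_{C^2}\bigl(\|f\|_{\dot H^s}\|g\|_{L^2}+\|f\|_{L^2}\|g\|_{\dot H^s}\bigr)$, which is exactly the $h_2+(-\Delta)^{s/2}h_3$ decomposition. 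Only quadratic differences ever appear, so no principal value of the symmetric kernel is needed. Your Taylor expansion of $\varphi$ is legitimate only for the antisymmetric part, where \eqref{e:Kcancellation1G} (via Lemma~\ref{lem:g-cancellation}) supplies the missing cancellation.
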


\begin{proof}
  We define $h_1$ and $\tilde h_2$ by the expressions of $h_1$ and
  $h_2$ in the proof of Lemma \ref{l:commutator-s<1/2}. The estimates
  for $h_1$ follow identically. We will split
  $\tilde h_2 = h_2 + (-\Delta)^{s/2} h_3$, and need to prove the
  estimate for each term.

Note that, by construction, $\tilde h_2(v) = 0$ for any $v \notin E$.

Let us write $K$ as the sum of its symmetric plus antisymmetric parts:
$K = K_s + K_a$. We start by estimating the antisymmetric
contribution.

Because of Lemma \ref{lem:g-cancellation}, we have that
$\|\Lv^a \varphi \|_{L^\infty} \lesssim \|\varphi\|_{C^2}$. Then
\begin{align*}
\|\tilde h_2^a\|_{L^2(\Omega)} &:= \left\vert \int_{B_r(v)} f(v') (\varphi(v') - \varphi(v)) K_a(v,v') \dd v' \right\vert_{L^2(\Omega)}, \\
&\leq\left\vert \int_{B_r(v)} (f(v')-f(v)) (\varphi(v') - \varphi(v)) K_a(v,v') \dd v' \right\vert_{L^2(\Omega)} + C \|\varphi\|_{C^2} \|f\|_{L^2(\Omega)} .
\end{align*}

With respect to the first term, we apply Cauchy-Schwarz and Lemma \ref{lem:upper-sym} to obtain
\begin{align*}
\bigg\vert \int_{B_r(v)} (f(v')&-f(v)) (\varphi(v') - \varphi(v)) K_a(v,v') \dd v' \bigg\vert_{L^2(\Omega)}^2  \\
&\leq \int_{E} \left( \int_{B_r(v)} (f(v')-f(v))^2 |K_a(v,v')| \dd v' \right) \left( \int_{B_r(v)} (\varphi(v')-\varphi(v))^2 |K_a(v,v')| \dd v' \right) \dd v, \\
&\lesssim \|\varphi\|_{C^2} \iint_{\Omega \times \Omega} (f(v')-f(v))^2 |K_a(v,v')| \dd v'\dd v \lesssim \|\varphi\|_{C^2}  \|f\|_{\dot H^s(\Omega)}^2.
\end{align*}

Therefore, we conclude the estimate for the antisymmetric contribution
$\|\tilde h_2^a\|_{L^2(\R^d)} \leq C \|\varphi\|_{C^2}
\|f\|_{H^s(\Omega)}$.

Now we need to analyse the contribution of $K_s$ to $\tilde h_2$,
which we call $\tilde h_2^s$. We estimate it by duality. Let
$g \in H^s(\R^d)$, recall that $\tilde h_s^2$ is supported in $E$ and
consider
\begin{align*} 
 \int_E \tilde h_2^s(v) g(v) \dd v &= \int_E \int_{B_r(v)} g(v) f(v') (\varphi(v') - \varphi(v)) K_s(v,v') \dd v' \dd v , \\
&= \frac 12 \int_E f(v) \left( \int_{B_r(v)}  (g(v) - g(v'))  (\varphi(v') - \varphi(v)) K_s(v,v') \dd v' \dd v \right) \\
&\phantom{=} + g(v) \left( \int_{B_r(v)}  (f(v') - f(v))  (\varphi(v') - \varphi(v)) K_s(v,v') \dd v' \dd v \right) \dd v.
\end{align*}
Applying the Cauchy-Schwarz inequality and Lemma \ref{lem:upper-sym} as above, we get
\[ \int_\Omega \tilde h_2^s(v) g(v) \dd v \lesssim \|\varphi\|_{C^2}
\left( \|f\|_{\dot H^s}\|g\|_{L^2} + \|f\|_{L^2} \|g\|_{\dot H^s}
\right).\]
Therefore, $\tilde h_2^s$ can be written as a sum
$\hat h_2^s + (-\Delta) h_3$ with
$\| \hat h_2^s\|_{L^2(\R^d)} \lesssim \|\varphi\|_{C^2}
\|f\|_{H^s(\Omega)}$
and
$\| h_3\|_{L^2(\R^d)} \lesssim \|\varphi\|_{C^2} \|f\|_{L^2(\Omega)}$.

We finish the proof by letting $h_2 = \tilde h_2^a + \hat h_2^s$.
\end{proof}

\section{Reduction to global kernels and weak solutions}
\label{sec:reduction}

The assumptions \eqref{e:Klower}, \eqref{e:Kabove},
\eqref{e:Kcancellation0} and \eqref{e:Kcancellation1} are given in
terms of values of $v \in B_\Radius$ only. It is natural that if we consider the equation
\eqref{e:main} to hold for $v \in B_1$ and we intend to prove local
regularity estimates, it should be useless to make assumptions for
$K(v,v')$ when $v \notin B_2$. It is confortable for the proofs of a
few lemmas (in particular the results in Section \ref{sec:bilinear}
above and Lemma \ref{lem:gee} below) to have a kernel that is globally
defined and satisfies all these assumptions for all values of $v$ and
$v'$. In this section we explain how to extend a kernel to the full
space in order to have that.

\subsection{Reduction to global kernels}

\begin{prop}[A kernel defined globally]\label{p:global-kernel}
  Assume that $K: B_\Radius \times \R^d \to \R$
  satisfies \eqref{e:Klower}, \eqref{e:Kabove},
  \eqref{e:Kcancellation0} and \eqref{e:Kcancellation1}.  There exists a kernel
  $\tilde K: \R^d \times \R^d \to \R$  satisfying the following global version of assumptions
  \eqref{e:Klower}, \eqref{e:Kabove}, \eqref{e:Kcancellation0} and
  \eqref{e:Kcancellation1}.
\begin{itemize}
\item $\tilde K(v,v') = K(v,v')$ whenever $v$ and $v'$ belong to $B_{2\Radius/3}$. Moreover $\tilde K(v,v') \geq 0$ for all $v,v' \in \R^d$ and for all $v \in B_{\Radius/2}$,
\begin{equation} \label{e:Kerrortail}
 \int_{\R^d} |K(v,v') - \tilde K(v,v')| \dd v' \leq C \Lambda.
\end{equation}
\item For any function $f \in H^s(\R^d)$,
\begin{equation}
\label{e:KlowerG} \lambda 
\|f\|_{\dot H^s}^2
\leq - \int_{\R^d} \tilde{\Lv} f(v) \,  f(v)  \dd v  + \Lambda \|f\|_{L^2(\R^d)}^2.
\end{equation}
Here $\tilde \Lv$ is the integro-differential operator corresponding
to the kernel $\tilde K$.
\item The assumptions \eqref{e:Kabove-signed},
  \eqref{e:Kcancellation0G} and \eqref{e:Kcancellation1G} hold for
  $\tilde K$ with a constant $C \Lambda$ instead of $\Lambda$, where
  $C$ depends on $s$, $\bar R$, and dimension only.
\end{itemize}
\end{prop}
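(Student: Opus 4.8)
\emph{The construction.} Fix a smooth cutoff $\eta:\R^d\to[0,1]$ with $\eta\equiv 1$ on $B_{2\Radius/3}$ and $\supp\eta\subset B_{7\Radius/8}$, and set
\[
\tilde K(v,v'):=\eta(v)\,\eta(v')\,K(v,v')\;+\;\lambda\bigl(1-\eta(v)\eta(v')\bigr)\,|v-v'|^{-d-2s}.
\]
Both summands are non-negative, so $\tilde K\ge0$; on $B_{2\Radius/3}\times B_{2\Radius/3}$ we have $\eta(v)\eta(v')=1$, hence $\tilde K=K$ there. Two features of the construction are used throughout. First, the cutoff must sit in \emph{both} variables: for $v\notin B_{7\Radius/8}$ the factor $\eta(v)$ kills the $K$-part, which is needed both because $K(v,v')$ is only given for $v\in B_\Radius$ and because the second line of \eqref{e:Kabove-signed} integrates the \emph{first} variable over all of $\R^d$, whereas \eqref{e:Kabove}(ii) is available only for the second variable in $B_\Radius$; the factor $\eta(v)$ reduces matters to $v\in B_{7\Radius/8}\subset B_\Radius$. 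Second, since $\supp\eta\subset B_{7\Radius/8}$, every $v$ with $\eta(v)\neq0$ lies in the region where \eqref{e:Klower}, \eqref{e:Kabove}, \eqref{e:Kcancellation0} and \eqref{e:Kcancellation1} are assumed.

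\emph{Elementary conclusions.} For $v\in B_{\Radius/2}$ we have $\eta(v)=1$, so $K(v,v')-\tilde K(v,v')=(1-\eta(v'))\bigl(K(v,v')-\lambda|v-v'|^{-d-2s}\bigr)$ is supported in $\{v':|v-v'|\ge\Radius/6\}$, and \eqref{e:Kerrortail} follows from \eqref{e:Kabove}(i) and $\Radius\ge1$ (we may assume $\lambda\le\Lambda$). For \eqref{e:Kabove-signed} one treats the two summands of $\tilde K$ separately: the symmetric background obeys the bound with constant $\lambda c_d$, while for the $\eta(v)\eta(v')K(v,v')$ part one uses $\eta(v)\eta(v')\le\mathbf 1_{B_\Radius}(v)\mathbf 1_{B_\Radius}(v')$ and $K\ge0$ to reduce to \eqref{e:Kabove}(i), resp.\ (ii). I also record, via Subsection~\ref{sub:com}, the moment forms $\int_{B_r(v)}|v-v'|^2|K(v,v')|\dd v'\lesssim\Lambda r^{2-2s}$ and its (ii)-analogue, which are needed below.

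\emph{The cancellation conditions.} The background is symmetric, so $\tilde K(v,v')-\tilde K(v',v)=\eta(v)\eta(v')\bigl(K(v,v')-K(v',v)\bigr)$; if $\eta(v)=0$ there is nothing to prove, otherwise $v\in B_{7\Radius/8}$, and one pulls out $\eta(v)$ and writes $\eta(v')=1-(1-\eta(v'))$. The ``$1$'' term is exactly the left side of \eqref{e:Kcancellation0}, with the integration domain harmlessly enlarged from $B_{7\Radius/8}$ to $B_\Radius$ since $\eta$ vanishes outside $B_{7\Radius/8}$, hence $\le\Lambda$. For the remaining term one further splits $1-\eta(v')=\bigl[(1-\eta(v'))-(1-\eta(v))\bigr]+(1-\eta(v))$: the constant-in-$v'$ piece reproduces \eqref{e:Kcancellation0} times $|1-\eta(v)|\le1$, and the leftover factor $\eta(v)-\eta(v')$ is $O(|v-v'|)$, so that — after one Taylor expansion of $\eta$ when $s\ge1/2$, producing the first $(v-v')$-moment controlled by \eqref{e:Kcancellation1} plus an $O(|v-v'|^2)$ remainder — everything is absorbed by the moment bounds $\int_{B_\Radius}|v-v'|\bigl(|K(v,v')|+|K(v',v)|\bigr)\dd v'\lesssim\Lambda$ (convergent for $s<1/2$), resp.\ $\int_{B_\Radius}|v-v'|^2\bigl(|K(v,v')|+|K(v',v)|\bigr)\dd v'\lesssim\Lambda$ (convergent for every $s<1$). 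The assumption \eqref{e:Kcancellation1G} is handled identically and is easier, every term carrying an extra factor $|v-v'|$ so that no principal value is needed.

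\emph{Coercivity --- the main point.} Write $\Ex_{\tilde K}(f,f)=I+II$ with $I:=\tfrac12\iint(f(v)-f(v'))^2\eta(v)\eta(v')K(v,v')\dd v'\dd v\ge0$ and $II:=\tfrac\lambda2\iint(f(v)-f(v'))^2(1-\eta(v)\eta(v'))|v-v'|^{-d-2s}\dd v'\dd v\ge0$, and set $III:=\iint(f(v)-f(v'))^2\eta(v)\eta(v')|v-v'|^{-d-2s}\dd v'\dd v$, so that $II=\tfrac\lambda2\|f\|_{\dot H^s}^2-\tfrac\lambda2\,III$. Then, up to adjusting constants, \eqref{e:KlowerG} reduces to the single estimate $\tfrac\lambda2\,III\le I+C\Lambda\|f\|_{L^2}^2+\tfrac18\|f\|_{\dot H^s}^2$, i.e.\ to transferring the coercivity \eqref{e:Klower} to the $\eta(v)\eta(v')$-weighted kernel. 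To prove this, put $g:=\eta f$, apply \eqref{e:Klower} to the compactly supported $g$, and compare both sides with the weighted quantities via $g(v)-g(v')=\eta(v)(f(v)-f(v'))+(\eta(v)-\eta(v'))f(v')$. The crucial algebraic fact is $\tfrac12(\eta(v)^2+\eta(v')^2)=\eta(v)\eta(v')+\tfrac12(\eta(v)-\eta(v'))^2$: the last term is $O(|v-v'|^2)$ and so, integrated against $|v-v'|^{-d-2s}$ or against $K(v,v')$, contributes only $\lesssim\Lambda\|f\|_{L^2}^2$ by the second-order moment form of \eqref{e:Kabove} --- so there is in fact \emph{no} bad contribution from the transition region $\{0<\eta<1\}$. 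Concretely one gets $III\le(1+\varepsilon)\|\eta f\|_{\dot H^s}^2+\varepsilon\|f\|_{\dot H^s}^2+C_\varepsilon\|f\|_{L^2}^2$ (the cross term handled, only when $s\ge1/2$, by the interpolation $\|f\|_{\dot H^{s-1/2}}^2\le\varepsilon\|f\|_{\dot H^s}^2+C_\varepsilon\|f\|_{L^2}^2$) and $\Ex_K(\eta f,\eta f)\le 2I+C\Lambda\|f\|_{L^2}^2$ (the boundary and skew contributions being $\lesssim\Lambda\|f\|_{L^2}^2$ thanks to the linear vanishing of $\eta$ at $\partial\supp\eta$ together with \eqref{e:Kabove} and \eqref{e:Kcancellation0}); combining and taking $\varepsilon$ small yields \eqref{e:KlowerG} with a fraction of $\lambda$ in place of $\lambda$, which suffices. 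The only genuinely delicate step is this coercivity: the cutoff does not commute with $\Lv$, so one must track the commutator errors, and one must arrange --- through the two-variable cutoff and the identity above --- for the weight $\eta(v)\eta(v')$ built into $\tilde K$ to be reconciled with the weight $\eta(v)^2+\eta(v')^2$ naturally produced by localization, \emph{without} any pointwise bound $K\lesssim|v-v'|^{-d-2s}$; everything else is routine splitting of integrals.
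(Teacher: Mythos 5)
Your construction of $\tilde K$ is the same as the paper's (the paper takes $\Lambda$ rather than $\lambda$ as the coefficient of the symmetric background $|v-v'|^{-d-2s}$, which is immaterial), and your verifications of \eqref{e:Kerrortail}, \eqref{e:Kabove-signed}, \eqref{e:Kcancellation0G} and \eqref{e:Kcancellation1G} follow the paper's computations essentially line by line, including the case split at $s=1/2$ and the Taylor expansion of $\eta$ feeding into \eqref{e:Kcancellation1}. The one place where you genuinely reorganize the argument is the coercivity \eqref{e:KlowerG}. The paper introduces intermediate radii $3\Radius/4<r_1<r_2<\Radius$ and a second cutoff $\varphi$ ($\varphi\equiv1$ on $B_{r_1}$, $\supp\varphi\subset B_{r_2}$), lower-bounds $\tilde K$ by $K/9$ on $B_{r_2}\times B_{r_2}$ and by $\Lambda|v-v'|^{-d-2s}/3$ off $B_{r_1}\times B_{r_1}$, applies \eqref{e:Klower} to $\varphi f$, and patches the two regions to recover $\|f\|_{\dot H^s}^2$. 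You instead exploit the exact complementarity of the weights $\eta(v)\eta(v')$ and $1-\eta(v)\eta(v')$, convert the two-variable weight to a one-variable one via $\eta(v)\eta(v')=\tfrac12(\eta(v)^2+\eta(v')^2)-\tfrac12(\eta(v)-\eta(v'))^2$, and apply \eqref{e:Klower} directly to $\eta f$ with the same $\eta$ used in the construction; the commutator errors are controlled by the same ingredients (\eqref{e:Kabove}, \eqref{e:Kcancellation0}, \eqref{e:Kcancellation1}, Lemma \ref{lem:g-cancellation}) as in the paper's estimate of $\E(\varphi f,\varphi f)$. This buys you a slightly cleaner bookkeeping (no $r_1,r_2,\varphi$, no region patching), at the cost of two absorptions you should make explicit: (i) your decomposition $\Ex_{\tilde K}(f,f)=I+II$ accounts only for the symmetric part, so the skew part must be bounded by $C\Lambda\|f\|_{L^2}^2$ using the already-established \eqref{e:Kcancellation0G}, exactly as the paper does; and (ii) your chain produces $(1+\varepsilon)I$ rather than $I$ on the right of the key inequality, so absorbing the leftover $\varepsilon I$ into $\|f\|_{\dot H^s}^2$ requires the upper bound $I\lesssim\Lambda\|f\|_{\dot H^s}^2$, which is available from Lemma \ref{lem:upper-sym} since $\eta(v)\eta(v')K(v,v')$ satisfies \eqref{e:Kabove-signed}; with that, taking $\varepsilon$ small relative to $\lambda/\Lambda$ closes the argument.
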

\begin{proof}
  Let $\eta: \R^d \to [0,1]$ be a smooth radial function so that
  $\eta = 1$ in $B_{3\Radius/4}$ and $\eta = 0$ outside $B_{7\Radius/8}$. We define
\[ \tilde K(v,v') = \eta(v) \eta(v') K(v,v') + \Lambda (1-\eta(v) \eta(v')) |v-v'|^{-d-2s}.\]

Note that even though $K(v,v')$ is not defined when $v \notin B_\Radius$,
since we have the factor $\eta(v)=0$ there, there is no ambiguity in
the definition of $\tilde K(v,v')$.

The first item in the Proposition is obvious by construction. We start by checking \eqref{e:Kabove-signed}. For any $v \in \R^d$ and
$r>0$, we have
\begin{align*} \int_{\R^d \setminus B_r(v)} \tilde K (v,v') \dd v' &=\int_{\R^d \setminus B_r(v)} \eta(v) \eta(v') K (v,v') +  \Lambda (1-\eta(v) \eta(v')) |v-v'|^{-d-2s} \dd v', \\
&\leq \eta(v) \int_{\R^d \setminus B_r(v)} K(v,v') \dd v' + (1-\eta(v)) \Lambda \int_{\R^d \setminus B_r(v)} |v-v'|^{-d-2s} \dd v' \lesssim \Lambda.\end{align*}

For any $v' \in \R^d$ and $r>0$, we do almost the same computation
\begin{align*} 
\int_{\R^d \setminus B_r(v')} \tilde K (v,v') \dd v &=\int_{\R^d \setminus B_r(v)} \eta(v) \eta(v') K (v,v') +  \Lambda (1-\eta(v) \eta(v')) |v-v'|^{-d-2s} \dd v', \\
&\leq \eta(v') \int_{B_\Radius \setminus B_r(v')} K(v,v') \dd v + (1-\eta(v')) \Lambda \int_{\R^d \setminus B_r(v')} |v-v'|^{-d-2s} \dd v \lesssim \Lambda.
\end{align*}

This justifies \eqref{e:Kabove-signed}. We now verify \eqref{e:Kcancellation1G}. This only applies when
$2s \geq 1$. Given any $r > 0$, we compute
\begin{align*}
\bigg \vert PV \int_{B_r(v)}
   & (v-v') \big( \tilde K(v,v') -  \tilde K (v',v) \big) \dd v' \bigg \vert \\
& = \eta(v) \left \vert PV \int_{ B_r(v) \cap B_\Radius} 
    (v-v') \eta(v') \left( K(v,v') - K (v',v)\right) \dd v' \right \vert,\\ 
&\leq \eta(v) \Bigg( \eta(v) \Lambda (1+ \min(r, (\Radius - |v|))^{1-2s} ) \\ 
& \qquad  + \left \vert \int_{  B_r(v) \cap B_\Radius}   (v-v') (\eta(v')-\eta(v)) \left( K(v,v') - K (v',v)\right) \dd v' \right \vert \Bigg), \\ 
\intertext{Note that $\eta(v)=0$ if $|v| > 7\Radius/8$, therefore $\eta(v) (\Radius - |v|)^{1-2s} \leq C$ for some constant depending on $\Radius$.} 
&\leq C\eta(v) \left( \Lambda (1 + r^{1-2s}) + \int_{ B_r(v) \cap B_\Radius}  
    |v-v'|^2 | K(v,v') - K (v',v)| \dd v' \right),  \\
& \leq C \Lambda (1+r^{1-2s} ). 
\end{align*}
This proves \eqref{e:Kcancellation1G}.

We now move on to \eqref{e:Kcancellation0G}. When $s \in [0,1/2)$ the
proof is similar to the computation above for \eqref{e:Kcancellation1G}. Indeed,
\begin{align*}
\bigg\vert PV \int_{\R^d} \left( \tilde K(v,v') - \tilde K (v',v)\right) & \dd v' \bigg \vert = \eta(v) \left \vert PV \int_{B_\Radius} \eta(v') \left(  K(v,v') - K (v',v)\right) \dd v' \right \vert,\\ 
&\leq \eta(v) \left( \Lambda + \left \vert PV \int_{B_\Radius} (\eta(v')-\eta(v)) \left(  K(v,v') - K (v',v)\right) \dd v' \right \vert \right),\\ 
&\leq \eta(v) \left( \Lambda + C \int_{B_\Radius} |v'-v| | K(v,v') - K (v',v)|) \dd v' \right) \leq C \Lambda \eta(v).
\end{align*}
The last inequality follows from \eqref{e:Kabove} because $s \in [0,1/2)$.

In the case $s \in [1/2,1)$, we modify the estimate of the last line. We have
\begin{align*}
\bigg\vert PV \int_{\R^d} \left( \tilde K(v,v') - \tilde K (v',v)\right) & \dd v' \bigg \vert
\leq \eta(v) \left( \Lambda + \left \vert PV \int_{B_\Radius} (\eta(v')-\eta(v)) \left(  K(v,v') - K (v',v)\right) \dd v' \right \vert \right),\\ 
&\leq \eta(v) \bigg( \Lambda + \left \vert \nabla \eta(v) \cdot PV \int_{B_\Radius}  (v'-v) \left(  K(v,v') - K (v',v)\right) \dd v' \right \vert \\
& \qquad \qquad +\int_{B_\Radius} C|v'-v|^2 |K(v,v') - K (v',v)| \dd v' \bigg) \leq C \Lambda \eta(v).
\end{align*}
For the last inequality, we apply \eqref{e:Kabove} and \eqref{e:Kcancellation1}.

We now justify \eqref{e:KlowerG}. We see that
\begin{align*}
- \int \tilde \Lv f(v) f(v) \dd v &= \tilde \Esym(f,f) + \tilde \Eskew(f,f), \\
&= \iint |f(v) - f(v')|^2 \tilde K(v,v') \dd v \dd v' + \int f(v)^2 \left( PV \int (\tilde K(v,v') - \tilde K(v',v)) \dd v' \right) \dd v, \\
&\geq \iint_{\R^{2d}} |f(v) - f(v')|^2 \tilde K(v,v') \dd v \dd v' -  \Lambda \|f\|_{L^2}^2.
\end{align*}

Let $3\Radius/4 < r_1 < r_2 < \Radius$ so that $\eta(v) < 2/3$ if $|v| > r_1$ and $\eta(v) > 1/3$ if $|v| < r_2$. The first term in the definition of $\tilde K(v,v')$ of bounded below by $K(v,v')/9$ when both $v$ and $v'$ belong to $B_{r_2}$. When $v$ and $v'$ do not belong to $B_{r_1}$,  we can estimate $\tilde K(v,v')$ from below by $\Lambda |v-v'|^{-d-2s}/3$. If $v$ and $v'$ belong to $B_{r_2} \setminus B_{r_1}$, the value of $\tilde K(v,v')$ is bounded below by the sum of the two previous terms. We have,
\begin{align*}
\iint_{\R^{2d}} |f(v) - f(v')|^2 \tilde K(v,v') \dd v \dd v' &\geq \frac 19 \iint_{B_{r_2} \times B_{r_2}} |f(v) - f(v')|^2 K(v,v') \dd v \dd v' \\
&\qquad \qquad + \frac \Lambda 3 \iint_{\R^{2d} \setminus (B_{r_1} \times B_{r_1})} |f(v) - f(v')|^2 |v-v'|^{-d-2s} \dd v \dd v'.
\end{align*}

We need to estimate the first term using \eqref{e:Klower}. Let $\varphi$ be a smooth radial function so that $\varphi = 1$ in $B_{r_1}$ and $\varphi=0$ outside $B_{r_2}$.  Using Lemma \ref{lem:g-cancellation} after some arithmetic manipulations, we see that
\begin{align*} 
 \E(\varphi f, \varphi f) &= \iint_{B_\Radius \times B_\Radius} \varphi(v) \varphi(v') (f(v)-f(v'))^2 K(v,v') \dd v' \dd v \\
&\qquad \qquad + 2 \int_{B_\Radius} f(v)^2 \varphi(v) \left( PV \int_{B_\Radius} (\varphi(v) - \varphi(v')) (K(v,v') - K(v',v)) \dd v' \right) \dd v, \\
&\qquad \qquad + 2 \int_{B_\Radius} \varphi(v)^2 f(v)^2 \left( \int_{\R^d \setminus B_\Radius} K(v,v') \dd v' \right) \dd v, \\
&\leq \iint_{B_{r_2} \times B_{r_2}} |f(v) - f(v')|^2 K(v,v') \dd v \dd v' + C \|f\|_{L^2}^2.
\end{align*}
Combining the last three displayed inequalities with \eqref{e:Klower}, we obtain
\begin{align*}
\E(f,f) &= - \int \tilde \Lv f(v) f(v) \dd v, \\
&\geq \iint_{\R^{2d}} |f(v) - f(v')|^2 \tilde K(v,v') \dd v \dd v' -  \Lambda \|f\|_{L^2}^2, \\
&\geq \frac 19 \iint_{B_{r_2} \times B_{r_2}} |f(v) - f(v')|^2 K(v,v') \dd v \dd v' \\
&\qquad \qquad + \frac \Lambda 3 \iint_{\R^{2d} \setminus (B_{r_1} \times B_{r_1})} |f(v) - f(v')|^2 |v-v'|^{-d-2s} \dd v \dd v' -  \Lambda \|f\|_{L^2}^2, \\
&\geq  \frac 19 \E(\varphi f, \varphi f) - C \|f\|_{L^2}^2 + \frac \Lambda 3 \iint_{\R^{2d} \setminus (B_{r_1} \times B_{r_1})} |f(v) - f(v')|^2 |v-v'|^{-d-2s} \dd v \dd v', \\
&\geq \min(\lambda/9,\Lambda/3) \left( \iint_{\R^d \times \R^d} |f(v) - f(v')|^2 |v-v'|^{-d-2s} \dd v \dd v' \right) - C \|f\|_{L^2}^2.
\end{align*}
\end{proof}

The extended kernel $\tilde K$ can be used to reduce many results to
the case of globally defined kernels. The following results, which we
will need later, are examples.
\begin{cor}[The operator $\Lv$ maps $H^s$ into
  $H^{-s}$] \label{c:upperbound} Assume $K: B_\Radius \times \R^d \to \R$ is
  a non-negative kernel that satisfies \eqref{e:Kabove} and
  \eqref{e:Kcancellation0}; if $s \ge 1/2$, we also assume that $K$
  satisfies \eqref{e:Kcancellation1}. For any $f \in H^s(\R^d)$ and
  $g \in H^s (\R^d)$ supported in $B_{\Radius/2}$,
\begin{equation} \label{e:Kupper}
\Ex (f,g) = -\int_{B_{\Radius/2}} \Lv f(v) g(v) \dd v \leq \Lambda \|f\|_{H^s(\R^d)} \|g\|_{H^s(\R^d)}
\end{equation}
for some positive constant $\Lambda$ depending on dimension.
\end{cor}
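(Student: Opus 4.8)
The plan is to reduce the statement to Theorem \ref{t:upperbound}, which is already available for globally defined kernels satisfying \eqref{e:Kabove-signed}, \eqref{e:Kcancellation0G}, and (when $s \ge 1/2$) \eqref{e:Kcancellation1G}. First I would invoke Proposition \ref{p:global-kernel} to produce an extended kernel $\tilde K : \R^d \times \R^d \to \R$ that agrees with $K$ on $B_{2\Radius/3} \times B_{2\Radius/3}$, is non-negative everywhere, and satisfies the global assumptions \eqref{e:Kabove-signed}, \eqref{e:Kcancellation0G} and \eqref{e:Kcancellation1G} with a constant $C\Lambda$ in place of $\Lambda$ (the constant depending only on $s$ and $d$). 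Let $\tilde \Ex$ and $\tilde \Lv$ denote the bilinear form and operator associated with $\tilde K$. Since $g$ is supported in $B_{\Radius/2}$, the term $g(v)$ in the integrand $(f(v)-f(v'))g(v)K(v,v')$ vanishes unless $v \in B_{\Radius/2}$; I then need to compare $\Ex(f,g)$ with $\tilde \Ex(f,g)$, the difference coming only from the region $v \in B_{\Radius/2}$, $v' \in \R^d \setminus B_{2\Radius/3}$ where $K$ and $\tilde K$ may differ.

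Next I would estimate this error term. Write
\[
\Ex(f,g) - \tilde \Ex(f,g) = \int_{B_{\Radius/2}} g(v) \left( \int_{\R^d} (f(v)-f(v'))\,(K(v,v')-\tilde K(v,v'))\dd v' \right) \dd v.
\]
Split $f(v)-f(v')$ into the $f(v)$ piece and the $f(v')$ piece. For the $f(v)$ piece, \eqref{e:Kerrortail} from Proposition \ref{p:global-kernel} gives $\int_{\R^d}|K(v,v')-\tilde K(v,v')|\dd v' \le C\Lambda$ for $v \in B_{\Radius/2}$, so that contribution is bounded by $C\Lambda \|g\|_{L^2}\|f\|_{L^2}$ by Cauchy--Schwarz. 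For the $f(v')$ piece, note that $K(v,v')-\tilde K(v,v') = (1-\eta(v)\eta(v'))(K(v,v') - \Lambda|v-v'|^{-d-2s})$, which for $v\in B_{\Radius/2}$ is supported in $\{|v-v'|\gtrsim \Radius\}$; there both $|K(v,v')|$ (by \eqref{e:Kabove} and \eqref{e:Kabove-signed}, once extended) and $\Lambda|v-v'|^{-d-2s}$ are integrable in $v'$ with a bound $\lesssim \Lambda$, so after Cauchy--Schwarz in $v$ and $v'$ this piece is also $\lesssim \Lambda \|g\|_{L^2}\|f\|_{L^2}$. Hence $|\Ex(f,g)-\tilde\Ex(f,g)| \lesssim \Lambda\|f\|_{H^s}\|g\|_{H^s}$.

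Finally, applying Theorem \ref{t:upperbound} to the global kernel $\tilde K$ (whose constants are $C\Lambda$), we get $\tilde\Ex(f,g) \le C\Lambda \|f\|_{H^s}\|g\|_{H^s}$, and combining with the error estimate yields $\Ex(f,g) \le C\Lambda \|f\|_{H^s(\R^d)}\|g\|_{H^s(\R^d)}$, which is the claim (after renaming the constant). I also need the identity $\Ex(f,g) = -\int_{B_{\Radius/2}}\Lv f(v)\,g(v)\dd v$, which holds because $g$ is supported in $B_{\Radius/2}$ and $\Lv f$ is, a priori, only used tested against such $g$; for smooth $f$ and $g$ this is just Fubini/the definition of $\Lv$, and the general case follows by density of smooth functions in $H^s$ together with the just-proved bilinear bound. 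The main obstacle I expect is the bookkeeping in the error term: making sure the region where $K \ne \tilde K$ is genuinely bounded away from the diagonal when $v \in B_{\Radius/2}$, so that both kernels are integrable there, and carefully using the \emph{second} line of \eqref{e:Kabove-signed} (integrability in $v$ for fixed $v'$) where needed — though since we only integrate $g$ over the fixed ball $B_{\Radius/2}$, the integration in $v$ is over a bounded set and this is not actually delicate.
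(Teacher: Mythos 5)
Your proposal is correct and follows exactly the route the paper intends: the corollary is stated in the paper as a direct consequence of Proposition \ref{p:global-kernel} (extend $K$ to $\tilde K$, apply Theorem \ref{t:upperbound} to $\tilde K$, and control $\Ex - \tilde\Ex$ using \eqref{e:Kerrortail}, the support of $g$, and the fact that $K$ and $\tilde K$ agree near the diagonal for $v \in B_{\Radius/2}$), with the paper omitting the details. Your handling of the $f(v')$ piece via Fubini and \eqref{e:Kabove}-(ii) is the right way to fill in the one step the paper leaves implicit.
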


\begin{cor}[Second upper bound for $\Ex$] \label{cor:upper-2L} Let
  $K$ satisfy \eqref{e:Kabove}, \eqref{e:Kcancellation0}. If
  $s \geq 1/2$, we also assume \eqref{e:Kcancellation1}. For any two
  functions $g \in H^s (B_{\Radius/2}) \cap L^1 (\R^d)$ and $\varphi \in C^2$, both compactly supported in $B_{\Radius/2}$,
  with $g \ge 0$ and any $\eps>0$, we have
\begin{equation}\label{e:Kupper2L}
\Ex(\varphi,g)
\le \eps \|g\|_{H^s}^2 + C \eps^{-1}
  \| \nabla \varphi\|_{L^\infty}^2 |\{v \in \R^d : g(v) >0 \}| + C \|\varphi\|_{C^2} \|g\|_{L^1} + C \eps \|g\|_{L^2}^2.
\end{equation}
\end{cor}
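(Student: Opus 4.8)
The plan is to reduce the corollary to Lemma~\ref{lem:upper-2} by replacing the kernel $K$, which is only defined on $B_\Radius\times\R^d$, with the globally defined kernel $\tilde K$ furnished by Proposition~\ref{p:global-kernel}, and then to control the lower-order error introduced by this substitution. First I would extend $g$ by zero outside $B_{\Radius/2}$: since $g$ is compactly supported there, the extension lies in $H^s(\R^d)\cap L^1(\R^d)$, stays nonnegative, keeps the same superlevel set $\{g>0\}$, and has $\|g\|_{H^s(\R^d)}$ comparable to $\|g\|_{H^s(B_{\Radius/2})}$ with a constant depending only on $\Radius$.

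Next I would invoke Proposition~\ref{p:global-kernel} to obtain $\tilde K$: it satisfies the global assumptions \eqref{e:Kabove-signed}, \eqref{e:Kcancellation0G} and (when $s\ge1/2$) \eqref{e:Kcancellation1G} with a constant $C\Lambda$, it agrees with $K$ on $B_{2\Radius/3}\times B_{2\Radius/3}$, and it satisfies \eqref{e:Kerrortail} for every $v\in B_{\Radius/2}$. Writing $\tilde\Ex$ for the bilinear form associated with $\tilde K$, Lemma~\ref{lem:upper-2} applied to $\tilde K$, $\varphi$ and $g$ would give, for every $\eps>0$,
\[
\tilde\Ex(\varphi,g)\le\eps\|g\|_{\dot H^s}^2+C\eps^{-1}\|\varphi\|_{C^1}^2\,|\{v\in\R^d:g(v)>0\}|+C\|\varphi\|_{C^2}\|g\|_{L^1}.
\]
Then I would estimate the error $\Ex(\varphi,g)-\tilde\Ex(\varphi,g)$. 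Since $g$ is supported in $B_{\Radius/2}$, this difference equals
\[
-\int_{B_{\Radius/2}}g(v)\left(\int_{\R^d}(\varphi(v')-\varphi(v))\big(K(v,v')-\tilde K(v,v')\big)\dd v'\right)\dd v,
\]
and because $K$ and $\tilde K$ coincide on $B_{2\Radius/3}\times B_{2\Radius/3}$ the inner integrand vanishes for $|v-v'|$ small (when $v\in B_{\Radius/2}$), so it is absolutely convergent with no principal value needed. Bounding $|\varphi(v')-\varphi(v)|\le2\|\varphi\|_{L^\infty}$ and using \eqref{e:Kerrortail} gives
\[
|\Ex(\varphi,g)-\tilde\Ex(\varphi,g)|\le2C\Lambda\|\varphi\|_{L^\infty}\|g\|_{L^1}\le C\|\varphi\|_{C^2}\|g\|_{L^1}.
\]

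Finally I would combine the two displays, use the Poincar\'e inequality $\|\varphi\|_{C^1}\le C\|\nabla\varphi\|_{L^\infty}$ (valid since $\varphi$ is supported in the fixed ball $B_{\Radius/2}$) to replace $\|\varphi\|_{C^1}$, and bound $\|g\|_{\dot H^s}^2\le\|g\|_{H^s}^2$, using the slack term $C\eps\|g\|_{L^2}^2$ in \eqref{e:Kupper2L} to absorb the discrepancy between the global seminorm and the $H^s$ norm appearing in the statement; this yields \eqref{e:Kupper2L}. I do not expect any real obstacle: the whole content is that substituting $\tilde K$ for $K$ perturbs $\Ex(\varphi,g)$ only by a term of the form $C\|\varphi\|_{C^2}\|g\|_{L^1}$, which is already on the right-hand side of \eqref{e:Kupper2L}, and that every comparison constant used depends only on $d$, $s$ and the fixed radius $\Radius$. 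The one step that merits a bit of care is observing that the error integrand has no singularity along the diagonal, so that the difference $\Ex-\tilde\Ex$ can be estimated directly without dealing with a principal value.
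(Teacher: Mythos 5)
Your proposal is correct and is exactly the reduction the paper intends: the corollary is stated as a consequence of Proposition~\ref{p:global-kernel}, and your argument (extend $K$ to $\tilde K$, apply Lemma~\ref{lem:upper-2} to $\tilde\Ex$, then control $\Ex-\tilde\Ex$ via \eqref{e:Kerrortail} using that $g$ is supported in $B_{\Radius/2}$ while $K=\tilde K$ near the diagonal there) supplies precisely the details the paper omits, including the correct use of the slack terms $\|\nabla\varphi\|_{L^\infty}$ and $C\eps\|g\|_{L^2}^2$.
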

{

\begin{cor} \label{c:commutator-s<1/2}
Let $K$ satisfy \eqref{e:Kabove}, $D$, $\Omega$, $\varphi$ and $f$ be as in Lemma \ref{l:commutator-s<1/2}. Assume that $B_{\Radius/2} \supset \Omega$. We extend the operator $\tilde L$ as in Proposition \ref{p:global-kernel}. Then, 
\[ \tilde L[\varphi f] - \varphi Lf = h_1 + h_2,\]
where $h_1$ and $h_2$ satisfy the same estimates as in Lemma \ref{l:commutator-s<1/2}.
\end{cor}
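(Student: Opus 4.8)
\emph{Proof proposal.} The plan is to reduce everything to Lemma~\ref{l:commutator-s<1/2} applied to the globally defined kernel $\tilde K$ produced by Proposition~\ref{p:global-kernel}, and then to absorb the (genuinely lower order) discrepancy between $\tilde L f$ and $L f$ on the support of $\varphi$ by means of the tail bound \eqref{e:Kerrortail}. First I would invoke Proposition~\ref{p:global-kernel}: since $K$ satisfies \eqref{e:Kabove} (and the other structural hypotheses inherited from Lemma~\ref{l:commutator-s<1/2}), there is a non-negative kernel $\tilde K\colon\R^d\times\R^d\to\R$, with associated operator $\tilde L$, such that $\tilde K=K$ on $B_{2\Radius/3}\times B_{2\Radius/3}$, $\tilde K$ satisfies \eqref{e:Kabove-signed} with $C\Lambda$ in place of $\Lambda$, and $\int_{\R^d}|K(v,v')-\tilde K(v,v')|\dd v'\le C\Lambda$ for every $v\in B_{\Radius/2}$. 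Because $D\Subset\Omega\subset B_{\Radius/2}$, Lemma~\ref{l:commutator-s<1/2} applies verbatim to $\tilde K$ with the same $D,\Omega,\varphi,f$, giving a splitting $\tilde L[\varphi f]-\varphi\,\tilde L f = h_1^{0}+h_2$ with $h_2=0$ outside $\Omega$ and with $h_1^0,h_2$ obeying the three estimates of that lemma (the harmless extra factor $C$ being absorbed into the universal constants).

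Next I would carry out the elementary algebraic bookkeeping. For $v\in D$ one has
\[
\tilde L[\varphi f](v)-\varphi(v)\,L f(v)
=\bigl(\tilde L[\varphi f](v)-\varphi(v)\,\tilde L f(v)\bigr)
+\varphi(v)\int_{\R^d}\bigl(f(v')-f(v)\bigr)\bigl(\tilde K(v,v')-K(v,v')\bigr)\dd v',
\]
while for $v\notin D$ both sides reduce to $\int_D\varphi(v')f(v')\,\tilde K(v,v')\dd v'$, so the identity $\tilde L[\varphi f]-\varphi L f=(h_1^{0}+h_2)+E$ holds globally, where $E(v):=\varphi(v)\int_{\R^d}(f(v')-f(v))(\tilde K(v,v')-K(v,v'))\dd v'$. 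The term $E$ is supported in $D\subset\Omega$, and for $v\in D\subset B_{\Radius/2}$ its integrand has no singularity at $v'=v$ (there $v$ and $v'$ both lie in $B_{2\Radius/3}$, where $\tilde K=K$), so by \eqref{e:Kerrortail}, $|E(v)|\le 2\|\varphi\|_{L^\infty}\|f\|_{L^\infty}\int_{\R^d}|\tilde K(v,v')-K(v,v')|\dd v'\le C\Lambda\,\|\varphi\|_{L^\infty}\|f\|_{L^\infty}$. Since $\Omega\subset B_{\Radius/2}$ forces $d(D,\R^d\setminus\Omega)\le\Radius/2$ and $\Radius$ depends only on $d$ and $s$, we get $C\Lambda\lesssim d(D,\R^d\setminus\Omega)^{-2s}$, hence $\|E\|_{L^\infty(\R^d)}\lesssim\|\varphi\|_{L^\infty}\|f\|_{L^\infty}\,d(D,\R^d\setminus\Omega)^{-2s}$; moreover $E=0$ on $\R^d\setminus\Omega$, so it does not affect the $L^2(\R^d\setminus\Omega)$ bound. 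Setting $h_1:=h_1^{0}+E$ and keeping $h_2$ unchanged, all three estimates of Lemma~\ref{l:commutator-s<1/2} hold, which finishes the proof.

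The only delicate point is the meaning of $\varphi\,(\tilde L f-L f)$: for $f\in H^s\cap L^\infty$ with $s<1/2$ neither $\tilde L f$ nor $L f$ need be defined pointwise, but their $\varphi$-localized difference is, precisely because $\tilde K$ and $K$ agree near the diagonal over $D$, so $E$ is merely the integral of $f(v')-f(v)$ against the bounded, far-from-diagonal error $\tilde K-K$. Once that observation is in place, everything else is the routine splitting above, and I expect no further obstacle.
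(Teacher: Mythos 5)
Your proposal is correct and follows essentially the same route as the paper: apply the commutator lemma to the globally extended kernel $\tilde K$ from Proposition~\ref{p:global-kernel}, observe that $\varphi(\tilde L f - Lf)$ is supported in $D$ and is bounded pointwise because $\tilde K = K$ near the diagonal over $D$ (so the error integrand lives far from the singularity and is controlled by \eqref{e:Kerrortail}, or equivalently by the tail bounds), and absorb that error into $h_1$. The paper's own proof is exactly this absorption argument, so no further comment is needed.
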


\begin{cor} \label{c:commutator-s>1/2}
Let $K$ satisfy \eqref{e:Kabove} and \eqref{e:Kcancellation1}, $D$, $\Omega$, $\varphi$ and $f$ be as in Lemma \ref{l:commutator-s>1/2}. Assume that $B_{\Radius/2} \supset \Omega$. We extend the operator $\tilde L$ as in Proposition \ref{p:global-kernel}. Then, 
\[ \tilde L[\varphi f] - \varphi Lf = h_1 + h_2 + (-\Delta)^{s/2} h_3,\]
where $h_1$, $h_2$ and $h_3$ satisfy the same estimates as in Lemma \ref{l:commutator-s>1/2}.
\end{cor}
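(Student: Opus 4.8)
The plan is to deduce this from Lemma~\ref{l:commutator-s>1/2} by globalizing the kernel. First I would construct a globally defined kernel $\tilde K:\R^d\times\R^d\to\R$ exactly as in the proof of Proposition~\ref{p:global-kernel}, namely $\tilde K(v,v') = \eta(v)\eta(v')K(v,v') + \Lambda(1-\eta(v)\eta(v'))|v-v'|^{-d-2s}$ with $\eta$ a radial cutoff equal to $1$ on $B_{3\Radius/4}$ and $0$ outside $B_{7\Radius/8}$. Inspecting that proof, the construction together with the verification of \eqref{e:Kabove-signed}, \eqref{e:Kcancellation1G} and the tail bound \eqref{e:Kerrortail} for $v\in B_{\Radius/2}$ uses only the hypotheses \eqref{e:Kabove} and \eqref{e:Kcancellation1} that we have available here (the coercivity \eqref{e:Klower} is needed only for \eqref{e:KlowerG}, and \eqref{e:Kcancellation0} only for \eqref{e:Kcancellation0G}, neither of which we use). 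Moreover $\tilde K(v,v') = K(v,v')$ whenever $v,v'\in B_{2\Radius/3}$. Write $\tilde L$ for the operator with kernel $\tilde K$.

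Since $\varphi$ is supported in $D\Subset\Omega$, $\varphi\in C^\infty_c(D)$ and $\varphi f\in H^s_0(\Omega)$, Lemma~\ref{l:commutator-s>1/2} applies to $\tilde K$ and gives
\[ \tilde L[\varphi f] - \varphi\,\tilde L f = h_1 + h_2 + (-\Delta)^{s/2}h_3 \]
with $h_1,h_2,h_3$ obeying the bounds stated there (with $\Lambda$ replaced by $C(d,s)\Lambda$, which is harmless since our estimates are stated with $\lesssim$). Because we want $\varphi\,Lf$ rather than $\varphi\,\tilde L f$ on the left, I would then write
\[ \tilde L[\varphi f] - \varphi\,Lf = \Big(\tilde L[\varphi f] - \varphi\,\tilde L f\Big) + \varphi\,\big(\tilde L f - Lf\big) = \big(h_1 + \varphi(\tilde L f - Lf)\big) + h_2 + (-\Delta)^{s/2}h_3, \]
so it remains to check that the correction $\varphi(\tilde L f - Lf)$ satisfies the same estimates as $h_1$.

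To estimate the correction, note that $\supp\varphi\subset D\subset\Omega\subset B_{\Radius/2}$, and that for $v\in B_{\Radius/2}$ the function $(\tilde K-K)(v,\cdot)$ vanishes on $B_{2\Radius/3}$, hence is supported in $\{v':|v-v'|\ge \Radius/6\}$. Consequently, for such $v$,
\[ \tilde L f(v) - Lf(v) = \int_{\R^d\setminus B_{2\Radius/3}} \big(f(v')-f(v)\big)\,(\tilde K-K)(v,v')\dd v' \]
is an absolutely convergent integral (no principal value needed, the singularity having been removed), bounded by $2\|f\|_{L^\infty(\R^d)}\int|(\tilde K-K)(v,v')|\dd v'\le C\Lambda\|f\|_{L^\infty(\R^d)}$ by \eqref{e:Kerrortail}. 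Thus $\varphi(\tilde L f - Lf)$ is supported in $D$, so its $L^2(\R^d\setminus\Omega)$-norm is $0$, and $\|\varphi(\tilde L f - Lf)\|_{L^\infty}\le C\Lambda\|\varphi\|_{L^\infty}\|f\|_{L^\infty(\R^d)}$. Since $D$ is nonempty and $D\subset\Omega\subset B_{\Radius/2}$ we have $d(D,\R^d\setminus\Omega)\le\Radius/2$, and on $D$ one has $d(v,D)=0$, so on $D$ the crude bound $C\Lambda\le C(\Radius,s)\,(d(D,\R^d\setminus\Omega)+d(v,D))^{-2s}$ holds, while off $D$ the correction is zero. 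Hence $h_1+\varphi(\tilde L f - Lf)$ satisfies precisely the two bounds required of $h_1$ in Lemma~\ref{l:commutator-s>1/2}, and after relabelling $h_1 + \varphi(\tilde L f - Lf)$ as the new $h_1$ the proof is complete.

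The only point requiring a little care is the claim that $\tilde L f - Lf$ is genuinely a lower-order, non-singular quantity on $\supp\varphi$: this is exactly where we use that $\tilde K$ agrees with $K$ near the diagonal throughout $B_{2\Radius/3}\times B_{2\Radius/3}$ (so the principal values cancel on the relevant region), together with the tail bound \eqref{e:Kerrortail}; everything else is bookkeeping. The companion statement Corollary~\ref{c:commutator-s<1/2} is proved the same way, invoking Lemma~\ref{l:commutator-s<1/2} in place of Lemma~\ref{l:commutator-s>1/2} and simply omitting the $h_3$ term.
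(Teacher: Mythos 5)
Your proposal is correct and follows essentially the same route as the paper: globalize the kernel via Proposition~\ref{p:global-kernel}, apply Lemma~\ref{l:commutator-s>1/2} to $\tilde L$, and absorb the correction $\varphi(\tilde L f - Lf)$ into $h_1$ using the fact that $\tilde K$ agrees with $K$ near the diagonal on $\supp\varphi$ together with the tail bound \eqref{e:Kerrortail}. Your write-up is in fact somewhat more careful than the paper's (which simply bounds the correction by $C\varphi(v)\|f\|_{L^\infty}\delta^{-2s}$ without spelling out the support considerations), but the underlying argument is identical.
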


The justifications of the two lemmas above are almost identical. We explain the latter one.

\begin{proof}[Proof of Corollary \ref{c:commutator-s>1/2}]
Let $\bar K$ be the extended kernel according to Proposition \ref{p:global-kernel}.

Applying Lemma \ref{l:commutator-s>1/2}, we obtain that
\[ \tilde L [ \varphi f] - \varphi \tilde L f = \tilde h_1 + h_2 + (-\Delta)^{s/2} h_3.\]

For this corollary, we want to replace $\varphi \tilde L f$ by $\varphi Lf$. Since $\varphi$ is supported in $D$, these two expressions only differ when $v \in D$. In this case, we have
\[ |\varphi(v) \tilde L f(v) - \varphi(v) Lf(v)| = \left\vert \varphi(v) \int_{\R^d} [f(v) - f(v')] \left( K(v,v') - \tilde K(v,v') \right) \dd v \right\vert \leq C \varphi(v) \|f\|_{L^\infty} \delta^{-2s}.\]
This difference is absorbed by the term $h_1$ by setting $h_1 = \tilde h_1 + \varphi(v) \tilde L f(v) - \varphi(v) Lf(v)$.
\end{proof}

} 

\subsection{Definition of weak solutions}

We now discuss the concept of weak solutions. In order to justify the
definition we are going to give below, we start with the following
preparatory lemma.
\begin{lemma}[The bilinear form $\E$ in the local
  case] \label{l:restricted-form} Let
  $\supp \varphi \Subset B_{\Radius/2}$  and $\varphi \in H^s (\R^d)$. Assume $K$ satisfies
  \eqref{e:Kabove}, \eqref{e:Kcancellation0} and
  \eqref{e:Kcancellation1}. Then for all $f \in L^\infty(\R^d \setminus B_{\Radius/2}) + H^s(\R^d)$
\[ \E(f,\varphi) \leq C \|f\|_{L^\infty(\R^d \setminus B_{\Radius/2}) + H^s(\R^d)} \|\varphi\|_{H^s(\R^d)},\]
where the constant $C$ depends on $\Lambda$, $d$, $s$ and the support of $\varphi$. Here,
\[\|f\|_{L^\infty(\R^d \setminus B_{\Radius/2}) + H^s(\R^d)} = \inf\left\{ \|f_1\|_{L^\infty(\R^d \setminus B_{\Radius/2})} + \|f_2\|_{H^s(\R^d)} : f = f_1 + f_2 \text{ and } f_1 = 0 \text{ in } B_{\Radius/2}\right\}.
\]
More precisely, the inequality holds for smooth functions, and
therefore it allows the bilinear form to be extended to the
appropriate spaces of functions.
\end{lemma}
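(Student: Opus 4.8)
The plan is to split $\varphi$ away from the region where $f$ may be merely $L^\infty$, reduce to the global kernel $\tilde K$ of Proposition~\ref{p:global-kernel}, and then apply the $H^s \times H^s$ estimate of Theorem~\ref{t:upperbound}. Write $f = f_1 + f_2$ with $f_1 = 0$ on $B_{\Radius/2}$ and $f_1 \in L^\infty(\R^d \setminus B_{\Radius/2})$, $f_2 \in H^s(\R^d)$. By bilinearity it suffices to bound $\E(f_1,\varphi)$ and $\E(f_2,\varphi)$ separately. For the $f_2$ term, since $\supp \varphi \Subset B_{\Radius/2}$, only values of the kernel with $v$ or $v'$ in $B_{\Radius/2}$ enter; extend $K$ to $\tilde K$ using Proposition~\ref{p:global-kernel}, note $\E(f_2,\varphi)$ computed with $K$ equals the same quantity computed with $\tilde K$ (their disagreement set is disjoint from the support of the integrand involving $\varphi$, or is controlled by \eqref{e:Kerrortail} as in the proof of Corollary~\ref{c:commutator-s>1/2}), and invoke Theorem~\ref{t:upperbound} to get $\E(f_2,\varphi) \lesssim \|f_2\|_{H^s}\|\varphi\|_{H^s}$.

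For the $f_1$ term, use that $f_1$ vanishes on $B_{\Radius/2}$ and $\varphi$ is supported in a compact subset $D \Subset B_{\Radius/2}$, so that in $\E(f_1,\varphi) = \iint (f_1(v) - f_1(v'))\varphi(v) K(v,v')\dd v'\dd v$ the variable $v$ ranges over $D$, where $f_1(v) = 0$, leaving
\[
\E(f_1,\varphi) = -\iint_{v \in D,\ v' \notin B_{\Radius/2}} f_1(v')\,\varphi(v)\, K(v,v') \dd v' \dd v.
\]
Here $|v - v'| \geq \delta := d(D, \R^d \setminus B_{\Radius/2}) > 0$, so the kernel is integrated over the complement of a fixed ball. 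Applying \eqref{e:Kabove}-(i) (in the form of \eqref{e:Kabove-signed}) gives $\int_{\R^d \setminus B_\delta(v)} |K(v,v')| \dd v' \lesssim \Lambda \delta^{-2s}$ for each $v \in D$, hence
\[
|\E(f_1,\varphi)| \lesssim \Lambda \delta^{-2s} \|f_1\|_{L^\infty(\R^d \setminus B_{\Radius/2})} \int_D |\varphi(v)| \dd v \lesssim \Lambda \delta^{-2s} |D|^{1/2} \|f_1\|_{L^\infty(\R^d\setminus B_{\Radius/2})} \|\varphi\|_{L^2}.
\]
Since $\delta$ and $|D|$ depend only on $\supp\varphi$, this is of the required form. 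Adding the two contributions and taking the infimum over decompositions $f = f_1 + f_2$ yields the claimed bound.

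The only delicate point is the treatment of the antisymmetric (principal value) part of the kernel in the $f_2$ contribution: when $s \geq 1/2$ the integral $\E(f_2,\varphi)$ is only conditionally convergent and must be understood through the symmetric/antisymmetric decomposition $\E = \Esym + \Eskew$, exactly as in Section~\ref{sec:bilinear}. This is handled by Theorem~\ref{t:upperbound}, whose hypotheses \eqref{e:Kabove-signed}, \eqref{e:Kcancellation0G}, \eqref{e:Kcancellation1G} are supplied for $\tilde K$ by Proposition~\ref{p:global-kernel}; one also checks that near $\supp\varphi$ the passage from $K$ to $\tilde K$ does not alter the principal value, since $\tilde K = K$ on $B_{2\Radius/3} \times B_{2\Radius/3} \supset D \times D$ and the off-diagonal discrepancy contributes a bounded (non-singular) error absorbed into the $L^2$ estimate via \eqref{e:Kerrortail}. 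Everything else is routine Cauchy–Schwarz, and the statement about smooth functions being enough follows from density of $C^\infty_c$ in $H^s$ together with the linearity of all the estimates just derived.
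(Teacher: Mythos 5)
Your proof is correct and follows essentially the same route as the paper: the same decomposition $f = f_1 + f_2$, the global $H^s \times H^s$ bound (the paper invokes Corollary~\ref{c:upperbound}, which is exactly your reduction to $\tilde K$ plus Theorem~\ref{t:upperbound}) for the $f_2$ part, and the tail bound \eqref{e:Kabove} over $\R^d \setminus B_\delta(v)$ for the $f_1$ part. Your extra remarks on the $K$ versus $\tilde K$ discrepancy and on the principal value for $s \ge 1/2$ are points the paper leaves implicit inside Corollary~\ref{c:upperbound}, but they do not change the argument.
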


Note that the restriction $f \in L^\infty(\R^d \setminus B_{\Radius/2}) + H^s(\R^d)$ imposes some fractional Sobolev regularity in $B_{\Radius/2}$ but not so much outside. In particular, any function $f \in H^s(B_{\Radius/2 + \eps}) \cap L^\infty(\R^d \setminus B_{\Radius/2+\eps})$ is in this space.

\begin{proof}
  As mentioned above, we assume for the proof that both $f$ and
  $\varphi$ are smooth. Afterwards, the inequality is obtained by
  density when $f \in L^\infty(\R^d \setminus B_{\Radius/2}) + H^s(\R^d)$ and
  $\varphi \in H^s(\R^d)$ is compactly supported in $B_{\Radius/2}$

Let $f = f_1 + f_2$ as in the definition of the norm in $L^\infty(\R^d \setminus B_{\Radius/2}) + H^s(\R^d)$. Applying Corollary \ref{c:upperbound},
$|\E(f_2, \varphi)| \lesssim \|f_2\|_{H^s} \|\varphi\|_{H^s}$.  We are left to compute $\E(f_1, \varphi)$. We have
\begin{align*}
\E(f_1, \varphi) &= \lim_{\eps \to 0} \iint_{|v'-v|>\eps} (f_1(v')  - f_1(v) ) \varphi(v) K(v,v') \dd v' \dd v, \\
&= \lim_{\eps \to 0} \int_{\supp \varphi} \left( \int_{\R^d \setminus B_\eps(v)} f_1(v') K(v,v')  \dd v'\right) \varphi(v) \dd v, \\
&= \int_{\supp \varphi} \left( \int_{\R^d \setminus B_\delta(v)} f_1(v') K(v,v')  \dd v'\right) \varphi(v) \dd v. \\
\intertext{Here $\delta$ is the distance between the support of $\varphi$ and $\R^d \setminus B_{\Radius/2}$.}
&\leq \Lambda \delta^{-2s} \|f_1\|_{L^\infty} \left( \int_{\supp \varphi} \varphi(v) \dd v \right) \leq  C \Lambda \delta^{-2s} \|f_1\|_{L^\infty} \| \varphi\|_{H^s}.
\end{align*}
\end{proof}

Another way to describe Lemma \ref{l:restricted-form} is that $\Lv$ is
a bounded operator from $L^\infty(\R^d \setminus B_{\Radius/2}) + H^s(\R^d)$ to
$H^{-s}(B_{\bar R})$. We will use this to define the concept of weak
solution.
\begin{defn}[Weak solutions] Assume $K$ satisfies \eqref{e:Kabove}, \eqref{e:Kcancellation0} and \eqref{e:Kcancellation1}.
Given the cylinder $Q = (0,T) \times B_{(\Radius/2)^{1+2s}} \times B_{\Radius/2}$, 
We say that a function
$f : [0,T] \times B_{(\Radius/2)^{1+2s}} \times \R^d \to \R$ is a \emph{subsolution} of
\eqref{e:main} in the cylinder $Q$ \label{weak}  if
\[ 
f \in C^0 ((0,T), L^2 (B_{(\Radius/2)^{1+2s}} \times B_{\Radius/2})) \cap L^2 ((0,T) \times B_{(\Radius/2)^{1+2s}}, 
L^\infty(\R^d \setminus B_{\Radius/2}) + H^s(\R^d) ),
\]
\[ 
f_t + v \cdot \nabla_x f \in L^2((0,T) \times B_{(\Radius/2)^{1+2s}} ,H^{-s}(B_{\Radius/2})), 
\]
and for all non-negative test function
$\varphi \in L^2 ((0,T) \times B_{(\Radius/2)^{1+2s}} ,
H^s (\R^d))$ so that for every $t$ and $x$, $\varphi(t,x\cdot)$ is compactly supported in $B_{\Radius/2}$,
\begin{equation} \label{e:weak-subsolution}
\iiint (f_t+ v\cdot \nabla_x f) \varphi + \iint \E(f,\varphi) - \iiint
h \varphi \le 0.
\end{equation}

A function $f$ is a \emph{supersolution} of \eqref{e:main} in $Q$ if $-f$ is a
subsolution of \eqref{e:main} in $Q$. A function $f$ is a \emph{solution} of \eqref{e:main} in $Q$ if it is
both a sub- and a supersolution.  
\end{defn}
\begin{remark}
Assuming that $f \in C^0 ((0,T), L^2 (B_{(\Radius/2)^{1+2s}} \times B_{\Radius/2}))$ and $f \in L^2 ((0,T) \times B_{(\Radius/2)^{1+2s}}, 
H^s (B_{\Radius/2}))$ is rather natural in view of the energy estimates one can easily get from the coercivity assumption.

\end{remark}

Note that the bilinear form $\iint \E(f,\varphi)$ in \eqref{e:weak-subsolution} is well defined because of Lemma \ref{l:restricted-form}.

\section{The first lemma of De Giorgi}
\label{sec:fdl}

This section is devoted to the first  intermediate result in the
proof of the weak Harnack inequality.  It is referred
to as the first lemma of De Giorgi. It consists in controlling a local
pointwise bound in the interior of a cylinder by an integral quantity in the
cylinder. Its proof (see Subsection~\ref{sub:1DG}) relies on a global
energy estimate (See Subsection~\ref{sub:global}). 

For degenerate integral equations, the situation is different than for
equations of second order. It is not true that the maximum of a
nonnegative subsolution can be bounded by above by a multiple of its $L^2$ norm. One needs to impose an extra global restriction (in this case we assume $0 \leq f \leq 1$ globally). This is because of nonlocal effects, since the positive values of the function outside of the domain of the equation may \emph{pull} the maximum upwards. The strong Harnack inequality fails in general. This fact is well documented and there are counterexamples (see \cite{bogdan2005}).

\subsection{Energy estimates}
\label{sub:global}

The proof of the first lemma of De Giorgi relies on an iteration of
energy estimates applied to certain truncated functions. For kinetic
equations, the energy estimate naturally gives us some regularization
with respect to the $v$ variable. We use the fractional Kolmogorov equation to
translate this regularization in $v$ to a higher degree of integrability of the function.

\begin{lemma}[Global energy inequality and gain of integrability]\label{lem:gee}
Assume $\tilde K$, and its corresponding
  operator $\tilde \Lv$, satisfy \eqref{e:KlowerG},
  \eqref{e:Kabove-signed}, \eqref{e:Kcancellation0G} and
  \eqref{e:Kcancellation1G}. Let $G \geq 0$ be a weak sub-solution of
\begin{equation}\label{e:ivp-bis} 
\begin{cases}
  (\partial_t + v \cdot \nabla_x) G - \tilde \Lv G \leq H_1 {+ (-\Delta)_v^{s/2} H_2} &  \text{ in } [0,T] \times \R^{2d}, \\
  G(0,x,v) = G_0(x,v) & \text{ in } \R^{2d}
\end{cases} 
\end{equation}
with a  source {terms $H_1 , H_2 \in L^2 ([0,T] \times \R^{2d})$.}
Then, 
\begin{equation}\label{e:nrj-G}
\sup_{\tau \in [0,T]} \| G(\tau)\|^2_{L^2 ( \R^{2d})} + \| G\|^2_{L^2([0,T]\times \R^d, \dot{H}^s(\R^{d}) )} \le C \left( \| G_0\|^2_{L^2(\R^{2d})} + \| H_1 \|^2_{L^2 ([0,T] \times \R^{2d})} {+ \| H_2 \|^2_{L^2 ([0,T] \times \R^{2d})} } \right).
\end{equation}
Moreover, there exists $p>2$ (only depending on dimension and $s$) such that 
\begin{equation}\label{e:gain}
\| G \|^2_{L^{p} ([0,T] \times \R^{2d})} \le C \left( \| G_0\|^2_{L^2(\R^{2d})} + \| H_1 \|^2_{L^2 ([0,T] \times 
\R^{2d})} {+ \| H_2 \|^2_{L^2 ([0,T] \times 
\R^{2d})}} \right),
\end{equation}
for some constant $C$ depending on $\lambda$, $\Lambda$, $d$, $p$, $s$, and $T$. 
\end{lemma}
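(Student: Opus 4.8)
The plan is to prove the two displayed inequalities \eqref{e:nrj-G} and \eqref{e:gain} in turn, the first by a standard energy argument exploiting the coercivity assumption \eqref{e:KlowerG}, and the second by applying the gain-of-integrability lemma (Proposition \ref{l:kolmogorov-youngs}) for the fractional Kolmogorov equation, treating $\tilde \Lv G$ as a right-hand side controlled by the energy bound.

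\textbf{Energy estimate.} First I would test the equation \eqref{e:ivp-bis} against $G$ itself (using $G$ as test function is legitimate since $G \geq 0$ is a subsolution and $G \in L^2_t H^s_v$). This produces
\[ \frac12 \frac{d}{dt} \|G(\tau)\|_{L^2(\R^{2d})}^2 + \int_{\R^d} \left( -\int_{\R^d} \tilde \Lv G \, G \dd v \right) \dd x \le \int_{\R^{2d}} \left( H_1 G + (-\Delta)_v^{s/2} H_2 \, G \right) \dd v \dd x, \]
where the transport term $\int (v\cdot\nabla_x G) G \dd v \dd x = \frac12 \int v \cdot \nabla_x (G^2) = 0$ integrates to zero (after justification; formally it is a perfect $x$-derivative and $G$ has enough integrability, or one truncates in $x$ and passes to the limit). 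By the coercivity assumption \eqref{e:KlowerG} applied for a.e. $(t,x)$,
\[ -\int_{\R^d} \tilde \Lv G \, G \dd v \geq \lambda \|G(t,x,\cdot)\|_{\dot H^s(\R^d)}^2 - \Lambda \|G(t,x,\cdot)\|_{L^2(\R^d)}^2. \]
For the right-hand side, $\int (-\Delta)_v^{s/2} H_2 \, G \dd v = \int H_2 (-\Delta)_v^{s/2} G \dd v \le \|H_2\|_{L^2(\R^d)} \|G\|_{\dot H^s(\R^d)}$, and after Cauchy--Schwarz and Young's inequality (absorbing $\tfrac{\lambda}{2}\|G\|_{\dot H^s}^2$ to the left) one gets a differential inequality of Gronwall type:
\[ \frac{d}{dt} \|G(\tau)\|_{L^2}^2 + \lambda \|G(\tau)\|_{L^2_x \dot H^s_v}^2 \le C\left( \|G(\tau)\|_{L^2}^2 + \|H_1(\tau)\|_{L^2}^2 + \|H_2(\tau)\|_{L^2}^2 \right). \]
Integrating in time and applying Gronwall's lemma (the factor $e^{CT}$ is absorbed into the constant) yields \eqref{e:nrj-G}.

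\textbf{Gain of integrability.} For the second inequality I would view $G$ as a solution of the fractional Kolmogorov equation with a modified source. Write $(\partial_t + v\cdot\nabla_x) G + (-\Delta_v)^s G = \tilde h$ where $\tilde h = (-\Delta_v)^s G + \tilde \Lv G + H_1 + (-\Delta)_v^{s/2} H_2$ (plus a sign issue since $G$ is only a subsolution — one handles this by comparison, replacing $G$ by the actual solution of the equation with the same data and source, which dominates $G$, or equivalently by noting the Duhamel formula with the nonnegative kernel $J$ preserves the inequality). The key point is that $\tilde h \in L^2_t H^{-s}_v$: indeed $(-\Delta_v)^s G = (-\Delta_v)^{s/2}\left[(-\Delta_v)^{s/2} G\right]$ with $(-\Delta_v)^{s/2} G \in L^2$ by \eqref{e:nrj-G}; the term $\tilde \Lv G$ lies in $L^2_t H^{-s}_v$ by Theorem \ref{t:upperbound} (the bilinear-form bound gives exactly $\|\tilde \Lv G\|_{H^{-s}} \lesssim \|G\|_{H^s}$), which is again controlled by \eqref{e:nrj-G}; and $H_1, (-\Delta)_v^{s/2} H_2 \in L^2_t H^{-s}_v$ trivially. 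Then Proposition \ref{l:kolmogorov-youngs} gives $\|G\|_{L^q([0,T]\times\R^{2d})} \le C(T)(\|G_0\|_{L^2} + \|\tilde h\|_{L^2_t H^{-s}_v})$ for some $q = p > 2$ (any $q$ with $1/q > 1/p_\star - 1/2$), and chaining with the two previous bounds gives \eqref{e:gain}.

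\textbf{Main obstacle.} The routine part is the energy estimate; the delicate points are (i) justifying that the transport term vanishes and that $G$ is an admissible test function for itself given only the regularity in the definition of weak subsolution — this needs the usual Steklov-averaging / mollification in $t$ and truncation in $x$ argument, and the fact that $G$ is globally $H^s$ in $v$ here (not merely $L^\infty$ outside a ball) makes $\tilde \Lv$ well-behaved; and (ii) handling the inequality (subsolution, not solution) when invoking the Kolmogorov representation — the clean way is to compare $G$ with the Duhamel solution built from $G_0$, $H_1+(-\Delta_v)^{s/2}H_2$ and the extra source $(-\Delta_v)^s G + \tilde\Lv G$ (now known to be in $L^2_t H^{-s}_v$), using that the fundamental solution $J \geq 0$ so that the Duhamel formula is monotone. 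That monotonicity/comparison step, together with verifying $\tilde\Lv G \in L^2_t H^{-s}_v$ uniformly via Theorem \ref{t:upperbound}, is the crux of the argument.
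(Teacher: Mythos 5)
Your proposal is correct and follows essentially the same route as the paper: an energy estimate via the coercivity assumption \eqref{e:KlowerG} plus a Gronwall-type absorption for \eqref{e:nrj-G}, and then rewriting $G$ as a subsolution of the fractional Kolmogorov equation with source $(-\Delta)^s_v G + \tilde\Lv G + H_1 + (-\Delta)^{s/2}_v H_2$, controlled in $L^2_t H^{-s}_v$ by Theorem \ref{t:upperbound} and the energy bound, before invoking Proposition \ref{l:kolmogorov-youngs}. You correctly identified the two crux points (the $H^{-s}$ bound on $\tilde\Lv G$ and the comparison with the exact Duhamel solution via $J\geq 0$), which is exactly how the paper argues.
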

\begin{proof}
Multiplying the equation by $G$ and integrating on the time interval
$[0,\tau]$ for $\tau \in [0,T]$, we get
\[
\frac12 \| G (\tau)\|^2_{L^2(\R^{2d})} + \int_{0}^\tau \int_{\R^d} \Ex(G,G) \dd x \dd t  \le \frac12 \| G_0\|^2_{L^2(\R^{2d})} + 
\int_{[0,T]\times \R^{2d}}  {(H_1+ (-\Delta)^{s/2}_v H_2)} G .
\]
Using \eqref{e:KlowerG} from Proposition~\ref{p:global-kernel}, we have
\begin{equation} \label{e:e1}
\begin{aligned}
\frac12 \| G (\tau)\|^2_{L^2(\R^{2d})} + \int_{0}^\tau \int_{\R^d} \lambda \|G\|_{\dot H^s}^2 - \Lambda \|G\|_{L^2}^2 \dd x \dd t  \le & \frac12 \| G_0\|^2_{L^2(\R^{2d})} \\
&+ { \int_0^T  \|H_1(t)\|_{L^2} \|G(t)\|_{L^2} + \|H_2(t)\|_{L^2} \|G(t)\|_{\dot H^s} \dd t }.
\end{aligned}
\end{equation}
{Therefore,
\[
\frac12 \| G (\tau)\|^2_{L^2(\R^{2d})} + \int_{0}^\tau \int_{\R^d} - \frac{\Lambda}2 \|G\|_{L^2}^2 \dd x \dd t  \le \frac12 \| G_0\|^2_{L^2(\R^{2d})} + 
C \int_0^T  \|H_1(t)\|_{L^2}^2 + \|H_2(t)\|_{L^2}^2\dd t .
\]
}

Integrating against $\exp(-\Lambda \tau/2)$ with respect to $\tau$ yields
\[
\| G\|^2_{L^2 ([0,T] \times \R^{2d})} 
\le C \left( \| G_0\|^2_{L^2(\R^{2d})} + \| H_1 \|^2_{L^2 ([0,T] \times 
\R^{2d})} + { \| H_2 \|^2_{L^2 ([0,T] \times 
\R^{2d})}} \right).
\]
Using this information back into \eqref{e:e1}, we finally get 
\begin{equation}\label{e:nrj-G-0}
\sup_{\tau \in [0,T]} \| G(\tau)\|^2_{L^2 ( \R^{2d})} + \| G\|^2_{L^2_{t,x}\dot{H}^s_v([c,b] \times \R^{2d} )}
\le C \left( \| G_0\|^2_{L^2(\R^{2d})} + \| H_1 \|^2_{L^2 ([0,T] \times 
\R^{2d})}  { + \| H_2 \|^2_{L^2 ([0,T] \times 
\R^{2d})} } \right).
\end{equation}
The function $G$ is also a subsolution of the fractional Kolmogorov equation with an appropriate right hand side
\[ G_t + v \cdot \nabla_x G + (-\Delta)^s G \leq (-\Delta)^s G + \tilde{\Lv} G + H_1 + {(-\Delta)^{s/2} H_2}.\]
Thus, $G$ is smaller or equal to the exact solution of this equation. Theorem~\ref{t:upperbound} ensures that
$\tilde \Lv G \in L^2([0,T] \times \R^d, H^{-s}(\R^d))$.  
We then can apply Proposition~\ref{l:kolmogorov-youngs} to $G$ with $h=H_1 {+ (-\Delta)^{s/2} H_2} + \tilde{L_v} G + (-\Delta)^s G$
so that
\[\|h\|_{L^2([0,T] \times \R^{d}, H^{-s}(\R^d) )} \le \|H_1\|_{L^2} {+ \|H_2\|_{L^2}} + C \| G \|_{L^2_{t,x}\dot{H}^s_v([0,T] \times \R^{2d})}. \]
and get \eqref{e:gain}.
\end{proof}

Let us analyze a localized version of the energy dissipation.
\begin{lemma}[Local energy dissipation]\label{lem:caccio}
  Let $f$ be a subsolution of \eqref{e:main} in
  $[0,T] \times B_{R^{1+2s}} \times B_R$ with $h=0$. Assume
  $0 \leq f \leq 1$ almost everywhere in
  $[0,T] \times B_{R^{1+2s}} \times \R^{d}$. Assume $K$ satisfies \eqref{e:Klower},
  \eqref{e:Kabove}, \eqref{e:Kcancellation0} and
  \eqref{e:Kcancellation1} with $\Radius = 2R$.  Then, for any
  $\delta \in (0,1)$, we have
\begin{multline} \label{e:energy-g}
 \sup_{t \in [0,T]} \iint_{B_{(R-\delta)^{1+2s}} \times B_{R-\delta}} f(t,x,v)^2 \dv \dx  +  \int_0^T \int_{B_{(R-\delta)^{1+2s}}} \|f\|_{H^s(B_{R-\delta})}^2   \dd x \dt \\
  \leq  \iint_{B_{R^{1+2s}} \times B_R}  f(0,x,v)^2 \dv \dx + C \delta^{-2}  |\{ f> 0 \} \cap [0,T] \times B_{R^{1+2s}} \times B_R| .
 \end{multline}
\end{lemma}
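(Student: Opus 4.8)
The plan is to test the weak subsolution formulation \eqref{e:weak-subsolution} against a carefully chosen test function built from a cutoff and the positive part of $f$. Concretely, fix a smooth cutoff $\chi(x,v)$ with $\chi = 1$ on $B_{(R-\delta)^{1+2s}} \times B_{R-\delta}$, $\chi$ supported in $B_{R^{1+2s}} \times B_R$, and $|\nabla_v \chi| + |\nabla_x \chi|^{1/(1+2s)} \lesssim \delta^{-1}$ (this matches the natural scaling of the slanted cylinders). I would use $\varphi = \chi^2 f$ as a test function. Since $0 \le f \le 1$ we have $f = f_+$, so there is no truncation to worry about; the role usually played by $(f-k)_+$ here is replaced by the a priori global bound $0 \le f \le 1$, which is exactly what makes the nonlocal tail terms tractable.

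The main computation splits \eqref{e:weak-subsolution} into three pieces. First, the transport-in-time term: $\iiint (\partial_t f + v \cdot \nabla_x f)\chi^2 f = \tfrac12 \iiint (\partial_t + v\cdot\nabla_x)(\chi^2 f^2)$ since $\chi$ is time-independent and $\nabla_v\chi$ does not enter here; integrating in $t$ and using that $v\cdot\nabla_x(\chi^2 f^2)$ integrates to a boundary term controlled by $|\nabla_x\chi| f^2 \lesssim \delta^{-1} f^2 \mathbf 1_{\{f>0\}}$, this yields $\tfrac12\sup_t \iint \chi^2 f^2 \,\dd v\,\dd x$ minus the initial data term minus an error of size $\delta^{-1}|\{f>0\}\cap Q|$. (Actually the $v\cdot\nabla_x$ term produces an error of order $\delta^{-1}$, not $\delta^{-2}$, which is why the final statement only needs $\delta^{-2}$.) Second, the bilinear form term $\iint \E(f, \chi^2 f)\,\dd x\,\dd t$: here I would invoke the commutator estimates, Corollary~\ref{c:commutator-s<1/2} for $s<1/2$ and Corollary~\ref{c:commutator-s>1/2} for $s \ge 1/2$, to write $\Lv[\chi f] - \chi \Lv f = h_1 + h_2 (+ (-\Delta)^{s/2}h_3)$, so that $\E(f,\chi^2 f) = \E(\chi f, \chi f) - \langle \text{commutator terms}, \chi f\rangle$ plus a further symmetric rearrangement. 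The coercivity \eqref{e:Klower} (with $\Radius = 2R$) gives $\E(\chi f,\chi f) \ge \lambda\|\chi f\|_{\dot H^s}^2 - \Lambda\|\chi f\|_{L^2}^2$, and the commutator pieces $h_1, h_2, h_3$ are all bounded in $L^2$ by (a universal constant times) $\delta^{-1}\|\chi' f\|$ with support in $\{f>0\}$, hence absorbed: $|\langle h_i, \chi f\rangle| \le \tfrac\lambda4 \|\chi f\|_{\dot H^s}^2 + C\delta^{-2}|\{f>0\}\cap Q|$ after Young's inequality (the $h_3$ term pairs with $(-\Delta)^{s/2}(\chi f)$, contributing to the $\dot H^s$ norm). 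Third, $h = 0$ so that term vanishes.

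Combining these three pieces rearranges \eqref{e:weak-subsolution} into exactly \eqref{e:energy-g}, once one notes that $\|\chi f\|_{\dot H^s}^2 + \|\chi f\|_{L^2}^2 \gtrsim \|f\|_{H^s(B_{R-\delta})}^2$ on the region where $\chi \equiv 1$ (restricting the double integral to $B_{R-\delta}\times B_{R-\delta}$ only decreases it), and that $\|\chi f\|_{L^2}^2 \le |\{f>0\}\cap Q|$ trivially since $f \le 1$. The lower-order $-\Lambda\|\chi f\|_{L^2}^2$ terms and the $\Lambda\|f\|_{L^2}^2$ type terms coming out of coercivity and the commutators are all $\lesssim |\{f>0\}\cap Q| \le \delta^{-2}|\{f>0\}\cap Q|$, hence harmless.

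The main obstacle is the bilinear form term, specifically handling the nonlocal tail and the non-symmetry of $K$ cleanly enough that the error lands with the correct $\delta$-power and with support confined to $\{f>0\}$. The point is that $\E(f,\chi^2 f)$ is \emph{not} simply $\ge \E(\chi f,\chi f)$ — the difference is a genuine commutator, and without the cancellation assumptions \eqref{e:Kcancellation0}, \eqref{e:Kcancellation1} the antisymmetric part of $K$ could be too singular to control. This is precisely why Corollaries~\ref{c:commutator-s<1/2}–\ref{c:commutator-s>1/2} were set up; the proof of this lemma is really an application of them, but one must be careful that the cutoff $\chi$ (which depends on both $x$ and $v$ but the operator $\Lv$ only touches $v$) is admissible in those corollaries for each fixed $(t,x)$, with constants uniform in $(t,x)$, and that $B_{\Radius/2} = B_R \supset \supp_v \chi$, which holds since $\Radius = 2R$.
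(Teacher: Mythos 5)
Your route is genuinely different from the paper's, and it contains a circularity that breaks the argument for $s\ge 1/2$. The paper does not use a multiplicative cutoff here at all: it tests the equation against the additive truncation $g=(\varphi+f-1)_+$, writes $f$ as $g$ minus its companion negative part $\tilde g$ plus $(1-\varphi)$, observes that $\E(\tilde g,g)\le 0$ because $g$ and $\tilde g$ have disjoint supports, applies the coercivity \eqref{e:Klower} to $\E(g,g)$, and controls the single remaining error $\E(\varphi,g)$ by Corollary~\ref{cor:upper-2L}, which charges only $|\{g>0\}|$ and $\|g\|_{L^1}$. No commutator estimates enter; they are deliberately reserved for Lemma~\ref{lem:gain-int}, \emph{after} the present lemma has supplied the local $H^s$ bound that they require as input.

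That ordering is exactly what your proposal violates. In your decomposition $\E(f,\chi^2 f)=\tilde{\E}(\chi f,\chi f)+\int (h_1+h_2)\,\chi f+\int h_3\,(-\Delta)^{s/2}(\chi f)$, the piece $h_2$ from Corollary~\ref{c:commutator-s>1/2} satisfies $\|h_2\|_{L^2}\lesssim\|\chi\|_{C^2}\|f\|_{H^s(\Omega)}$, where $\Omega$ is an open neighbourhood of the $v$-support of $\chi$; it is \emph{not} bounded by $\delta^{-1}\|\chi' f\|$, nor is it supported in $\{f>0\}$, contrary to what you assert. After Cauchy--Schwarz and Young, this term leaves $\eps\|f\|_{H^s(\Omega)}^2$ on a set strictly larger than $B_{R-\delta}$, which cannot be absorbed by $\lambda\|\chi f\|_{\dot H^s}^2\gtrsim\|f\|_{\dot H^s(B_{R-\delta})}^2$: the commutator estimate consumes precisely the quantity the lemma is meant to produce. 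One could in principle repair this by an iteration over a decreasing sequence of intermediate radii (using that $f$ lies qualitatively in $L^2_{t,x}H^s_v$ by the definition of weak solution), but that is a missing ingredient, not a detail. For $s<1/2$ your argument does close, since there $\|h_2\|_{L^2}\lesssim\|\chi\|_{C^1}\|f\|_{L^2(\Omega)}$ and $\|f\|_{L^2(\Omega)}^2\le|\{f>0\}\cap Q|$, modulo the extra powers of $\delta$ you accumulate, which the paper's remark declares irrelevant.
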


\begin{remark}
  The factor in $\delta^{-2}$ can be improved in terms of $s$
  (probably to $\delta^{-2s}$). The optimal power is irrelevant for
  the rest of our proof.
\end{remark}
\begin{proof}
  Let $\varphi: \R^{2d} \to [0,1]$ be $C^\infty$, supported in
  $B_{R^{1+2s}} \times B_R$, so that $\varphi = 1$ in
  $B_{(R - \delta)^{1+2s}} \times B_{R -\delta}$. It is not hard to
  check that we can construct such $\varphi$ with
  $\|\varphi\|_{C^2} \leq \delta^{-2}$.

Let $g = (\varphi + f-1)_+$ and $\tilde g = (1-\varphi - f)_-$
We use $g$ as a test function for \eqref{e:main}  and obtain for a.e. $t \in [0,T]$,
\begin{align*} 
  0 &\geq \iint (f_t + v \grad_x f)  g \dd v \dd x + \int \Ex(f,g) \dd x, \\
    &= -\frac12  \frac{d}{dt} \iint g^2 \dd v \dd x  + \int \Ex(g,g) \dd x - \int \Ex(\tilde g, g) \dd x 
- \int \Ex(\varphi,g) \dd x  + \int g  ( \partial_t \varphi +  v \cdot \grad_x \varphi) \dd v \dd x.
\end{align*}
We used the fact that
$\nabla_x (g^2) = 2 g (-\nabla_x \varphi + \nabla_x f)$.  Remarking that
$\Ex (\tilde g,g) \le 0$ and using \eqref{e:Klower} and \eqref{e:Kupper2L} from Corollary~\ref{cor:upper-2L} yields for any $t_0 \in [0,T]$,
\begin{multline*}
\frac12  \iint g^2(t_0,x,v) \dv \dx  +  \lambda \int_{0}^{t_0} \int \|g\|_{H^s}^2  \dd x \dt \\
\leq \iint g^2 (0,x,v) \dv \dx + \eps \int_{0}^{t_0} \int \|g\|_{H^s}^2  \dd x \dt + C \eps^{-1} [\varphi]_{C^1}^2 
|\{g >0\} \cap \{ 0 \le t \le t_0\}| +  C \| \varphi\|_{C^2} \|g\|_{L^1} + C \|g\|_{L^2}^2\\
+ \int_{0}^{t_0}  \iint g  (\partial_t \varphi  + v \cdot \grad_x \varphi )  \dv \dx \dt.
\end{multline*}
Recall that $\|\varphi\|_{C^1} \lesssim \delta^{-1}$ and $\|\varphi\|_{C^2} \lesssim \delta^{-2}$. Also $g(t,x,v) \in [0,1]$ for all $(t,x,v)$, therefore $\|g\|_{L^1}$ and $\|g\|_{L^2}^2$ are both bounded by $|\{ g > 0\}| \le  |\{ f> 0 \} \cap [0,T] \times B_{R^{1+2s}} \times B_R|$. Therefore, taking supremum in $t_0$,
\begin{multline*}
\sup_{t \in [0,T]} \frac12  \iint g^2(t,x,v) \dv \dx  +  \int_0^T \int \|g\|_{H^s}^2  \dd x \dt \\
\leq \iint g^2 (0,x,v) \dv \dx + \eps \int_{0}^{T} \int \|g\|_{H^s}^2  \dd x \dt +\left(  \eps^{-1} \delta^{-2} +  \delta^{-2} + 2\right)
|\{ f> 0 \} \cap [0,T] \times B_{R^{1+2s}} \times B_R|.
\end{multline*}
Note that $g = f$ in $B_{(R-\delta)^{1+2s}} \times B_{R -\delta}$,
$g \leq f$ everywhere, and $g = 0$ outside of
$B_{R^{1+2s}} \times B_R$. We thus conclude the proof picking $\eps>0$
small.
\end{proof}

\begin{lemma}[Local gain of integrability]\label{lem:gain-int}
  Let $f$ be a subsolution of \eqref{e:main} in
  $[0,T] \times B_{R^{1+2s}} \times B_R$ with $h=0$. Assume
  $0 \leq f \leq 1$ almost everywhere in
  $[0,T] \times B_{R^{1+2s}} \times \R^{d}$. Assume $K$ satisfies
  \eqref{e:Klower}, \eqref{e:Kabove}, \eqref{e:Kcancellation0} and
  \eqref{e:Kcancellation1} with $\Radius = 2R$. Then for any
  $\delta \in (0,1)$ and $\delta <R$,
\begin{multline} \label{e:energy-g2-improved}
\left(  \int_{0}^T \iint_{B_{(R -\delta)^{1+2s}} \times B_{R -\delta}} f^p \dt \dv \dx   \right)^{2/p} \\
 \leq \delta^{-2} \int_{B_{R^{1+2s}} \times B_R} f(0,x,v)^2 \dv \dx 
+ C \delta^{-4} \left \vert \{f>0\}  \cap ([0,T] \times B_{R^{1+2s}} \times B_R) \right \vert
\end{multline}
where $p>2$ is some universal constant (explicit).
\end{lemma}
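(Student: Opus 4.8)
The plan is to localise $f$ with a cutoff, recognise the localised function as a subsolution of a globalised fractional-Kolmogorov-type inequality, and conclude with the gain of integrability of Lemma~\ref{lem:gee}. Fix the intermediate radii $R-\delta<R-\tfrac34\delta<R-\tfrac12\delta<R$, and choose a time-independent $\varphi:\R^{2d}\to[0,1]$ with $\varphi\equiv1$ on $B_{(R-\delta)^{1+2s}}\times B_{R-\delta}$, $\supp\varphi\subset B_{(R-\frac34\delta)^{1+2s}}\times B_{R-\frac34\delta}$ and $\|\varphi\|_{C^2}\lesssim\delta^{-2}$. Set $G:=\varphi f\ge0$. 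Since $f$ is a subsolution with $h=0$ and $\varphi$ is supported inside the domain where the equation holds,
\[
(\partial_t+v\cdot\nabla_x)G \;=\; \varphi\,(\partial_t+v\cdot\nabla_x)f + f\,v\cdot\nabla_x\varphi \;\le\; \varphi\,\Lv f + f\,v\cdot\nabla_x\varphi ,
\]
and, writing $\varphi\,\Lv f=\tilde\Lv G-\bigl(\tilde\Lv[\varphi f]-\varphi\,\Lv f\bigr)$ with $\tilde\Lv$ the globally defined operator of Proposition~\ref{p:global-kernel} (available since $\Radius=2R$, and $B_{\Radius/2}=B_R$), the commutator estimate of Corollary~\ref{c:commutator-s<1/2} (for $s<1/2$) or Corollary~\ref{c:commutator-s>1/2} (for $s\ge1/2$), applied with $D=\supp\varphi$ and velocity set $\Omega=B_{R-\frac12\delta}$, shows that $G$ is a weak subsolution, globally on $[0,T]\times\R^{2d}$, of
\[
(\partial_t+v\cdot\nabla_x)G-\tilde\Lv G \;\le\; H_1+(-\Delta)^{s/2}_v H_2, \qquad G(0,\cdot,\cdot)=\varphi\,f(0,\cdot,\cdot),
\]
with $H_1=f\,v\cdot\nabla_x\varphi-h_1-h_2$ and $H_2=-h_3$ (and $h_3\equiv0$ when $s<1/2$).

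Next I bound the data by the right side of \eqref{e:energy-g2-improved}. Since $G_0=\varphi f(0)\le f(0)$ is supported in $B_{R^{1+2s}}\times B_R$, $\|G_0\|_{L^2(\R^{2d})}^2\le\int_{B_{R^{1+2s}}\times B_R}f(0)^2$. With $\mathcal Q:=[0,T]\times B_{R^{1+2s}}\times B_R$ and using $0\le f\le1$: $\|f\,v\cdot\nabla_x\varphi\|_{L^2}^2\lesssim\delta^{-4}\,|\{f>0\}\cap\mathcal Q|$; the commutator bounds (together with $D\subset B_R$ and $\|f\|_{L^\infty}\le1$) give $\|h_1\|_{L^2}^2\lesssim\delta^{-4s}\,|\{f>0\}\cap\mathcal Q|$ and $\|h_3\|_{L^2}^2\lesssim\delta^{-4}\int_0^T\!\!\int\|f\|_{L^2_v(\Omega)}^2\dd x\dt\lesssim\delta^{-4}\,|\{f>0\}\cap\mathcal Q|$; for $s<1/2$ the bound on $h_2$ is the same, while for $s\ge1/2$ the commutator estimate gives $\|h_2\|_{L^2}^2\lesssim\delta^{-4}\int_0^T\!\!\int\|f\|_{H^s_v(\Omega)}^2\dd x\dt$, and this last integral is controlled, via the local energy dissipation Lemma~\ref{lem:caccio} applied with shrink parameter $\delta/2$ (whose $H^s$ estimate covers $\Omega=B_{R-\delta/2}$ in velocity and $B_{(R-\delta/2)^{1+2s}}\supset D$ in $x$), by $\int_{B_{R^{1+2s}}\times B_R}f(0)^2+C\delta^{-2}\,|\{f>0\}\cap\mathcal Q|$. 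Feeding these into Lemma~\ref{lem:gee} yields a universal $p>2$ with $\|G\|_{L^p(\mathcal Q)}^2\le\|G\|_{L^p([0,T]\times\R^{2d})}^2\le C\bigl(\|G_0\|_{L^2}^2+\|H_1\|_{L^2}^2+\|H_2\|_{L^2}^2\bigr)$; since $G=f$ on $B_{(R-\delta)^{1+2s}}\times B_{R-\delta}$ and the terms on the right collect, after absorbing the powers of $\delta$, into $\delta^{-2}\int_{B_{R^{1+2s}}\times B_R}f(0)^2+C\delta^{-4}\,|\{f>0\}\cap\mathcal Q|$, this gives \eqref{e:energy-g2-improved}.

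I expect the main obstacle to be the treatment of the nonlocal tail inside the commutator. The term $-h_1$ contains the nonnegative piece $\varphi(v)\int_{|v-v'|>r}f(v')K(v,v')\,\dd v'$, which pointwise is only $O(\delta^{-2s}\varphi(v))$; to keep it subordinate to $|\{f>0\}\cap\mathcal Q|$ one must use that the integration is over $|v-v'|\gtrsim\delta$, apply both lines of \eqref{e:Kabove} to transfer an $L^2$ norm onto $f$, and combine $0\le f\le1$ with the support of $\varphi$, while the contribution of $f$ restricted to $\R^d\setminus B_R$ is absorbed using that $\tilde K=K$ on $B_R\times B_R$ by the construction in Proposition~\ref{p:global-kernel}. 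The remaining ingredients — constructing $\varphi$ with the stated $C^2$ bound, verifying that testing the weak formulation of $f$ against $\varphi\psi$ reproduces exactly the weak subsolution inequality for $G$ with the above right-hand side, and bookkeeping the precise exponents of $\delta$ — are routine.
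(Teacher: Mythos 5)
Your overall skeleton (cutoff, commutator decomposition via Corollaries \ref{c:commutator-s<1/2}--\ref{c:commutator-s>1/2}, then Lemma \ref{lem:gee}) is the same as the paper's, and the treatment of $h_2$, $h_3$ and $f\,v\cdot\nabla_x\varphi$ is fine modulo the (admittedly non-optimal) powers of $\delta$. The gap is exactly where you suspected it: the term $h_1$. The commutator lemmas only give $\|h_1\|_{L^\infty(\R^d)}\lesssim\delta^{-2s}$ and $\|h_1\|_{L^2(\R^d\setminus\Omega)}\lesssim\delta^{-2s}\|f\|_{L^2(D)}$; inside $\Omega$ the $L^\infty$ bound yields only $\|h_1\|_{L^2(\Omega)}^2\lesssim\delta^{-4s}|[0,T]\times B_{R^{1+2s}}\times\Omega|$, a fixed constant rather than something proportional to $|\{f>0\}\cap\mathcal Q|$. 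Your proposed repair — Cauchy--Schwarz with both lines of \eqref{e:Kabove} to ``transfer an $L^2$ norm onto $f$'' — produces $\int_{\R^d}f(v')^2\bigl(\int_{\supp\varphi\setminus B_r(v')}K(v,v')\,\dd v\bigr)\dd v'$, and this fails in two ways: \eqref{e:Kabove}-(ii) is only assumed for $v'\in B_{\Radius}=B_{2R}$, and even where it applies you end up with $|\{f>0\}\cap([0,T]\times B_{R^{1+2s}}\times\R^d)|$ rather than the $B_R$-version in the statement. The hypotheses give no control on $|\{f>0\}|$ at velocities outside $B_R$ (only $0\le f\le1$ there), so the piece $\varphi(v)\int_{|v-v'|>r}f(v')K(v,v')\,\dd v'$ contributed by $v'\notin B_R$ is, by itself, only $O(R^{-2s})$ in $L^\infty$ and hence a fixed constant in $L^2$ — which would destroy the De Giorgi iteration where this lemma is applied to truncations with arbitrarily small positivity set. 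The remark that $\tilde K=K$ on $B_R\times B_R$ does not address this.

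The paper's resolution is a different, essential idea: it strengthens the subsolution inequality before localizing. Wherever $f=0$ one has $f_t+v\cdot\nabla_xf=0$ and $\Lv f\ge0$, so $f$ actually satisfies \eqref{e:subsol-recharged}, i.e.
\[
f_t+v\cdot\nabla_xf-\Lv f\;\le\;-\Bigl(\int_{\R^d}f(v')K(v,v')\,\dd v'\Bigr)\chi_{\{f=0\}} .
\]
This extra nonpositive term, multiplied by $\varphi$, dominates the problematic tail of $-h_1$ pointwise at every $v$ with $f(v)=0$ (the explicit computation in the paper shows $-\varphi(\Lv f)\chi_{\{f=0\}}-h_1\le0$ there, including the contribution of $f(v')$ for $|v'|>R$). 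What survives is $-h_1\chi_{\{f>0\}}$, whose $L^2$ norm \emph{is} controlled by $\delta^{-2s}|\{f>0\}\cap\mathcal Q|^{1/2}$ by combining the $L^\infty$ bound on $\Omega\subset B_R$ with the $L^2(\R^d\setminus\Omega)$ bound. Without incorporating \eqref{e:subsol-recharged} (or an equivalent use of the equation itself at the zero set of $f$), your estimate $\|H_1\|_{L^2}^2\lesssim\delta^{-4s}|\{f>0\}\cap\mathcal Q|$ cannot be established from the stated hypotheses.
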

\begin{remark}
The exponents in the factors $\delta^{-2}$ and $\delta^{-4}$ are most certainly not optimal. This is not important for the rest of our proof.
\end{remark}
\begin{proof}
  Let us start by the following simple observation. Wherever
  $f(t,x,v) = 0$, we have $f_t + v \cdot \nabla_x f=0$ (a.e.) and
  $\Lv f \geq 0$. In particular, the following equation also holds and
  contains slightly more information than
  \eqref{e:main}.
\begin{equation} \label{e:subsol-recharged}
 f_t + v \cdot \nabla_x f - \Lv f \leq - (\Lv f) \chi_{\{f=0\}} = -\left( \int_{\R^d} f(v') K(v,v') \dd v' \right) \chi_{\{f=0\}}.
\end{equation}

Let us call 
\[ N := \delta^{-2} \int_{B_{R^{1+2s}} \times B_R} f(0,x,v)^2 \dv \dx 
+ C \delta^{-4} \left \vert \{f>0\}  \cap ([0,T] \times B_{R^{1+2s}} \times B_R) \right \vert.\]

From Lemma \ref{lem:caccio}, we know that
\[ \int_0^T \int_{B_{(R-\delta/2)^{1+2s} } } \|f\|_{H^s(B_{R-\delta/2})}^2   \dd x \dt  \leq \delta^2 N.\]

 Let
$\varphi: \R^{2d} \to [0,1]$ be $C^\infty$, supported in $B_{(R-\delta/2)^{1+2s}} \times B_{R-\delta/2}$, so that $\varphi = 1$ in $B_{(R - \delta)^{1+2s}} \times B_{R - \delta}$. It is not hard to check that we can construct such $\varphi$ with $\|D\varphi\|_{L^\infty} \lesssim \delta^{-1}$ and $\|D^2\varphi\|_{L^\infty} \lesssim \delta^{-2}$. 

Let us analyse what equation the funtion $g = \varphi f$ satisfies. Combining {Corollaries \ref{c:commutator-s<1/2} and \ref{c:commutator-s>1/2}} with \eqref{e:subsol-recharged}, we have
\[ [\partial_t + v \cdot \nabla_x - { \tilde L_v}] g \leq f (v \cdot \nabla_x \varphi) - \varphi (\Lv f) \chi_{\{f=0\}} - h_1 - h_2 - (-\Delta)^{s/2} h_3 \qquad \text{in } [0,T] \times B_{(R-\delta/2)^{1+2s}} \times B_{R-\delta/2}.\]
We want to verify that the right hand side belongs to $L^2([0,T]\times \R^d, H^{-s}(\R^d))$ with norm bounded above by $N$.

{
Following the proofs of Lemmas \ref{l:commutator-s<1/2} and \ref{l:commutator-s>1/2} and Corollaries \ref{c:commutator-s<1/2} and \ref{c:commutator-s>1/2}, we have
\[ h_1 = \int_{\R^d \setminus B_{\delta/2}(v)} \varphi(v) f(v) [K(v,v') - \tilde K(v,v')] + f(v') (\varphi(v') K(v,v') - \varphi(v) \tilde K(v,v')) \dd v'.\]
Therefore, at the points in $\{f=0\}$ we have
\[ - \varphi (\Lv f) \chi_{\{f=0\}} - h_1 \leq \int_{\R^d \setminus B_{\delta/2}(v)} f(v') \left( (\varphi(v) - 1) K(v,v') - \varphi(v') \tilde K(v,v') \right) \dd v' \leq 0.\]
}
This allows us to simplify the equation to
\begin{equation} \label{e:gint1}
 [\partial_t + v \cdot \nabla_x - \tilde{L}x_v] g \leq f (v \cdot \nabla_x \varphi) - h_1 \chi_{\{f>0\}}- h_2 - (-\Delta)^{s/2} h_3 \qquad \text{in } [0,T] \times B_{(R-\delta/2)^{1+2s}} \times B_{R-\delta/2}.
\end{equation}

Corollaries \ref{c:commutator-s<1/2} and \ref{c:commutator-s>1/2} tell us that
\[ \| h_2\|_{L^2} , {\|h_3 \|_{L^2([0,T] \times \R^d, L^2(\R^d) ) }} \lesssim \delta^{-2} \|f\|_{L^2( [0,T] \times B_{R^{1+2s}}, H^s(B_R) ) } \leq N.\]

Since $(v \cdot \nabla_x \varphi)$ is bounded and supported in $B_{(R-\delta/2)^{1+2s}} \times B_{R-\delta/2}$, and $0\leq f \leq 1$, we clearly have $\| f (v \cdot \nabla_x \varphi) \|_{L^2} \leq N$. Likewise $\|h_1 \chi_{\{f>0\} }\|_{L^2} \leq N$.

We conclude the proof applying Lemma \ref{lem:gee} to \eqref{e:gint1}.
\end{proof}

\subsection{De Giorgi's iteration}
\label{sub:1DG}

This subsection is devoted to the proof of the following lemma. 
\begin{lemma}[First lemma of De Giorgi]\label{lem:linfty}
  Let $\tilde Q = [-\tau, 0 ] \times B_{R_1^{1+2s}} \times B_{R_1}$ and 
  $\hat Q = [-\hat \tau, 0 ] \times B_{R_2^{1+2s}} \times B_{R_2}$ with
  $0 < \tilde{\tau} < \hat \tau$ and
  $R_1 \leq R_2$.  There exists
  $\eps_0>0$ (depending on $\tau$, $\hat \tau$, $R_1$, $R_2$, dimension, $s$, $\lambda$ and $\Lambda$) such that for all supersolution $f$ of
$f_t + v \cdot \nabla_x f - \Lv f \geq 0$ in $\hat Q$ such that
  $f \ge 0$ almost everywhere in $[-\hat \tau, 0] \times \R^{2d}$ and
\begin{equation}\label{assum:lem-1deg}
 \int_{\hat Q} (2 - f)_+^2 \dt \dv \dx \le \eps_0, 
\end{equation}
we have 
\[ f \ge 1 \quad \text{ a.e. in } \tilde Q. \]
\begin{figure}[ht]
\includegraphics[height=4cm]{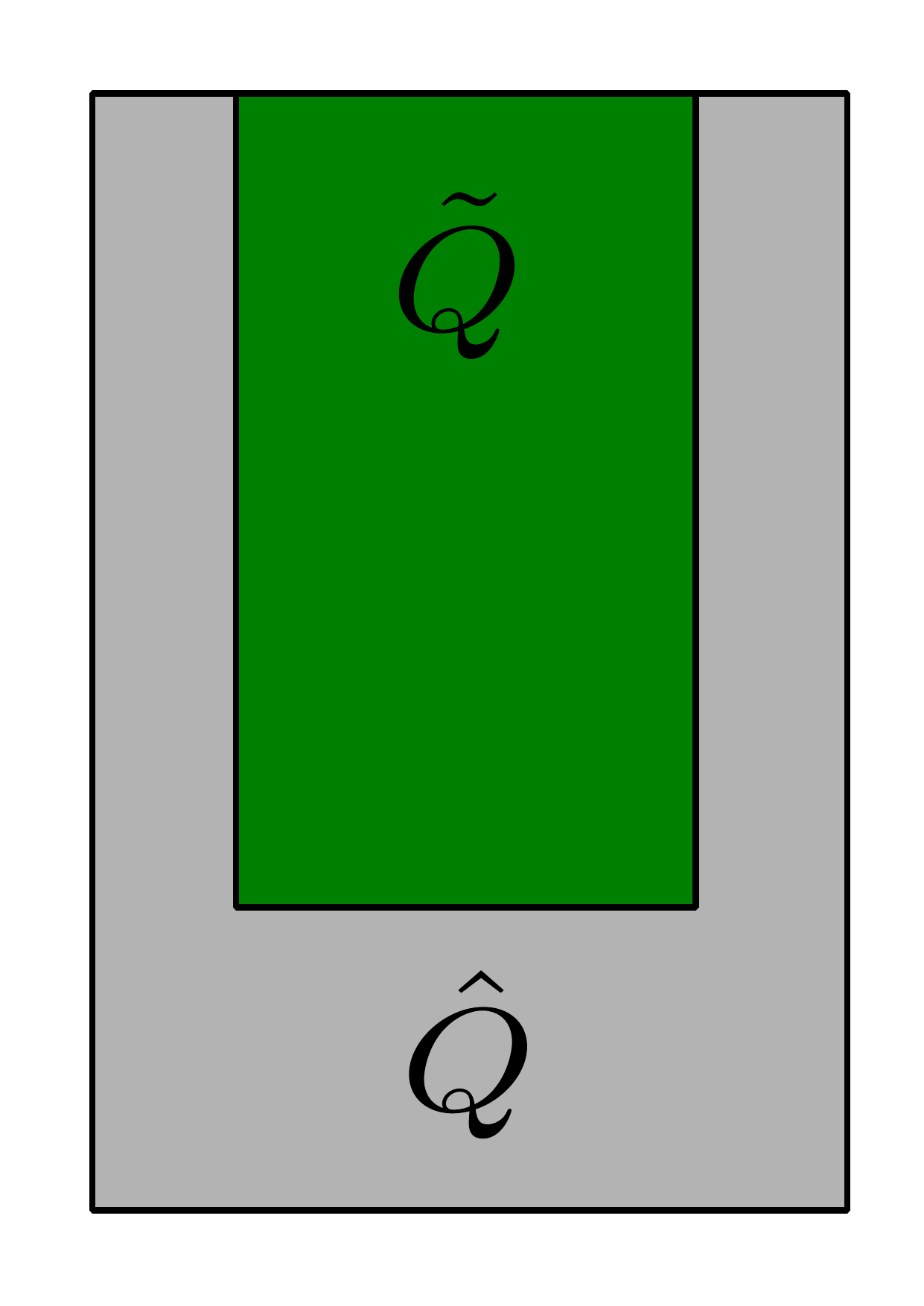}
\caption{The cylinders $\hat Q$ and $\tilde Q$}
\end{figure}
\end{lemma}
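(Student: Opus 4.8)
### Plan for the proof of Lemma~\ref{lem:linfty} (First lemma of De~Giorgi)

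The plan is to run a De~Giorgi iteration on a decreasing sequence of truncations of $f$ applied to a decreasing sequence of cylinders, using the local gain of integrability (Lemma~\ref{lem:gain-int}) as the engine that makes the iteration close. Since the statement is about a supersolution $f\ge0$ pulled up to be $\ge1$, I would switch to the function $w:=(2-f)_+$, which is a nonnegative \emph{subsolution} of $(\partial_t + v\cdot\nabla_x)w - \Lv w \le 0$ (the truncation from above of a supersolution is a subsolution, and the nonlocal term only helps since cutting off large values of $f$ can only decrease $\Lv$ at points where the truncation is active; this is the standard fact one checks directly from the bilinear-form inequality defining weak supersolutions). The goal ``$f\ge1$ a.e.\ in $\tilde Q$'' becomes ``$w\le1$ a.e.\ in $\tilde Q$''.

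First I would set up the iteration scheme. Fix interpolating radii and times between $(R_1,\tau)$ and $(R_2,\hat\tau)$: let $R_k = R_1 + 2^{-k}(R_2-R_1)$, $\tau_k = \tau + 2^{-k}(\hat\tau-\tau)$, and let $Q_k$ be the (straight) cylinder $[-\tau_k,0]\times B_{R_k^{1+2s}}\times B_{R_k}$, so $Q_0=\hat Q$ and $Q_\infty=\tilde Q$. Introduce the truncation levels $\ell_k = 1 - 2^{-k}$, increasing from $0$ to $1$, and set $w_k := (w-(1-\ell_k))_+ = (2-\ell_k-f)_+ = (1+2^{-k}-f)_+$, which vanishes exactly where $f\ge 1+2^{-k}$. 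Define the energy quantity
\[
A_k := \int_{Q_k} w_k^2 \dt\dv\dx.
\]
The scaling normalizations $0\le w_k\le 2^{-(k-1)}$ and the fact that $\{w_k>0\}=\{f<1+2^{-k}\}\subset\{w_{k-1}>2^{-k}\}\subset\{f<1+2^{-(k-1)}\}$ will be used repeatedly. The assumption \eqref{assum:lem-1deg} says precisely $A_0\le\eps_0$ (note $w_0 = (2-f)_+$ on $Q_0=\hat Q$), and we want to show $A_k\to0$, which forces $w\le1$ a.e.\ on $\tilde Q$.

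The core step is the recursive inequality. On the cylinder $Q_{k}$, the function $w_{k}$ is a nonnegative subsolution of the equation with right-hand side $h=0$ and satisfies $0\le w_k\le 1$ after rescaling; more precisely I would rescale so that $w_k/2^{-(k-1)}\in[0,1]$ and apply Lemma~\ref{lem:gain-int} on the pair of cylinders $Q_k\supset Q_{k+1}$ with $\delta \approx 2^{-k}(R_2-R_1)$ and with the initial slice taken at time $-\tau_k$ (where $w_k$ is already small — this needs a short argument, e.g.\ choosing the starting time slice appropriately, or absorbing the initial data term; alternatively one restarts the time interval at each stage and uses that on $Q_k$ the contribution of the ``initial'' slice is itself controlled by a previous $A$). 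This yields, after undoing the rescaling,
\[
\Big(\int_{Q_{k+1}} w_{k+1}^{p}\Big)^{2/p}
\;\lesssim\; C^{k}\,\big(A_k + |\{w_k>0\}\cap Q_k|\big),
\]
for the universal exponent $p>2$ from that lemma, with a geometric constant $C^k$ coming from the $\delta^{-2},\delta^{-4}$ factors. Then I use two elementary facts: (i) on $Q_{k+1}$ one has $w_{k+1}>0 \Leftrightarrow f<1+2^{-(k+1)} \Rightarrow w_k > 2^{-(k+1)}$, so by Chebyshev $|\{w_{k+1}>0\}\cap Q_{k+1}| \le 2^{2(k+1)} A_k$ and also $w_{k+1}^2 \le w_k^2$ on that set; (ii) by Hölder with the exponent $p>2$, $A_{k+1} = \int_{Q_{k+1}} w_{k+1}^2 \le \big(\int_{Q_{k+1}} w_{k+1}^p\big)^{2/p}|\{w_{k+1}>0\}\cap Q_{k+1}|^{1-2/p}$. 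Chaining these gives the nonlinear recursion
\[
A_{k+1} \;\le\; C\,b^{k}\, A_k^{1+\beta}
\]
for some universal $b>1$ and $\beta = 1-2/p > 0$ (tracking the powers of $2^k$ from Chebyshev and the geometric constants carefully — this bookkeeping is the fiddly part).

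Finally I would invoke the standard fast-geometric-convergence lemma: if $A_{k+1}\le C b^k A_k^{1+\beta}$ with $b>1,\beta>0$, then there is $\eps_0 = \eps_0(C,b,\beta)>0$ such that $A_0\le\eps_0$ implies $A_k\to0$. Taking this $\eps_0$ in the statement, we conclude $\int_{\tilde Q}(1-f)_+^2 = \lim_k A_k = 0$, hence $f\ge1$ a.e.\ in $\tilde Q$.

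The main obstacle I anticipate is \textbf{handling the initial time slice} at each stage of the iteration: Lemma~\ref{lem:gain-int} carries an $\int f(0,x,v)^2\,dv\,dx$ term, and since our cylinders shrink in time as well as in space, we must ensure this term is also controlled by $A_k$ (not just the measure term). The clean way is probably to apply the lemma on time-translated cylinders and average the starting slice over a small time interval inside $Q_k\setminus Q_{k+1}$, bounding $\inf_{\text{slice}}\int w_k^2\,dv\,dx \lesssim \delta^{-2s}\int_{Q_k\setminus Q_{k+1}} w_k^2 \lesssim C^k A_k$; this is routine but must be done with care so the geometric constants stay compatible with the convergence lemma. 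A secondary technical point is justifying rigorously that $w_k=(1+2^{-k}-f)_+$ is an admissible subsolution (including the nonlocal sign of $\Lv$ on the truncation), which follows from the weak formulation and the positivity/structure of $K$ exactly as in the proof of Lemma~\ref{lem:caccio}.
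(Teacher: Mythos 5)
Your proposal is correct and follows essentially the same route as the paper: truncation levels $\ell_k=1+2^{-k}$ on shrinking cylinders, Lemma~\ref{lem:gain-int} as the engine, Chebyshev for the measure of the level set, H\"older with the exponent $p>2$ to close the nonlinear recursion $A_{k+1}\le C b^k A_k^{1+\beta}$, and the standard fast-convergence lemma. The ``initial slice'' issue you flag is handled in the paper exactly as you propose, by choosing an intermediate time $t_{k+1/2}\in[t_k,t_{k+1}]$ at which the slice integral is bounded by $(t_{k+1}-t_k)^{-1}A_k\lesssim 2^kA_k$.
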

After Lemma~\ref{lem:gain-int}, the proof of Lemma~\ref{lem:linfty} follows by the relatively standard De Giorgi's iteration.

\begin{proof}[Proof of Lemma~\ref{lem:linfty}]
Let us consider the sequences
\begin{align*}
\ell_k = 1 + 2^{-k}, \qquad r_k = R_1 + (R_2-R_1) 2^{-k}, \qquad t_k = \tau - 2^{-k} (\hat \tau- \tau).
\end{align*}
We define
\[ A_k := \int_{t_k}^0 \iint_{B_{r_k^{1+2s}} \times B_{r_k}} (\ell_k-
f)_+^2 \dv \dx \dt.\]
The assumption \eqref{assum:lem-1deg} tells us that $A_0 \le
\eps_0$.
The strategy of De Giorgi's iteration is to prove that $A_k \to 0$ as
$k \to \infty$ provided that $\eps_0$ is sufficiently small. The
conclusion clearly follows from that.

In order to prove that $A_k$ converges towards $0$, we are going to prove that 
\begin{equation}\label{eq:ak}
A_{k+1} \le C 2 ^{Ck} A_k^{1+ \eps}
\end{equation}
for some $\eps >0$. 

We first pick $t_{k+\frac12} \in [t_k,t_{k+1}]$ such that 
\[ \iint_{B_{r_k^{1+2s}} \times B_{r_k}} (\ell_k - f
(t_{k+\frac12},x,v))_+^2 \dv \dx \le \frac{1}{t_{k+1}-t_k}
\int_{t_k}^{t_{k+1}} \iint_{B_{r_k^{1+2s}} \times B_{r_k}}
(\ell_k-f)_+^2\dv \dx \dt \le C 2^k A_k.\]

Note that $(\ell_{k+1} - f)_+$ is a subsolution with values in $[0,2]$
(in particular half of it takes values in $[0,1]$). We then apply
Lemma~\ref{lem:gain-int}, and obtain the following inequality (note
that $\ell_{k+1} \leq \ell_k$)
\begin{equation}\label{e:interm}
 \left(  \int_{t_{k+\frac12}}^0 \int_{B_{r_{k+1}^{1+2s}} \times B_{r_{k+1}}} (\ell_{k+1} - f)_+^p \dv \dx  \dt  \right)^{2/p} 
 \leq   C 4^k  A_k  + C 16^k | \{ f < \ell_{k+1}\} \cap ([t_{k+1/2},0] \times B_{r_{k}^{1+2s}  } \times B_{r_{k}}) | .
\end{equation}
We now estimate $ | \{ f < \ell_{k+1}\} \cap ([t_{k+1/2},0] \times B_{r_{k}} \times B_{r_{k}})|$ in terms of $A_k$. We use Chebyshev inequality and get
\begin{equation} \label{eq:ak-mes}
\begin{aligned}
| \{ f < \ell_{k+1}\} \cap ([t_{k+1/2},0] \times B_{r_{k}^{1+2s} } \times B_{r_{k}}) | &= | \{ ( \ell_{k} - f)_+ > 2^{-k-1} \} \cap([t_{k+1/2},0] \times B_{r_{k}^{1+2s} } \times B_{r_{k}}) |, \\
&\leq 16^{k+1}  A_k.
\end{aligned}
\end{equation}

Combining \eqref{e:interm} and \eqref{eq:ak-mes}, we get
\[
 \left(  \int_{t_{k+1}}^0 \iint_{B_{r_{k+1}^{1+2s} } \times B_{r_{k+1}}} (\ell_{k+1}-f)_+^p \dt \dv \dx   \right)^{\frac2p} 
 \leq   C 2^{8k}  A_k
\]
(we used that $t_{k+\frac12} \le t_{k+1} \le 0$). 
We can now combine this estimate  with \eqref{eq:ak-mes} and get 
\begin{align*}
 A_{k+1} & \le \left(  \int_{t_{k+1}}^0 \iint_{B_{r_{k+1}^{1+2s} } \times B_{r_{k+1}}} (\ell_{k+1}-f)_+^p \dt \dv \dx   \right)^{\frac2p}  | \{ f < \ell_{k+1}\} \cap ([t_{k+1},0] \times B_{r_{k+1}^{1+2s} } \times B_{r_{k+1}}) | ^{1 -\frac2p} \\
& \le C 2^{8k} A_k^{1+ \frac{2-p}p}.
\end{align*}
This yields \eqref{eq:ak} with $\eps = \frac{2-p}p >0$. The proof is now complete. 
\end{proof}

\section{Barrier functions for $s < 1/2$}
\label{sec:barriers}

A remarkable difference between the range $s < 1/2$ and $s \geq 1/2$
is that, in the former, the integral expression in the definition of
$\Lv f(v)$ is computable pointwise for all smooth functions $f$
provided that $K$ satisfies the first line in \eqref{e:Kabove}. The reason for this is simply that from the Lipschitz continuity of
$f$ we get
\begin{equation}\label{e:lip}
\int_{B_{2r}(v) \setminus B_r(v)} |f(v) - f(v')| K(v,v') \dd v' \leq r
\|f\|_{\Lip} \int_{B_{2r(v)} \setminus B_r(v)} K(v,v') \dd v' \leq \Lambda
\|f\|_{\Lip} r^{1-2s}.
\end{equation}
This is summable for $r = 2^{-k}$ as $k$ ranges accross the natural
numbers when $s < 1/2$.

If we assumed further than $K$ is symmetric in
the \emph{non-divergence} sense $K(v,v+h) = K(v,v-h)$, then the same
analysis as above would hold for $s \in (0,1)$ and $f \in C^{1,1}$ (instead of
$f \in \Lip$) and the results in this section could be extended to the full range $s \in (0,1)$. Note
that the Boltzmann kernel satisfies this symmetry, but we do not make that assumption in Theorems \ref{thm:whi} and
\ref{thm:holder}.

We build barrier functions using crucially the assumption \eqref{e:Knondegeneracy}.

\begin{lemma}[Existence of barriers] \label{l:barrier} For any $r>0$, $R>0$, $\tau > 0$ and
  $T>0$, there exist constants $\theta>0$ and $R_1 > 0$, and a
  function $\varphi : [0,\infty) \times \R^d \times \R^d \to [0,1]$
  such that
\begin{itemize}
\item we have $\varphi \in C^{1,1}([0,\infty) \times
  \R^{2d})$;
  moreover, $\varphi$ is smooth in the open set $\{\varphi > 0\}$;
\item for any kernel $K(t,x,v)$ that satisfies \eqref{e:Kabove}
  and \eqref{e:Knondegeneracy} with $\Radius = R_1$, and all
  $(t,x,v) \in \Omega \subset [0,\infty) \times \R^{2d}$, we have
\[ \varphi_t + v \cdot \nabla_x \varphi - \Lv \varphi \leq 0 \qquad \text{ in } \Omega;\]
\item at the initial time, the support of $\varphi(0,\cdot,\cdot)$ is contained in $B_{r^{1+2s}} \times B_r$;
\item we have the following lower bound: $\varphi(t,x,v) \geq \theta$ if $t \in [\tau,T]$, $x \in B_{R^{1+2s}}$ and $v \in B_R$;
\item the function $\varphi(t,x,v)$ vanishes if $t \in [0,T]$ and $(x,v) \notin B_{R_1^{1+2s}} \times B_{R_1}$.
\end{itemize}

The function $\varphi$ depends on $r$, $R$, $\tau$, $T$, dimension
$d$, $\lambda$, $\Lambda$ and $s$ (which should be in $(0,1)$). The radius $R_1$ depends on $r$, $R$, $\tau$, $T$, dimension
$d$, and $s$ (but not $\lambda$ and $\Lambda$).
\end{lemma}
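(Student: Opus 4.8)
The plan is to construct the barrier explicitly as a function whose super-level sets are slanted cylinders, and to check the supersolution inequality on the boundary layer $\Omega$ where the function is positive but small. First I would introduce the natural kinetic variable that measures distance to the initial support: set $\rho(t,x,v)^2 := |v|^2 + c\,|x - tv|^{2/(1+2s)}$ (or a smoothed variant) so that $\rho$ is essentially constant along the drift field $\partial_t + v\cdot\nabla_x$, and let $\varphi(t,x,v) = \Psi\big(t, \rho(t,x,v)\big)$ for a profile $\Psi$ to be chosen. The point of using $\rho$ is that $\varphi_t + v\cdot\nabla_x\varphi$ only sees the explicit $t$-dependence of $\Psi$ (plus a controlled error from the smoothing), so the transport term is harmless; the real work is estimating $-\Lv\varphi$.

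Next I would record the two competing contributions to $\Lv\varphi(v)$ at a point $v$ near $\partial B_{R_1}$ (the outer edge, where $\varphi$ is about to vanish), using that $\varphi$ is $C^{1,1}$ globally and smooth where positive. The far-away part of the integral, $\int_{\R^d\setminus B_r(v)} (\varphi(v')-\varphi(v))K(v,v')\,dv'$, is bounded above by $\|\varphi\|_{L^\infty}\Lambda r^{-2s}$ using \eqref{e:Kabove}(i) and is actually negative-leaning because $\varphi$ decays; the near part, where $\varphi$ is $C^{1,1}$, gives the genuinely negative second-order term. Here is where \eqref{e:Knondegeneracy} enters: choosing $\Psi$ to be concave in $\rho$ near the edge (e.g. $\Psi \sim (\text{const} - \rho)_+^{1+\beta}$ with $\beta$ chosen so $\varphi\in C^{1,1}$ and $1<1+\beta<2$; for $s<1/2$ this works), the second difference $\varphi(v')-\varphi(v) - \nabla\varphi(v)\cdot(v'-v)$ is bounded above by a negative multiple of $((v'-v)\cdot e)_+^2$ times the profile's concavity, and \eqref{e:Knondegeneracy} turns this into a negative term of order $-\lambda r^{2-2s}\times(\text{concavity})$ which, for $r$ small, dominates the $O(r^{1-2s})$ error from the Lipschitz bound \eqref{e:lip} and the $O(r^{-2s})$ tail. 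Balancing these makes $\Lv\varphi \ge$ (a positive multiple of the profile derivative), so one can then absorb it by giving $\Psi$ a small negative time derivative: $\varphi_t \le -c < 0$ there.

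Then I would organize the construction in two regions. On the outer collar $\{0 < \varphi < \theta_0\}$ near $\{x,v\} \notin B_{R_1^{1+2s}}\times B_{R_1}$, the concavity argument above gives $\varphi_t + v\cdot\nabla_x\varphi - \Lv\varphi \le 0$. In the interior bulk, where $\varphi$ is bounded below by a fixed constant, I would instead simply make $\varphi$ decrease slowly enough in time that $\varphi_t$ beats the (now harmless, $L^\infty$-bounded by $\Lambda \cdot 2^{-k}$-summability since $s<1/2$) value of $\Lv\varphi$; choosing the initial profile supported in $B_{r^{1+2s}}\times B_r$ and the decay rate so that at time $\tau$ the value has only dropped from $1$ to some $\theta>0$, and is still $\ge\theta$ up to time $T$, gives the stated lower bound. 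The radius $R_1$ is then fixed by requiring the collar to have enough room for the concavity estimate to close — it depends only on $r,R,\tau,T,d,s$, not on $\lambda,\Lambda$, because the sign of the leading term is what matters, not its size. Finally I would patch the two regions with a standard mollification to achieve global $C^{1,1}$ regularity while keeping smoothness on $\{\varphi>0\}$, checking that the mollification errors are lower order.

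The main obstacle I expect is making the near-field negative term genuinely usable: \eqref{e:Knondegeneracy} controls $\int ((v'-v)\cdot e)_+^2 K\,dv'$ from below, but $K$ is not symmetric and may be very singular, so I must be careful that (i) the second-order Taylor expansion of $\varphi$ is valid up to the scale $r$ where the lower bound bites — this needs $\varphi\in C^{1,1}$, hence the $1+\beta$ exponent with $\beta\ge1$ would fail $C^{1,1}$ unless handled via the "smooth where positive" clause, which is exactly why the statement allows $\varphi$ merely $C^{1,1}$ globally — and (ii) the first-order term $\nabla\varphi(v)\cdot\int(v'-v)K(v,v')\,dv'$ over $B_r(v)$, which has no sign and is only $O(r^{1-2s})$ by \eqref{e:Kabove}, is dominated by the $O(r^{2-2s})$ concavity gain for $r$ small; this forces the collar width (equivalently $R_1$) to be chosen small/large appropriately. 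The restriction $s<1/2$ is used precisely to sum the Lipschitz tails $\sum 2^{-k(1-2s)}$ and to have the first-order error beaten by a $C^{1,1}$ (not $C^{2}$) profile's concavity.
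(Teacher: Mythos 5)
Your identification of the key mechanism is correct: the nondegeneracy condition \eqref{e:Knondegeneracy} paired with a profile that vanishes quadratically at the edge of its support produces a strictly positive lower bound on $\Lv \varphi$ exactly where $\varphi$ is small, and this is indeed how the paper's construction works (Lemma \ref{l:b1}). However, your proposal has a genuine gap in how that positivity is used. The barrier must \emph{expand} its support: it starts supported in $B_{r^{1+2s}} \times B_r$ at $t=0$ but must be bounded below by $\theta$ on $B_{R^{1+2s}} \times B_R$ for $t \in [\tau, T]$, with $R$ possibly much larger than $r$ (in the applications, Corollary \ref{cor:smoothpropagation} and Theorem \ref{thm:lowerbound}, $R$ is arbitrarily large). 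Your ansatz $\varphi = \Psi(t,\rho)$ with $\rho$ invariant under free transport and $\Psi$ \emph{nonincreasing} in time has support contained in a fixed sublevel set $\{\rho \le \rho_0\}$, hence in $\{|v| \lesssim r\}$ for all time; it can never become positive at velocities $|v| \in (r, R)$, so the fourth bullet of the lemma fails. Relatedly, your treatment of the edge is backwards: you propose to "absorb" the positive quantity $\Lv\varphi$ with a negative time derivative, whereas the whole point is to \emph{spend} it on a positive time derivative $\varphi_t > 0$ at an outward-moving front, which is what permits the support to grow. The paper does this with a self-similar expansion in the kinetic scaling, $\varphi_3(t,x,v) = \bigl(\tfrac{t_0}{t+t_0}\bigr)^p \varphi_2\bigl( (\tfrac{t_0}{t+t_0})^{1+\frac{1}{2s}} x, (\tfrac{t_0}{t+t_0})^{\frac{1}{2s}} v \bigr)$: the rescaling of the arguments makes the support spread, the inequality at the spreading edge is closed by the positivity of $\Mm \varphi_1$ there (Lemmas \ref{l:b1} and \ref{l:b2}), and in the bulk the large damping term $-p\,\varphi_2$ dominates. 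Choosing $t_0$ small then forces the support to cover $B_{R^{1+2s}} \times B_R$ by time $\tau$.

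Two further points. First, your regularity bookkeeping for the profile is off: $(\mathrm{const}-\rho)_+^{1+\beta}$ with $\beta < 1$ is only $C^{1,\beta}$, not $C^{1,1}$; the correct (and essentially forced) choice is exactly quadratic vanishing, which is both $C^{1,1}$ and matched to the weight $((v'-v)\cdot e)_+^2$ in \eqref{e:Knondegeneracy} — this is why the paper prescribes $\lim_{r\to 1^-} D^2\varphi_1(re) = e \otimes e$. Second, even setting aside the expansion issue, the ansatz built from $|x - tv|$ kills the transport term but makes the $v$-derivatives of $\varphi$ grow with $t$ (since $\nabla_v |x-tv|$ carries a factor $t$), so the interplay between the $x$-localization and the diffusion in $v$ is not controlled uniformly; the paper instead couples the variables through the shear $\varphi_1(x)\varphi_1(v - Ax)$ and handles the resulting transport contribution explicitly in the term $T_2$ of Lemma \ref{l:b2}.
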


Lemma \ref{l:barrier} will be proved by the end of this section. We remark that we only use the first line in \eqref{e:Kabove}.

It is convenient to define the extremal (Pucci type) operators which
correspond to the supremum and infimum of all values of $\Lv f(v)$ for
any kernel $K$ satisfying \eqref{e:Kabove} and
\eqref{e:Knondegeneracy}.

Let us say that a nonnegative kernel $K : \R^d \to [0,+\infty]$ belongs to the class
$\kernels$ if
\[
K \in \kernels \Leftrightarrow \begin{cases}
\int_{\R^d \setminus B_r} K(w) \dd w \leq \Lambda r^{-2s}, \\
\inf_{|e|=1} \int_{B_r} (w\cdot e)_+^2 K(w) \dd w \geq \lambda r^{2-2s}.
\end{cases}
\]
Correspondingly, we define the extremal operators $\Mp$ and $\Mm$.
\begin{align*} 
 \Mp f(v) &= \sup \left\{ \int_{\R^d} (f(v') - f(v)) K(v'-v) \dd v' : K \in \kernels \right\},\\
 \Mm f(v) &= \inf \left\{ \int_{\R^d} (f(v') - f(v)) K(v'-v) \dd v' : K \in \kernels \right\}.
\end{align*}

Note that the infimum and supremum are taken only with respect to a family of translation invariant linear operators, whose kernels depend only on $v'-v$. However, the kernel which achieves the extremal value will be different at every value of $v$. Therefore, effectively, the operators $\Mp f$ and $\Mm f$
correspond to the supremum and infimum value of $\Lv f$ for all kernels $K(v,v')$ satifying the first line in \eqref{e:Kabove} and
\eqref{e:Knondegeneracy}.

We start by pointing out a simple continuity property of $\Mp$ and $\Mm$. 
\begin{lemma} \label{l:Mm-continuous}
Let $f$ and $g$ be two bounded functions that are Lipschitz in $B_r(v)$, then
\[|\Mm f(v) - \Mm g(v)| \leq C_r \left(\|f-g\|_{L^\infty(\R^d)} + \|f-g\|_{\Lip(B_r(v))} \right).\]
The same holds for $\Mp$.
\end{lemma}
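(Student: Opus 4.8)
The plan is to estimate the difference $\Mm f(v) - \Mm g(v)$ by exploiting that both are infima over the same kernel class $\kernels$, so the difference of infima is controlled by the supremum over the class of the difference of the corresponding linear functionals. Concretely, for any $K \in \kernels$, write
\[
\int_{\R^d} (f(v') - f(v)) K(v'-v) \dd v' - \int_{\R^d} (g(v') - g(v)) K(v'-v) \dd v' = \int_{\R^d} \big( (f-g)(v') - (f-g)(v) \big) K(v'-v) \dd v'.
\]
Setting $u = f - g$, it suffices to bound $\big| \int (u(v') - u(v)) K(v'-v) \dd v' \big|$ by $C_r (\|u\|_{L^\infty(\R^d)} + \|u\|_{\Lip(B_r(v))})$ uniformly over $K \in \kernels$, and then take infimum/supremum appropriately: since $\Mm f(v) = \inf_K \langle \cdots \rangle$, we get $\Mm f(v) - \Mm g(v) \le \sup_K \int (u(v')-u(v)) K \dd v'$ and the reverse by symmetry, giving the two-sided bound.

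The key step is therefore the pointwise integral estimate. First I would split the integral over $B_r(v)$ and $\R^d \setminus B_r(v)$. On the inner region, use the Lipschitz bound on $u$ in $B_r(v)$: $|u(v') - u(v)| \le \|u\|_{\Lip(B_r(v))} |v'-v|$, so
\[
\int_{B_r(v)} |u(v') - u(v)| K(v'-v) \dd v' \le \|u\|_{\Lip(B_r(v))} \int_{B_r(v)} |v'-v| K(v'-v) \dd v'.
\]
The last integral is finite and bounded by $C_r$ because the upper bound in the definition of $\kernels$, namely $\int_{\R^d \setminus B_\rho} K \le \Lambda \rho^{-2s}$, gives by a standard dyadic decomposition (as in Subsection~\ref{sub:com}) that $\int_{B_r} |w| K(w) \dd w \le \Lambda' r^{1-2s}$. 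On the outer region, use boundedness of $u$: $|u(v') - u(v)| \le 2\|u\|_{L^\infty(\R^d)}$, and $\int_{\R^d \setminus B_r(v)} K(v'-v) \dd v' \le \Lambda r^{-2s}$. Adding the two contributions yields the claim with $C_r$ depending only on $r$, $\Lambda$, $s$, $d$ (the constant $\lambda$ from the lower bound in $\kernels$ plays no role here since we only used the upper bound).

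I do not expect any serious obstacle; this is a routine consequence of the class definition and the elementary observation that the difference of infima (or suprema) over a common index set is dominated by the sup of the pointwise difference. The only mild care needed is the dyadic-sum convergence $\sum_k 2^{-k(1-2s)} < \infty$, which holds precisely because $s < 1/2$ — but in fact the statement is invoked in the $s < 1/2$ section, and even if one wanted it for general $s$ one should instead use the equivalent form $\int_{B_r(v)} |v-v'|^2 K \dd v' \le \Lambda r^{2-2s}$ together with Cauchy–Schwarz on the inner piece; I would mention this only in passing. The proof is short: set $u = f-g$, do the two-region split, invoke the $\kernels$ bounds, and conclude for both $\Mm$ and $\Mp$ by the same argument (noting that $\Mp u = -\Mm(-u)$ so the $\Mp$ case is immediate).
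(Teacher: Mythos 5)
Your proof is correct and is exactly the paper's argument, fleshed out: the paper's own proof consists of the single observation that each linear operator in the infimum defining $\Mm$ satisfies the stated continuity estimate, which is precisely your two-region split using the $\kernels$ upper bound (with the $s<1/2$ summability caveat you correctly flag, consistent with the lemma's use in the barrier section). Nothing further is needed.
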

\begin{remark}
  Note that the norm $\|f-g\|_{L^\infty(\R^d)}$ can be
  weighted. Indeed, the same estimate holds with
  $\|(1+|v|)^{-\sigma} (f(v)-g(v))\|_{L^\infty(\R^d)}$ instead provided
  that $\sigma < 2s$. 
\end{remark}
\begin{proof}
  It is enough to notice that each linear operator in the infimum of
  the definition of $\Mm$ satisfies the continuity estimate.
\end{proof}

\begin{cor} \label{c:Mm-continuity} If $f_n$ is a sequence of
  functions so that $f_n \to f$ uniformly in $\R^d$ and $f_n \to f$ in
  $\Lip(\Omega)$, then $\Mp f_n$ and $\Mm f_n$ converge to $\Mp f$ and
  $\Mm f$ uniformly in compact sets of $\Omega$.
\end{cor}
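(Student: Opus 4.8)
The plan is to deduce the corollary directly from the pointwise continuity estimate of Lemma~\ref{l:Mm-continuous}, upgrading it to uniform convergence on a compact set $\mathcal{K} \Subset \Omega$ by making the constant $C_r$ uniform in the base point. The only input beyond Lemma~\ref{l:Mm-continuous} is the elementary observation that, since $\Omega$ is open and $\mathcal{K}$ is compact, the distance $\delta := \mathrm{dist}(\mathcal{K}, \R^d \setminus \Omega)$ is strictly positive; I would fix $r := \delta/2$, so that $B_r(v) \subset \Omega$ for every $v \in \mathcal{K}$.

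With this choice of $r$, for an arbitrary $v \in \mathcal{K}$ I would apply Lemma~\ref{l:Mm-continuous} to the pair $(f, f_n)$. Both functions are bounded (uniform limits on $\R^d$, or one uses the weighted variant noted in the remark after Lemma~\ref{l:Mm-continuous}) and Lipschitz on $\Omega$, hence on $B_r(v)$; moreover $\|f - f_n\|_{\Lip(B_r(v))} \le \|f - f_n\|_{\Lip(\Omega)}$ because $B_r(v) \subset \Omega$. Therefore
\[ |\Mm f_n(v) - \Mm f(v)| \leq C_r \left( \|f_n - f\|_{L^\infty(\R^d)} + \|f_n - f\|_{\Lip(\Omega)} \right), \]
and the identical bound holds with $\Mp$ in place of $\Mm$. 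Crucially, the right-hand side does not depend on $v \in \mathcal{K}$.

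To conclude, I would take the supremum over $v \in \mathcal{K}$, obtaining
\[ \|\Mm f_n - \Mm f\|_{L^\infty(\mathcal{K})} + \|\Mp f_n - \Mp f\|_{L^\infty(\mathcal{K})} \leq 2 C_r \left( \|f_n - f\|_{L^\infty(\R^d)} + \|f_n - f\|_{\Lip(\Omega)} \right), \]
and the right-hand side tends to $0$ as $n \to \infty$ by the two assumed modes of convergence. Since $\mathcal{K} \Subset \Omega$ was arbitrary, this is exactly the stated uniform convergence on compact subsets of $\Omega$.

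I do not foresee any real obstacle: this is a soft argument whose entire content is the uniform choice of the radius $r$ (which is where compactness of $\mathcal{K}$ and openness of $\Omega$ enter), together with the already-proved Lemma~\ref{l:Mm-continuous}; the only minor point to be careful about is justifying that the boundedness and local Lipschitz hypotheses of Lemma~\ref{l:Mm-continuous} are met, which follows from the convergence hypotheses (or, for the $L^\infty$ part, from the weighted version of that lemma).
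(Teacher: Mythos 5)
Your proof is correct and is exactly the intended derivation: the paper states this as an immediate corollary of Lemma~\ref{l:Mm-continuous} without further argument, and your observation that compactness of $\mathcal{K}\Subset\Omega$ lets one fix a single radius $r$ (hence a single constant $C_r$) uniformly over base points is precisely the content that makes the lemma upgrade to uniform convergence on compact subsets.
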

The following is perhaps not strictly a corollary of Lemma \ref{l:Mm-continuous}, since it requires a slightly sharper analysis (but standard and elementary).

\begin{cor} \label{c:Mm-continuous}
  Let $f$ be a bounded continuous function in $\R^d$ and Lipchitz in
  some open set $\Omega$. The functions $\Mm f$ and $\Mp f$ are
  continuous in $\Omega$.
\end{cor}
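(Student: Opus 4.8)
The statement to prove is Corollary~\ref{c:Mm-continuous}: if $f$ is bounded and continuous on $\R^d$ and Lipschitz on an open set $\Omega$, then $\Mm f$ and $\Mp f$ are continuous in $\Omega$.

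The plan is to fix a point $v_0 \in \Omega$ and a small ball $B_\rho(v_0) \Subset \Omega$ on which $f$ is Lipschitz, and to show continuity of $\Mm f$ at $v_0$ by splitting the defining integral into a near part over $B_\rho(v_0)$ and a tail part over its complement. For the tail part, one uses the upper bound $\int_{\R^d \setminus B_\rho} K \dd w \le \Lambda \rho^{-2s}$ together with the global boundedness and continuity of $f$: since $f$ is uniformly continuous (being bounded and continuous, one can work on a large ball and use the tail decay of $K$ to control contributions from far away), the contribution $\int_{|w|>\rho}(f(v+w)-f(v))K(w)\dd w$ is continuous in $v$ uniformly over $K \in \kernels$. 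More precisely, for $v, v'$ near $v_0$ write the difference of tail integrals as $\int_{|w|>\rho}\big((f(v+w)-f(v)) - (f(v'+w)-f(v'))\big)K(w)\dd w$; split the domain of $w$ into $|w| \le M$ and $|w|>M$; on $|w|>M$ the integrand is bounded by $4\|f\|_\infty$ and $\int_{|w|>M}K \le \Lambda M^{-2s}$, which is small for $M$ large; on $\rho < |w| \le M$ one uses uniform continuity of $f$ and $\int_{|w|>\rho}K \le \Lambda \rho^{-2s}$.

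For the near part $\int_{B_\rho(v)}(f(v+w)-f(v))K(w)\dd w$, one uses the Lipschitz bound: $|f(v+w)-f(v)| \le \|f\|_{\Lip(B_\rho(v_0))}|w|$, and $\int_{B_\rho}|w|K(w)\dd w \le \Lambda' \rho^{1-2s}$ (finite since the second line of the $\kernels$ condition together with the first controls $\int_{B_\rho}|w|^2 K$ by $\Lambda \rho^{2-2s}$, hence $\int_{B_\rho}|w|K \lesssim \rho^{1-2s}$ by Cauchy--Schwarz or dyadic summation using only the upper bound). To get continuity of this near part as $v$ varies, observe that it suffices to estimate the difference: by adding and subtracting, the near-part difference at $v$ and $v'$ splits into a term involving $f(v+w)-f(v) - (f(v'+w)-f(v'))$ over a common small ball (which goes to zero because $f$ restricted to $B_\rho(v_0)$ is Lipschitz, hence its second difference in $w$ tends to zero as $v \to v'$ — here I would instead simply make $\rho$ small so that the whole near part is $\le 2\|f\|_{\Lip(B_\rho(v_0))}\Lambda'\rho^{1-2s}$ uniformly, which is small, and absorb it) plus a term from the mismatch of the domains $B_\rho(v) \triangle B_\rho(v')$, which has small measure and integrand bounded by $\|f\|_{\Lip}\rho \cdot K$, controlled again by the upper bound on $K$. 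Since everything is uniform over $K \in \kernels$, taking the supremum (for $\Mp$) or infimum (for $\Mm$) preserves the estimate.

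The main obstacle — and the reason this is flagged as "not strictly a corollary" of Lemma~\ref{l:Mm-continuous} — is that $f$ need not be globally Lipschitz, only Lipschitz on $\Omega$, so one cannot directly invoke the continuity estimate of Lemma~\ref{l:Mm-continuous} which requires Lipschitz control on a fixed ball around each point; one must instead exploit uniform continuity of $f$ on large balls together with the $r^{-2s}$ tail decay of the kernels to handle the region where $f$ is merely continuous. Putting this together: fix $v_0 \in \Omega$, choose $\rho$ small with $\overline{B_{2\rho}(v_0)} \subset \Omega$; for $\varepsilon>0$ choose $\rho$ small enough that the near parts at both $v$ and $v'$ are each $\le \varepsilon$ (using the Lipschitz bound on $B_{2\rho}(v_0) \subset \Omega$ and $\int_{B_\rho}|w|K \lesssim \rho^{1-2s}$); choose $M$ large so tail-of-tail contributions are $\le \varepsilon$ (using $\|f\|_\infty$ and $\int_{|w|>M}K \le \Lambda M^{-2s}$); then for $|v-v'|$ small enough the remaining middle piece $\rho<|w|\le M$ is $\le \varepsilon$ by uniform continuity of $f$ on $B_{M+2\rho}(v_0)$ and $\int_{|w|>\rho}K \le \Lambda\rho^{-2s}$. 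Summing, $|\Mm f(v) - \Mm f(v')| \le C\varepsilon$ for $|v-v'|$ small, which gives continuity; the argument for $\Mp$ is identical.
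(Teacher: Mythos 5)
Your proof is correct, and in fact the paper offers no proof of this corollary at all: it only remarks that the statement ``requires a slightly sharper analysis (but standard and elementary)'' beyond Lemma~\ref{l:Mm-continuous}. Your three-zone decomposition (a near part over $B_\rho(v)$ controlled by the local Lipschitz bound and $\int_{B_\rho}|w|K(w)\,\mathrm{d}w \lesssim \rho^{1-2s}$; a middle annulus $\rho<|w|\le M$ controlled by uniform continuity of $f$ on a compact set together with $\int_{|w|>\rho}K \le \Lambda\rho^{-2s}$; a far tail $|w|>M$ controlled by $\|f\|_{L^\infty}$ and $\Lambda M^{-2s}$), all uniform over $K\in\kernels$ so that the sup/inf inherits the modulus, is exactly the intended elementary argument and correctly isolates why global Lipschitz continuity is not needed. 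Two small points. First, your parenthetical Cauchy--Schwarz route to $\int_{B_\rho}|w|K \lesssim \rho^{1-2s}$ does not work, since $\int_{B_\rho}K$ may be infinite; only the dyadic summation does, and it converges precisely because $s<1/2$ --- which is the standing assumption of this section (and is also what makes $\Mm f$ finite on merely Lipschitz functions in the first place), so you should state that restriction explicitly. Second, the ``domain mismatch $B_\rho(v)\triangle B_\rho(v')$'' you worry about is vacuous: the kernels in $\kernels$ are translation invariant, so after the substitution $w=v'-v$ the near-part domain is the fixed ball $B_\rho$; in any case your fallback of simply making both near parts smaller than $\eps$ and absorbing them already disposes of it.
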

Since the operators $\Mp$ and $\Mm$ are a supremum and infimum of
linear ones, then they are also sub- and super-additive
respectively. That means that for any $f$ and $g$,
\[ \Mm (f+g)(v) \geq \Mm f(v) + \Mm g(v), \qquad \Mp (f+g)(v) \leq \Mp f(v) + \Mp g(v).\]
\begin{lemma}[The function $\varphi_1$] \label{l:b1} Let $\varphi_1 : \R^d \to [0,1]$ be a
  nonnegative, radially symmetric function, so that
\begin{itemize}
\item $\{ \varphi_1 > 0\} = B_1$, $\varphi_1 \in C^2(B_1)$,
  $\varphi_1 =1$ in $B_{1/2}$, and
  $v \cdot \nabla \varphi_1(v) \leq 0$;
\item $\varphi_1 \in C^2 (B_1)$ and $\varphi \in C^{1,1}(\R^d)$; more
  precisely, there is a discontinuity of $D^2 \varphi_1$ on
  $\partial B_1$ so that
  $\lim_{r \to 1^-} D^2 \varphi_1 (re) = e \otimes e$ for any $|e|=1$.
\end{itemize}
Then, there exist two constants  $\delta>0$ and  $\theta > 0$ so that
\[ 
\Mm \varphi_1(v) \geq \theta \text{ for any $v \in B_1$ so that } \varphi_1(v) < \delta.
\]
\end{lemma}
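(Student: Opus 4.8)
The plan is to estimate $\Mm \varphi_1(v)$ from below at any point $v \in B_1$ where $\varphi_1(v) < \delta$, for $\delta$ chosen small. First I would observe that, since $\varphi_1 = 1$ on $B_{1/2}$ and $\varphi_1 < \delta < 1$ at $v$, the point $v$ must lie in an annulus $B_1 \setminus B_{1/2}$ (once $\delta < 1$), so $|v|$ is bounded below away from $0$; this will matter when I locate a direction in which $\varphi_1$ increases. For \emph{any} fixed kernel $K \in \kernels$, I want a pointwise lower bound on $\int_{\R^d}(\varphi_1(v') - \varphi_1(v)) K(v'-v) \dd v'$ that is uniform over the class; taking the infimum then gives the bound on $\Mm \varphi_1(v)$.

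The key step is to split the integral into a near part on $B_\rho(v)$ (for a small fixed $\rho$) and a far part on $\R^d \setminus B_\rho(v)$, and show the positive contribution of one of them dominates. For the \emph{far} part, since $0 \le \varphi_1 \le 1$ and $\varphi_1(v) < \delta$, we have $\varphi_1(v') - \varphi_1(v) \ge -\delta$, so that part is bounded below by $-\delta \int_{\R^d \setminus B_\rho(v)} K(w) \dd w \ge -\delta \Lambda \rho^{-2s}$, which is tiny once $\delta$ is small. For the \emph{near} part, I would use a second-order Taylor expansion of $\varphi_1$ at $v$: because $\varphi_1 \in C^2(B_1)$ away from $\partial B_1$ (and $C^{1,1}$ globally), for $v'$ close to $v$,
\[
\varphi_1(v') - \varphi_1(v) = \nabla \varphi_1(v) \cdot (v'-v) + \tfrac12 (v'-v)^T D^2\varphi_1(\xi) (v'-v),
\]
with $|D^2 \varphi_1|$ bounded (using the $C^{1,1}$ bound near $\partial B_1$). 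Then, integrating against $K(v'-v)$, the first-order term is controlled below using the nondegeneracy assumption \eqref{e:Knondegeneracy}: one must be careful that $K$ is not symmetric, so the odd (gradient) term does not vanish automatically. However, I can write $\nabla\varphi_1(v)\cdot(v'-v) = \frac12 [\nabla\varphi_1(v)\cdot(v'-v) + |\nabla\varphi_1(v)\cdot(v'-v)|] + \frac12[\nabla\varphi_1(v)\cdot(v'-v) - |\nabla\varphi_1(v)\cdot(v'-v)|]$; the key geometric input is that at a point $v$ near $\partial B_1$ with $\varphi_1(v)$ small, the gradient $\nabla\varphi_1(v)$ points essentially radially inward (since $\varphi_1$ is radial and decreasing, $\nabla\varphi_1(v) = \psi'(|v|) v/|v|$ with $\psi' < 0$ near $|v|=1$), so $\nabla\varphi_1(v) \cdot (v'-v)$ is \emph{positive} precisely on the half-ball pointing inward toward $B_{1/2}$, where $\varphi_1$ grows. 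The cleanest route, though, is to bypass the first-order term entirely: choose a fixed unit vector $e = e(v)$ pointing from $v$ toward the origin and note that along that direction $\varphi_1$ increases from $\varphi_1(v) < \delta$ up to $1$ within distance $\le 1/2$; combined with $v\cdot\nabla\varphi_1 \le 0$ and the convexity-type lower bound $\lim_{r\to1^-} D^2\varphi_1(re) = e\otimes e > 0$ near the boundary, one gets that on a fixed-size cone of directions $v'$ with $(v'-v)\cdot e \ge c|v'-v|$ and $|v'-v| \le \rho$, we have $\varphi_1(v') - \varphi_1(v) \gtrsim ((v'-v)\cdot e)^2$ (a fixed multiple, uniform once $\delta$ is small enough that $v$ is close enough to $\partial B_1$ for the $D^2\varphi_1 \approx e\otimes e$ approximation to hold, or else $\varphi_1(v)$ is bounded away from $1$ and one uses the uniform interior $C^2$ bound). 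Elsewhere in $B_\rho(v)$ we use $\varphi_1(v') - \varphi_1(v) \ge -C|v'-v|^2$. Putting these together,
\[
\int_{B_\rho(v)} (\varphi_1(v') - \varphi_1(v)) K(w)\dd v' \ge c \int_{\{w \cdot e \ge 0\}\cap B_\rho} (w\cdot e)^2 K(w)\dd w - C\int_{B_\rho} |w|^2 K(w)\dd w \ge c\lambda \rho^{2-2s} - C\Lambda \rho^{2-2s},
\]
using \eqref{e:Knondegeneracy} for the first integral and \eqref{e:Kabove} (third form) for the second.

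The main obstacle, and what requires care, is the interplay between the non-symmetry of $K$ and the sign of the first-order term $\nabla\varphi_1(v)\cdot(v'-v)$: I cannot simply discard it. The resolution I expect to use is geometric — at points $v$ with $\varphi_1(v) < \delta$, both $\nabla\varphi_1(v)$ and the ``direction of growth'' of $\varphi_1$ point radially inward, so the half of $B_\rho(v)$ where the first-order term is negative is exactly the half where $\varphi_1$ is (by the downhill direction) decreasing, but there $\varphi_1(v') - \varphi_1(v)$ is still $\ge -C|v'-v|^2$ (it cannot drop below, since $\varphi_1 \ge 0$ and the full-space floor costs only $O(\delta)$), so the bad first-order contribution is absorbed either into the $O(\delta)$ far-field error or into the $O(\rho^{2-2s})$ quadratic error — provided I orient $e$ along $-v/|v|$, the nondegeneracy integral over $\{w\cdot e \ge 0\}$ lands precisely on the ``uphill'' side. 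Then I choose $\rho$ small so that the $D^2\varphi_1$ error is beaten by the $\lambda$-coercive term, and then $\delta$ small (depending on $\rho$, $\lambda$, $\Lambda$) so that the far-field $O(\delta)$ error is beaten as well; this yields $\Mm\varphi_1(v) \ge \theta := \tfrac12 c\lambda\rho^{2-2s} > 0$ for all such $v$, completing the proof. One subtlety to handle at the end: if $\delta$ is not yet small enough to force $v$ near $\partial B_1$, then $\varphi_1(v) < \delta \le$ some fixed value bounded away from $1$ keeps $v$ in a compact subset of the open annulus where $\varphi_1 \in C^2$ with uniform bounds, and the same estimate goes through with the interior $C^2$ norm in place of the boundary Hessian limit — so in all cases a single $\theta > 0$ works.
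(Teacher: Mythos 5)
Your plan has a genuine gap at its central step. Your displayed near‑field estimate reads
\[
\int_{B_\rho(v)}(\varphi_1(v')-\varphi_1(v))K(v'-v)\,\dd v' \;\geq\; c\lambda\rho^{2-2s}-C\Lambda\rho^{2-2s},
\]
and you then propose to ``choose $\rho$ small so that the $D^2\varphi_1$ error is beaten by the $\lambda$-coercive term.'' But both terms carry the \emph{same} power $\rho^{2-2s}$: shrinking $\rho$ does not change their ratio, and since nothing forces $c\lambda>C\Lambda$ (here $C$ is a $C^{1,1}$ bound on $\varphi_1$ and $\Lambda/\lambda$ can be arbitrarily large), the right-hand side may be negative for every $\rho$. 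The only way to rescue a fixed-scale Taylor argument is to observe that the constant in front of the quadratic error need only control the \emph{negative part} of $D^2\varphi_1$ on $B_\rho(v)\cap B_1$, which tends to $0$ as $v\to\partial B_1$ because $D^2\varphi_1\to e\otimes e\succeq 0$ there; your write-up never invokes this structural fact. The same objection kills your fallback for interior points (``the same estimate goes through with the interior $C^2$ norm''): away from $\partial B_1$ the Hessian has no sign, so that case must instead be \emph{excluded} by taking $\delta$ below $\min_{\overline{B_{1-\sigma}}}\varphi_1$, not estimated.

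Two further steps are stated too loosely. First, you repeatedly integrate the floor $-\delta$ (and, implicitly, the first-order error $-|\nabla\varphi_1(v)|\,|w|$) against $K$ over small balls; but $\int_{B_\rho}K$ is generically infinite and $\int_{B_\rho}|w|K$ diverges when $s\geq 1/2$, so one must interpolate between the floor and the pointwise Taylor bound according to $|w|$ (this does produce an $O(\delta^{1-s})$ error, but the computation is not automatic). Second, restricting \eqref{e:Knondegeneracy} to the cone $\{w\cdot e\geq c|w|\}$ is only legitimate after checking that the slab $\{0\leq w\cdot e\leq c|w|\}$ contributes at most $c^2\Lambda\rho^{2-2s}$, so $c$ must be chosen small relative to $\sqrt{\lambda/\Lambda}$. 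The paper avoids all of these issues by arguing differently: it proves $\Mm\varphi_1>0$ at each point $e\in\partial B_1$, where $\varphi_1(e)=0$ and $\nabla\varphi_1(e)=0$, so there is no floor and no first-order term and the far field contributes nonnegatively; the near field is handled by a blow-up $\varphi_\eps(w)=\eps^{-2}\varphi_1(e+\eps w)\chi_{B_1}(w)\to(-w\cdot e)_+^2\chi_{B_1}(w)$, for which $\Mm$ at the origin is $\geq\lambda$ exactly by \eqref{e:Knondegeneracy}, with the quadratic error vanishing in the limit; continuity of $\Mm\varphi_1$ plus compactness of $\partial B_1$ then yields a uniform $\theta$ on a neighborhood containing $\{\varphi_1<\delta\}$.
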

\begin{remark}
We can choose any function $\varphi_1 (x) = \Psi (|x|)$ with
    $\Psi$ non-increasing in $\R$, positive and $C^2$ in $[0,1]$,
    supported in $[0,1]$, $\Psi \equiv 1$ in $[0,1/2]$, and
    $\Psi'(1)=0$ and $\Psi'' (1) = 1$.
\end{remark}
\begin{proof}
  Since $\Mm \varphi_1$ is continuous in $B_1$, it is enough to prove
  that $\Mm \varphi_1$ is strictly positive on $\partial B_1$. From
  radial symmetry, we are left to show that $\Mm \varphi_1(e) > 0$ for
  $e = (1,0,\dots,0)$.

Let $\eps > 0$. From the super-additivity of $\Mm$, we have
\[ 
\Mm \varphi_1(e) \geq \Mm (\varphi_1 \chi_{B_\eps(e)}) (e) + \Mm
(\varphi_1 \chi_{\R^d\setminus B_\eps(e) } ) (e).
\]

For any $K \in \kernels$, since $K\geq 0$ and $\varphi_1 \geq 0$, we have
\[
  \int_{\R^d} ((\varphi_1 \chi_{\R^d\setminus B_\eps(e)}) (v')
- (\varphi_1 \chi_{\R^d\setminus B_\eps(e)})(e)))K(v'-e) \dd v' 
= \int_{\R^d \setminus B_\eps(e)} \varphi_1(v') K(v'-e) \dd v' \geq 0.
\]
Therefore $ \Mm (\varphi_1 \chi_{\R^d\setminus B_\eps(e) } )   (e) \geq 0$.

We now show that $\Mm (\varphi_1 \chi_{B_\eps(e)}) (e)$ is bounded
below for $\eps>0$ small. Essentially this follows because
$\varphi_1(v')$ is approximately $((v'-e) \cdot (-e))_+^2$ in
$B_\eps(e)$.

Indeed, let the scaled function $\varphi_\eps$ be
\[ \varphi_\eps(w) = \begin{cases}
\eps^{-2} \varphi_1(e+\eps w) & \text{ if } |w| < 1, \\
0 & \text{ if } |w| \geq 1.
\end{cases}\]
Thus
\[
\Mm (\varphi_1 \chi_{B_\eps(e)})(e) = \eps^{2-2s} \Mm \varphi_\eps (0) .
\] 
From the definition of $\varphi_1$ ,we know that
\[ \varphi_\eps(w) \to q(w) := \begin{cases}
(-w \cdot e)_+^2 & \text{ for } |w| < 1, \\
0 & \text{ for } |w| \geq 1,
\end{cases}
\]
uniformly in $\R^d$ and also in $\Lip(B_{1/2})$. Therefore, using
Corollary \ref{c:Mm-continuity},
\[\Mm \varphi_\eps(0) \to \Mm q(0) \geq \lambda.\]
The last inequality comes from the non-degeneracy condition~\eqref{e:Knondegeneracy}.

Therefore, choosing $\eps$ sufficiently small,
\[ 
\Mm \varphi_1(e) \geq \Mm (\varphi_1 \chi_{B_\eps(e)}) (e) \geq
\frac \lambda 2 \eps^{2-2s} > 0.
\] 
This concludes the proof.
\end{proof}
\begin{lemma}[The function $\varphi_2$] \label{l:b2} Let $t_0 > (0,1)$ be arbitrary and $\varphi_1$
  be a function as in Lemma \ref{l:b1}. Let
  $A = \left( 5 +\frac 1 {2s}\right)$. Let us define the function
  $\varphi_2: \R^{2d} \to [0,1]$ to be
  \[ \varphi_2(x,v) := \varphi_1(x) \varphi_1(v-Ax). \]
  There exists a constant $\delta>0$ so that if at some point $(x,v)$,
  $\varphi_2(x,v) < \delta$, then
  \begin{equation} \label{e:b2} \left(-1-\frac 1 {2s} \right) x \cdot
    \nabla_x \varphi_2 - \frac 1 {2s} v \cdot \nabla_v \varphi_2 + t_0
    \left( v \cdot \nabla_x \varphi_2 - \Mm_v \varphi_2 \right) \leq
    0.
\end{equation}
\end{lemma}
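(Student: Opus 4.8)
\textbf{Proof proposal for Lemma~\ref{l:b2}.}

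The plan is to treat $\varphi_2(x,v) = \varphi_1(x)\,\varphi_1(v-Ax)$ as a perturbation of a self-similar supersolution profile and to use the dichotomy that the inequality \eqref{e:b2} only needs to hold at points where $\varphi_2$ is small; hence it suffices to understand the behaviour near the boundary of $\{\varphi_2>0\}$. First I would set $y = v - Ax$ and compute the first-order terms: since $\varphi_2 = \varphi_1(x)\varphi_1(y)$, one has $\nabla_x \varphi_2 = \nabla\varphi_1(x)\varphi_1(y) - A\varphi_1(x)\nabla\varphi_1(y)$ and $\nabla_v\varphi_2 = \varphi_1(x)\nabla\varphi_1(y)$. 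Plugging these in, the transport term $v\cdot\nabla_x\varphi_2$ combines with $-\tfrac1{2s} v\cdot\nabla_v\varphi_2$ and the $x\cdot\nabla_x$ term; I would organize the bookkeeping so that the ``good'' negative contributions come from $x\cdot\nabla_x\varphi_1(x) \le 0$ and $y\cdot\nabla\varphi_1(y)\le 0$ (the monotonicity built into $\varphi_1$), while the genuinely dangerous term is $-t_0\,\Mm_v\varphi_2$, which can be large and \emph{negative} (i.e. $\Mm_v\varphi_2$ large positive is good, but it could be very negative).

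The key structural point is that $\Mm$ is super-additive, so freezing $x$ and writing $\varphi_2(x,\cdot) = \varphi_1(x)\,\varphi_1(\cdot - Ax)$, we get $\Mm_v\varphi_2(x,v) \ge \varphi_1(x)\,\Mm_v[\varphi_1(\cdot-Ax)](v) = \varphi_1(x)\,(\Mm\varphi_1)(v-Ax)$ by translation invariance of the class $\kernels$. Now invoke Lemma~\ref{l:b1}: wherever $\varphi_1(v-Ax) < \delta_1$ we have $(\Mm\varphi_1)(v-Ax)\ge\theta>0$, and wherever $\varphi_1(v-Ax)\ge\delta_1$ we have $\varphi_1(x)<\delta/\delta_1$ (because $\varphi_2<\delta$), so $x$ is close to $\partial B_1$ where $x\cdot\nabla_x\varphi_1(x)$ is strictly negative and bounded away from zero — this is the regime where the first-order ``good'' terms dominate. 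So I would split into these two cases: (a) $\varphi_1(y)$ small, where the $\Mm$ term alone beats everything (choosing $\delta$ small enough that all the $O(1)$ first-order terms, which are Lipschitz-bounded on the compact support, are dominated by $t_0\theta\cdot(\text{something bounded below})$ — here one needs $\varphi_1(x)$ bounded below, which follows since $\varphi_2=\varphi_1(x)\varphi_1(y)$ can only be positive if $\varphi_1(x)>0$, but it could be small... so actually case (a) needs $\varphi_1(x)$ not too small, which is part of what defines a finer subdivision); (b) $\varphi_1(y)$ bounded below, forcing $\varphi_1(x)$ small, so $|x|$ near $1$ and $x\cdot\nabla_x\varphi_1(x)\le -c<0$, making $(-1-\tfrac1{2s})x\cdot\nabla_x\varphi_2 = (-1-\tfrac1{2s})x\cdot\nabla\varphi_1(x)\varphi_1(y)$ a strictly negative quantity bounded above by $-c'<0$, while $-t_0\Mm_v\varphi_2$ is controlled by the Lipschitz bound on $\varphi_1$ (Lemma~\ref{l:Mm-continuous}, with the weighted $L^\infty$ remark since $\varphi_1$ has compact support) times $\varphi_1(x)$, hence small.

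I anticipate the main obstacle is the case analysis near the ``corner'' where \emph{both} $\varphi_1(x)$ and $\varphi_1(y)$ are of intermediate size (so neither is small, yet $\varphi_2<\delta$ is impossible there if $\delta$ is small — so actually this corner is vacuous, which is the saving observation). One must also verify the $\Mm_v$ term is evaluated correctly: $\Mm_v\varphi_2$ means the extremal operator acting in $v$ with $x$ frozen, and since $y = v-Ax$ is an affine shift in $v$ at fixed $x$, $\Mm_v[\varphi_1(\cdot-Ax)](v) = (\Mm\varphi_1)(v-Ax)$ exactly, with no Jacobian issue; I would state this explicitly. Finally, the constant $A = 5+\tfrac1{2s}$ and the factor $t_0\in(0,1)$ enter only to make the signs work out in the first-order bookkeeping of case (b) and to keep the ``bad'' transport term $t_0 v\cdot\nabla_x\varphi_2$ — which contains $-t_0 A\varphi_1(x)v\cdot\nabla\varphi_1(y)$ — of the right sign; I would track these coefficients carefully but not belabour them, choosing $\delta$ last, after $\theta,c,c'$ and all Lipschitz constants are fixed.
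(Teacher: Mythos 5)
Your proposal has the right skeleton, and it is the paper's skeleton: exploit the product structure to reduce $\Mm_v \varphi_2(x,v)$ to $\varphi_1(x)\,\Mm\varphi_1(v-Ax)$ (this is in fact an identity, since $\Mm$ is positively homogeneous and the kernels in $\kernels$ are translation invariant, so super-additivity is not even needed); split according to which of $\varphi_1(x)$, $\varphi_1(v-Ax)$ is small; invoke Lemma~\ref{l:b1} when $\varphi_1(v-Ax)$ is small; and note that the ``both factors intermediate'' corner is vacuous once $\delta$ is small. Your case (a) and the vacuity observation are essentially what the paper does (its Claim on $T_1$ and the final choice $\delta=\delta_{11}\delta_{21}$).

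Your case (b), however, rests on a sign error and a false claim, and together these remove the actual mechanism of the proof. First, since $x\cdot\nabla\varphi_1(x)\le 0$ and the coefficient is $-1-\tfrac1{2s}<0$, the quantity $\bigl(-1-\tfrac1{2s}\bigr)\varphi_1(v-Ax)\,x\cdot\nabla\varphi_1(x)$ is \emph{nonnegative}: the dilation terms produced by the self-similar rescaling are the dangerous ones (the profile is expanding), while the diffusion term $-t_0\Mm_v\varphi_2$, which you label ``genuinely dangerous,'' is precisely the helpful one near the boundary in the $v-Ax$ variable. Second, $x\cdot\nabla\varphi_1(x)$ is \emph{not} bounded away from zero near $\partial B_1$: by construction $\Psi'(1)=0$, so $\nabla\varphi_1(x)\sim-(1-|x|)\,x/|x|$ degenerates linearly there. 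What actually closes case (b) in the paper is the competition inside $T_2=\varphi_1(v-Ax)\,\nabla\varphi_1(x)\cdot\bigl\{-(1+\tfrac1{2s})x+t_0v\bigr\}$: because $\varphi_1(v-Ax)>0$ forces $v\in B_1(Ax)$ and $A=5+\tfrac1{2s}$, the relevant vector points outward, so its inner product with $\nabla\varphi_1(x)\parallel -x$ is $\lesssim-(1-|x|)$; the leftover $T_1$ is $O(\varphi_1(x))=O\bigl((1-|x|)^2\bigr)$ by the quadratic vanishing of $\varphi_1$ at $\partial B_1$, and the linear-versus-quadratic degeneracy absorbs it. Neither the true role of $A$ in this comparison nor the quadratic vanishing appears in your proposal, so as written case (b) does not go through.
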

\begin{proof}
  Since $\min \varphi_2 = 0$, then $\Mm_v \varphi_2 \geq 0$ wherever
  $\varphi_2 = 0$. Thus, the inequality is trivial wherever
  $\varphi_2 = 0$. We are left to verify it at points where
  $\varphi_2 > 0$. Note that this is a bounded set since there
  $|x| \leq 1$ and $|v| \leq A|x|+1 \leq A+1$.

We expand the left hand of \eqref{e:b2}, in terms of $\varphi_1$,
  $x$ and $v$, as the sum of two terms $T_1+T_2$, where
\begin{align*}
T_1 &= \varphi_1(x) \left\{ \nabla \varphi_1(v-Ax) \cdot \left( A \left( 1+\frac 1 {2s} \right) x 
- \left( t_0 A + \frac 1 {2s} \right) v \right) - t_0 \Mm \varphi_1(v-Ax) \right\}, \\
T_2 &= \varphi_1(v-Ax) \nabla \varphi_1(x) \cdot \left\{ -\left(1+\frac 1 {2s} \right) x + {t_0} v \right\}.
\end{align*}

We first claim that 
\begin{equation}\label{e:d1}
 \text{there exist } \delta_1 >0  \text{ such that } T_1 \le 0 \text{ if }
\varphi_1(v-Ax) < \delta_1.
\end{equation}
Using Lemma \ref{l:b1}, we pick $\delta_1$ sufficiently small so that
$\Mm \varphi_1(v-Ax) \geq \theta$ whenever
$\varphi_1(v-Ax) < \delta_1$. Thanks to the continuity of
$\nabla \varphi_1$, we pick $\delta_1$ smaller if necessary so that
whenever $\varphi_1(v-Ax) < \delta_1$,
\[ 
\nabla \varphi_1(v-Ax) \cdot \left( A \left( 1+\frac 1 {2s} \right) x
  - \left( t_0 A + \frac 1 {2s} \right) v \right) - t_0 \Mm \varphi_1(v-Ax) < -\frac {t_0 \theta} 2.
\]
Therefore, we have 
\[ T_1 \leq -\frac{t_0 \theta \varphi_1(x)} 2 \text{ whenever } \varphi_1(v-Ax) < \delta_1.\]
In particular, \eqref{e:d1} holds true. 

We next claim that 
\begin{equation}\label{e:d2}
 \text{there exist } \delta_2 >0  \text{ such that } T_2  \text{ if }
\varphi_1(x) < \delta_2.
\end{equation}
Because of the second derivative of $\varphi_1$ of $\partial B_1$, we
have the following expansion
\[ \nabla \varphi_1(x) = -(1-|x|) \frac x {|x|} + O((1-|x|)^2 ).\]
Whenever $\varphi_1(v-Ax) > 0$, also $v \in B_1(Ax)$, and therefore 
\[ \nabla \varphi_1(x) \cdot \left\{ -\left(1+\frac 1 {2s} \right) x +
  v \right\} \leq (1-|x|) (-4|x|+1) + C(1-|x|)^2 < -(1-|x|) +
C(1-|x|)^2.\]
Thus, 
\[ 
T_2 \leq -\varphi_1(v-Ax) (1-|x|)/2 \le 0 \text{  whenever }
\varphi_1(x) < \delta_2
\]
and $\delta_2$ is sufficiently small. In particular, \eqref{e:d2} holds true.

In view of \eqref{e:d1} and \eqref{e:d2}, $T_1 + T_2 \le 0$ if 
$\varphi_1(v-Ax) < \delta_1$ and $\varphi_1(x) < \delta_2$. 

Let us analyse the case $\varphi_1(x) \geq \delta_2$; in this case
consider $\varphi_1(v-Ax) < \delta_{11} < \delta_1$ so that
\[ T_1 + T_2 \leq - t_0 \delta_2 \theta/2 + C \delta_{11}.\]
Picking $\delta_{11}$ sufficiently small (depending on the previous
choice of $\delta_2$), we assure $T_1 + T_2 < 0$ in this case.

We are left with the case $\varphi_1(v-Ax) \geq \delta_1$. In this case we have
for $\varphi_1(x) < \delta_{21}$, 
\begin{align*}
T_1 + T_2 &\leq C \varphi_1(x) -\varphi_1(v-Ax) (1-|x|)/2, \\
&\leq C \delta_{21} -\delta_1 (1-|x|)/2 < 0, 
\end{align*}
provided $|x|$ is sufficiently close to $1$, which follows if
$\varphi_1(x) < \delta_{21} < {\delta_2}$ with $\delta_{21}$
sufficiently small.

Finally, we finish the proof picking $\delta = \delta_{11} \delta_{21}$ to ensure that at
least one of the three cases above holds.
\end{proof}
\begin{lemma}[The function $\varphi_3$] \label{l:b3}
Let $\varphi_2$ be the function from Lemma \ref{l:b2} and $t_0> 0$. The function $\varphi_3(t,x,v)$ given by
\[ 
\varphi_3(t,x,v) = \frac {t_0^p}{(t+t_0)^p} \varphi_2 
\left( \left( \frac {t_0} {t+t_0}\right)^{1+\frac 1{2s}} x  , 
\left( \frac {t_0} {t+t_0} \right)^{\frac 1{2s}} v \right),
\]
is a subsolution of the equation
\[ \partial_t \varphi_3 + v \cdot \nabla_x \varphi_3 - \Mm \varphi_3 \leq 0,\]
provided that $p$ is sufficiently large (depending on $\varphi_1$, $\lambda$, $\Lambda$, $s$ and $d$, but not $t_0$).
\end{lemma}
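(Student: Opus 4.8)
The function $\varphi_3$ is obtained from $\varphi_2$ by applying the parabolic scaling and time-shift that is adapted to the fractional Kolmogorov operator, and the claim is exactly the kind of statement one gets by differentiating a scaled-and-dilated profile. So the first step is to compute $\partial_t \varphi_3 + v\cdot\nabla_x\varphi_3 - \Mm_v\varphi_3$ in terms of $\varphi_2$ and its derivatives, evaluated at the rescaled point
\[ (y,w) = \left( \left( \tfrac{t_0}{t+t_0}\right)^{1+\frac1{2s}} x,\ \left( \tfrac{t_0}{t+t_0}\right)^{\frac1{2s}} v \right).\]
Writing $\mu = t_0/(t+t_0)$, we have $\varphi_3(t,x,v) = \mu^p \varphi_2(\mu^{1+1/2s}x, \mu^{1/2s} v)$. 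Differentiating in $t$ produces a factor $-\frac1{t+t_0}$ times $p\varphi_2$, plus the contributions from the $t$-dependence inside the arguments: $-\frac{1+1/2s}{t+t_0}\, y\cdot\nabla_y\varphi_2$ and $-\frac{1/2s}{t+t_0}\, w\cdot\nabla_w\varphi_2$. The transport term $v\cdot\nabla_x\varphi_3$ becomes $\mu^p\cdot \mu^{1+1/2s}\, v\cdot\nabla_y\varphi_2 = \mu^p\, \frac{1}{\mu^{1/2s}}\cdot \mu^{1+1/2s}\, w\cdot\nabla_y\varphi_2 = \mu^{p+1}\, w\cdot\nabla_y\varphi_2$; after pulling out the common $\mu^p$ and $\frac1{t+t_0}=\frac\mu{t_0}$ appropriately, this matches the $v\cdot\nabla_x\varphi_2$ term in \eqref{e:b2} up to the factor $t_0$. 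Finally, $\Mm_v$ is homogeneous of degree $2-2s$ under dilations in $w$, so $\Mm_v\varphi_3 = \mu^p \cdot \mu^{(2-2s)/2s}\, \Mm_w\varphi_2(y,w) = \mu^{p}\,\mu^{1/s - 1}\,\Mm_w\varphi_2(y,w)$; checking the exponent, $1/s - 1 = (1-s)/s$, and one verifies this is exactly $\mu^{1+1/s}$ relative to the scaling of the transport term since $p+1+1/2s \leftrightarrow p + (1/s-1) + $ the scaling already absorbed — I would keep careful track of the powers of $\mu$ here.

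**Reduction to Lemma \ref{l:b2}.** After collecting terms, the inequality $\partial_t\varphi_3 + v\cdot\nabla_x\varphi_3 - \Mm_v\varphi_3 \le 0$ reduces, after dividing by the positive prefactor $\mu^p/(t+t_0)$, to an inequality of the form
\[ -p\,\varphi_2 + \left(-1 - \tfrac1{2s}\right) y\cdot\nabla_y\varphi_2 - \tfrac1{2s}\, w\cdot\nabla_w\varphi_2 + t_0\,\mu^{\text{(some nonneg power)}}\left( w\cdot\nabla_y\varphi_2 - \Mm_w\varphi_2\right) \le 0 \]
evaluated at $(y,w)$. The bracketed expression is bounded: on the compact set $\{\varphi_2 > \delta\}$ everything is a smooth bounded function, so the whole expression (without the $-p\varphi_2$ term) is bounded above by some constant $C_0$ depending on $\varphi_1$, $\lambda$, $\Lambda$, $s$, $d$ but not $t_0$ (using $\mu \le 1$ for $t\ge 0$ to control the power of $\mu$, and the extremal-operator bound on $\Mm_w\varphi_2$ coming from \eqref{e:Kabove}-type control of kernels in $\kernels$ together with the $C^{1,1}$ regularity of $\varphi_2$). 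Hence on $\{\varphi_2 \ge \delta\}$ we simply choose $p \ge C_0/\delta$ so that $-p\varphi_2 \le -C_0 \le -(\text{the rest})$. On $\{\varphi_2 < \delta\}$ we invoke Lemma \ref{l:b2} directly: there, $\left(-1-\tfrac1{2s}\right)y\cdot\nabla_y\varphi_2 - \tfrac1{2s}w\cdot\nabla_w\varphi_2 + t_0(w\cdot\nabla_y\varphi_2 - \Mm_w\varphi_2) \le 0$, and the extra term $-p\varphi_2 \le 0$ only helps. (One must check the power of $\mu$ multiplying the $t_0(\cdots)$ term does not spoil this — but since $w\cdot\nabla_y\varphi_2 - \Mm_w\varphi_2$ may change sign, I would instead absorb the $t_0\,\Mm_w\varphi_2 \ge 0$ part into the favorable direction and bound $t_0 w\cdot\nabla_y\varphi_2$ by the $y\cdot\nabla_y$ and $\varphi_2$ terms as in the proof of Lemma \ref{l:b2}, which is why the constant $A$ was chosen the way it was; alternatively, since $\mu\le 1$ and the Lemma \ref{l:b2} inequality is stated with $t_0$ replaced by $t_0\mu^{\text{power}}\le t_0$, and the inequality there holds for \emph{all} $t_0 \in (0,1)$, it holds with $t_0\mu^{\text{power}}$ in place of $t_0$ as well.)

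**Main obstacle.** The genuinely delicate point is the bookkeeping of the powers of $\mu = t_0/(t+t_0)$: one has to confirm that after differentiation, every term carries the same power $\mu^p$ (times $1/(t+t_0)$) so that they can be compared, that the residual power of $\mu$ multiplying the $\Mm_w$ and transport terms is \emph{nonnegative} (so that $\mu \le 1$ bounds it), and that $\Mm_w$ scales with the correct homogeneity $2-2s$. The homogeneity of $\Mm$ under dilations is where assumption \eqref{e:Knondegeneracy} and the structure of the class $\kernels$ enter: a dilation $w \mapsto \mu w$ maps $\kernels$ to itself after rescaling $K$, picking up exactly the factor $\mu^{2-2s}$. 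Once the scaling exponents are verified to line up — which is precisely how the constants $p$ and $A = 5 + \frac1{2s}$ and the exponents $1+\frac1{2s}$, $\frac1{2s}$ were reverse-engineered — the conclusion is immediate from Lemma \ref{l:b2} plus the choice of $p$ large. I would therefore present the proof as: (i) compute the three terms by the chain rule; (ii) factor out $\mu^p/(t+t_0)$; (iii) split into $\{\varphi_2 \ge \delta\}$ and $\{\varphi_2 < \delta\}$ and conclude as above.
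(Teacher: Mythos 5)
Your overall strategy is exactly the paper's: compute by the chain rule, factor out a common prefactor, and split into $\{\varphi_2<\delta\}$ (where Lemma \ref{l:b2} applies) and $\{\varphi_2\ge\delta\}$ (where $-p\varphi_2\le -p\delta$ dominates for $p$ large). But the step you flag as unresolved --- the power of $\mu=t_0/(t+t_0)$ carried by the $\Mm$ term --- contains a genuine error, and your fallback for it does not work, so the proof is not complete as written. The operator $\Mm$ is an extremal operator of order $2s$: the class $\kernels$ is invariant under $K\mapsto \rho^{d+2s}K(\rho\,\cdot)$ (both the upper bound and the nondegeneracy condition are preserved, the exponent $2-2s$ in \eqref{e:Knondegeneracy} being the homogeneity of the second-moment quantity, not of the operator), so $\Mm[g(\rho\,\cdot)](v)=\rho^{2s}\,\Mm g(\rho v)$. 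With $\rho=\mu^{1/2s}$ this gives a factor $\mu$, not $\mu^{(2-2s)/2s}$ as you wrote. Hence
\[
\Mm_v\varphi_3=\mu^{p+1}\,\Mm\varphi_2(X,V)=\frac{t_0^p}{(t+t_0)^{p+1}}\,t_0\,\Mm\varphi_2(X,V),
\]
which matches the transport term $v\cdot\nabla_x\varphi_3=\mu^{p}\mu^{1+\frac1{2s}}\mu^{-\frac1{2s}}\,V\cdot\nabla_X\varphi_2=\mu^{p+1}V\cdot\nabla_X\varphi_2$ exactly. All terms therefore carry the identical prefactor $t_0^p/(t+t_0)^{p+1}$ and the bracket is \emph{exactly} $-p\varphi_2$ plus the left-hand side of \eqref{e:b2} with the fixed parameter $t_0$; there is no residual power of $\mu$ to absorb.

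This matters because your proposed workaround --- ``Lemma \ref{l:b2} holds for all $t_0\in(0,1)$, hence with $t_0\mu^{\text{power}}$ in its place'' --- is not sound: the threshold $\delta$ produced by Lemma \ref{l:b2} depends on the coefficient $t_0$ (through the choice of $\delta_{11}$ in its proof), so replacing $t_0$ by a $t$-dependent quantity would make $\delta$ depend on $t$ and the split into $\{\varphi_2<\delta\}$, $\{\varphi_2\ge\delta\}$ would no longer give a uniform choice of $p$. Once the scaling is corrected, this issue evaporates and your argument closes exactly as in the paper: take $\delta$ from Lemma \ref{l:b2}, note that on $\{\varphi_2\ge\delta\}$ the remaining terms are bounded (here $\varphi_2\in C^{1,1}$ with compact support and the upper bound in the definition of $\kernels$ give a pointwise bound on $\Mm\varphi_2$), and choose $p\ge C_0/\delta$.
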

\begin{proof}
We write the equation in terms of $\varphi_2$. We have
\begin{align*}
\partial_t \varphi_3 + v \cdot \nabla_x \varphi_3 - \Mm \varphi_3 &= \frac{t_0^p}{(t+t_0)^{p+1}} \bigg\{ -p \varphi_2(X,V) \\
& \qquad +  \left(-1-\frac 1 {2s} \right) X \cdot \nabla_x \varphi_2(X,V) - \frac 1 {2s} V \cdot \nabla_v \varphi_2(X,V) \\
&\qquad + t_0 V \cdot \nabla_x \varphi_2(X,V) - t_0 \Mm_v \varphi_2(X,V) \bigg\},
\end{align*}
where $X = (t_0  / (t+t_0))^{1+\frac 1{2s}} x$ and $V = (t_0 / (t+t_0))^{\frac 1{2s}} v$.

Let $\delta>0$ be as in Lemma \ref{l:b2}, so that the right hand side
is non-positive when $\varphi_2 < \delta$. We choose $p$ large so that
the term $p \varphi_2 \geq p \delta$ is larger than all the others
terms when $\varphi_2 \geq \delta$.  Thus, the right hand side is
never positive.
\end{proof}
\begin{proof}[Proof of Lemma \ref{l:barrier}]
  Note that $\varphi_3(0,x,v) = \varphi_2(x,v)$, where $\varphi_2$ and
  $\varphi_3$ are the functions in Lemmas \ref{l:b2} and \ref{l:b3}
  respectively. Note that these fuctions depends on the choice of $t_0$ which will be made below. Also, the value of $p$ depends on $t_0$. The function $\varphi_2$ is supported in
  $B_1 \times B_{A+1}$. We must rescale $\varphi_3$ in order to obtain
  a function so that $\varphi(0,x,v)$ is supported in
  $B_r \times B_r$. We pick $\rho>0$ small and let
\[ \varphi(t,x,v) = \varphi_3(\rho^{-2s} t, \rho^{-2s-1} x, \rho^{-1} v),\]
so that $\rho (A+1) \leq r$. 
This ensures the first three items in Lemma \ref{l:barrier}. Indeed, the function $\varphi$ satisfies
\[ \varphi_t + v \cdot \nabla_x \varphi - \Mm \varphi \leq 0.\]
In particular, also 
\[ \varphi_t + v \cdot \nabla_x \varphi - \Lv \varphi \leq 0,\]
since $\Lv \varphi \geq \Mm \varphi$ in $\Omega$.

In order to obtain the lower bound in $[\tau,T] \times B_R$, we are
going to choose the parameter $t_0$ accordingly. Note that the value
of $t_0$ does not affect $\varphi(0,x,v)$.

From the construction of $\varphi_1$ and $\varphi_2$, we have
$\varphi_2(x,v) = 1$ whenever $|x|<\frac 1 {4A}$ and $|v| < 1/4$.  Picking $t_0$ sufficiently small, for
$(t,x,v) \in [\tau,T] \times B_{R^{1+2s}} \times B_R$, we have
\begin{align*} 
 \rho^{-2s-1} \left( \frac {t_0} {t+t_0}\right)^{1+\frac 1{2s}} |x| &\leq \rho^{-2s-1} \left( \frac {t_0} {\tau}\right)^{1+\frac 1{2s}} R^{1+2s}  < \frac 1 {4A}, \\
\rho^{-1} \left( \frac {t_0} {t+t_0} \right)^{\frac 1{2s}} |v| &\leq \rho^{-1} \left( \frac {t_0} {\tau} \right)^{\frac 1{2s}} R < \frac  14.
\end{align*}
Therefore, when
$(t,x,v) \in [\tau,T] \times B_R \times B_R$, we have
\[ 
\varphi(t,x,v) = \frac{t_0^p}{(\rho^{-2s} t + t_0)^{p}} \geq \frac{t_0^p}{(\rho^{-2s} T +t_0)^{p}} =: \theta > 0.
\]
This justifies the fourth item in Lemma \ref{l:barrier}.

Finally, for the last item, we just pick $R_1$ sufficiently large. The
function $\varphi_2$ is supported in $B_1 \times B_{1+A}$. Depending
on our choices of $t_0$ and $\rho$ above, the function
$\varphi(t,\cdot,\cdot)$ is supported inside $B_{R_1} \times B_{R_1}$
for all $t \in [0,T]$. This achieves the construction of the barrier. 

Note that the only parameters in this construction that depend on $\lambda$ and $\Lambda$ are $p$ and $\theta$.
\end{proof}

\section{The intermediate-value  lemma for $s \ge \frac12$}
\label{sec:intermediate}

This section is devoted to the statement and proof of a version of De
Giorgi's isoperimetric lemma in the case $s \ge \frac12$. It is
inspired by the compactness method in \cite{gimv}. However, unlike
\cite{gimv}, we do not use averaging lemmas. Instead, the analysis of
the fractional Kolmogorov equation plays a critical role.

The first lemma of this section concerns a supersolution of the
equation \eqref{e:main}. In this case we add a nonnegative measure to
the right hand side in order to have an exact solution. The purpose of
this lemma is to provide a basic control of the total measure that we
add.
\begin{lemma}[A priori estimate on a nonnegative measure] \label{l:rhs-measure}
Let $Q = [0,T] \times B_{R^{1+2s} } \times B_R$, $f : [0,T] \times \R^d \times \R^d \to [0,1]$ be supported in $Q$. Assume also that
\[ f_t + v \cdot \nabla_x f + (-\Delta)^s f \geq h \qquad \text{in } [0,T] \times \R^d \times \R^d,\]
for some $h \in L^2([0,T] \times \R^d, H^{-s}(\R^d))$. Then
\[ f_t + v \cdot \nabla_x f + (-\Delta)^s f = \tilde h + \mu \qquad \text{in } [0,T] \times \R^d \times \R^d,\]
where $\mu$ is a nonnegative measure supported in $[0,T] \times B_{(2R)^{1+2s} } \times B_{2R}$ such that
\[ \mu(Q) \leq C (1+\|h\|_{L^2_{t,x} H^{-s}_v} ),\]
and $\tilde h = h$ in $[0,T] \times B_{(2R)^{1+2s} } \times B_{2R}$ and
\[ \|\tilde h\|_{L^2_{t,x}H^{-s}_v } \leq C ( 1+\|h\|_{L^2_{t,x} H^{-s}_v}  ).\]
\end{lemma}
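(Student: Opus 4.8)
The plan is to realise the right-hand side defect as a nonnegative measure, to localise it with a \emph{smooth} cutoff into a piece supported in a slightly enlarged cylinder (this will be $\mu$) and a piece to be absorbed into $\tilde h$, and to estimate each piece by exploiting that away from the $v$-support of $f$ the nonlocal term $(-\lap)^s f$ is an explicit, pointwise bounded, rapidly decaying function while the transport term contributes nothing.

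First I would note that, since $f$ is bounded with compact support and $h\in L^2_{t,x}H^{-s}_v$, the quantity $\nu:=f_t+v\cdot\grad_x f+(-\lap)^s f-h$ is a well-defined distribution on $[0,T]\times\R^{2d}$ (indeed $(-\lap)^s f\in L^2_{t,x}H^{-2s}_v$ because $f\in L^2$), and by hypothesis $\langle\nu,\varphi\rangle\ge0$ for every nonnegative test function; by the standard representation of nonnegative distributions $\nu$ is therefore a nonnegative Radon measure and $f_t+v\cdot\grad_x f+(-\lap)^s f=h+\nu$. Then I would fix a smooth cutoff $\zeta=\zeta(x,v)$, $0\le\zeta\le1$, with $\zeta\equiv1$ on $\overline{B_{(2R)^{1+2s}}}\times\overline{B_{2R}}$ and $\supp\zeta$ contained in a fixed bounded cylinder slightly larger than $B_{(2R)^{1+2s}}\times B_{2R}$ in space (the exact radius is immaterial), and set $\mu:=\zeta\nu$ and $\tilde h:=h+(1-\zeta)\nu$. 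Then $\mu\ge0$ is a measure supported in that bounded cylinder, $\tilde h+\mu=h+\nu$, one has $\tilde h=h$ on $[0,T]\times B_{(2R)^{1+2s}}\times B_{2R}$ because $1-\zeta$ vanishes there, and $\mu(Q)=\nu(Q)$ because $\zeta\equiv1$ on $Q$.

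To estimate $\tilde h$ I would use that $1-\zeta$ vanishes on a neighbourhood of the $(x,v)$-support of $f$ (which lies in $\overline{B_{R^{1+2s}}}\times\overline{B_R}$), so $f_t+v\cdot\grad_x f$ vanishes on $\supp(1-\zeta)$; moreover every point of $\supp(1-\zeta)$ has either $\abs{v}\ge2R$, in which case $v$ lies outside the $v$-support of $f$ and
\[ (-\lap)^s f(t,x,v)=-c_{d,s}\int_{B_R}\frac{f(t,x,v')}{\abs{v-v'}^{d+2s}}\dd v'=:-g(t,x,v),\qquad 0\le g(t,x,v)\lesssim R^d\abs{v}^{-d-2s} \]
(using $0\le f\le1$), or $\abs{x}>R^{1+2s}$, in which case $f(t,x,\cdot)\equiv0$ and $(-\lap)^s f(t,x,\cdot)=0=-g(t,x,\cdot)$. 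Hence $(1-\zeta)\nu=-(1-\zeta)g-(1-\zeta)h$. Since $1-\zeta$ is a fixed smooth function, multiplication by it is bounded on $H^{-s}(\R^d)$, so $\|(1-\zeta)h\|_{L^2_{t,x}H^{-s}_v}\lesssim\|h\|_{L^2_{t,x}H^{-s}_v}$; and on $\supp(1-\zeta)$ the function $g$ is nonzero only when $\abs{x}\le R^{1+2s}$ and $\abs{v}>2R$, where $g\lesssim R^d\abs{v}^{-d-2s}$, so $(1-\zeta)g\in L^2([0,T]\times\R^{2d})$ with norm bounded in terms of $d,s,R,T$ only, whence (using $L^2_v\hookrightarrow H^{-s}_v$) the same bound in $L^2_{t,x}H^{-s}_v$. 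Together these give $\|\tilde h\|_{L^2_{t,x}H^{-s}_v}\le C(1+\|h\|_{L^2_{t,x}H^{-s}_v})$.

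For the mass bound I would take $\eta\in C_c^\infty$ with $\un_Q\le\eta\le1$ supported in a fixed bounded cylinder and compute $\mu(Q)=\nu(Q)\le\int\eta\dd\nu=\langle f_t+v\cdot\grad_x f+(-\lap)^s f-h,\eta\rangle$; integrating by parts in $t$ and $x$ and using that $(-\lap)^s$ is self-adjoint,
\[ \langle\nu,\eta\rangle=-\iiint f\,(\eta_t+v\cdot\grad_x\eta)\dv\dx\dt+\iiint f\,(-\lap)^s\eta\dv\dx\dt-\langle h,\eta\rangle+(\text{time-boundary terms}), \]
where the time-boundary contributions at $t=0,T$ are bounded by $2\|\eta\|_{L^\infty}\abs{B_R}$ since $0\le f\le1$. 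The first two integrals are bounded by $\|f\|_{L^\infty}\abs{\supp\eta}\,(\|\eta\|_{C^1}+\|(-\lap)^s\eta\|_{L^\infty})\le C(d,s,R,T)$, and $\abs{\langle h,\eta\rangle}\le\|h\|_{L^2_{t,x}H^{-s}_v}\|\eta\|_{L^2_{t,x}H^s_v}\le C(d,s,R,T)\|h\|_{L^2_{t,x}H^{-s}_v}$, so $\mu(Q)\le C(1+\|h\|_{L^2_{t,x}H^{-s}_v})$. The delicate point, and the reason a smooth cutoff is used rather than a sharp restriction of $\nu$ to the cylinder, is that for $s\ge\tfrac12$ multiplication by the characteristic function of a ball is not bounded on $H^{-s}_v$, so a sharply cut piece of $h$ need not lie in $L^2_{t,x}H^{-s}_v$; the argument above circumvents this by pushing into $\tilde h$ only a smoothly cut multiple of $h$ together with the explicit decaying tail $-g$ of the nonlocal term, the transport term contributing nothing on $\supp(1-\zeta)$.
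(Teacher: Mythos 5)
Your proof is correct and is essentially the paper's own argument: the paper's choices $\tilde h = h\varphi + (1-\varphi)(-\Delta)^s f$ and $\mu = f_t + v\cdot\nabla_x f + (-\Delta)^s f - \tilde h$ are algebraically identical to your $\tilde h = h + (1-\zeta)\nu$ and $\mu = \zeta\nu$ with $\nu$ the nonnegative defect measure, and both the $H^{-s}$ bound (smooth multiplier on $h$ plus the explicit $|v|^{-d-2s}$ tail of $(-\Delta)^s f$ off the support of $f$) and the mass bound (test against a fixed bump equal to one on the support and integrate by parts) are carried out the same way. The only wrinkle is the placement of your cutoff: since $\zeta\equiv 1$ on $\overline{B_{(2R)^{1+2s}}}\times\overline{B_{2R}}$ and $\supp\zeta$ is strictly larger, your $\mu=\zeta\nu$ lives in a slightly larger cylinder than stated; the paper instead takes $\varphi\equiv 1$ only on $B_{R^{1+2s}}\times B_{3R/2}$ with $\supp\varphi\subset B_{(2R)^{1+2s}}\times B_{2R}$, which pins down $\supp\mu$ exactly at the cost of having $\tilde h = h$ only on the smaller set --- either convention is harmless in the one place the lemma is used (Lemma \ref{l:intermediate-set-global}), which needs only the containment of $\supp\mu$ in some fixed cylinder together with the two norm bounds.
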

\begin{proof}
Note that for $(x,v) \notin B_{R^{1+2s}} \times B_R$, $f_t + v \cdot \nabla_x f = 0$. Moreover, 
\[ |(-\Delta)^s f(t,x,v)| = c \left\vert\int_{B_R} f(t,x,w) |w-v|^{-d-2s} \dd w \right \vert \leq (|v|-R)^{-d-2s} |B_R| \chi_{|x| \leq R}.\]
Therefore, $f_t + v \cdot \nabla_x f + (-\Delta)^s f = (-\Delta)^s f$ is an $L^2$ function outside of $B_{R^{1+2s}} \times B_{3R/2}$.

Let $\varphi : \R^{2d} \to [0,1]$ be a smooth bump function so that $\varphi=1$ in $B_{R^{1+2s} } \times B_{3R/2}$ and $\varphi = 0$ outside of $B_{(2R)^{1+2s} } \times B_{2R}$.

We first need to justify that there is $\tilde h \in L^2([0,T] \times \R^d, H^{-s}(\R^d) )$ so that
\[ \tilde h = h \varphi + (1-\varphi)  (-\Delta)^s f. \]

We clearly have that $(1-\varphi) (-\Delta)^s f$ is in $L^2_{t,x}H^{-s}_v$. We are left to justify that $h \varphi \in L^2([0,T] \times \R^d, H^{-s}(\R^d) )$.  This follows by duality once we observe that for every $g \in L^2_{t,x}H^s_v$, we also have $\varphi g \in H^s(\R^d)$. 

With this definition of $\tilde h$, we still have
\[ f_t + v \cdot \nabla_x f + (-\Delta)^s f \geq \tilde h \qquad \text{in } [0,T] \times \R^d \times \R^d,\]
with equality for $(x,v) \notin B_{(2R)^{1+2s} } \times B_{2R}$.

Let $\mu$ be the nonnegative measure, supported in $[0,T] \times B_{(2R)^{1+2s} } \times B_{2R}$, defined by
\[ \mu := f_t + v \cdot \nabla_x f + (-\Delta)^s f - \tilde h.\]

In order to estimate the total measure of $\mu$, we test it against a test function which is identically one on its support. Let $\tilde \varphi = 1$ in $B_{(2R)^{1+2s} } \times B_{2R}$ and be supported in $B_{(3R)^{1+2s} } \times B_{3R}$. We have
\begin{align*} 
 \mu([0,T] \times \R^d \times \R^d) &= \int_{[0,T] \times \R^{2d}} \tilde \varphi \dd \mu, \\
&= \int_{[0,T] \times \R^{2d}} \tilde \varphi \left( f_t + v \cdot \nabla_x f + (-\Delta)^s f - \tilde h \right) \dd v \dd x \dd t, \\
&= \int_{\R^{2d}} (f(T,x,v) - f(0,x,v)) \tilde \varphi(x,v) \dd v \dd x \\
& \phantom{=} + \int_{[0,T] \times \R^{2d}} \left\{ 
[-v\cdot \nabla_x \tilde \varphi + (-\Delta)^s \tilde \varphi]  \,  f - \tilde \varphi \tilde h  \right\}\dd v \dd x \dd t \leq C.
\qedhere
\end{align*}
\end{proof}
\begin{lemma} [Intermediate sets for the Kolmogorov
  equation] \label{l:intermediate-set-global} Let $s \in [1/2,1)$. Let
  $f : [0,T] \times \R^d \times \R^d \to [0,1]$. Assume $f$ is a
  supersolution of the fractional Kolmogorov equation
\[ f_t + v \cdot \nabla_x f + (-\Delta)^s f \geq  h \qquad \text{ in } [0,T] \times \R^d \times \R^d,\]
where $h \in L^2([0,T] \times \R^d, H^{-s}(\R^d))$. Let $r_1 > 0$, $r_2 > 0$, $0 < r_3 < r_4$ and $0 < t_1 < t_2 < T$ such that $r_3/2 > (r_1^{1+2s}+r_2^{1+2s})/(t_2-t_1)$. We define
\begin{align*}
Q^1 &= [0,t_1] \times B_{r_1^{1+2s}} \times B_{r_1}, \\
Q^2 &= [t_2,T] \times B_{r_2^{1+2s}} \times B_{r_2}, \\
Q^3 &= [0,T] \times B_{r_3^{1+2s}} \times B_{r_3}, \\
Q^4 &= [0,T] \times B_{r_4^{1+2s}} \times B_{r_4}.
\end{align*}
Let us assume that $f$ is supported in $Q^4$ and $f \in L^2([0,T] \times \R^d, H^s(\R^d)) \cap C([0,T], L^2(\R^d \times \R^d))$. For every pair of positive numbers $\delta_1,\delta_2$, there exist $\theta>0$ and $\mu>0$ so that whenever 
\[ |\{f=1\} \cap Q^1| \geq \delta_1 \qquad \text{and} \qquad |\{f \leq \theta \} \cap Q^2| \geq \delta_2,\]
then
\[ |\{\theta < f < 1\} \cap Q^3| \geq \mu.\] 
Here, the constants $\theta$ and $\mu$ depend on $\delta_1$, $\delta_2$, $\|h\|_{L^2_{t,x}H^{-s}_v}$, $\|f\|_{L^2_{t,x}H^s_v}$, $t_1$, $t_2$, $T$, $r_1$, $r_2$, $r_3$, $r_4$, $s$ and $d$.
\end{lemma}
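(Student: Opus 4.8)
The plan is to argue by contradiction and compactness, in the spirit of \cite{gimv}, but replacing averaging lemmas with the smoothing properties of the fractional Kolmogorov equation established in Proposition~\ref{l:kolmogorov-youngs} (gain of integrability) and Lemma~\ref{lem:gee}. Fix $\delta_1,\delta_2>0$ and suppose, for contradiction, that for every $n\in\mathbb N$ there is a supersolution $f_n:[0,T]\times\R^d\times\R^d\to[0,1]$ supported in $Q^4$, with right hand side $h_n$ and Sobolev bounds controlled by the given constants, such that
\[
|\{f_n=1\}\cap Q^1|\ge \delta_1,\qquad |\{f_n\le 1/n\}\cap Q^2|\ge \delta_2,\qquad \text{but}\qquad |\{1/n<f_n<1\}\cap Q^3|\to 0 .
\]
First I would use Lemma~\ref{l:rhs-measure} to write $f_n$ as an exact solution of the Kolmogorov equation with right hand side $\tilde h_n+\mu_n$, where $\tilde h_n$ is bounded in $L^2_{t,x}H^{-s}_v$ and $\mu_n$ is a nonnegative measure of total mass bounded by $C(1+\|h_n\|_{L^2_{t,x}H^{-s}_v})$; all of these bounds are uniform in $n$. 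Then the energy estimate \eqref{e:nrj-G} gives a uniform bound on $\|f_n\|_{L^2([0,T]\times\R^d,\dot H^s(\R^d))}$, and the gain of integrability \eqref{e:gain} gives a uniform bound on $\|f_n\|_{L^p}$ for some $p>2$. Since the $f_n$ are all supported in the fixed compact cylinder $Q^4$ and bounded in $L^2_{t,x}H^s_v$, I would extract (using an Aubin--Lions type argument, exploiting that $\partial_t f_n+v\cdot\nabla_x f_n$ is bounded in $L^2_{t,x}H^{-s}_v+\mathcal M$ and hence the transport operator provides the missing compactness in $(t,x)$ via the Kolmogorov representation \eqref{e:duhamel}) a subsequence converging in $L^2_{loc}$ to some limit $f_\infty$, which is again a supersolution with values in $[0,1]$.

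The key point is to identify $f_\infty$. The hypothesis $|\{1/n<f_n<1\}\cap Q^3|\to 0$ forces $f_\infty$ to take only the values $0$ and $1$ a.e.\ in $Q^3$; that is, $f_\infty=\chi_A$ for some measurable $A\subset Q^3$. Passing to the limit in the two measure lower bounds (using that $\{f_n=1\}\subset\{f_n\ge 1/2\}$ and that convergence in $L^2$ upgrades to a.e.\ convergence along a further subsequence), I get $|\{f_\infty=1\}\cap Q^1|\ge\delta_1$ and $|\{f_\infty=0\}\cap Q^2|\ge\delta_2$. So $A$ has positive measure in $Q^1$ and its complement has positive measure in $Q^2$. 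Now I use that $\chi_A$ is a supersolution of $\partial_t+v\cdot\nabla_x+(-\Delta)^s$ together with a unique-continuation/strong-minimum argument: because $s\ge 1/2$, a nontrivial characteristic function cannot belong to $H^s$ locally (this is precisely the fact exploited in \cite{gimv}), unless its $\dot H^s$ seminorm is concentrated on a null set, i.e.\ unless $\chi_A$ is (locally) constant — which contradicts the fact that $A$ and its complement both meet the interior of the relevant cylinder. More precisely, from the uniform $\dot H^s$ bound and lower semicontinuity, $\chi_A\in L^2_{t,x}\dot H^s_v$, so for a.e.\ $(t,x)$ the slice $v\mapsto\chi_{A(t,x)}(v)$ is in $\dot H^s(\R^d)$ with $s\ge 1/2$; such a function must have $A(t,x)$ of measure $0$ or full measure in any ball, and then the transport term $v\cdot\nabla_x$ plus the supersolution inequality propagates this dichotomy forward in $x$ and $t$ along characteristics, contradicting the simultaneous presence of $\{f_\infty=1\}$ in $Q^1$ and $\{f_\infty=0\}$ in $Q^2$ connected through the overlap condition $r_3/2>(r_1^{1+2s}+r_2^{1+2s})/(t_2-t_1)$.

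The main obstacle, and the step that requires the most care, is the compactness argument producing $f_\infty$ together with the rigidity of the limit. The diffusion acts only in $v$, so compactness in $(t,x)$ is not automatic from the energy estimate alone; I would obtain it by writing $f_n$ through the Duhamel formula \eqref{e:duhamel} for the Kolmogorov equation, using that the fundamental solution $J$ is smooth and rapidly decaying (Proposition~\ref{p:kolmogorov-kernel}) so that the modified convolution with $J(t-\tau,\cdot)$ regularizes in all variables, and then invoking the uniform mass bound on $\mu_n$ and the uniform $L^2_{t,x}H^{-s}_v$ bound on $\tilde h_n$. The regularized pieces converge strongly; the singular contribution near $\tau=t$ is controlled by the time-integrability $\|J(t,\cdot)\|_{L^{p_\star}}\le Ct^{1/2-1/p_\star}$. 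Once strong $L^2_{loc}$ convergence is in hand, the identification $f_\infty=\chi_A$ and the $\dot H^s$-rigidity for $s\ge 1/2$ close the argument, and unwinding the contradiction yields the existence of the claimed $\theta,\mu$ (depending only on the listed quantities, since all the a priori bounds used are in terms of them). Finally, since $\chi_{Q^3}$ is bounded by $\chi_{Q^4}$ and $f_\infty\in\{0,1\}$ on $Q^3$ while being neither identically $0$ nor $1$ there, the set $\{\theta<f<1\}\cap Q^3$ cannot have vanishing measure along the sequence, which is the desired contradiction.
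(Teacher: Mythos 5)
Your proposal is correct and follows essentially the same strategy as the paper: contradiction by compactness, with Lemma \ref{l:rhs-measure} to produce an exact equation, strong $L^1$ convergence via the Duhamel representation with the smooth kernel $J$, the $\dot H^s$-rigidity of characteristic functions for $s\ge 1/2$ to make the limit constant in $v$ on $B_{r_3}$, and then transport along characteristics to reach the contradiction. The only step you leave under-specified is the last one, which the paper makes precise by convolving $\chi_{S_1}$ with $\chi_{-S_2}$ to find a connecting velocity $v_0$ with $|v_0|\le r_3/2$, and by noting that the supersolution inequality gives an $L^2$-in-time lower bound on the derivative along the characteristic, so the $\{0,1\}$-valued limit cannot jump from $1$ down to $0$.
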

\begin{figure}[ht]
\setlength{\unitlength}{1in} 
\begin{picture}(2.667 ,2.000)
\put(0,0){\includegraphics[height=2.000in]{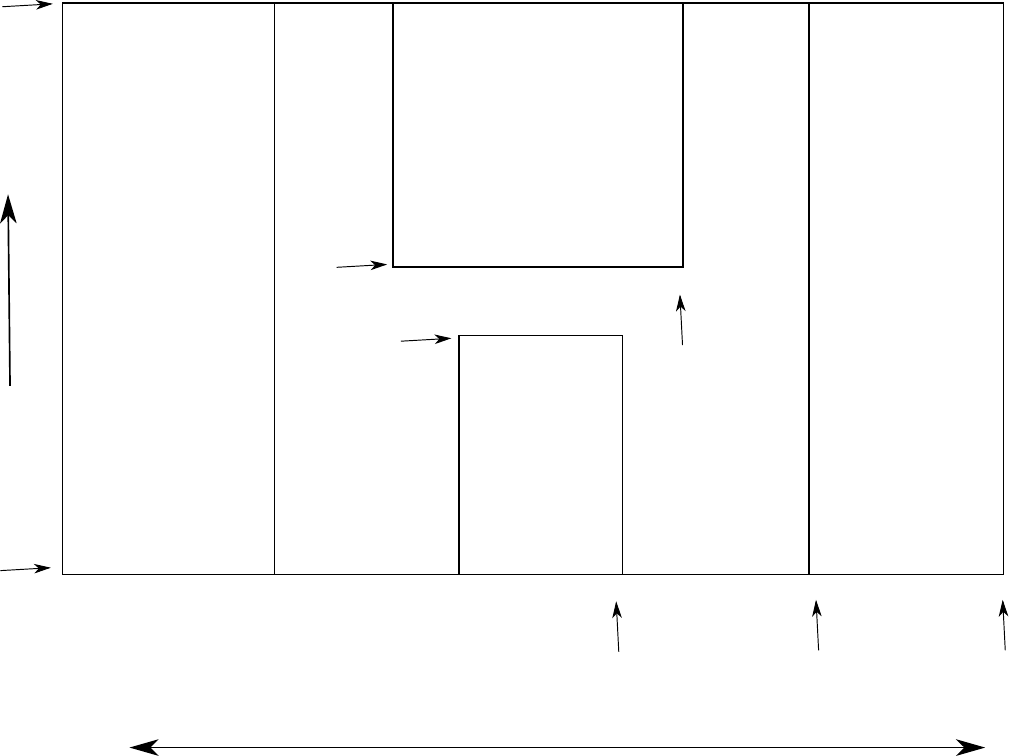}}
\put(1.6,0.192){$r_1$}
\put(2.081,0.192){$r_3$}
\put(1.740,0.983){$r_2$}
\put(2.6,0.192){$r_4$}
\put(0.935,1.056){$t_1$}
\put(0.777,1.244){$t_2$}
\put(-0.1,1.931){$T$}
\put(-0.32,0.44){$t=0$}
\put(-0.049,1.174){$t$}
\put(1.293,-0.116){$(x,v)$}
\put(1.3,1.5){$Q^2$}
\put(1.3,0.6){$Q^1$}
\put(1.8,0.6){$Q^3$}
\put(2.4,0.6){$Q^4$}
\end{picture}
\caption{The geometric setting of Lemma \ref{l:intermediate-set-global}.}
\end{figure}

\begin{proof}
Assume the contrary. Then, there is a sequence of functions $f_i$, uniformly bounded in $L^2([0,T] \times \R^d, H^s(\R^d))$, $h_i$ uniformly bounded in $L^2([0,T] \times \R^d, H^{-s}(\R^d))$, and sequences of positive numbers $\theta_i \to 0$ and $\mu_i \to 0$ so that all hypotheses in the lemma hold, however
\begin{equation} \label{e:cpt1}
 \begin{aligned} 
 |\{f_i=1\} \cap Q^1|  &\geq\delta_1, \\
 |\{f_i \leq \theta_i\} \cap Q^2| &\geq \delta_2, \\
|\{\theta_i < f_i < 1\} \cap Q^3| &< \mu_i.
\end{aligned} 
\end{equation}
We will find a contradiction \emph{by compactness}. That is, we will
find a subsequence that converges and find a limit function $f_\infty$
which only takes the values $1$ and $0$ in $Q^3$. We will derive a
contradiction there provided $s \in [1/2,1)$.

According to Lemma \ref{l:rhs-measure}, there are measures $\mu_i$,
supported in $[0,T] \times B_{(2r_4)^{1+2s}} \times B_{2r_4}$, and
modified right hand sides $\tilde h_i$ so that
\[ [\partial_t + v \cdot \nabla_x + (-\Delta)^s] f_i = \tilde h_i + \mu_i.\]
Moreover, $\mu_i ([0,T] \times \R^d \times \R^d) \leq C$, $\|\tilde h_i\|_{L^2_{t,x} H^{-s}_v} \leq C$.

Let us write $\tilde h_i = h_1^i + (-\Delta)^{s/2} h_2^i$ for $h_1^i$
and $h_1^i$ in $L^2([0,T] \times \R^{2d})$, with
$\|h_1^i\|_{L^2} \leq C$ and $\|h_2^i\|_{L^2} \leq C$.

Up to extracting a subsequence, we can assume that
$f_i(0,\cdot,\cdot)$ converges weakly in $L^2(\R^{2d})$, $f_i$,
$h_1^i$ and $h_2^i$ converge weakly in $L^2([0,T] \times \R^{2d})$ to
$f_\infty$, $h_1^\infty$ and $h_2^\infty$, and $\mu_i$ converges
weakly-$\ast$ in the space of Radon measures
$\mathbb M([0,T] \times \R^{2d})$ to some measure $\mu_\infty$.

Using the formula \eqref{e:duhamel}, we can write
$f_i = T_0 f_i(0,\cdot,\cdot) + T_1 \mu_i + T_2 h_1^i + T_3
h_2^i$.
Here, the operators $T_0 : L^2(B_{r_4}) \to L^1(Q^4)$,
$T_1: \mathbb M([0,T] \times \R^{2d}) \to L^1(Q^4)$,
$T_2,T_3: L^2([0,T] \times \R^{2d}) \to L^2(Q^4) $ and are given by
\begin{align*}
T_0 f_0 &:=   f_0  \ast_t J(t,\cdot,\cdot). \\
T_1 \mu &:=\int_0^t \mu(t) \ast_t  J(t-\tau,\cdot,\cdot) \dd \tau, \\
T_2 h_1 &:=\int_0^t h_1(t) \ast_t  J(t-\tau,\cdot,\cdot) \dd \tau, \\
T_3 h_2 &:=\int_0^t h_2(t) \ast_t  (-\Delta)^{s/2} J(t-\tau,\cdot,\cdot) \dd \tau.
\end{align*}
Note that $T_1$, $T_2$ and $T_3$ are exactly convolutions in all variables $(t,x,v)$ with respect to the natural Lie group structure. Also $T_0$ is the same as $T_1$ applied to a singular measure concentrated on $t=0$ with marginal density $f_0$.

The operators $T_1$, $T_2$ and $T_3$ are compact simply because they are convolutions with the $L^1$ functions $J$ and $(-\Delta)^{s/2} J$. Therefore $f_i = T_0 f_i(0,\cdot,\cdot) + T_1 \mu_i + T_2 h_1^i + T_3 h_2^i$ converges strongly in $L^1(Q^4)$ to some function $f_\infty$. Since we have $0 \leq f_i \leq 1$, then in fact $f_i$ converges strongly to $f_\infty$ in $L^p(Q^4)$ for any $p \in [1,+\infty)$.

The function $f_\infty$ solves, in the sense of distributions,
\[ [\partial_t + v \cdot \nabla_x + (-\Delta)^s] f_\infty \geq h_1^\infty + (-\Delta)^{s/2} h_2^\infty .\]
Moreover, since $f_i \to f_\infty$ in $L^1$, from \eqref{e:cpt1} we deduce that
\begin{equation} \label{e:cpt2}
 \begin{aligned} 
 |\{f_\infty=1\} \cap Q^1|  &\geq\delta_1, \\
 |\{f_\infty=0\} \cap Q^2| &\geq \delta_2, \\
|\{0 < f_\infty < 1\} \cap Q^3| &= 0.
\end{aligned} 
\end{equation}
Then, $f_\infty$ only takes the values $0$ and $1$, almost everywhere
in $Q^3$. Moreover, we have
$\|f_\infty\|_{L^2([0,T] \times \R^d, H^s(\R^d))} \leq C$. Thus
$f_\infty(t,x,\cdot) \in H^s(B_{r_3})$ almost everywhere in
$[0,T]\times B_{r_3^{1+2s}}$.  Since $s \geq 1/2$, this implies that
$f(t,x,\cdot)$ is either constant $1$ or constant $0$ in $B_{r_3}$ for
$(t,x) \in [0,T]\times B_{r_3^{1+2s}}$. From this point on, we write
$f_\infty(t,x) := f_\infty(t,x,v)$ provided that $(t,x,v)$ is
restricted to $Q^3$. Note that $(-\Delta)^s f_\infty$ is not constant
in $Q^3$ due to the nonlocality of $(-\Delta)^s$.

Let $\varphi : \R^d \to [0,+\infty)$ be a smooth bump function supported in $B_{r_3 /2}$, such that
\[ \int_{\R^d} \varphi(v) \dd v = 1, \qquad \int_{\R^d} \varphi(v) \, v \dd v = 0.\]

For any $v_0 \in B_{r_3/2}$ and $(t,x) \in [0,T] \times B_{r_3}$, we have
\[ f_\infty(t,x) = \int_{\R^d} f_\infty(t,x) \varphi(v-v_0) \dd v.\]
Therefore, using the equation
\begin{align*} 
 \partial_t f_\infty(t,x) &\geq \int_{\R^d} \big[ -v \cdot \nabla_x f_\infty(t,x) - (-\Delta)^s f_\infty(t,x,v) + h_1^\infty(t,x,v) + (-\Delta)^{s/2} h_2^\infty(t,x,v) \big] \varphi(v-v_0) \dd v, \\
&= -v_0 \cdot \nabla_x f_\infty(t,x) + \int_{\R^d} \left\{ (-f_\infty(t,x,v) + h_2^\infty(t,x,v)) (-\Delta)^s \varphi(v-v_0) + h_1^\infty \varphi(v-v_0) \right\} \dd v.
\end{align*}
Thus, for any $v_0 \in B_{r_3/2}$, $f_\infty(t,x)$ satisfies the transport equation
\[ \partial_t f_\infty + v_0 \cdot \nabla_x f_\infty \geq H_{v_0}(t,x).\]
where $H_{v_0}$ is the function in $L^2([0,T] \times B_{r_3^{1+2s}})$ given by
\[ H_{v_0}(t,x) = \int_{\R^d} (-f_\infty(t,x,v) + h_2^\infty(t,x,v)) (-\Delta)^s \varphi(v-v_0) + h_1^\infty \varphi(v-v_0) \dd v.\]

From \eqref{e:cpt2}, we know that there exist some $\tau_1 \in [0,t_1]$ and $\tau_2 \in [t_2,T]$ so that
\begin{align*}
 |\{x : f_\infty(\tau_1,x)=1\} \cap B_{r_1}|  &\geq \frac{\delta_1} {t_1}, \\
 |\{x : f_\infty(\tau_2,x)=0\} \cap B_{r_2}| &\geq \frac{\delta_2} {T-t_2}.
\end{align*}
Let $S_1 = \{x : f_\infty(\tau_1,x)=1\} \cap B_{r_1^{1+2s}}$ and $S_2 = \{x : f_\infty(\tau_2,x)=0\} \cap B_{r_2^{1+2s}}$. Since
\[ \| \chi_{S_1} \ast \chi_{-S_2} \|_{L^1} = |S_1| |S_2| \geq \frac{\delta_1 \delta_2} {t_1(T-t_2)},\]
then, there exists one vector $w_0 \in B_{r_1^{1+2s}+r_2^{1+2s}}$ such that
\[ | S_1 \cap (S_2-w_0)| \geq \frac{\delta_1 \delta_2} {t_1(T-t_2) |B_{r_1^{1+2s}+r_2^{1+2s}}|} =: c_0. \]
Let $v_0 = w_0 / (\tau_2 - \tau_1)$. We have $|v_0| \leq |w_0| / (t_2-t_1) \leq r_3/2$.

Since the right hand side $H_{v_0} \in L^2([0,T] \times B_{r_3^{1+2s}})$, in particular, for almost all $x \in S_1 \cap (S_2-w_0)$, the function $t \mapsto H_{v_0}(t,x+(t-\tau_1) v_0)$ is in $L^2(\tau_1,\tau_2)$. 

Because of the transport equation that $f_\infty$ satisfies in $Q^3$, we have
\[ \frac{\dd }{\dd t} f_\infty(t,x+(t-\tau_1) v_0) \geq H_{v_0}(t,x+(t-\tau_1) v_0).\]
In particular, for almost every $x \in B_{r_1}$, there is a constant $C(x)>0$ so that
\[ f_\infty(\tilde t_2,x+(\tilde t_2-\tau_1) v_0) - f_\infty(\tilde t_1,x+(\tilde t_1-\tau_1) v_0) \geq - C(x) (\tilde t_2 - \tilde t_1)^{1/2}, \qquad \text{ for any } t_1 < \tilde t_1 < \tilde t_2 < t_2.\]
However, since $f_\infty(t,x+(t-\tau_1) v_0)$ only takes the values $0$ and $1$, and $f_\infty(\tau_1,x)=1$ for every $x \in S_1 \cap (S_2-w_0)$, then $f_\infty(t,x+(t-\tau_1) v_0)=1$ for every $x \in S_1 \cap (S_2-w_0)$ and $t \in [\tau_1 , T]$.

We arrive to a contradiction since $f_\infty(\tau_2,x+(\tau_2 - \tau_1) v_0)=f_\infty(\tau_2,x+w_0)=0$ for every $x \in S_1 \cap (S_2-w_0)$.
This achieves the proof. \end{proof}
\begin{lemma} [Intermediate sets for local super-solutions] \label{l:intermediate-set-local}
Let $s \in [1/2,1)$. Let $r_1$, $r_2$, $r_3$, $r_4$, $t_1$, $t_2$, $T$, $Q^1$, $Q^2$, $Q^3$ and $Q^4$ be like in Lemma \ref{l:intermediate-set-global}.
Let $f : [0,T] \times B_{r_4} \times \R^d \to [0,+\infty)$. Assume $f$ is a supersolution of
\[ f_t + v \cdot \nabla_x f - \Lv f \geq 0 \qquad \text{in } Q^4.\]
For every pair of positive numbers $\delta_1,\delta_2$, there exists $\theta>0$ and $\mu>0$ so that whenever 
\[ |\{f\geq 1\} \cap Q^1| \geq \delta_1 \qquad \text{and} \qquad |\{f \leq \theta \} \cap Q^2| \geq \delta_2,\]
then
\[ |\{\theta < f < 1\} \cap Q^3| \geq \mu.\] 
Here, the constants $\theta$ and $\mu$ depend on $\delta_1$, $\delta_2$, $t_1$, $t_2$, $T$, $r_1$, $r_2$, $r_3$, $r_4$, $\lambda$, $\Lambda$, $s$ and $d$.
\end{lemma}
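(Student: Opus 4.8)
The plan is to deduce the statement from the global result for the fractional Kolmogorov equation, Lemma~\ref{l:intermediate-set-global}, after truncating $f$ to a bounded function, localizing it with a cut-off, and absorbing both the operator $\Lv$ and the localization errors into a right-hand side lying in $L^2_{t,x}H^{-s}_v$. As a first reduction, replace $f$ by $g_0 := \min(f,1)$. A standard truncation argument shows that $g_0$ is again a weak supersolution of $f_t + v\cdot\grad_x f - \Lv f \ge 0$ in $Q^4$: at points where $f<1$ one has $g_0 = f$ and, since $K \ge 0$ and $g_0 \le f$ with equality at $v$, $\Lv g_0(v) \le \Lv f(v)$; at points where $f \ge 1$ one has $(\partial_t + v\cdot\grad_x)g_0 = 0$ and $\Lv g_0(v) = \int_{\R^d}(g_0(v')-1)K(v,v')\dd v' \le 0$. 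Moreover $0 \le g_0 \le 1$ everywhere, and, since $\theta < 1$ in all statements, the relevant level sets agree with those of $f$: $\{g_0 = 1\} = \{f \ge 1\}$, $\{g_0 \le \theta\} = \{f \le \theta\}$ and $\{\theta < g_0 < 1\} = \{\theta < f < 1\}$.

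Next, I would record an energy bound for $g_0$ and localize. The function $1 - g_0$ is a bounded subsolution of $f_t + v\cdot\grad_x f - \Lv f \le 0$ in $Q^4$ with values in $[0,1]$, so Lemma~\ref{lem:caccio} gives, for every $\rho_1 < r_4$, an estimate $\|g_0\|_{L^2([0,T]\times B_{\rho_1^{1+2s}},\, H^s(B_{\rho_1}))} \le C$, the measure term on its right being bounded by the volume of $Q^4$; here $C$ depends only on $r_4$, $\rho_1$, $\lambda$, $\Lambda$, $s$, $d$ and $T$. Now fix nested radii $\max(r_1,r_2,r_3) < \rho_0 < \rho < \rho_1 < r_4$ and a cut-off $\varphi \in C^\infty_c(\R^{2d})$ with $0 \le \varphi \le 1$, $\|\varphi\|_{C^2} \le C$, $\varphi \equiv 1$ on $B_{\rho_0^{1+2s}} \times B_{\rho_0}$, and $\supp \varphi \subset B_{\rho^{1+2s}} \times B_{\rho}$, and set $g := \varphi g_0$.

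Using $(\partial_t + v\cdot\grad_x)g = \varphi(\partial_t + v\cdot\grad_x)g_0 + g_0 (v\cdot\grad_x \varphi)$ together with $\varphi \ge 0$ and the commutator identity of Corollary~\ref{c:commutator-s>1/2}, applied in the $v$ variable for each fixed $(t,x)$, namely $\tilde\Lv g = \varphi \Lv g_0 + h_1 + h_2 + (-\Delta)_v^{s/2} h_3$ with $\|h_1\|_{L^2} + \|h_2\|_{L^2} + \|h_3\|_{L^2} \lesssim 1 + \|g_0\|_{H^s(B_{\rho_1})}$ (for $h_1$ one uses both bounds in Corollary~\ref{c:commutator-s>1/2}, which rely on \eqref{e:Kabove-signed} as in Lemma~\ref{l:commutator-s<1/2}, together with $\|g_0\|_{L^\infty}\le 1$), one obtains in $[0,T] \times \R^{2d}$ the inequality
\[
 g_t + v\cdot\grad_x g + (-\Delta)_v^s g \ \ge\ \tilde H ,
\]
where $\tilde H := g_0(v\cdot\grad_x\varphi) - h_1 - h_2 - (-\Delta)_v^{s/2} h_3 + \tilde\Lv g + (-\Delta)_v^{s/2}[(-\Delta)_v^{s/2} g]$. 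By Theorem~\ref{t:upperbound} applied to the globally extended operator $\tilde\Lv$, by the identity $(-\Delta)_v^s = (-\Delta)_v^{s/2}(-\Delta)_v^{s/2}$, and by the energy bound above, every term of $\tilde H$ belongs to $L^2([0,T]\times\R^d,\, H^{-s}(\R^d))$ with norm $\le C(1 + \|g\|_{L^2_{t,x}H^s_v}) \le C$. Also $g$ takes values in $[0,1]$, is supported in $[0,T] \times B_{\rho^{1+2s}} \times B_{\rho}$, and lies in $L^2_{t,x}H^s_v \cap C_t L^2$.

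Finally, I would apply Lemma~\ref{l:intermediate-set-global} to $g$, keeping $r_1,r_2,r_3,t_1,t_2,T$ and replacing $r_4$ by $\rho$ (the compatibility condition $r_3/2 > (r_1^{1+2s}+r_2^{1+2s})/(t_2-t_1)$ is unchanged). For the given $\delta_1, \delta_2$ this yields $\theta > 0$ and $\mu > 0$. Since $g \equiv g_0 \equiv \min(f,1)$ on $[0,T] \times B_{\rho_0^{1+2s}} \times B_{\rho_0} \supseteq Q^1 \cup Q^2 \cup Q^3$, the hypotheses $|\{f \ge 1\} \cap Q^1| \ge \delta_1$ and $|\{f \le \theta\} \cap Q^2| \ge \delta_2$ are exactly $|\{g = 1\} \cap Q^1| \ge \delta_1$ and $|\{g \le \theta\} \cap Q^2| \ge \delta_2$, so Lemma~\ref{l:intermediate-set-global} gives $|\{\theta < g < 1\} \cap Q^3| \ge \mu$, that is $|\{\theta < f < 1\} \cap Q^3| \ge \mu$, and the dependence of the constants is inherited. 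The main obstacle is the third step: verifying that $g$ is genuinely a weak supersolution of the fractional Kolmogorov equation with a right-hand side in $L^2_{t,x}H^{-s}_v$ bounded independently of the unknown solution, which requires combining the local energy estimate (Lemma~\ref{lem:caccio}), the commutator bounds (Corollary~\ref{c:commutator-s>1/2}), and the $H^s \to H^{-s}$ boundedness of $\tilde\Lv$ (Theorem~\ref{t:upperbound}), together with a careful choice of the nested radii $\rho_0 < \rho < \rho_1$ so that $\varphi$ is supported inside the region where the energy bound for $g_0$ is available.
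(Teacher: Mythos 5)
Your proposal is correct and follows essentially the same route as the paper's proof: truncate at $1$, obtain the local $H^s$ energy bound from Lemma~\ref{lem:caccio}, localize with a cut-off, control the commutator $\Lv(\varphi f)-\varphi\Lv f$ and the terms $\Lv(\varphi f)$, $(-\Delta)^s_v(\varphi f)$ in $L^2_{t,x}H^{-s}_v$, and then invoke Lemma~\ref{l:intermediate-set-global}. The only differences are cosmetic: you spell out the truncation argument that the paper delegates to Lemma~\ref{lem:convex}, and you are slightly more explicit about choosing the cut-off equal to $1$ on a set containing $Q^1\cup Q^2\cup Q^3$ so that the level sets of $\varphi f$ and $f$ agree there.
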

\begin{proof}
By replacing $f$ with $\min(f,1)$ (see Lemma~\ref{lem:convex} in Appendix), we can assume that $0 \leq f \leq 1$ everywhere.

Let $\rho>0$ so that $2 \rho > r_4 -r_3$. 

Applying Lemma \ref{lem:caccio} to $1-f$, we obtain that $f \in L^2([0,T] \times B_{(r_4 - \rho)^{1+2s}}, H^s(B_{r_4-\rho}))$, with
\[ \int_0^T \int_{B_{r_4 - \rho}} \|f\|_{H^s(B_{r_4 - \rho} ) }^2 \dd x \dd t \leq C,\]
for some constant $C$ depending only on $r_4$, $\rho$, $d$, $\lambda$, $\Lambda$ and $s$.

Let $\varphi : \R^d \times \R^d \to [0,1]$ be a smooth bump function supported in $B_{(r_4-2\rho)^{1+2s}} \times B_{r_4-2\rho}$ and such that $\varphi=1$ in $B_{r_3^{1+2s}} \times B_{r_3}$. We now have
\begin{equation} \label{e:ivl1}
 \int_0^T \int_{\R^d} \|\varphi f\|_{H^s(\R^d)}^2 \dd x \dd t \leq C.
\end{equation}

From a direct computation, we get
\[ [\partial_t + v \cdot \nabla_x - \Lv] (\varphi f) \geq (v \cdot \nabla_x \varphi) f - \left( \Lv (\varphi f) - \varphi \Lv f \right) \qquad \text{in } [0,T] \times \R^d \times \R^d. \]
The term $(v \cdot \nabla_x \varphi) f$ is  bounded by one, and supported in $B_{(r_4-2\rho)^{1+2s}} \times B_{r_4-2\rho}$. The second term is a commutator, which is also bounded in $L^2([0,T]\times \R^d, H^{-s}(\R^d))$ because of Lemmas \ref{l:commutator-s<1/2} and \ref{l:commutator-s>1/2}. Let
\[ h_0 := (v \cdot \nabla_x \varphi) f - \left( \Lv [\varphi f] - \varphi \Lv f \right) \in L^2([0,T] \times  \R^d, H^{-s}(\R^d) ). \]
Now, we rewrite the equation for $\varphi f$ as a fractional Kolmogorov equation
\[  [\partial_t + v \cdot \nabla_x + (-\Delta)^s_v] (\varphi f) \geq h_0 + (-\Delta)^s_v (\varphi f) + \Lv (\varphi f).\]
Because of \eqref{e:ivl1}, there is a function $h_1 \in L^2([0,T] \times  \R^d \times \R^d)$ such that $(-\Delta)^s_v (\varphi f) = (-\Delta)^{s/2}_v h_1$.

Also because of \eqref{e:ivl1} and applying Corollary~\ref{c:upperbound}, $\Lv (\varphi f)$ belongs to $H^{-s}(\R^d)$.

Summarizing, $(\varphi f)$ is a supersolution to a fractional Kolmogorov equation with a right hand side in $L^2([0,T] \times \R^d, H^{-s}(\R^d) )$,
\[  [\partial_t + v \cdot \nabla_x + (-\Delta)^s_v] (\varphi f) \geq h_0 + (-\Delta)^{s/2}_v h_1 + \Lv(\varphi f) \qquad \text{in } [0,T] \times \R^d \times \R^d. \]

We finish the proof applying Lemma \ref{l:intermediate-set-global} with $r_4 - 2\rho$ instead of $r_4$.
\end{proof}
\begin{lemma} [Propagation in
  measure] \label{l:propagation-in-measure} Under the same assumptions
  as in Lemma \ref{l:intermediate-set-local}, For every pair of
  positive numbers $\delta_1,\delta_2$, there exists $\theta>0$ so
  that whenever
\[ |\{f\geq 1\} \cap Q^1| \geq \delta_1 \qquad \text{then} \qquad |\{f \leq \theta \} \cap Q^2| < \delta_2,\]
Here, the constant $\theta$ depends on $\delta_1$, $\delta_2$, $t_1$, $t_2$, $T$, $r_1$, $r_2$, $r_3$, $r_4$, $\lambda$, $\Lambda$, $s$ and $d$.
\end{lemma}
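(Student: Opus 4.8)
The plan is to bootstrap Lemma~\ref{l:intermediate-set-local} along a geometric sequence of levels and to exploit that the corresponding intermediate sets are pairwise disjoint. First I would fix, for the given $\delta_1,\delta_2$, the constants $\theta_\ast=\theta(\delta_1,\delta_2)$ and $\mu_\ast=\mu(\delta_1,\delta_2)>0$ furnished by Lemma~\ref{l:intermediate-set-local}; after shrinking $\theta_\ast$ if necessary we may assume $\theta_\ast\in(0,1)$, since replacing $\theta_\ast$ by a smaller value only strengthens the hypothesis $|\{f\le\theta_\ast\}\cap Q^2|\ge\delta_2$ and enlarges the set $\{\theta_\ast<f<1\}$ appearing in the conclusion. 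I would also set $N:=\lfloor |Q^3|/\mu_\ast\rfloor+1$ and claim that $\theta:=\theta_\ast^{N}$ works.

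Next, arguing by contradiction, suppose $|\{f\ge 1\}\cap Q^1|\ge\delta_1$ while $|\{f\le\theta_\ast^{N}\}\cap Q^2|\ge\delta_2$. For each $j\in\{0,1,\dots,N-1\}$ I would apply Lemma~\ref{l:intermediate-set-local} to the rescaled function $g_j:=\theta_\ast^{-j}f$. Since the equation for $f$ is linear and homogeneous (here $h=0$), $g_j\ge 0$ is again a supersolution of $\partial_t g_j+v\cdot\nabla_x g_j-\Lv g_j\ge 0$ in $Q^4$. Because $\theta_\ast^{j}\le 1$ and $\theta_\ast^{j+1}\ge\theta_\ast^{N}$ whenever $j+1\le N$, the level-set inclusions
\[ \{f\ge 1\}\subseteq\{g_j\ge 1\},\qquad \{f\le\theta_\ast^{N}\}\subseteq\{f\le\theta_\ast^{j+1}\}=\{g_j\le\theta_\ast\} \]
give $|\{g_j\ge 1\}\cap Q^1|\ge\delta_1$ and $|\{g_j\le\theta_\ast\}\cap Q^2|\ge\delta_2$, so Lemma~\ref{l:intermediate-set-local} yields $|\{\theta_\ast<g_j<1\}\cap Q^3|\ge\mu_\ast$, that is,
\[ \left|\{\theta_\ast^{j+1}<f<\theta_\ast^{j}\}\cap Q^3\right|\ge\mu_\ast \qquad\text{for } j=0,\dots,N-1. \]

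Finally, since $\theta_\ast<1$ the intervals $(\theta_\ast^{j+1},\theta_\ast^{j})$, $j=0,\dots,N-1$, are pairwise disjoint, hence so are the sets above, and summing them inside $Q^3$ gives $|Q^3|\ge N\mu_\ast>|Q^3|$, a contradiction. This forces $|\{f\le\theta\}\cap Q^2|<\delta_2$ with $\theta=\theta_\ast^{N}$, which depends only on $\delta_1$, $\delta_2$, $t_1$, $t_2$, $T$, $r_1$, $r_2$, $r_3$, $r_4$, $\lambda$, $\Lambda$, $s$ and $d$, as required. I do not expect any genuine obstacle here: the argument is a routine De Giorgi-type iteration, and the only points needing attention are that scaling $f$ by the positive constant $\theta_\ast^{-j}$ preserves the supersolution property (using linearity of \eqref{e:main} with $h=0$) and that the inclusions among the level sets are oriented correctly, both of which are immediate.
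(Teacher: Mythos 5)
Your proof is correct and follows essentially the same route as the paper: apply Lemma~\ref{l:intermediate-set-local} to the rescaled supersolutions $\tilde\theta^{-j}f$ for $j=0,\dots,N-1$, obtain $N$ pairwise disjoint intermediate sets of measure at least $\mu$ inside $Q^3$, and derive a contradiction with $N>|Q^3|/\mu$. The remarks about preserving the supersolution property under positive scaling and the orientation of the level-set inclusions are exactly the points one needs to check, and you handle them correctly.
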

\begin{proof}
Let $\tilde \theta $ and $\mu>0$ be the values from Lemma \ref{l:intermediate-set-local}. In this lemma we choose $\theta := \tilde \theta^{k}$ where $k$ is the smallest integer larger than $|Q^3| / \mu$.

Assume the conclusion of the lemma was not true. Then for all values of $j = 0, 1, \dots, k-1$, the function $\tilde \theta^{-j} f$ would satisfy the hypothesis of Lemma \ref{l:intermediate-set-local}. Therefore, for every $j = 0, 1, \dots, k-1$,
\[ |\{\tilde \theta^{j+1} < f < \tilde \theta^{j}\} \cap Q^3| \geq \mu.\] 
This is clearly impossible since all these are disjoint sets contained in $Q^3$, so their measures cannot add up to more than $|Q^3|$.
\end{proof}

\section{The propagation lemma}
\label{sec:growth}

We call \emph{propagation lemma}  a result that says that as soon as a
(super)solution is above a large constant in most of a cylinder, then
it is bounded from below by $1$, say, for later times. 

The difference between this propagation lemma and the first De Giorgi
lemma proved in Section \ref{sec:fdl} lies in the sets of points where
the estimates hold. Essentially, the propagation lemma is the result
of De Giorgi's first lemma, combined with a propagation of the lower
bound to later times and larger sets. This propagation is obtained
using the barrier function of section \ref{sec:barriers} when
$s \in (0,1/2)$ or the intermediate-value lemma from Section
\ref{sec:intermediate} when $s \in [1/2,1)$.

\begin{lemma}[Propagation lemma] \label{lem:growth} There exist
  $R_1>0$ (large, only depending on dimension and $s$),
  $\delta >0$ (small, universal) and $M \ge 1$ (large, universal) such
  that for $\tilde T = 2^{2s}$, if $f$ is a supersolution
\[ f_t + v \cdot \nabla_x f -\Lv f \geq 0 \qquad \text{in } [-1,\tilde T] \times B_{ R_1^{1+2s} } \times B_{R_1},\]
which is non-negative in $[-1,\tilde T] \times \R^{2d}$ and such that
\begin{equation}\label{assum:f-meas}
 |\{ f > M \} \cap Q_1| \ge (1-\delta) |Q_1| 
\end{equation}
then $f \ge 1$ in $\tilde Q$ where
$\tilde Q = [0,\tilde T] \times B_{2^{2s+1}} \times B_2$
(see Figure~\ref{fig:growth}).
\end{lemma}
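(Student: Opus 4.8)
The plan is to combine the first De Giorgi lemma (Lemma~\ref{lem:linfty}) with a barrier argument (when $s<1/2$) or the propagation-in-measure lemma (when $s\ge 1/2$) to propagate the lower bound forward in time and to a larger spatial-velocity region. First I would normalize: since $f$ is a supersolution and $0\le f$, the function $g:=\min(f/M,1)$ is still a nonnegative supersolution of $g_t+v\cdot\nabla_x g-\Lv g\ge 0$ (using Lemma~\ref{lem:convex} from the appendix, since $t\mapsto\min(t/M,1)$ is concave and nondecreasing), taking values in $[0,1]$. The measure hypothesis \eqref{assum:f-meas} says $|\{g=1\}\cap Q_1|\ge(1-\delta)|Q_1|$, equivalently $|\{g<1\}\cap Q_1|\le\delta|Q_1|$, which can be made as small as we wish by choosing $\delta$ small.

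Next I would set up the first De Giorgi lemma on appropriate cylinders. The idea is to pick $\tilde Q_0 = [-\tau,-1+\tau']\times B_{r_1^{1+2s}}\times B_{r_1}\subset\subset Q_1$ so that Lemma~\ref{lem:linfty} applies to $2g$ (a supersolution with values in $[0,2]$): the hypothesis \eqref{assum:lem-1deg} requires $\int_{\hat Q}(2-2g)_+^2 = 4\int_{\hat Q}(1-g)_+^2 \le 4|\{g<1\}\cap\hat Q|\le 4\delta|Q_1|$, which is $\le\eps_0$ once $\delta$ is small enough (with $\eps_0$ depending on the fixed geometry). Lemma~\ref{lem:linfty} then yields $g\ge 1/2$ a.e.\ in a smaller cylinder $\tilde Q_1$ sitting near time $0$ and some ball $B_{r_0}$. (One must be slightly careful: to apply Lemma~\ref{lem:linfty} one wants $g$ defined and nonnegative globally in $v$, which holds, and the kernel assumptions hold with the required $\Radius$ — here is where we need $R_1$ large enough that $B_{2R_1}$ contains everything, fixing the role of $R_1$ depending only on dimension and $s$.) So after this step we know $f\ge M/2$ on a small slanted cylinder around $(0,x_*,v_*)$ for suitable center.

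Now for the propagation. When $s\in(0,1/2)$: apply Lemma~\ref{l:barrier} with the inner radius $r$ chosen so that $B_{r^{1+2s}}\times B_r$ fits inside the region where $g\ge 1/2$ obtained above, outer radii $R$ large enough to cover $B_{2^{2s+1}}\times B_2$, and $\tau,T$ chosen to cover $[0,\tilde T]$; this produces a barrier $\varphi\le 1$, vanishing outside $B_{R_1^{1+2s}}\times B_{R_1}$, with $\varphi_t+v\cdot\nabla_x\varphi-\Lv\varphi\le 0$, $\varphi(0,\cdot,\cdot)$ supported where $g\ge 1/2$ so $\varphi(0,\cdot,\cdot)\le (1/2)\cdot 2g(0,\cdot,\cdot)$ after scaling the barrier's amplitude, and $\varphi\ge\theta$ on $[\tau,T]\times B_R\times B_R\supset\tilde Q$. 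Comparing $g$ with the rescaled barrier via the comparison principle gives $g\ge\theta$ on $\tilde Q$, hence $f\ge M\theta$; choosing $M\ge 1/\theta$ finishes. When $s\in[1/2,1)$: instead use Lemma~\ref{l:propagation-in-measure} (built from the intermediate-value Lemma~\ref{l:intermediate-set-local}) to upgrade "$g\ge 1/2$ on a set of positive measure near time $0$" to "$g>\theta$ on all of a later, larger cylinder up to a set of arbitrarily small measure", then feed this back into Lemma~\ref{lem:linfty} once more (now with the good set occupying almost all of the cylinder) to conclude $g\ge 1/2$, i.e.\ $f\ge M/2\ge 1$, on $\tilde Q$. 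A short bootstrap/iteration in the $s\ge1/2$ case may be needed to cover the full radius $2$; the slanted-cylinder geometry means one propagates along characteristics $x\mapsto x+tv_0$, so a finite chain of overlapping cylinders covers $B_{2^{2s+1}}\times B_2$.

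The main obstacle I expect is the bookkeeping of the slanted geometry: choosing the radii $r_1,r_0,r,R$ and times so that (i) the inner De Giorgi cylinder sits inside $Q_1$, (ii) the barrier's initial support (or the propagation-lemma's source cylinder $Q^1$) lies inside the region where the lower bound $g\ge 1/2$ was just established, and (iii) the barrier's (or propagation lemma's) positivity region, resp.\ the final De Giorgi output, covers $\tilde Q=[0,\tilde T]\times B_{2^{2s+1}}\times B_2$ — all while keeping $R_1$ dependent only on $d$ and $s$ and $\delta,M$ universal. The velocity-translation invariance $\mathcal T_{z_0}$ and the scaling relations must be used carefully since translating in $v$ tilts cylinders in $x$, and the kernel assumptions are only posited on $B_{\Radius}$, forcing $\Radius=2R_1$ to be chosen once and for all to absorb every cylinder that appears. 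The analytic inputs (comparison principle for the barrier, De Giorgi iteration, intermediate-value lemma) are all already available; the work is purely in orchestrating them on compatible domains.
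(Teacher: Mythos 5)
Your proposal is correct and follows essentially the same route as the paper: normalize so the solution takes values in $[0,1]$, apply the first De Giorgi lemma (Lemma~\ref{lem:linfty}) to get a pointwise lower bound on a small cylinder, then propagate it via the barrier of Lemma~\ref{l:barrier} and the comparison principle when $s<1/2$, and via Lemma~\ref{l:propagation-in-measure} combined with a second application of Lemma~\ref{lem:linfty} when $s\ge 1/2$. The only cosmetic differences are that the paper works with the equivalent statement ``$|\{f<1\}\cap Q_1|<\delta$ implies $f\ge 1/M$'' instead of truncating with $\min(f/M,1)$, and in the case $s\ge1/2$ it applies Lemma~\ref{l:propagation-in-measure} directly to the measure hypothesis rather than first passing through De Giorgi's lemma.
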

\begin{figure}[ht]
\setlength{\unitlength}{1in} 
\begin{picture}(1.475 ,1.000)
\put(0,0){\includegraphics[height=1.000in]{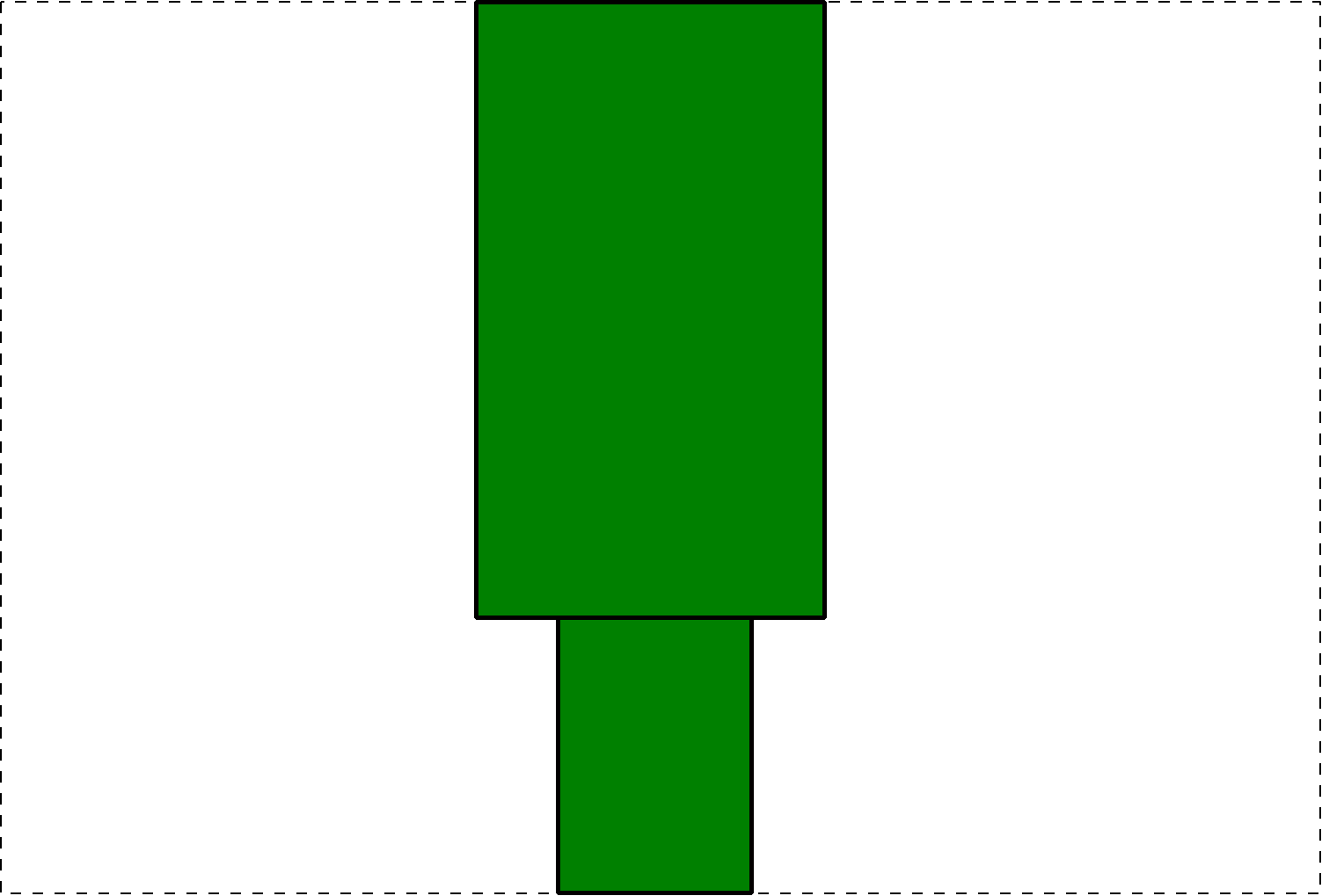}}
\put(0.634,0.651){$\tilde Q$}
\put(0.615,0.119){$Q_1$}
\end{picture}
\caption{Geometric setting of the propagation Lemma~\ref{lem:growth}.} 
\label{fig:growth}
\end{figure}
\begin{proof}
  We will prove the equivalent result that if
  $ |\{ f < 1 \} \cap Q_1| < \delta$ then $f \geq 1/M$ in $\tilde
  Q$.
  The proof will be different depending on whether $s \in (0,1/2)$ or
  $s \in [1/2,1)$.

  Let us start with the case $s \in (0,1/2)$. We combine De Giorgi's
  first lemma with a barrier function.

  We first apply Lemma \ref{lem:linfty} to $2f$, shifted in time, with
  $\hat Q = [-1,-1/2] \times B_1 \times B_1$ and
  $\tilde Q = [-3/4,-1/2] \times B_{1/2} \times B_{1/2}$. For $\delta$
  suffiently small, we obtain that $f \geq 1/2$ in $\tilde Q$. In
  particular $f(-1/2,x,v) \geq 1/2$ for all
  $(x,v) \in B_{1/2} \times B_{1/2}$.

  Let $\varphi$ be the barrier of Lemma \ref{l:barrier} with
  $T = 3/2$, $\tau = 1/2$ and $r=1/2$. Lemme \ref{l:barrier} also
  gives us the value of $R_1$. We apply the comparison principle to
  get that $f \geq \frac 12 \varphi(t+1/2,\cdot,\cdot)$ in
  $[-1/2,T] \times B_{R_1} \times B_{R_1}$ and conclude the proof. In
  this case $M = 2 / \theta$, where $\theta>0$ is the constant from
  Lemma \ref{l:barrier}.

  For the case $s \in [1/2,1)$, we combine the intermediate set lemma
  with De Giorgi's first lemma.

  We apply Lemma \ref{l:propagation-in-measure} to $f(t-1,x,v)$, with
  $r_1 = 1$, $r_2 = 3$, $r_3=4$, $r_4 = R_1 = 5$, $t_1 = 1/2$,
  $t_2 = 3/4$, $T = \tilde T+1$, arbitrary $\delta_1 = \delta>0$ and
  $\delta_2>0$ sufficiently small. We obtain that there is a
  $\theta_1>0$ so that
  \[ \left \vert \{f > \theta_1\} \cap ([-1/4,\tilde T] \times
    B_{3^{1+2s}} \times B_3) \right\vert < \delta_2.\]
  Then we apply Lemma \ref{lem:linfty} to $2f / \theta_1$ (again
  shifted in time) with
  $\hat Q = [-1/4,\tilde T] \times B_{3^{1+2s}} \times B_3$ and
  $\tilde Q = [0,\tilde T] \times B_{2^{1+2s}} \times B_2$. This
  concludes the proof.
\end{proof}

The propagation lemma implies the following corollaries.
\begin{cor}[Stacked propagation]\label{cor:stacked}
  Let $R_1$ and $\delta$ be the constants from Lemma \ref{lem:growth}.
  Let $k \ge 1$, $T_k =\sum_{i=1}^k 2^{2si}$ and $R_k = 2^k R_1$.  If
  $f$ is a supersolution of \eqref{e:main} with $h=0$ in
  $[-1,T_k] \times B_{R_k^{1+2s}} \times B_{R_k}$ and
\[| \{ f > M^k \} \cap Q_1| > (1-\delta) | Q_1|,\]
 then $f \geq 1$  in $Q[k] := [T_{k-1},T_k] \times B_{2^{(1+2s)k} } \times B_{2^k}$.
\end{cor}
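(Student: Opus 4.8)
The plan is to prove Corollary~\ref{cor:stacked} by induction on $k$, using the Propagation Lemma~\ref{lem:growth} as the base case and as the inductive engine, together with the scaling and translation invariance of the equation recorded in Subsection~\ref{sub:com}. The key observation is that each application of Lemma~\ref{lem:growth} (after a suitable rescaling) both pushes the lower bound forward in time by one \emph{block} $[T_{i-1},T_i]$ and doubles the spatial and velocity radii on which the bound holds.

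First I would set up the base case $k=1$: this is exactly Lemma~\ref{lem:growth}, since $T_0=0$, $T_1=2^{2s}$, $R_1$ is the constant from that lemma, and $Q[1]=[0,2^{2s}]\times B_{2^{1+2s}}\times B_2 = \tilde Q$. The hypothesis $|\{f>M\}\cap Q_1|>(1-\delta)|Q_1|$ gives $f\geq 1$ in $\tilde Q$ as claimed.

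For the inductive step, suppose the statement holds for $k-1$; I want to prove it for $k$. Given $f$ a supersolution in $[-1,T_k]\times B_{R_k^{1+2s}}\times B_{R_k}$ with $|\{f>M^k\}\cap Q_1|>(1-\delta)|Q_1|$, apply the inductive hypothesis to $f/M$ (which is again a nonnegative supersolution of the same equation, since the equation is linear and $h=0$): the measure hypothesis $|\{f/M>M^{k-1}\}\cap Q_1|>(1-\delta)|Q_1|$ holds, so $f/M\geq 1$, i.e.\ $f\geq M$, on $Q[k-1]=[T_{k-2},T_{k-1}]\times B_{2^{(1+2s)(k-1)}}\times B_{2^{k-1}}$. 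Now I rescale: set $r=2^{k-1}$ and consider $g(t,x,v):=f(r^{2s}t + T_{k-1} - r^{2s}, r^{1+2s}x, rv)/M$ — that is, compose the scaling $f\mapsto f_r$ from Subsection~\ref{sub:com} with a time-translation placing time $T_{k-1}$ at the ``$-1\to 0$'' scale. Under this transformation $Q[k-1]$ contains a copy of $Q_1$ on which $g>1$ on a set of \emph{full} measure, in particular more than $(1-\delta)|Q_1|$, and $g$ is a nonnegative supersolution of \eqref{e:main} with $h=0$ on a domain containing $[-1,2^{2s}]\times B_{R_1^{1+2s}}\times B_{R_1}$ — here one must check that the original domain $[-1,T_k]\times B_{R_k^{1+2s}}\times B_{R_k}$, after the transformation, covers this: the forward time extent $T_k - T_{k-1} = 2^{2sk} = r^{2s}\cdot 2^{2s}$ rescales to exactly $2^{2s}$, and $R_k/r = 2^{k}R_1/2^{k-1} = 2R_1 \geq R_1$; for the velocity radius needed in the kernel assumptions, one uses that the scaled kernel satisfies the hypotheses on the larger ball $B_{\Radius/r}$, as noted in Subsection~\ref{sub:com}. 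Applying Lemma~\ref{lem:growth} to $g$ yields $g\geq 1$ on $\tilde Q = [0,2^{2s}]\times B_{2^{1+2s}}\times B_2$; undoing the scaling, $f\geq M\geq 1$ on $[T_{k-1},T_k]\times B_{2^{(1+2s)k}}\times B_{2^k} = Q[k]$, which is the claim (note $M\geq 1$).

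The main obstacle I expect is bookkeeping: making the affine change of variables precise so that (i) the rescaled function is genuinely a supersolution on a domain \emph{containing} the domain required by Lemma~\ref{lem:growth}, (ii) the measure-density hypothesis transfers correctly (the Jacobian is constant so densities are preserved, but one must verify the image of $Q[k-1]$ really contains a translate of $Q_1$ — this uses $2^{(1+2s)(k-1)}\geq$ the rescaled spatial radius and $2^{k-1}\geq$ the rescaled velocity radius, which hold with room to spare), and (iii) the kernel hypotheses \eqref{e:Klower}, \eqref{e:Kabove}, \eqref{e:Kcancellation0}, \eqref{e:Kcancellation1} (and \eqref{e:Knondegeneracy} or \eqref{e:Kcancellation1} according to the range of $s$) survive the rescaling with the \emph{same} constants $\lambda,\Lambda$ — which is exactly the content of the scaling discussion in Subsection~\ref{sub:com}, the radius merely enlarging from $\Radius$ to $\Radius/r\geq\Radius$. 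None of these steps is deep, but they must be assembled carefully; everything else is a direct invocation of Lemma~\ref{lem:growth}.
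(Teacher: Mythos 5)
Your overall strategy is exactly the paper's: the paper proves the corollary by iterating Lemma~\ref{lem:growth} $k$ times, rescaling after each step so that the cylinder on which the lower bound was just obtained plays the role of $Q_1$ for the next application; your induction on $k$ is the same iteration read backwards, and your domain and kernel checks (forward time extent rescaling to exactly $2^{2s}$, radii $R_k/r=2R_1$, Subsection~\ref{sub:com} for the kernel hypotheses) are correct and are the points the paper leaves implicit.

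There is, however, one step that fails as written: the normalization of $g$ in the final application of Lemma~\ref{lem:growth}. From the inductive hypothesis you get $f\ge M$ on $Q[k-1]$, and you then set $g=f(\cdot)/M$ (rescaled), so that $g\ge 1$ on the copy of $Q_1$. But the hypothesis of Lemma~\ref{lem:growth} is that $\{g>M\}$ — not $\{g>1\}$ — fills a $(1-\delta)$ fraction of $Q_1$, and since $M$ may be much larger than $1$ you have not verified this for your $g$. The symptom is visible in your conclusion: you deduce $f\ge M$ on $Q[k]$, which is strictly stronger than the corollary claims and is not derivable from the stated hypothesis (iterating your step would give $f\ge M^k$ on $Q[k]$, which is false). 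The repair is immediate: do not divide by $M$. Take $g(t,x,v)=f(r^{2s}t+T_{k-1},\,r^{1+2s}x,\,rv)$, so that $g\ge M$ on $Q_1$, hence $|\{g>M\}\cap Q_1|\ge(1-\delta)|Q_1|$ (up to the harmless strict/non-strict quibble the paper also glosses over), and Lemma~\ref{lem:growth} then yields $g\ge 1$ on $\tilde Q$, i.e.\ $f\ge 1$ on $Q[k]$, as claimed. Note also that your time translation should be $t\mapsto r^{2s}t+T_{k-1}$ rather than $r^{2s}t+T_{k-1}-r^{2s}$: as written it sends $(-1,0]$ to $(T_{k-1}-2r^{2s},\,T_{k-1}-r^{2s}]$ instead of to $(T_{k-2},T_{k-1}]$, although your subsequent computation of the forward time extent shows you intended the correct map.
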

\begin{figure}[ht]
\setlength{\unitlength}{1in} 
\setlength{\unitlength}{1in} 
\begin{picture}(3.679 ,1.500)
\put(0,0){\includegraphics[height=1.500in]{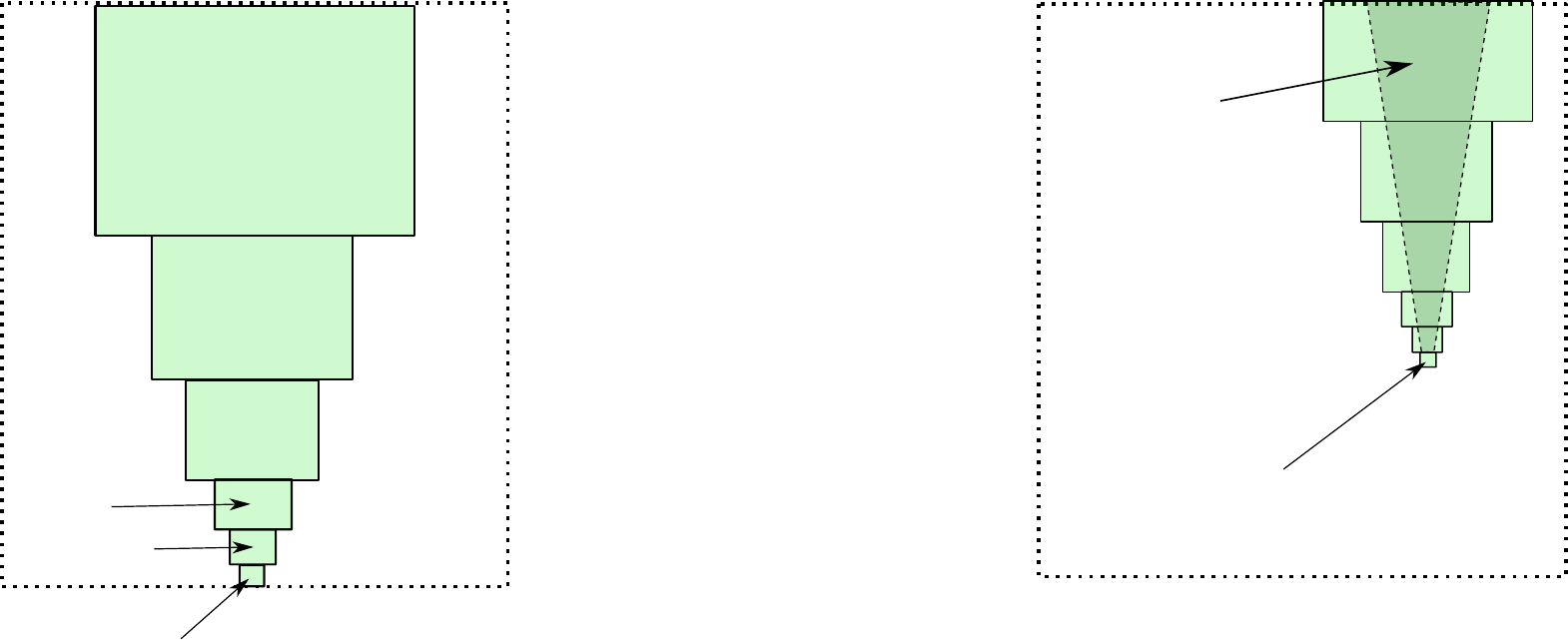}}
\put(2.171,1.205){The set $S$}
\put(2.431,0.289){$Q_r(t_0,x_0,v_0)$}
\put(0.290,-0.087){$Q_1$}
\put(0.104,0.142){$Q[1]$}
\put(-0.004,0.302){$Q[2]$}
\put(0.466,0.443){$Q[3]$}
\put(0.444,0.750){$Q[4]$}
\put(0.439,1.176){$Q[5]$}
\put(3.402,0.199){$Q_1$}
\end{picture}
\caption{Geometric setting of Corollaries \ref{cor:stacked} and \ref{cor:smoothpropagation}.}
\label{fig:stacked}
\end{figure}

\begin{proof}
This is simply an iteration of Lemma \ref{lem:growth}. Indeed, getting $f \ge 1$
in $\tilde Q$ implies that
$|\{ \tilde f > M \} \cap \tilde Q| > (1-\delta) |\tilde Q|$ where
$\tilde f = M f$.  Choosing $\tilde Q$ as the new cylinder $Q$ in the
basic propagation lemma yields that $f$ is bounded from below by $M^{-1}$
in a new cylinder. Iterating this estimate, we get the corollary.
\end{proof}
\begin{cor}[Propagation of minima] \label{cor:smoothpropagation} Let $R_1$ and $\delta$ as in
  Lemma \ref{lem:growth}. Let $f$ be a supersolution of \eqref{e:main}
  with $h=0$ in $Q = [-1,0] \times B_{R_1^{1+2s}} \times B_{R_1}$. Let
  $Q_r(t_0,x_0,v_0) \subset Q_1$ such that
\[| \{ f > A \} \cap Q_r(t_0,x_0,v_0) | > (1-\delta) |Q_r|.\]
Then, there exists some $p>0$ and $c>0$ so that
\[ f(t,x,v) \gtrsim A \left( 1 + \frac{t-t_0}{r^{2s}} \right)^{-p}, \]
whenever $(t,x,v)$ belongs to the set
\[ \begin{aligned}
S = S(t_0,x_0,v_0):= \bigg\{ (t,x,v)  : t>t_0,  \, & |x-x_0 - (t-t_0) v_0| < \left( (1-2^{-2s})(t-t_0)+r^{2s} \right)^{1+\frac 1 {2s}}, \\
& \hspace{12mm}    \text{ and } |v-v_0| < \left( (1-2^{-2s})(t-t_0)+r^{2s} \right)^{\frac 1 {2s}}  \bigg\} .
\end{aligned}
\]
\end{cor}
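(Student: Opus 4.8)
The plan is to derive Corollary~\ref{cor:smoothpropagation} from Corollary~\ref{cor:stacked} by a scaling argument together with the group structure $\mathcal{T}_{z_0}$. First I would reduce to the case $z_0 = 0$ and $r = 1$: since the equation, the assumptions on $K$, and the geometry of the slanted cylinders are all invariant under $\mathcal{T}_{z_0}$ and under the parabolic scaling $f_r(t,x,v) = f(r^{2s}t, r^{1+2s}x, r v)$ described in the subsection on invariant transformations, it suffices to prove the statement for $Q_1(0) = Q_1 \subset Q_1$ with the hypothesis $|\{f > A\} \cap Q_1| > (1-\delta)|Q_1|$. Replacing $f$ by $f / A$ we may also assume $A = 1$, so the hypothesis becomes $|\{f > 1\} \cap Q_1| > (1-\delta)|Q_1|$; note $\tilde f := M^{k} (f/A)$ then satisfies $|\{\tilde f > M^k\} \cap Q_1| > (1-\delta)|Q_1|$ for every $k \geq 1$, which is exactly the hypothesis of Corollary~\ref{cor:stacked} at level $k$ (the constant $M^{k-1}$ dividing out correctly).

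The core step is then to apply Corollary~\ref{cor:stacked} for each $k \geq 1$ to $\tilde f = M^{k} f$, which yields $f \geq M^{-k}$ on $Q[k] = [T_{k-1}, T_k] \times B_{2^{(1+2s)k}} \times B_{2^k}$, where $T_k = \sum_{i=1}^{k} 2^{2si} = \frac{2^{2s(k+1)} - 2^{2s}}{2^{2s}-1}$. Next I would check that the set $S = S(0,0,0)$ is covered by $\bigcup_{k \geq 1} Q[k]$, up to possibly enlarging constants. The defining inequalities of $S$ at time $t$ are $|x| < \big((1 - 2^{-2s}) t + 1\big)^{1 + 1/2s}$ and $|v| < \big((1-2^{-2s}) t + 1\big)^{1/2s}$; for $t \in [T_{k-1}, T_k]$ one computes that $(1-2^{-2s}) t + 1 \lesssim 2^{2sk}$ (indeed $(1 - 2^{-2s}) T_k + 1 = 2^{2s k} (1 - 2^{-2s})\cdot\frac{2^{2s}}{2^{2s}-1} \cdot \text{(bounded)} + 1 \asymp 2^{2sk}$ after using $(2^{2s}-1)^{-1}$ as a fixed factor), so $|v| \lesssim 2^k$ and $|x| \lesssim 2^{(1+2s)k}$, i.e. $(t,x,v)$ lies in a fixed dilate of $Q[k]$. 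Absorbing this fixed dilation factor by choosing the scaling slightly more generously (or by re-centering), we obtain that $(t,x,v) \in S$ with $t \in [T_{k-1}, T_k]$ implies $f(t,x,v) \geq M^{-k}$. Finally I would convert the discrete bound $M^{-k}$ into the continuous polynomial bound: since $t \asymp 2^{2sk}$ on $[T_{k-1},T_k]$, we have $2^k \asymp t^{1/2s}$, hence $M^{-k} = 2^{-k \log_2 M} \asymp t^{-(\log_2 M)/2s}$, giving the claimed bound with $p = (\log_2 M)/2s$ and an implicit constant absorbing the $1 + (t - t_0)/r^{2s}$ normalization at $t$ near $t_0$ (where $f \geq M^{-1} \gtrsim 1$ directly from $k=1$).

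The main obstacle I expect is the \emph{bookkeeping of the geometry}: verifying that the union of the stacked cylinders $Q[k]$ — whose spatial centers shift linearly with the \emph{frozen} velocity $0$ at the origin but whose actual content after applying $\mathcal{T}_{z_0}$ involves the oblique shear $x \mapsto x + t v_0$ — really does contain (a fixed dilate of) the set $S(t_0,x_0,v_0)$ with its particular exponents $1 + 1/2s$ and $1/2s$ and its particular ``radius'' $(1-2^{-2s})(t-t_0) + r^{2s}$. The factor $(1 - 2^{-2s})$ is precisely what accounts for the geometric series $\sum 2^{2si}$ telescoping correctly, so matching it up requires care but no deep idea. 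A secondary, minor point is to confirm that the hypothesis ``$Q_r(t_0,x_0,v_0) \subset Q_1$'' guarantees that after scaling and translation the supersolution property and the kernel assumptions still hold on the larger domain $[-1,T_k] \times B_{R_k^{1+2s}} \times B_{R_k}$ demanded by Corollary~\ref{cor:stacked} for every $k$; this is where one uses that the ellipticity constants only deteriorate with $|v| \to \infty$ and that the statement is ultimately an interior one — in the Boltzmann application (proof of Theorem~\ref{thm:lowerbound}) one applies this on a bounded region and the constants $\lambda, \Lambda$ are fixed there. I would also remark that $p$ and $c$ are universal, depending only on $d, s, \lambda, \Lambda$, as they come directly from $M$ and $\delta$ of Lemma~\ref{lem:growth}.
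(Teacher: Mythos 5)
Your proposal is correct and follows essentially the same route as the paper: reduce to the unit cylinder at the origin via the Lie group translation and the parabolic scaling, iterate Corollary~\ref{cor:stacked} to get $f \geq A M^{-k}$ on the stacked cylinders, observe that the factor $(1-2^{-2s})$ makes the geometric series $T_k=\sum 2^{2si}$ telescope so that $S$ is covered by the $Q[k]$ (in fact exactly, with no need to enlarge constants, since $(1-2^{-2s})T_k+1=2^{2sk}$), and convert $M^{-k}$ into the polynomial bound with $p=\log M/\log(2^{2s})$. The only point the paper handles slightly differently is your "secondary" concern about the domain: the issue is not the deterioration of $\lambda,\Lambda$ but that some $\tilde Q[k]$ may extend past $t=0$, which the paper dismisses in a remark because the equation is parabolic and future values do not affect earlier ones.
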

\begin{proof}
Let $t_k =t_0 + \sum_{i=1}^k (2^ir)^{2s}$, $r_k = (2^k r)$ and
\[ \tilde Q[k] := Q_{r_k}(t_k,x_0+t_k v_0,v_0).\]
The change of variables
$(t,x,v) \mapsto (r^{2s}(t-t_0), r^{1+2s}(x-x_0 - (t-t_0) v_0), r
(v-v_0))$,
which preserves the equation, transforms the cylinder $Q_1$ into
$Q_r(t_0,x_0,v_0)$ and the cylinders $Q[k]$ of Corollary
\ref{cor:stacked} into $\tilde Q[k]$.

We can easily check that $S \subset \bigcup \tilde Q[k]$. Corollary
\ref{cor:stacked} tells us (after the change of variables above) that
$f \geq A / M^k$ in $\tilde Q[k]$. Observe that
$(t-t_0+r^{2s}) \approx (2^k r)^{2s}$ in $\tilde Q[k]$, therefore
\[ f(t,x,v) \geq A M^{-k} \gtrsim A \left( \frac{t-t_0+r^{2s}}{r^{2s}} \right)^{-p},\]
where $p = \frac{\log(M)}{\log(2^{2s})}$.
\end{proof}
\begin{remark}
  It is possible that in the proof of Corollary
  \ref{cor:smoothpropagation} some cylinder $\tilde Q[k]$ extends pass
  the time $t=0$ and thus it is not strictly contained in $Q$. This is
  not a problem since we are dealing with a parabolic equation and
  future values of $f$ do not affect earlier values. Indeed, we can
  readily verify that Lemma \ref{lem:growth} also holds for any value
  of $\tilde T \in (0,2^{2s})$. The only thing that matters is that
  $R_1$ is sufficiently large.
\end{remark}

\section{The ink-spots theorem for slanted cylinders}
\label{sec:ink-spot}

This section is dedicated to the statement and proof of a theorem
involving a covering argument in the flavor of Krylov-Safonov growing
ink spots theorem, or the Calder\'on-Zygmund decomposition. Such a
theorem is used in the proof of the weak Harnack inequality.  
The statement of the theorem involves stacked (and slanted) cylinders: 
\begin{equation} \label{e:Qdelayed}
  \bar Q^m(z_0,r) = \{(t,x,v) : 0 < t-t_0 \leq m r^{2s},  |v-v_0|<r,  
  |x-x_0-(t-t_0) v_0| < (m+2) r^{1+2s}\}
\end{equation}
(see Figure~\ref{fig:slanted}).
\begin{figure}[ht]
\includegraphics[height=5cm]{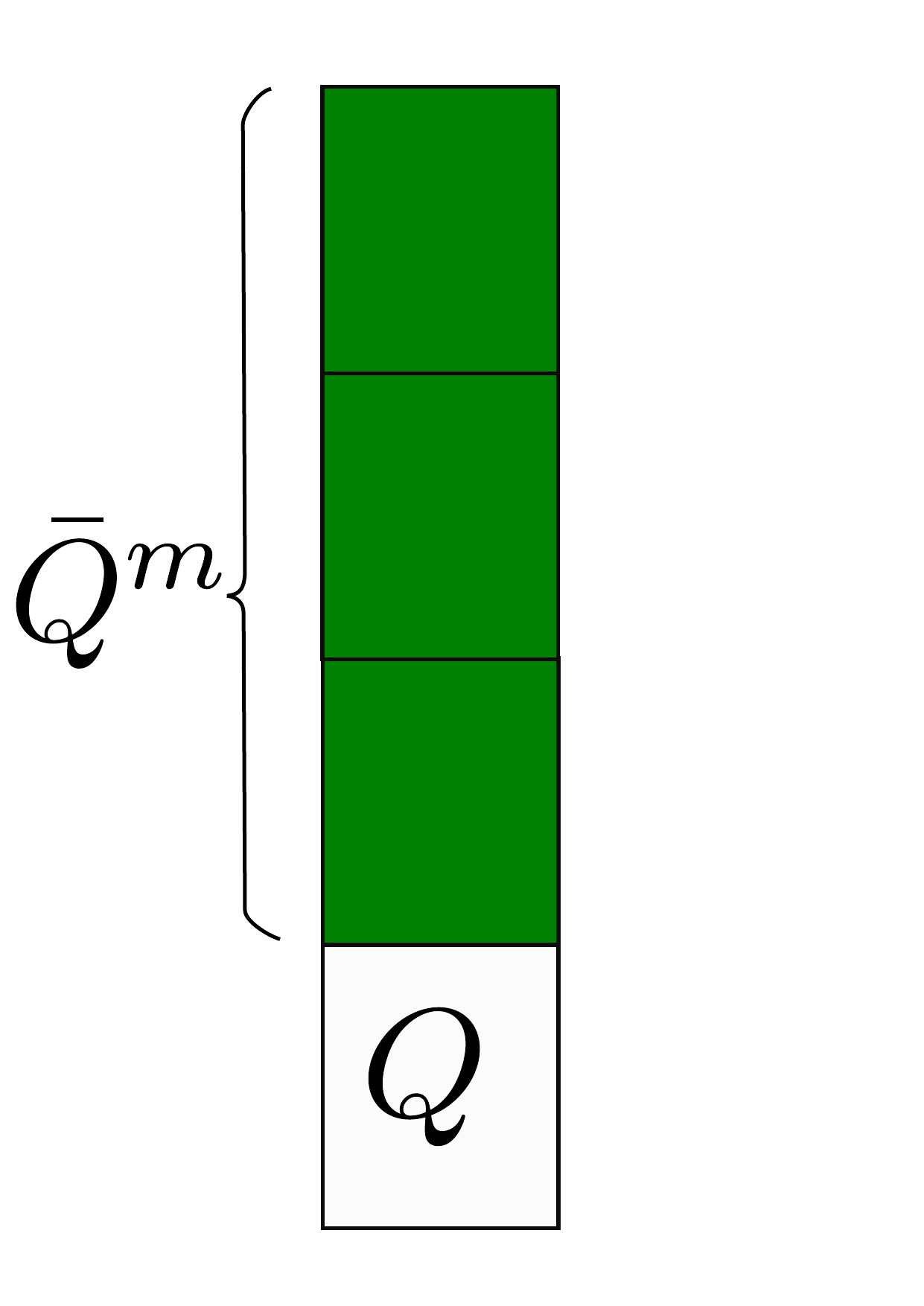}
\caption{Stacked cylinders}
\label{fig:slanted}
\end{figure}

The cylinder $\bar Q^m$ is a delayed version of $Q$. It starts
immediately at the end of $Q$. Its duration in time is $m$ times as
long as $Q$. Its radius in $x$ is $(m+2)$ times the radius in $Q$. It
shares the same values of velocities $v$ as $Q$.

\begin{thm}[The ink-spots theorem] \label{t:inkspots-slanted} Let
  $E \subset F$ be two bounded {measurable} sets. We
  make the following assumption for some constant $\mu \in (0,1)$.
\begin{itemize}
\item $E \subset Q_1$ and $|E| < (1-\mu) |Q_1|$.
\item If any cylinder $Q \subset Q_1$ {such that $\bar Q^m \subset Q_1$} satisfies
  $|Q \cap E| \ge (1-\mu) |Q|$, then $\bar Q^m \subset F$.
\end{itemize}
Then $|E| \leq \frac{m+1}m (1- c\mu) |F|$ for some constant $c \in (0,1)$
depending on $s$ and dimension only.
\end{thm}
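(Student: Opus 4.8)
# Proof Proposal for the Ink-Spots Theorem

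The plan is to mimic the classical Calderón–Zygmund / Krylov–Safonov stopping-time argument, but adapted to the fact that the cylinders $Q_r(z)$ here are \emph{slanted} (oblique in $x$), and that the ``enlargement'' $\bar Q^m$ is the forward stacked cylinder defined in \eqref{e:Qdelayed} rather than a concentric dilation. First I would set up a dyadic decomposition of $Q_1$ into slanted subcylinders: starting from $Q_1 = Q_1(0)$, subdivide it into finitely many congruent slanted cylinders at the next dyadic scale (halving the $v$-radius, and correspondingly dividing the $x$-extent and the time-extent by the parabolic scaling $r \mapsto r/2$, i.e. time by $2^{2s}$). Crucially, because all these subcylinders at a given generation are translates of one another along the \emph{same} slant direction (all sharing the velocity $v=0$ of $Q_1$ at the top, or more precisely organized so that the slant is consistent within each branch of the tree), they do tile $Q_1$ up to measure zero, even though slanted cylinders with \emph{varying} slopes cannot. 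This is exactly the point emphasized in the introduction: the covering must be custom-made. I would then run a stopping-time argument: call a dyadic cylinder $Q$ ``selected'' if it is a maximal dyadic subcylinder of $Q_1$ satisfying $|Q \cap E| \ge (1-\mu)|Q|$. Let $\{Q_i\}$ be the collection of selected cylinders.

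The next step is the standard density dichotomy. Since $|E| < (1-\mu)|Q_1|$, the top cylinder $Q_1$ is not selected, so every point of $E$ (up to measure zero, by the Lebesgue density theorem applied along the dyadic filtration — here I need that the dyadic slanted cylinders shrink nicely to points, which holds because their diameters go to $0$) lies in some selected $Q_i$. Thus $|E| \le \sum_i |E \cap Q_i| < \sum_i |Q_i|$. On the other hand, by maximality, the dyadic \emph{parent} $\hat Q_i$ of each $Q_i$ satisfies $|E \cap \hat Q_i| < (1-\mu)|\hat Q_i|$, hence $|E^c \cap \hat Q_i| > \mu |\hat Q_i| \gtrsim \mu |Q_i|$ (the parent has comparable measure, a fixed dyadic factor $2^{2s+d+(1+2s)d}$ larger). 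The second hypothesis of the theorem says that for each selected $Q_i$, we have $\bar Q_i^m \subset F$ — provided that $\bar Q_i^m \subset Q_1$, which I should check or else handle boundary cylinders separately. The key geometric lemma I would isolate and prove is: \emph{the stacked cylinder $\bar Q_i^m$ contains the parent $\hat Q_i$} (or at least a fixed fraction of it, up to the slant), so that $|\bar Q_i^m| \ge c'\, |\hat Q_i|$ with $c'$ depending only on $m$, $s$, $d$, and moreover $|\bar Q_i^m \cap E^c| \ge |\hat Q_i \cap E^c| \gtrsim \mu|Q_i|$.

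The arithmetic at the end: I would like to conclude $|E| \le \frac{m+1}{m}(1-c\mu)|F|$. The heuristic is that $\bigcup_i \bar Q_i^m \subset F$, each $\bar Q_i^m$ has measure exactly $\frac{|\bar Q^m|}{|Q|}|Q_i| = (m+2)^d\, m\, |Q_i|$ (from \eqref{e:Qdelayed} versus \eqref{defi:cylinder}), it carries a guaranteed ``hole'' $E^c$ of relative measure $\gtrsim \mu$ inside it (coming from the parent), and — this is the delicate point — the $\bar Q_i^m$ have \emph{bounded overlap} so that $\sum_i |\bar Q_i^m| \lesssim |F|$ and $\sum_i |\bar Q_i^m \cap E^c| \lesssim |F \cap E^c|$ up to the overlap constant. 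Because a stacked cylinder sits \emph{above} its base in time while the bases $Q_i$ are disjoint, and the $Q_i$'s live at (possibly) different dyadic scales, I expect the bounded-overlap claim is the main obstacle: I would prove it by a Vitali-type or a layer-cake-in-time argument exploiting that at each fixed time slice the relevant cross-sections are controlled. Combining $|E| < \sum_i |Q_i| = \frac{1}{(m+2)^d m}\sum_i |\bar Q_i^m|$ with $\sum_i |\bar Q_i^m \cap E^c| \ge c\mu \sum_i |\bar Q_i^m|$ and $\bigcup \bar Q_i^m \subset F$ (modulo overlap), one gets $\sum_i|\bar Q_i^m| \le \frac{1}{1-c\mu}|F|$ after absorbing the $E^c$-mass, and then $|E| \le \frac{1}{(m+2)^d m}\cdot\frac{1}{1-c\mu}|F|$; tracking constants carefully against the precise measure ratio $\frac{|\bar Q^m|}{|Q|}$ and the parent/child ratio should yield the stated $\frac{m+1}{m}(1-c\mu)$ form (the $\frac{m+1}{m}$ factor being morally $\frac{m+2}{m+1}$-ish slack from the $+2$ in the $x$-radius of $\bar Q^m$, adjusted by $c$). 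I would handle the cylinders near $\partial Q_1$ (where $\bar Q^m \not\subset Q_1$) by noting their total measure is controlled, or by shrinking to a slightly smaller reference cylinder — a routine but necessary bookkeeping step.
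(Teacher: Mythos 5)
Your proposal has three genuine gaps, the first of which is fatal to the chosen strategy. The dyadic decomposition you start from does not exist: a slanted cylinder $Q_r(t_0,x_0,v_0)$ is oblique in $x$ with slope equal to its own center velocity $v_0$, so subcylinders at a given generation necessarily have \emph{different} slants (their centers have different velocities), and they cannot tile $Q_1$; if instead you force a ``consistent slant within each branch,'' the resulting sets are no longer cylinders of the form to which the density hypothesis applies, so you cannot invoke $\bar Q^m \subset F$ for them. This is precisely the obstruction the paper flags, and its proof replaces the Calder\'on--Zygmund stopping time by a Vitali-type covering (Lemma \ref{l:vitali}) together with a generalized Lebesgue differentiation theorem for slanted cylinders (Theorem \ref{t:lebesgue}): one selects, for a.e.\ point of $E$, a \emph{maximal} high-density cylinder $Q^x \subset Q_1$, uses maximality to get $|Q^x \cap E| = (1-\mu)|Q^x|$ exactly, and extracts a disjoint subfamily whose fixed dilations cover $E$ (Lemma \ref{l:inkspots-notimeshift}).

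The second gap is that your key geometric claim — that $\bar Q_i^m$ contains the dyadic parent $\hat Q_i$, so that the $\mu$-hole of $E^c$ in the parent lands inside $F$ — is false: by \eqref{e:Qdelayed} the stacked cylinder lives strictly in the \emph{future} of $Q_i$ ($0 < t - t_0 \le m r^{2s}$), whereas the parent overlaps the time interval of $Q_i$ itself and extends into the past, so the two are essentially disjoint in time. The third gap is the bounded-overlap claim for the family $\{\bar Q_i^m\}$, which you correctly identify as the main obstacle but do not resolve; in fact the stacked cylinders of bases with velocities flowing toward each other can overlap badly, and the paper does not prove (or need) bounded overlap. Instead it sidesteps both issues at once: the $\mu$-hole is located in $G := \bigcup_{Q \in \mathcal Q} Q$ (the union of \emph{all} high-density cylinders), for which Lemma \ref{l:inkspots-notimeshift} gives $|E| \le (1-c\mu)|G|$, and then Lemma \ref{l:expansion-overlap} transfers measure forward in time via the inequality $|\bigcup_j \bar Q_j^m| \ge \frac{m}{m+1}|\bigcup_j Q_j|$ — a lower bound on the measure of the union, proved by Fubini in $v$, straightening the slant with the change of variables $z = x - tv$ (which is where the $(m+2)$ versus $2$ slack in the $x$-radius of $\bar Q^m$ is spent), and a one-dimensional interval lemma in $t$. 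Combining $|F| \ge |\bigcup \bar Q^m| \ge \frac{m}{m+1}|G|$ with $|E| \le (1-c\mu)|G|$ yields the stated constant; no estimate of $\sum_i |\bar Q_i^m|$ is ever required.
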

There is no chance to adapt the Calder\'on-Zygmund decomposition to
this context. It would require that we split a larger piece into
smaller pieces of the same type. Even if we replace balls with cubes,
the different slopes, depending on the center velocities, make this
tiling condition impossible.

What we do is a variation of the growing ink-spots theorem. The
original construction by Krylov and Safonov can be found (in English)
in the Appendix 1 of \cite{krylov1987book}. Here, we have one extra
dimension, $x$, which plays a different role and presents additional
difficulies. The most significant difficulty is to go from a lower
bound on the measure of the union of disjoint cylinders $Q$ (Lemma
\ref{l:inkspots-notimeshift}) to a lower bound on the measure of the
union of their delayed versions $\bar Q^m$
(Theorem~\ref{t:inkspots-slanted}). The problem is that if the center
velocities of two cubes flow towards each other, they may create extra
overlaps in their delayed versions. This is addressed essentially in
Lemma \ref{l:expansion-overlap}, using that we expand the radius in
$x$ only by a fixed factor.

The values of $x$ that belong to a slanted cylinder $Q_r(t_0,x_0,v_0)$
change for different values of $t$. They are contained in a ball with
radius $r^{1+2s}$ which flows in the direction of $v_0$ and is shifted
a total distance $|v_0| r^{2s}$ from the initial to the end time. For
small values of $r$, the lenght of this shift is an order of magnitude
larger than the radius of the ball $r^{1+2s}$. Dealing with this shift
is non-trivial, and that is the main difference between the covering
argument described in this section and the usual ink-spots theorem.

The following corollary will be used when we need to confine both $E$
and $F$ to stay within a fixed cylinder.
\begin{cor}[Ink-spots theorem with leakage] \label{c:inkspots-leakage}
  Let $E \subset F$ be two bounded {measurable} sets. We make the following
  assumption for some constant $\mu \in (0,1)$.
\begin{itemize}
\item $E \subset Q_1$.
\item If any cylinder $Q \subset Q_1$ satisfies
  $|Q \cap E| \ge (1-\mu) |Q|$, then $\bar Q^m \subset F$ and also
  $Q = Q_r(t,x,v)$ for some $r < r_0$.
\end{itemize}
Then $|E| \leq \frac {m+1}m (1- c\mu) ( |F \cap Q_1| + C m r_0^{2s} )$ for
some constants $c$ and $C$ depending on $s$ and dimension only.
\end{cor}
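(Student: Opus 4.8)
The plan is to derive Corollary~\ref{c:inkspots-leakage} from Theorem~\ref{t:inkspots-slanted} by a truncation argument. The obstacle in applying the theorem directly is twofold: first, the theorem requires $|E| < (1-\mu)|Q_1|$, which we no longer assume; second, and more importantly, the theorem wants the delayed cylinders $\bar Q^m$ to lie inside $Q_1$, whereas here they are only required to lie in $F$ (which may stick out of $Q_1$). To deal with these, I would split the family of admissible cylinders $Q = Q_r(t,x,v) \subset Q_1$ satisfying $|Q\cap E|\ge(1-\mu)|Q|$ into those whose delayed version $\bar Q^m$ is contained in $Q_1$ and those for which $\bar Q^m$ leaks out of $Q_1$. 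The leaking cylinders are all small (radius $r < r_0$), so their delayed versions can only escape through the lateral or top boundary of $Q_1$; since the radius in $x$ of $\bar Q^m$ is $(m+2)r^{1+2s}$ and its time-length is $m r^{2s}$, the set of points in $Q_1$ that can belong to such a leaking $\bar Q^m$ is confined to a boundary layer of $Q_1$ of thickness comparable to $m r_0^{2s}$ (in the appropriate anisotropic sense), hence of measure $\le C m r_0^{2s}$.

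Concretely, I would define $E' := E \setminus N$, where $N$ is the union of all the leaking $\bar Q^m$'s intersected with $Q_1$, so that $|N| \le C m r_0^{2s}$. Then apply Theorem~\ref{t:inkspots-slanted} with $E$ replaced by $E'$ and $F$ replaced by $F \cap Q_1$: for any cylinder $Q \subset Q_1$ with $\bar Q^m \subset Q_1$ and $|Q \cap E'| \ge (1-\mu)|Q|$ we also have $|Q \cap E| \ge (1-\mu)|Q|$, so by hypothesis $\bar Q^m \subset F$, and since $\bar Q^m \subset Q_1$ we get $\bar Q^m \subset F \cap Q_1$, as required. The smallness hypothesis $|E'| < (1-\mu)|Q_1|$ needs a brief check; if it fails, then $Q_1$ itself (or a slightly shrunk version) is a cylinder with $|Q_1 \cap E'| \ge (1-\mu)|Q_1|$, forcing $\bar Q_1^m \subset F$, and the conclusion follows trivially since $r_0$ can be taken large enough that the stated bound holds — alternatively one shrinks to a sub-cylinder $Q_{1-\eta}$ and takes $\eta \to 0$. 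Theorem~\ref{t:inkspots-slanted} then yields $|E'| \le \frac{m+1}{m}(1-c\mu)|F \cap Q_1|$.

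Finally I would reassemble: $|E| \le |E'| + |N| \le \frac{m+1}{m}(1-c\mu)|F\cap Q_1| + C m r_0^{2s} \le \frac{m+1}{m}(1-c\mu)\big(|F\cap Q_1| + C' m r_0^{2s}\big)$, absorbing the constant $\frac{m+1}{m}(1-c\mu)$ (which is bounded) into the constant $C'$ in front of $m r_0^{2s}$. This gives the claimed inequality with constants depending only on $s$ and dimension.

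The main obstacle I expect is the geometric estimate $|N| \le C m r_0^{2s}$: one must carefully account for the slant of the delayed cylinders $\bar Q^m$. A leaking $\bar Q^m(z_0, r)$ with $r < r_0$ has its base at time $t_0 > -r^{2s} > -r_0^{2s}$ inside $Q_1$, flows for a time $m r^{2s} < m r_0^{2s}$ in the direction $v_0$ with $|v_0| < 1$, and has $x$-radius $(m+2)r^{1+2s} < (m+2)r_0^{1+2s}$; so every point of $\bar Q^m \cap Q_1$ lies within time-distance $m r_0^{2s}$ of the final time-slice $t = 0$, or its sliding $x$-ball pokes through the lateral boundary $\{|x - (t) v| = 1\}$ of $Q_1$, in which case that point lies in an $x$-layer of width $O((m+2)r_0^{1+2s}) + O(m r_0^{2s}) = O(m r_0^{2s})$ near $\partial B_1$ in the sliding frame. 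Either way $N$ is contained in a union of two boundary slabs of $Q_1$ each of measure $\le C m r_0^{2s}$, which is exactly the bound needed. This is the one place where the specific definition \eqref{e:Qdelayed} of $\bar Q^m$ and the constraint $r < r_0$ are both used in an essential way.
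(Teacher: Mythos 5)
Your route differs from the paper's. The paper applies Theorem~\ref{t:inkspots-slanted} with the \emph{target} set replaced by $G:=\bigcup_{Q\in\mathcal Q}\bar Q^m$, where $\mathcal Q$ is the family of \emph{all} density cylinders $Q\subset Q_1$ (leaking or not); for that choice the covering condition is tautological, and the leakage term comes from the estimate $|G\setminus Q_1|\le Cmr_0^{2s}$, proved exactly by the boundary-layer geometry you describe. You instead trim the \emph{input} set, $E'=E\setminus N$, and keep the target $F\cap Q_1$. The geometric estimate $|N|\le Cmr_0^{2s}$ is fine and is the same computation as the paper's bound on $|G\setminus Q_1|$.

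However, the trimming does not do the job it is meant to do, and this is a genuine gap. The delayed cylinder $\bar Q^m$ is \emph{disjoint} from $Q$ itself (it sits strictly later in time), so removing $N=\bigcup(\text{leaking }\bar Q^m)\cap Q_1$ from $E$ does not decrease $|Q\cap E|$ for a leaking cylinder $Q$: take, e.g., a density cylinder abutting the top slice $t=0$ of $Q_1$, whose $\bar Q^m$ lies entirely in $\{t>0\}$ and hence meets neither $Q_1$ nor $N$. Such a $Q$ remains a density cylinder for $E'$, and $\bar Q^m\not\subset F\cap Q_1$. Your argument therefore survives only if one reads the hypothesis of Theorem~\ref{t:inkspots-slanted} as literally quantified over cylinders with $\bar Q^m\subset Q_1$ — but under that literal reading your truncation is superfluous: one could apply the theorem directly to $(E,F\cap Q_1)$ and obtain $|E|\le\frac{m+1}{m}(1-c\mu)|F\cap Q_1|$ with \emph{no} leakage term, contradicting the very form of the corollary. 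The point is that the proof of Theorem~\ref{t:inkspots-slanted} (via Lemma~\ref{l:inkspots-notimeshift} and Lemma~\ref{l:expansion-overlap}) selects maximal density cylinders with no constraint on where $\bar Q^m$ lands and then needs \emph{all} of their delayed versions to lie in the target; the restriction in its hypothesis cannot be exploited the way you do. The safe closing move is the paper's: make the target $\bigcup\bar Q^m$ itself and pay $Cmr_0^{2s}$ for its overflow outside $Q_1$.
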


\subsection{Stacked cylinders and scaling}

For any factor $k$, we define the scaled cylinder $kQ_r$ by
\[ kQ_r = Q_{kr}\left(\frac{k^{2s}-1}2 r^{2s} , 0,0 \right).\]
Here, we scaled the radius $r$ by a factor $k$ and kept the same
center of the cylinder. Note that the point $(t_0,x_0,v_0)$ in
$Q_r(t_0,x_0,v_0)$ refers to the top of the cylinder, not its
center. In order to keep the center fixed, we updated the top.

Consistently with the Lie group action, we define
\begin{alignat*}{2}
 kQ_r(t_0,x_0,v_0) &:= \mathcal{T}_{(t_0,x_0,v_0)} && kQ_r , \\ 
 &= \{(t,x,v) : && -\frac{k^{2s}+1}2r^{2s} < t-t_0 , \leq \frac{k^{2s}-1}2 r^{2s}, \\
& &&|v-v_0|<kr, \\
& && |x-x_0-(t-t_0) v_0| < (kr)^{1+2s}\}.
\end{alignat*}

Note that $|k Q_r(t_0,x_0,v_0)| = k^{2(sd+s+d)} |Q_r|$.

The first version of the growing ink-spots lemma uses essentially a
variation of the Vitali covering lemma together with a generalized
Lebesgue differentiation theorem.

\subsection{A generalized Lebesgue differentiation theorem}

In \cite{is}, a generalized Lebesgue differentiation theorem was
derived for parabolic cylinders.  Here, even though we have one additional
variable ($x$), the proof is the essentially the same. It relies on an
adaptation of Vitali's covering lemma (Lemma~\ref{l:vitali} below) and
a maximal inequality (Lemma~\ref{l:maximal} below). 
\begin{thm} \label{t:lebesgue}
Let $f \in L^1 (\Omega,  \dx \otimes \dv \otimes \dt )$ where $\Omega$ is an open set of $\R^{2d+1}$. 
Then for a.e. $(t,x,v) \in \Omega$, 
\[ \lim_{r \to 0^+} \fint_{Q_r(t,x,v)} | f -f(t,x,v)|  \dx \dv \dt  =0. \]
\end{thm}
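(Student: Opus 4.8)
The plan is to follow the classical route to the Lebesgue differentiation theorem, replacing ordinary balls by the slanted cylinders $Q_r(t,x,v)$ defined in \eqref{defi:cylinder}, and checking that the two ingredients used in the standard proof — a Vitali-type covering lemma and a weak-$(1,1)$ maximal inequality — go through for this family. The only genuinely new feature compared with \cite{is} is the extra variable $x$ and the fact that the cylinders are slanted with a slope depending on the center velocity $v$; but since we only take $r \to 0^+$, the slant is harmless: over a time interval of length $r^{2s}$ the $x$-ball travels a distance $|v| r^{2s}$, which is controlled once $r$ is small, so the relevant cylinders are comparable, up to universal dilation factors, to the ones centered at a fixed point.

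First I would record the doubling/engulfing property of the family $\{Q_r(z)\}$ under the Lie group action $\mathcal{T}_{z_0}$: if $Q_{r_1}(z_1) \cap Q_{r_2}(z_2) \neq \emptyset$ with $r_1 \le r_2$, then $Q_{r_1}(z_1) \subset k Q_{r_2}(z_2)$ for a universal dilation factor $k$ (in the sense of the scaled cylinders $kQ_r$ already introduced in this section), and $|kQ_r| = k^{2(sd+s+d)}|Q_r|$, so the family is doubling. This is a direct computation using the triangle inequality in each of the three coordinates $t$, $x - (t-t_0)v_0$, $v$, plus the observation $|x - x_0 - (t-t_0)v_1| \le |x - x_0 - (t-t_0)v_0| + |t-t_0|\,|v_0 - v_1|$ to absorb the change of slope. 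From this, the Vitali covering lemma (Lemma~\ref{l:vitali} referenced above) follows in the usual greedy way: from any family of cylinders with bounded radii, extract a pairwise disjoint subfamily whose $k$-dilations cover the union.

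Next I would prove the maximal inequality (Lemma~\ref{l:maximal}): for the maximal operator $Mf(z) = \sup_{r>0} \fint_{Q_r(z)} |f|$, one has $|\{Mf > \alpha\}| \le C\alpha^{-1}\|f\|_{L^1}$, by the standard argument — for each point where the sup exceeds $\alpha$ pick a cylinder on which the average exceeds $\alpha$, apply the Vitali lemma to extract a disjoint subfamily, and estimate $|\{Mf>\alpha\}| \le \sum |kQ_i| = k^{2(sd+s+d)}\sum|Q_i| \le k^{2(sd+s+d)}\alpha^{-1}\sum \int_{Q_i}|f| \le k^{2(sd+s+d)}\alpha^{-1}\|f\|_{L^1}$. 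Then Theorem~\ref{t:lebesgue} follows by the textbook density argument: continuous functions are dense in $L^1(\Omega)$, the conclusion is trivial for continuous $g$ (since $\operatorname{diam} Q_r(z) \to 0$), and for general $f$ write $f = g + (f-g)$; the maximal inequality applied to $f-g$ together with Chebyshev controls the bad set, and letting $\|f-g\|_{L^1} \to 0$ shows the differentiation set has full measure. One technical point to handle carefully: near $\partial\Omega$ the cylinder $Q_r(z)$ must lie inside $\Omega$, which it does for $r$ small since $Q_r(z)$ shrinks to $z$; so the statement is about $r \to 0^+$ and there is no boundary obstruction.

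The main obstacle — really the only place requiring care — is the slant of the cylinders in the covering/maximal step: two cylinders that intersect can have quite different center velocities, so one must verify that the dilation factor $k$ needed to engulf the smaller one inside the larger one is genuinely universal (independent of the velocities), and here it is crucial that the radius in the $x$-variable is $r^{1+2s}$ while the temporal extent is only $r^{2s}$, so the extra displacement $|t-t_0|\,|v_0-v_1| \lesssim r^{2s}\cdot r = r^{1+2s}$ is of the same order as the $x$-radius, keeping $k$ bounded. Once that engulfing estimate is in hand with a universal constant, everything else is the routine Vitali + maximal + density machinery, exactly as in \cite{is}.
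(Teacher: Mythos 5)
Your proposal is correct and follows essentially the same route as the paper: the engulfing property of intersecting slanted cylinders (Lemma~\ref{l:scaledcyl}, where as you note the cross term $|t-t_0|\,|v_0-v_1|\lesssim r^{2s}\cdot r=r^{1+2s}$ is exactly absorbed by the $x$-radius), then the Vitali lemma, the weak-$(1,1)$ maximal inequality, and the standard density argument. The paper defers that last density step to \cite{is}; you spell it out, but the content is identical.
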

Theorem~\ref{t:lebesgue} is obtained from Lemma~\ref{l:maximal}
exactly as in \cite{is}. For the reader's convenience we will provide
below a proof of the maximal inequality.

In our setting, the cylinders $Q_r(t_0,x_0,v_0)$ are not the balls of
any metric. The important properties of cylinders are explicitly given
by the following lemma.
\begin{lemma} \label{l:scaledcyl} Let $Q_{r_0}(t_0,x_0,v_0)$ and
  $Q_{r_1}(t_1,x_1,v_1)$ be two cylinders with nonempty
  intersection. Assume that $2r_0 \geq r_1$. Then
\[ Q_{r_1}(t_1,x_1,v_1) \subset k Q_{r_0}(t_0,x_0,v_0) ,\]
for some universal constant $k$ (it depends on $s$ only).
\end{lemma}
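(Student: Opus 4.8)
The plan is to verify directly from the defining inequalities \eqref{defi:cylinder} and the definition of the scaled cylinder $kQ_{r_0}(t_0,x_0,v_0)$ that every point of $Q_{r_1}(t_1,x_1,v_1)$ lies in $kQ_{r_0}(t_0,x_0,v_0)$ for an appropriate universal $k$. Without loss of generality I would translate by $\mathcal{T}_{(t_0,x_0,v_0)}^{-1}$ so that $(t_0,x_0,v_0)=0$; since the transformation group acts on cylinders by $\mathcal{T}_{z_0} Q_r = Q_r(z_0)$ and commutes with the scaling $k(\cdot)$, this is harmless. So I may assume $Q_{r_0}(0,0,0)=Q_{r_0}$ and I am given $Q_{r_1}(t_1,x_1,v_1)$ with $r_1\le 2r_0$ and a common point $(\bar t,\bar x,\bar v)$.

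The key steps, in order: (1) From $(\bar t,\bar x,\bar v)\in Q_{r_0}$ and $\in Q_{r_1}(t_1,x_1,v_1)$ extract the bounds $-r_0^{2s}\le \bar t\le 0$, $|\bar v|<r_0$, $|\bar x-\bar t\cdot 0|=|\bar x|<r_0^{1+2s}$, together with $-r_1^{2s}\le \bar t-t_1\le 0$, $|\bar v-v_1|<r_1$, $|\bar x-x_1-(\bar t-t_1)v_1|<r_1^{1+2s}$. (2) Combine these (using $r_1\le 2r_0$) to locate the center data of the second cylinder relative to the first: $|t_1|\lesssim r_0^{2s}$, $|v_1|\lesssim r_0$, and — the only slightly delicate estimate — $|x_1-t_1 v_1|\lesssim r_0^{1+2s}$, bounding the $x$-offset of the second cylinder's axis; here one uses $|\bar x|<r_0^{1+2s}$, $|\bar x-x_1-(\bar t-t_1)v_1|<r_1^{1+2s}$, and the triangle inequality after writing $x_1-t_1v_1 = \bar x - (\bar x - x_1-(\bar t-t_1)v_1) - \bar t v_1$, controlling $|\bar t v_1|$ by $|\bar t|\cdot|v_1|\lesssim r_0^{2s}\cdot r_0 = r_0^{1+2s}$. (3) Now take an arbitrary $(t,x,v)\in Q_{r_1}(t_1,x_1,v_1)$ and bound each coordinate: $|t|\le|t-t_1|+|t_1|\lesssim r_0^{2s}$, $|v|\le|v-v_1|+|v_1|\lesssim r_0$, and $|x-tv| \le |x - x_1-(t-t_1)v_1| + |x_1 - t_1 v_1| + |(t-t_1)(v_1-v)| + |t_1||v_1-v| + |t||v-v_1|$ type splitting — more cleanly, write $x - t v = \big(x - x_1 - (t-t_1)v_1\big) + (x_1 - t_1 v_1) + t_1 v_1 + (t-t_1)v_1 - t v$ and estimate each term by a universal multiple of $r_0^{1+2s}$ using the bounds from steps (1)–(2) and $|v|\lesssim r_0$, $|t|\lesssim r_0^{2s}$. (4) Reading off the resulting constants, choose $k$ to be the maximum of the three comparison constants (depending only on the numerical factors, hence on $s$ alone), and check that the inequalities obtained are exactly those defining membership in $kQ_{r_0}$, recalling that $kQ_{r_0}=Q_{kr_0}\big(\tfrac{k^{2s}-1}{2}r_0^{2s},0,0\big)$ allows $t\in(-\tfrac{k^{2s}+1}{2}r_0^{2s},\tfrac{k^{2s}-1}{2}r_0^{2s}]$, $|v|<kr_0$, $|x-tv|<(kr_0)^{1+2s}$.

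The main obstacle is the mixed-scaling bookkeeping in step (3): because $x$ scales like $r^{1+2s}$ while the drift term $tv$ contributes at order $r^{2s}\cdot r = r^{1+2s}$, one must be careful that the cross terms involving the velocity offset $v - v_1$ and the time offset $t - t_1$ do not produce an uncontrolled factor — but since $|t-t_1|\le r_1^{2s}\le (2r_0)^{2s}$ and $|v-v_1|<r_1\le 2r_0$, their product is $\lesssim r_0^{1+2s}$, so everything closes with a constant depending only on $s$. No genuine difficulty remains beyond this careful triangle-inequality accounting; the constant $k$ is explicit if desired.
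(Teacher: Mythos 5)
Your proof is correct and follows essentially the same route as the paper's: reduce to $(t_0,x_0,v_0)=0$ by the group action, then chain triangle inequalities through the common point to bound $t$, $v$, and the $x$-coordinate of an arbitrary point of $Q_{r_1}(t_1,x_1,v_1)$ by universal multiples of $r_0^{2s}$, $r_0$, and $r_0^{1+2s}$ respectively, choosing $k$ accordingly. One small correction: the $x$-condition for membership in $kQ_{r_0}$ is $|x|<(kr_0)^{1+2s}$ (the slant in \eqref{defi:cylinder} uses the \emph{center} velocity, which is $0$ after the translation), not $|x-tv|$ with $v$ the test point's velocity; your estimates $|x-tv|\lesssim r_0^{1+2s}$, $|t|\lesssim r_0^{2s}$, $|v|\lesssim r_0$ still yield $|x|\lesssim r_0^{1+2s}$, so nothing is lost beyond a slightly larger $k$.
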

\begin{proof}
  Since all our definitions are invariant by the action of the Lie
  group, we can assume without loss of generality that
  $(t_0,x_0,v_0) = 0$ (the general case is reduced to this applying
  $\mathcal T_{(t_0,x_0,v_0)}^{-1}$).

We need to choose the constant $k$ so that
\begin{align*}
k &\geq 5, \\
k^{2s}  &\geq 1+2 \cdot 2^{2s},\\
k^{1+2s}  &\geq 1+2 \cdot 2^{1+2s}.
\end{align*}
The first inequality implies the other two when $s \geq 1/2$. The
second inequality implies the other two when $s \leq1/2$. In
particular the third inequality is always redundant. In any case, we
pick the smallest $k$ satisfying these inequalities, which depends
only on $s$.

Let $(t_2,x_2,v_2) \in Q_{r_0} \cap Q_{r_1}(t_1,x_1,v_1)$. Let $(t,x,v) \in Q_{r_1}(t_1,x_1,v_1)$. Then all the following hold
\begin{align*}
t &\leq t_1 < t_2 + r_1^{2s} \leq (2r_0)^{2s} \leq \frac{k^{2s}-1}2 r_0^{2s}, \\
t &\geq t_1-r_1^{2s} \geq t_2 - r_1^{2s} \geq - r_0^{2s} - (2r_0)^{2s} \geq -\frac{k^{2s}+1}2 r_0^{2s}, \\
|v| &\leq |v-v_1| + |v_1-v_2| + |v_2| \leq 2 r_1 + r_0 \leq 5 r_0 \leq k r_0, \\
|x| &\leq |x-x_1-(t-t_1) v_1| + |x_2-x_1-(t_2-t_1) v_1| + |x_2| \leq 2 r_1^{2s+1} + r_0^{2s+1} \leq k^{2s+1} r_0^{2s+1}.
\end{align*}
Thus, we get that $(t,x,v) \in k Q_{r_0}$ and we conclude the proof.
\end{proof}
\begin{lemma}[Vitali] \label{l:vitali} Let $\{Q_j\}_{j \in J}$ be an arbitrary
  collection of slanted cylinders with bounded radius. Then, there
  exists a disjoint countable subcollection $\{Q_{j_i}\}$ so that
\[ \bigcup_{j \in J} Q_j = \bigcup_{i=1}^\infty kQ_{j_i} .\]
\end{lemma}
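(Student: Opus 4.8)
The plan is to run the classical Vitali selection, but \emph{stratified by the size of the cylinders}, so that the only cylinders ever discarded are at most twice the radius (in $v$) of a retained cylinder that they meet — which is exactly the situation covered by Lemma~\ref{l:scaledcyl}.

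First I would use that the radii are bounded: set $R_0 := \sup_{j\in J} r_j < \infty$ and split the index set into generations $J_n := \{\, j\in J : 2^{-n-1}R_0 < r_j \le 2^{-n}R_0 \,\}$ for $n=0,1,2,\dots$. Then I would build the subcollection recursively. Starting from $\mathcal F_{-1}=\emptyset$, given a pairwise disjoint family $\mathcal F_{n-1}$ indexed inside $J_0\cup\cdots\cup J_{n-1}$, apply Zorn's lemma to choose a maximal pairwise disjoint subfamily $\mathcal G_n$ of $\{\,Q_j : j\in J_n,\ Q_j\cap Q=\emptyset \text{ for every } Q\in\mathcal F_{n-1}\,\}$, and set $\mathcal F_n := \mathcal F_{n-1}\cup\mathcal G_n$. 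Each $\mathcal G_n$ is at most countable, because a pairwise disjoint family of sets of positive Lebesgue measure in $\R^{2d+1}$ is necessarily countable (exhaust $\R^{2d+1}$ by bounded boxes). Hence $\bigcup_n \mathcal F_n$ is a countable pairwise disjoint family, which I enumerate as $\{Q_{j_i}\}_{i\ge 1}$; disjointness of the whole family follows since a member of $\mathcal G_n$ is disjoint from everything in $\mathcal F_{n-1}\supseteq\mathcal G_m$ for $m<n$.

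It then remains to establish the covering property, which is the heart of the matter: for every $j\in J$ one has $Q_j\subset kQ_{j_i}$ for some $i$. Fix $j\in J$, say $j\in J_n$, and assume $Q_j$ was not selected. If $Q_j$ met no cylinder of $\mathcal F_{n-1}$, then it was an admissible candidate at stage $n$, so maximality of $\mathcal G_n$ forces it to meet some member of $\mathcal G_n$; in all cases $Q_j$ meets some selected $Q_{j_i}$ lying in a generation $m\le n$. Then $r_{j_i} > 2^{-m-1}R_0 \ge 2^{-n-1}R_0 \ge \tfrac12 r_j$, i.e. $2r_{j_i}\ge r_j$, so Lemma~\ref{l:scaledcyl} applied with $Q_{j_i}$ as the larger cylinder gives $Q_j\subset kQ_{j_i}$. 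Taking the union over $j$ yields $\bigcup_{j\in J}Q_j\subset\bigcup_i kQ_{j_i}$, which is the content of the lemma (the displayed equality is to be read as this inclusion, since the dilated cylinders $kQ_{j_i}$ need not themselves lie in $\bigcup_j Q_j$).

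I do not anticipate a serious obstacle. The one point requiring care — and the reason a single unstratified greedy pass does not suffice — is precisely the stratification into the generations $J_n$: it is what guarantees that a discarded cylinder is never more than twice the radius of a retained cylinder it intersects, i.e. the hypothesis $2r_0\ge r_1$ of Lemma~\ref{l:scaledcyl}. Everything else is bookkeeping: disjointness is built into the construction, countability comes from $\sigma$-finiteness of Lebesgue measure, and the engulfing step is a black-box application of Lemma~\ref{l:scaledcyl}.
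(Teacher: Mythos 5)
Your proof is correct and is exactly the argument the paper has in mind: the paper gives no details beyond remarking that one runs the classical Vitali selection with Lemma~\ref{l:scaledcyl} playing the role of the metric engulfing property $B_{r_1}(x_1)\subset 5B_{r_0}$, and your stratification into generations $J_n$ together with the maximal-disjoint-subfamily selection is precisely that classical proof. Your observation that the displayed equality should be read as the inclusion $\bigcup_j Q_j \subset \bigcup_i kQ_{j_i}$ is also right.
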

The proof of Lemma \ref{l:vitali} is the same as the classical proof
of the Vitali coverling lemma using Lemma \ref{l:scaledcyl} instead of
the fact that in any metric space $B_{r_1}(x_1) \subset 5 B_{r_0}$ if
$B_{r_1}(x_1) \cap B_{r_0} \neq \emptyset$ and $r_1 \leq 2 r_0$.

We next define the maximal function $Mf$ as follows: for $(x,v,t) \in \Omega$,
\[ Mf (t,x,v) = \sup_{Q \ni (x,v,t)} \fint_{Q \cap \Omega} |f|\]
where the supremum is taken over cylinders of the form $(y,w,s) + R Q_1$. 
\begin{lemma}[The maximal inequality] \label{l:maximal} 
For all $\lambda > 0$, 
\[ |\{ Mf > \lambda \} \cap \Omega| \le \frac{C}\lambda \|f\|_{L^1(\Omega)}.\] 
\end{lemma}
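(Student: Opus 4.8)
The plan is to run the standard Hardy--Littlewood argument, but using the Vitali-type covering Lemma~\ref{l:vitali} in place of the usual metric-ball Vitali lemma, since the slanted cylinders $Q_r(t_0,x_0,v_0)$ are not balls of a metric. First I would fix $\lambda>0$ and set $A_\lambda = \{Mf > \lambda\} \cap \Omega$. For each point $z = (t,x,v) \in A_\lambda$, by definition of $Mf$ there is a cylinder $Q_z$ of the form $(y,w,\tau) + RQ_1$ containing $z$ with
\[ \fint_{Q_z \cap \Omega} |f| \dx \dv \dt > \lambda, \qquad \text{hence} \qquad |Q_z| < \frac 1\lambda \int_{Q_z \cap \Omega} |f| \dx \dv \dt \le \frac 1\lambda \|f\|_{L^1(\Omega)}. \]
In particular all these cylinders have uniformly bounded radius (the radius to the power $2(sd+s+d)$ is at most $\frac{1}{\lambda}\|f\|_{L^1}$ up to a fixed constant), so Lemma~\ref{l:vitali} applies to the family $\{Q_z\}_{z \in A_\lambda}$.

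Next I would extract, via Lemma~\ref{l:vitali}, a countable disjoint subfamily $\{Q_{z_i}\}_i$ such that $\bigcup_{z} Q_z = \bigcup_i k Q_{z_i}$, where $k$ is the universal constant from Lemma~\ref{l:scaledcyl}. Since $A_\lambda \subset \bigcup_z Q_z = \bigcup_i kQ_{z_i}$, and using the scaling identity $|kQ_r(\cdot)| = k^{2(sd+s+d)} |Q_r|$ recorded just before the generalized Lebesgue differentiation subsection, we get
\[ |A_\lambda| \le \sum_i |kQ_{z_i}| = k^{2(sd+s+d)} \sum_i |Q_{z_i}|. \]
Then, because the $Q_{z_i}$ are pairwise disjoint and each satisfies $|Q_{z_i}| < \frac 1\lambda \int_{Q_{z_i}\cap \Omega} |f|$, I would sum:
\[ |A_\lambda| \le \frac{k^{2(sd+s+d)}}{\lambda} \sum_i \int_{Q_{z_i} \cap \Omega} |f| \dx \dv \dt \le \frac{k^{2(sd+s+d)}}{\lambda} \int_\Omega |f| \dx \dv \dt = \frac C\lambda \|f\|_{L^1(\Omega)}, \]
with $C = k^{2(sd+s+d)}$ depending only on $s$ and $d$. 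This is exactly the claimed inequality.

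There is no serious obstacle here: the only non-routine input is that the covering lemma (Lemma~\ref{l:vitali}) and the dilation structure of slanted cylinders (Lemma~\ref{l:scaledcyl} and the measure-scaling formula) have already been established, which is precisely what replaces the metric-space ingredients in the classical proof. The one point to be slightly careful about is that the cylinders defining $Mf$ are of the form $(y,w,\tau)+RQ_1$ and that $\Omega$ is merely open, not all of $\R^{2d+1}$; but intersecting with $\Omega$ throughout causes no trouble, since disjointness of the $Q_{z_i}$ is preserved and the bound $|Q_{z_i}| < \frac 1\lambda \int_{Q_{z_i}\cap\Omega}|f|$ only uses the average over $Q_{z_i}\cap\Omega$ that witnesses membership in $A_\lambda$. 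Finiteness of the extracted subfamily's total measure (needed to justify the summation) follows a posteriori from $\sum_i |Q_{z_i}| \le \frac 1\lambda \|f\|_{L^1(\Omega)} < \infty$.
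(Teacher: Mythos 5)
Your argument is the same as the paper's: witness cylinders for each point of $\{Mf>\lambda\}\cap\Omega$, the Vitali-type covering Lemma~\ref{l:vitali}, disjointness of the selected subfamily, and the dilation identity $|kQ_r|=k^{2(sd+s+d)}|Q_r|$ from Lemma~\ref{l:scaledcyl} and the scaling discussion; the constant you obtain matches the paper's $C=2k^{2(d+ds+s)}$ up to the harmless factor $2$. One step deserves a flag. Since $Mf$ is defined with the average $\fint_{Q\cap\Omega}|f|$, i.e.\ normalized by $|Q\cap\Omega|$, the inequality $\fint_{Q_z\cap\Omega}|f|>\lambda$ only yields $|Q_z\cap\Omega|<\lambda^{-1}\int_{Q_z\cap\Omega}|f|$, not $|Q_z|<\lambda^{-1}\int_{Q_z\cap\Omega}|f|$ as you wrote, because $|Q_z|$ may vastly exceed $|Q_z\cap\Omega|$ when $Q_z$ sticks out of $\Omega$; your later chain $|A_\lambda|\le k^{2(sd+s+d)}\sum_i|Q_{z_i}|\le C\lambda^{-1}\sum_i\int_{Q_{z_i}\cap\Omega}|f|$ genuinely uses the bound on the full measure $|Q_{z_i}|$. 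Your proof is therefore complete under the reading in which the averages are normalized by $|Q|$ (equivalently, $f$ is extended by zero outside $\Omega$), and under that reading it is actually a little cleaner than the paper's own proof, which keeps the $|Q\cap\Omega|$ normalization and then passes from $|\bigcup_i Q_{j_i}\cap\Omega|$ to $k^{-2(d+ds+s)}|\bigcup_i kQ_{j_i}\cap\Omega|$ as if intersection with $\Omega$ commuted with dilation. Either way the substance is right; just state explicitly which normalization of $Mf$ you use, and note that your observation that the witness cylinders automatically have bounded radius (needed to invoke Lemma~\ref{l:vitali}) also relies on that choice.
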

\begin{proof}
For $(x,v,t) \in \{ Mf > \lambda \} \cap \Omega$, there exists a cylinder $Q \ni (x,v,t)$ 
such that 
\[ \int_{Q \cap \Omega} |f| \ge \frac\lambda2 |Q \cap \Omega|.\]
Then $\{ Mf > \lambda \} \cap \Omega$ is covered with cylinders $\{Q_j\}$
such that the previous inequality holds. From Lemma~\ref{l:vitali}, 
there exists a disjoint countable subcollection $\{Q_{j_i}\}$ so that 
\[ \{ Mf > \lambda \} \cap \Omega \} \subset  \bigcup_{i=1}^\infty kQ_{j_i} \]
for some integer $k$ only depending on $s$. 

We now write 
\begin{align*}
\int_{\Omega} |f| \ge & \int_{\Omega \cap \cup_i Q_{j_i}} |f| = \sum_i  \int_{\Omega \cap \ Q_{j_i}} |f| \\
\ge & \sum_i \frac\lambda2 |Q_{j_i} \cap \Omega| = \frac\lambda2 |\cup_i Q_{j_i} \cap \Omega|  
=  \frac\lambda{2k^{2(d+ds+s)}} |\cup_i k Q_{j_i} \cap \Omega|  \\
\ge & \frac\lambda{2k^{2(d+ds+s)}} |\{ Mf > \lambda \} \cap \Omega|.
\end{align*}
We obtain the desired inequality with $C = 2 k^{2(d+ds+s)}$. 
\end{proof}

\subsection{Preliminary version without time delay}

\begin{lemma} \label{l:inkspots-notimeshift} Let
  $E \subset F \subset Q_1$ be two measurable
  sets. Assume that there is a constant $\mu>0$ such that
\begin{itemize}
\item $|E| < (1-\mu) |Q_1|$. 
\item if any cylinder $Q \subset Q_1$ satisfies
  $|Q \cap E| \geq (1-\mu) |Q|$, then $Q \subset F$.
\end{itemize}
Then  $|E| \leq (1-c \mu) |F|$ for some constant $c$ depending on $s$
and dimension only.
\end{lemma}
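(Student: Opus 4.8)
The plan is to run a Vitali-type covering argument combined with the generalized Lebesgue differentiation theorem (Theorem~\ref{t:lebesgue}), which is why the preparatory material on scaled cylinders and the maximal inequality was set up. First I would consider the family $\mathcal{F}$ of all slanted cylinders $Q \subset Q_1$ satisfying the density condition $|Q \cap E| \geq (1-\mu)|Q|$; by hypothesis every such $Q$ is contained in $F$. The key point, supplied by Theorem~\ref{t:lebesgue}, is that almost every point of $E$ is a Lebesgue point of $\chi_E$, hence lies in arbitrarily small cylinders $Q$ with $|Q \cap E|/|Q| \to 1$; in particular almost every point of $E$ belongs to some $Q \in \mathcal{F}$. (Here one uses $|E| < (1-\mu)|Q_1|$ to guarantee that such a point is a proper Lebesgue point, i.e. that the cylinders used can be taken strictly inside $Q_1$ and not equal to $Q_1$ itself.) Therefore $\bigcup_{Q \in \mathcal{F}} Q$ covers $E$ up to a null set.

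Next I would apply the Vitali covering lemma in the slanted-cylinder setting (Lemma~\ref{l:vitali}) to the family $\mathcal{F}$: there is a countable disjoint subcollection $\{Q_i\}$ with $\bigcup_{Q \in \mathcal{F}} Q \subset \bigcup_i k Q_i$ for the universal dilation constant $k$ from Lemma~\ref{l:scaledcyl}. Since $|kQ_i| = k^{2(sd+s+d)} |Q_i|$, writing $\kappa := k^{-2(sd+s+d)} \in (0,1)$ we get
\[
|E| \leq \sum_i |kQ_i| = \kappa^{-1} \sum_i |Q_i|.
\]
On the other hand, each $Q_i$ satisfies the density condition, so $|Q_i \cap E| \geq (1-\mu)|Q_i|$, i.e. $|Q_i \setminus E| \leq \mu |Q_i|$, and since the $Q_i$ are disjoint and each is contained in $F$ while $|Q_i \cap E| \le |Q_i \cap F|$... more usefully, $|Q_i| = |Q_i \cap E| + |Q_i \setminus E| \le |Q_i \cap E| + \mu|Q_i|$, so $|Q_i \cap E| \ge (1-\mu)|Q_i|$ rearranges to $|Q_i| \le (1-\mu)^{-1}|Q_i \cap E|$.

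Finally I would combine the two estimates. Since $E \subset F$ and the $Q_i \subset F$ are disjoint, $\sum_i |Q_i \cap E| \le \sum_i |Q_i \cap F| = |\bigcup_i Q_i| \le |F|$; actually the cleaner route is to bound $\sum_i |Q_i|$ directly: the disjoint cylinders $Q_i$ lie in $F$, so $\sum_i |Q_i| = |\bigcup_i Q_i| \leq |F|$. Plugging this into $|E| \le \kappa^{-1}\sum_i |Q_i|$ would only give $|E| \le \kappa^{-1}|F|$, which is useless since $\kappa^{-1} > 1$; so instead one must extract the gain from $\mu$. The correct bookkeeping: split $kQ_i = (kQ_i \cap E) \cup (kQ_i \setminus E)$ is not quite it either. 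The standard trick is to write, using $|Q_i \setminus E| \ge$ nothing directly — rather one notes $|E \setminus \bigcup_i Q_i| = 0$ so $|E| = \sum_i |E \cap Q_i| \le \sum_i |Q_i|$, and also $\sum_i |Q_i \setminus E| \le \mu \sum_i |Q_i|$, hence $\sum_i |Q_i| = \sum_i |Q_i \cap E| + \sum_i|Q_i\setminus E| \le |E| + \mu\sum_i|Q_i|$. This is vacuous. The genuine argument instead covers $E$ by the $\{kQ_i\}$ but estimates $|F|$ from below by the disjoint $\{Q_i\}$ together with the observation that each $kQ_i$ contributes a definite fraction to $|F|$ beyond $E$; concretely, $|E| \le \sum_i|kQ_i|$ and $|F| \ge \sum_i |Q_i| \ge \kappa \sum_i |kQ_i| \ge \kappa|E \cup (\bigcup_i kQ_i)|$, which after a short manipulation with the density deficit $\mu$ on each $Q_i$ yields $|E| \le (1 - c\mu)|F|$ with $c = c(s,d)$. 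The main obstacle, and the step requiring the most care, is exactly this final combinatorial bookkeeping: one has to convert the \emph{local} density deficit $\mu$ on each selected cylinder into a \emph{global} multiplicative gain $(1-c\mu)$, and the dilation factor $k$ from Vitali works against us, so the estimate must be organized so that the $\mu$-gain dominates the $\kappa$-loss — this is done precisely as in Krylov--Safonov (Appendix~1 of \cite{krylov1987book}), with Lemma~\ref{l:scaledcyl} ensuring the dilation constant is purely dimensional.
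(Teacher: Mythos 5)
Your scaffolding (Theorem~\ref{t:lebesgue} to cover almost every point of $E$ by high-density cylinders, Lemma~\ref{l:vitali} to extract a disjoint subfamily $\{Q_i\}$ with $E\subset\bigcup_i kQ_i$) matches the paper's, but the proof has a genuine gap at exactly the step you flag as ``a short manipulation'': you never supply the mechanism that converts the local density condition into the global gain, and, as you yourself observe, every bound you actually write down collapses to the useless $|E|\le \kappa^{-1}|F|$ or to a tautology. The missing idea is a stopping-time (maximality) selection. The hypothesis only gives $|Q_i\cap E|\ge(1-\mu)|Q_i|$, i.e.\ an \emph{upper} bound $|Q_i\setminus E|\le \mu|Q_i|$; a selected cylinder could lie entirely inside $E$ and contribute nothing. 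What the argument needs is a \emph{lower} bound on $|Q_i\setminus E|$, because the correct quantity to estimate is $|F\setminus E|$ from below, not $|F|$ directly. The paper achieves this by choosing, for each Lebesgue point, a \emph{maximal} cylinder $Q^x\subset Q_1$ with density $\ge 1-\mu$; the hypothesis $|E|<(1-\mu)|Q_1|$ guarantees $Q^x\ne Q_1$, so there is room to enlarge, and maximality forces the exact equality $|Q^x\cap E|=(1-\mu)|Q^x|$ (otherwise a slightly larger cylinder would still pass the density test). Since each $Q_i\subset F$, this pins down $|Q_i\cap(F\setminus E)|=|Q_i\setminus E|=\mu|Q_i|$ exactly, and then disjointness gives
\begin{equation*}
|F\setminus E|\;\ge\;\sum_i|Q_i\cap(F\setminus E)|\;=\;\mu\sum_i|Q_i|\;=\;\mu\,k^{-2(d+ds+s)}\sum_i|kQ_i|\;\ge\;\mu\,k^{-2(d+ds+s)}|E|,
\end{equation*}
hence $|F|\ge(1+\tilde c\mu)|E|$ with $\tilde c=k^{-2(d+ds+s)}$, which rearranges to $|E|\le(1-c\mu)|F|$ with $c=\tilde c/2$. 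Note also that this is where the hypothesis $|E|<(1-\mu)|Q_1|$ is actually used; it is not about ``proper Lebesgue points'' as you suggest, but about ensuring the maximal cylinder is strictly smaller than $Q_1$ so the enlargement argument applies. Without the maximality step your proof cannot be closed, so as written it does not establish the lemma.
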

\begin{proof}
  Using the generalized Lebesgue differentiation theorem (see
  \cite{is} for an adaptation of the classical Lebesgue
  differentiation theorem~\ref{t:lebesgue}, for almost all points
  $x \in E$, there is some cylinder $Q^x$ containing $x$ such that
  $|Q^x \cap E| \geq (1-\mu) |Q^x|$. For all Lebesgue points
  $x \in E$, let us choose a maximal slanted cylinder
  $Q^x \subset Q_1$ that contains $x$ and such that
  $|Q^x \cap E| \geq (1-\mu) |Q^x|$. Here
  $Q^x = Q_{\bar r}(\bar t, \bar x, \bar v)$ for some $\bar r$,
  $\bar t$, $\bar x$ and $\bar v$. From one of the assumptions, we
  know that $Q^x \neq Q_1$ for any $x$. The other assumption tells us
  that $Q^x \subset F$.

  We claim that $|Q^x \cap E| = (1-\mu)|Q^x|$.  Otherwise, for
  $\delta>0$ small enough, there would be a $\tilde Q$ such
  that $Q^x \subset \tilde Q \subset (1+\delta) Q^x$,
  $\tilde Q \subset Q_1$ and $|\tilde Q \cap E| > (1-\mu)|\tilde Q|$,
  contradicting the maximality of the choice of $Q^x$.

  The family of cylinders $Q^x$ covers the set $E$. By Lemma
  \ref{l:vitali}, we can select a finite subcollection of non
  overlapping cylinders $Q_j := Q^{x_j}$ such that
  $E \subset \bigcup_{j=1}^n k Q_j$.

  Since $Q_j \subset F$ and $|Q_j \cap E| = (1-\mu) |Q_j|$, we have
  that $|Q_j \cap F \setminus E| = \mu |Q_j|$. Therefore
\begin{align*}
|F \setminus E| &\geq \sum_{j=1}^n |Q_j \cap F \setminus E| \\
& \geq \sum_{j=1}^n \mu |Q_j| \\
& = k^{-2(d+ds+s)} \mu \sum_{j=1}^n |k Q_j|  \geq k^{-2(d+ds+s)} \mu |E|.
\end{align*}
We thus get
\[ |F| \ge (1+\tilde{c}\mu) |E| \]
with $\tilde{c}=k^{-2(ds+d+s)}$. Since $\tilde c \mu \in (0,1)$, this implies 
\[ |E| \le (1 - c \mu)|F|\]
with $c = \tilde{c} /2$. 
\end{proof}

\subsection{Stacked cylinders and leakage}

The following lemma can be deduced from Lemma 4.29 in \cite{is} (There
is a typo in the statement in that note, we embarrassingly apologize).
\begin{lemma}\label{lem:cover-1d}
Consider a (possibly infinite) sequence of intervals $(a_j-h_k,a_j]$. Then
\[
\left|\bigcup_k (a_k, a_k+ m h_k)\right| \geq \frac{m}{m+1} \left|\bigcup_k
  (a_k-h_k, a_k] \right|.
\]
\end{lemma}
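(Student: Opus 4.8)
The plan is to prove the statement one connected component at a time, after reducing to a finite family. Write $I_k=(a_k-h_k,a_k]$, $J_k=(a_k,a_k+mh_k)$ and $\hat I_k=(a_k-h_k,a_k+mh_k)$, so that $\hat I_k$ is the disjoint union of $I_k$, $\{a_k\}$ and $J_k$; in particular $|\hat I_k|=(m+1)h_k$. Since $I_k\subset\hat I_k$ for every $k$ we have $\bigl|\bigcup_k\hat I_k\bigr|\ge\bigl|\bigcup_kI_k\bigr|$, so it suffices to prove the sharper inequality $\bigl|\bigcup_kJ_k\bigr|\ge\frac{m}{m+1}\bigl|\bigcup_k\hat I_k\bigr|$. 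Because $\bigcup_{k\le N}J_k\uparrow\bigcup_kJ_k$ and $\bigcup_{k\le N}\hat I_k\uparrow\bigcup_k\hat I_k$, monotone convergence reduces this to the case of finitely many intervals; and since a finite union $\bigcup_{k\le N}\hat I_k$ is a disjoint union of open intervals and each $\hat I_k$ (hence each $J_k$) lies in exactly one of these components, it is enough to treat the case where $\bigcup_k\hat I_k=(\alpha,\beta)$ is a single interval.

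In the connected finite case I would first discard every $\hat I_k$ that is contained in some single other $\hat I_j$: this changes neither $\bigcup_k\hat I_k$ nor the inequality to be proved (it can only shrink $\bigcup_kJ_k$). After this cleanup the intervals can be relabelled so that $a_1-h_1<a_2-h_2<\cdots<a_N-h_N$, and then necessarily also $a_1+mh_1<\cdots<a_N+mh_N$, since otherwise one $\hat I_k$ would contain another. Thus $\alpha=a_1-h_1$, $\beta=a_N+mh_N$, and I would record the key \emph{overlap inequality}: for each $k\ge2$, $a_k-h_k\le a_{k-1}+mh_{k-1}$. Indeed, if it failed, the point $a_{k-1}+mh_{k-1}$ would lie in no $\hat I_j$ — it is the right endpoint of $\hat I_{k-1}$, it lies to the right of $\hat I_j$ for $j\le k-2$ (right endpoints increasing), and to the left of $\hat I_j$ for $j\ge k$ (left endpoints increasing, with $a_k-h_k$ above it) — contradicting connectedness of $(\alpha,\beta)=\bigcup_k\hat I_k$.

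Next I would cut $(\alpha,\beta)$ into the consecutive disjoint blocks $B_1=\hat I_1=(a_1-h_1,a_1+mh_1)$ and $B_k=(a_{k-1}+mh_{k-1},a_k+mh_k)$ for $2\le k\le N$, which tile $(\alpha,\beta)$ exactly. The heart of the argument is the claim $\bigl|B_k\cap\bigcup_jJ_j\bigr|\ge\frac{m}{m+1}|B_k|$ for every $k$. For $k=1$ this is clear since $B_1\cap\bigcup_jJ_j\supset J_1$, of length $mh_1=\frac{m}{m+1}|B_1|$. For $k\ge2$: if $a_{k-1}+mh_{k-1}\ge a_k$ then $J_{k-1}\cup J_k$ already covers all of $B_k$; otherwise the gap $g_k:=a_k-(a_{k-1}+mh_{k-1})>0$ satisfies $g_k\le h_k$ by the overlap inequality, and $B_k\cap\bigcup_jJ_j\supset J_k$ has length $mh_k\ge\frac{m}{m+1}(g_k+mh_k)=\frac{m}{m+1}|B_k|$. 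Summing over $k$ gives $\bigl|\bigcup_jJ_j\bigr|\ge\frac{m}{m+1}(\beta-\alpha)=\frac{m}{m+1}\bigl|\bigcup_k\hat I_k\bigr|$, and running the reductions backwards yields the lemma.

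The only genuinely non‑bookkeeping step is the overlap inequality in the second paragraph, which is what turns the elementary per‑interval ratio $\frac{|J_k|}{|\hat I_k|}=\frac{m}{m+1}$ into a bound on the measure of the \emph{union}; I expect that to be the point requiring the most care, while the finite reduction, the cleanup of redundant intervals, and the block computation are routine.
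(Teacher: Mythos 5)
Your proof is correct. The overall skeleton coincides with the paper's: truncate to finitely many intervals, split into connected components, and prove a per-component estimate. The difference is in which union you decompose and how you get the per-component bound. The paper decomposes $\bigcup_k (a_k,a_k+mh_k]$ into its components $I_\ell$ and, for each one, simply compares extremal endpoints: with $a_0-h_0=\min_i(a_i-h_i)$ and $a_1+mh_1=\max_i(a_i+mh_i)$ over the indices in that component, it observes $|I_\ell|\ge (a_1+mh_1)-a_0\ge\frac{m}{m+1}\bigl((a_1+mh_1)-(a_0-h_0)\bigr)$, and the interval $(a_0-h_0,a_1+mh_1]$ contains every $(a_i-h_i,a_i]$ from that component — a two-line argument with no sorting, no discarding of redundant intervals, and no overlap inequality. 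You instead decompose $\bigcup_k\hat I_k$ into components and run a sorted block decomposition driven by your overlap inequality. What your extra work buys is the marginally stronger conclusion $\bigl|\bigcup_kJ_k\bigr|\ge\frac{m}{m+1}\bigl|\bigcup_k\hat I_k\bigr|$ (the paper only compares against $\bigl|\bigcup_kI_k\bigr|$), together with the observation that each block is filled to density exactly $\frac{m}{m+1}$ in the worst case, which makes the sharpness of the constant transparent; what it costs is the cleanup/relabelling machinery (where, as a small point of care, identical intervals should be handled by keeping one representative rather than discarding both). Either route is fine for the application in Lemma \ref{l:expansion-overlap}.
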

\begin{proof}
We first assume that $k=1,\dots,N$ for some finite number $N$.

Let
\[ \bigcup_{k=1}^N (a_k, a_k+ m h_k) = \bigcup_\ell I_\ell,\]
for a disjoint family of intervals
$I_\ell$. Here, each $I_\ell$ is a union of intervals of the form
$(a_i,a_i+mh_i]$. Let $a_0 - h_0$ be the minimum of $a_i-h_i$ and
$a_1+mh_1$ be the maximum of $a_i+ mh_i$ respectively, for all $i$ so
that $(a_i,a_i+mh_i] \subset I_\ell$. Naturally, we have
\[ 
|I_\ell| \geq (a_1+mh_1) - a_0 \geq \frac m {m+1} ((a_1+mh_1) -
(a_0-h_0)) \geq \frac m {m+1} \left\vert \bigcup_{\{i: (a_i,a_i+mh_i]
    \subset I_\ell\}} (a_i-h_i,a_i] \right\vert,
\] 
Therefore
\begin{align*}
\left|\bigcup_{k=1}^N (a_k, a_k+ m h_k] \right| &= |\bigcup_\ell I_\ell|  = \sum_\ell |I_\ell|, \\
&\geq \frac m {m+1} \sum_\ell \left\vert \bigcup_{\{i: (a_i,a_i+mh_i] \subset I_\ell\}} (a_i-h_i,a_i] \right\vert,\\
&\geq \frac m {m+1} \left\vert \bigcup_{i=1,\dots,N} (a_i-h_i,a_i] \right\vert.
\end{align*}
It is now enough to let $N \to \infty$ to conclude.
\end{proof}
\begin{lemma} \label{l:expansion-overlap} Let $\{Q_j\}$ be a
  collection of slanted cylinders, and $\bar Q_j^m$ be the
  corresponding delayed versions as in \eqref{e:Qdelayed}. Then
\[\abs{\bigcup_{j} \bar Q_j^m} \geq \frac m {m+1} \abs{\bigcup_{j} Q_j}.\] 
\end{lemma}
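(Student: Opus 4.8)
\textbf{Proof strategy for Lemma \ref{l:expansion-overlap}.} The plan is to reduce this multi-dimensional overlap estimate to the one-dimensional statement in Lemma \ref{lem:cover-1d} by slicing along suitable lines. Recall that $Q_j = Q_{r_j}(t_j,x_j,v_j)$ consists of points $(t,x,v)$ with $-r_j^{2s} < t - t_j \le 0$, $|v-v_j| < r_j$, and $|x - x_j - (t-t_j)v_j| < r_j^{1+2s}$, while $\bar Q_j^m$ keeps the same constraint on $v$, allows $0 < t - t_j \le m r_j^{2s}$, and relaxes the $x$-constraint to $|x - x_j - (t-t_j)v_j| < (m+2) r_j^{1+2s}$. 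The key geometric observation is that if we fix a velocity $v$ and look at the "characteristic line" $\ell_v : \tau \mapsto (\tau, x_0 + \tau v, v)$ for an appropriate base point, then the slice $Q_j \cap \ell_v$ (parametrized by $t$) is an interval in $t$ of length $r_j^{2s}$, or empty; and $\bar Q_j^m \cap \ell_v$ is an interval of length $m r_j^{2s}$, or empty. Moreover — and this is the crucial point that makes the radius expansion by the factor $m+2$ necessary — whenever $Q_j \cap \ell_v \ne \emptyset$, the immediately following time interval of length $m r_j^{2s}$ along $\ell_v$ lies inside $\bar Q_j^m$, because over that time span the $x$-component of $\ell_v$ drifts away from $x_j + (t-t_j)v_j$ by at most $|v - v_j|\, m r_j^{2s} < r_j \cdot m r_j^{2s} = m r_j^{1+2s}$, and adding this to the initial $x$-slack $r_j^{1+2s}$ gives at most $(m+1)r_j^{1+2s} < (m+2)r_j^{1+2s}$.

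\textbf{Key steps in order.} First I would set up Fubini: write $|\bigcup_j Q_j| = \int |\{(t,x): (t,x,v)\in \bigcup_j Q_j\}|\, \mathrm{d}v$ and similarly for $\bigcup_j \bar Q_j^m$, so it suffices to prove the inequality for each fixed $v$. Second, for a fixed $v$, I would further slice in $x$ along characteristic lines: parametrize points by $(t, y)$ where $y = x - tv$ (a volume-preserving change of variables for each fixed $v$, with unit Jacobian), so that the $x$-condition for $Q_j$ becomes $|y - (x_j - t_j v_j) - (t-t_j)(v_j - v)|<r_j^{1+2s}$ — but since we have already fixed $v$ and are integrating in $y$, I instead slice in $y$ and reduce to a one-dimensional problem in the single variable $t$. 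Third, for each fixed $(v,y)$ the set $\{t : (t,y,v) \in Q_j\}$ is, by the computation above, either empty or an interval of the form $(a_j - r_j^{2s}, a_j]$, and correspondingly $\{t : (t, y, v) \in \bar Q_j^m\}$ \emph{contains} the interval $(a_j, a_j + m r_j^{2s})$ whenever the former is nonempty — here is exactly where the $(m+2)$ factor absorbs the drift $|v-v_j| m r_j^{2s}$. Fourth, apply Lemma \ref{lem:cover-1d} with $h_j = r_j^{2s}$ to these one-dimensional slices to get that the slice of $\bigcup_j \bar Q_j^m$ has measure at least $\tfrac{m}{m+1}$ times the slice of $\bigcup_j Q_j$. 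Finally, integrate this pointwise inequality back up in $y$ and then in $v$.

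\textbf{Main obstacle.} The one genuine subtlety — and the step I would be most careful about — is verifying that along the fixed characteristic line the slice of $\bar Q_j^m$ really does contain the full length-$m r_j^{2s}$ interval sitting immediately after the slice of $Q_j$, rather than merely an interval of that length placed somewhere possibly disjoint from $Q_j$'s slice. This requires tracking the $x$-drift quantitatively: over $t \in (a_j, a_j + m r_j^{2s}]$, the displacement of the line's $x$-coordinate relative to the "center curve" $x_j + (t - t_j) v_j$ of $Q_j$ grows linearly with slope $|v - v_j| < r_j$, so it stays below $m r_j^{1+2s}$; combined with the slack $r_j^{1+2s}$ already present at $t = a_j$ (since $(a_j,y,v) \in \overline{Q_j}$), the total stays within $(m+1) r_j^{1+2s} \le (m+2) r_j^{1+2s}$, which is precisely the radius allotted to $\bar Q_j^m$. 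Everything else is bookkeeping: Fubini, a unit-Jacobian change of variables, and a direct invocation of Lemma \ref{lem:cover-1d}. Once this containment is nailed down, the proof is essentially immediate.
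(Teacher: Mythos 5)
Your proposal is correct and follows essentially the same route as the paper: Fubini in $v$, the change of variables $x \mapsto x - tv$ with unit Jacobian, the drift estimate $|v-v_j|\,|t-t_j| < (m+1)r_j^{1+2s}$ absorbed by the factor $(m+2)$, and then Lemma \ref{lem:cover-1d} applied to the resulting one-dimensional time slices. One small imprecision: for fixed $(v,y)$ the $t$-slice of $Q_j$ need not be the full interval $(t_j - r_j^{2s}, t_j]$, since the characteristic line can enter or exit the slanted cylinder through its lateral boundary; but the slice is always \emph{contained} in $(t_j - r_j^{2s}, t_j]$ while, by your drift computation, the slice of $\bar Q_j^m$ \emph{contains} all of $(t_j, t_j + m r_j^{2s})$ whenever the $Q_j$-slice is nonempty, so applying the one-dimensional lemma to the full intervals $(t_j - r_j^{2s}, t_j]$ still yields the stated inequality.
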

\begin{proof}
  Because of Fubini's theorem, we know that for any set
  $A \subset \R \times \R^d \times \R^d$,
  \[ |A| = \int |\{(t,x) : (t,x,v) \in A\}| \dd v.\]
  Therefore, in order to prove the lemma, it is enough to show that
  for every $v \in \R^d$,
\begin{equation} \label{e:aim_in_v}
\abs{\left\{(t,x) : (t,x,v) \in \bigcup_{j} \bar Q_j^m \right\}} \geq \frac m {m+1} \abs{\left\{(t,x) : (t,x,v) \in \bigcup_{j} Q_j\right\}}.
\end{equation}
From now on, let $v$ be any fixed $v \in \R^d$.

Any cylinder $Q_j$ corresponds to $Q_{r_j}(t_j,x_j,v_j)$ for some
choice of $r_j>0$, $t_j \in \R$ and $x_j,v_j \in \R^d$. If
$|v-v_j|<r_j$, we have
\[
\left\{(t,x) : (t,x,v) \in \bar Q_j^m \right\} = \{ (t,x) : 
0< t-t_j \leq m r_j^{2s} ,|x-x_j - (t-t_j) v_j| < (m+2) r_j^{1+2s}\}.
\]
The set in the left hand side would be empty when $|v-v_j| \geq r_j$. 

When $|v-v_j| < r_j$, we have $|(t-t_j)(v_j-v)| < m
r_j^{1+2s}$.
Therefore, we can switch $v_j$ with $v$ in the last term, changing the
right hand side, and we obtain a smaller set.
\[
\left\{(t,x) : (t,x,v) \in \bar Q_j^m \right\} \supset \{ (t,x) : 0 < t-t_j \leq m r_j^{2s},
|x-x_j - (t-t_j) v| < 2 r_j^{1+2s}\}.
\]

Let $z = x-tv$. The change of variables $(t,x) \mapsto (t,z)$ has
Jacobian one. We will estimate the measure of the points $(t,z)$ so
that $(t,z+tv)$ belongs to the set above. Thus
\begin{equation} \label{e:is-b2t}
\abs{ \left\{(t,x) : (t,x,v) \in \bigcup_{j:|v-v_j|<r_j} \bar Q_j^m \right\}} \geq \bigg\vert\bigcup_{j:|v-v_j|<r_j} \{ (t,z) : 
0 < t-t_j \leq m r_j^{2s},
|z-x_j + t_j v| < 2r_j^{1+2s}\}\bigg\vert.
\end{equation}

Let $\tilde Q_j$ be the cylinders in $\R \times \R^d$ used in the
right hand side of the inequality above,
\[
\tilde Q_j = \{ (t,z) :  0 < t-t_j \leq m r_j^{2s},
|z-x_j + t_j v| < 2r_j^{1+2s}\}.
\]

Applying Fubini's theorem again,
\begin{align*} 
  \abs{ \bigcup_{|v-v_j|<r_j} \tilde Q_j  } = \int_{\R^d} 
\abs{ \bigcup_{\substack{ \{j : |v-v_j|<r_j, \\
 |z-x_j + t_j v| < 2r_j^{1+2s}\}} } (t_j, t_j+m r_j^{2s}]} \dd z
\end{align*}
Using lemma \ref{lem:cover-1d}, 
\begin{equation} \label{e:is-t2rhs}
 \begin{aligned} 
\abs{ \bigcup_{|v-v_j|<r_j} \tilde Q_j  } &\geq \frac m {m+1}\int_{\R^d} \abs{ \bigcup_{\substack{ \{j : |v-v_j|<r_j, \\  |z-x_j + t_j v| < 2r_j^{1+2s}\} } } (t_j-r_j^{2s},t_j] } \dd z, \\
&= \frac m {m+1} \abs{ \bigcup_{j:|v-v_j|<r_j} \{(t,x) : -r_j^{2s} < t-t_j \leq 0, |x-x_j-(t-t_j)v| \leq 2r_j^{1+2s} \} }, \\
&\geq \frac m {m+1} \abs{ \bigcup_{j:|v-v_j|<r_j} \{(t,x) : -r_j^{2s} < t-t_j \leq 0, |x-x_j-(t-t_j)v_j| \leq r_j^{1+2s} \} }, \\
&= \frac m {m+1} \abs{\left\{(t,x) : (t,x,v) \in \bigcup_{j} Q_j\right\}}.
\end{aligned}
\end{equation}
For the last inequality we used that if $-r_j^{2s} < t-t_j \leq 0$,
then $(t_j-t)|v-v_j| < r_j^{1+2s}$.

Combining \eqref{e:is-b2t} with \eqref{e:is-t2rhs}, we obtain
\eqref{e:aim_in_v} and finish the proof.
\end{proof}
We can now turn to the proof of the main theorem.
\begin{proof}[Proof of Theorem~\ref{t:inkspots-slanted}]
  Let $\mathcal Q$ be the collection of all cylinders $Q \subset Q_1$
  such that $|Q \cap E| \ge (1-\mu) |Q|$. Let
  $G := \bigcup_{Q \in \mathcal Q} Q$. By construction, the sets $E$
  and $G$ satisfy the hypothesis of the Lemma
  \ref{l:inkspots-notimeshift}. Therefore $(1-c\mu) |G| \geq |E|$.

From our hypothesis $\bigcup_{Q \in \mathcal Q} \bar Q^m \subset
F$. We conclude the proof applying Lemma \ref{l:expansion-overlap},
\[|F| \geq \abs{\bigcup_{Q \in \mathcal Q} \bar Q^m} \geq \frac m {m+1} \abs{\bigcup_{Q \in \mathcal Q} Q }
= \frac m {m+1} |G|. \qedhere\] 
\end{proof}
\begin{proof}[Proof of Corollary~\ref{c:inkspots-leakage}]
Note that the condition $|E| \leq (1-\delta) |Q_1|$ is implied by the second assumption when $r_0 < 1$. Moreover, the result is trivial for $r_0 \geq 1$ choosing $C$ sufficiently large.

  Let $\mathcal Q$ be the collection of all cylinders $Q \subset Q_1$
  such that $|Q \cap E| \ge (1-\mu) |Q|$. Let
  $G := \bigcup_{Q \in \mathcal Q} \bar Q^m$. From Theorem
  \ref{t:inkspots-slanted}, we have that
  $|E| \leq \frac m {m+1} (1-c\mu) |G|$. Moreover, our hypothesis tell
  us that $G \subset F$.

  In order to conclude the corollary, we will estimate the measure
  $G \setminus Q_1$ using the fact that each of the cubes
  $Q = Q_r(t,x,v) \subset Q_1$ has radius bounded by $r_0$. Recall
  that
\[ \begin{aligned}
\bar Q^m  &= \{(\bar t,\bar x,\bar v) : 0 < \bar t-t \leq m r^{2s}, \\
& \phantom{= \{(t,x,v) :} |\bar v-v|<r, \\
& \phantom{= \{(t,x,v) :} |\bar x-x-(\bar t-t) v| < (m+2) r^{1+2s}\}.
\end{aligned} \]
Since $Q \subset Q_1$, then $t<0$. So $\bar t \leq m
r_0^{2s}$.
Moreover, $|\bar v|<1$, since the velocities in $\bar Q^m$ are the
same as in $Q$. Also, $|x|<1$, so $|\bar x| \leq 1 + m
r_0^{2s}$.
Therefore,
$\bar Q^m \subset (-1,m r_0^{2s}] \times B_{1+m r_0^{2s}} \times
B_1$. The same thing applies to $G$.
\[ G \subset (-1,m r_0^{2s}] \times B_{1+m r_0^{2s}} \times B_1.\]
Therefore
$|F \cap Q_1| \geq |G \cap Q_1| \geq |G| - |G \setminus Q_1| \geq |G|
- C m r_0^{2s}$ and we conclude the proof.
\end{proof}

\section{Proofs of the main results}
\label{sec:main}

In this section we complete the proofs of our main results. At this
point, the main tools have already been established in previous
sections. The weak Harnack inequality is proved combining the propagation
lemma (Lemma \ref{lem:growth}) with our special version of the
ink-spots theorem (Theorem \ref{t:inkspots-slanted}). The structure of
this proof is inspired by the work of Krylov and Safonov
\cite{ks} for equations in nondivergence form.

\subsection{The weak Harnack inequality}
\label{sec:whi}

\begin{proof}[Proof of Theorem~\ref{thm:whi}]
  We choose $R_1$ to be the radius given in Lemma \ref{lem:growth}. We
  choose $r_0$ sufficiently small so that the set $S(t_0,x_0,v_0)$
  from Corollary \ref{cor:smoothpropagation} contains $Q_+$ for any
  $(t_0,x_0,v_0) \in Q_-$ and $r \in (0,r_0)$.

  Replacing $f$ and $h$ with $c f$ and $ch$ where the constant $c$ is choosen as follows
\[
c = (2\inf_{Q^+}f +  2\|h\|_{L^\infty (Q_1) } )^{-1},
\]
  we reduce to the case where $\inf_{Q^+} f \le 1/2$ and
  $\|h\|_{L^\infty (Q_1)} ) \le 1/2$. 

  We can further reduce to the case $\inf_{Q^+} f \le 1$ and $h =
  0$. Indeed, if the function $f$ is a supersolution of
  \[ f_t + v \cdot \nabla_x f - \Lv f \geq -1/2,\]
  then the function $\tilde f(t,x,v) = f(t,x,v) + (t+1)/2$ is a
  nonnegative function in $[-1,0] \times \R^{2d}$ which is a
  supersolution to \eqref{e:main} with $h=0$. Moreover,
  $ \inf_{Q^+} \tilde f \le \inf_{Q^+} f + 1/2 \le 1$ and
  $f^\eps \leq \tilde f^\eps$.

\medskip

  The proof relies on the application of the propagation
  lemmas~\ref{lem:growth} and Corollary \ref{cor:smoothpropagation}.  The constants
  $M,\delta$ in the remainder of the proof are chosen so that these
  propagation lemmas can be applied.

  We are going to prove that in this case
\[ \int_{Q^-} f^\eps (t,x,v) \dv \dx \dt \le \tilde{C}_{\text{w.h.i.}} .\]
 In order to do so,
it is enough to prove that 
\begin{equation}\label{eq:Mk}
\forall k \ge 1, \qquad |\{ f > \bar M^k \} \cap Q^- | \le C_{\text{w.h.i.}} (1- \delta')^k 
\end{equation} 
for some universal constants $\bar M \ge 1$, $C_{\text{w.h.i.}} \ge 1$ and  $\delta' \in (0,1)$. 

Estimate~\eqref{eq:Mk} is proved by induction.  For $k=1$, we simply
 choose
$C_{\text{w.h.i.}}$ and $\delta'$ so that
\[
|Q^-| \le C_{\text{w.h.i.}} (1-\delta)  \quad \text{ and } \quad
\delta' \le \delta.
\]
Note that by choosing a larger constant $C_{\text{w.h.i.}}$ we can make sure the inequality holds for arbitrarily many values of $k$.

Assume that the inequality holds true up to rank $k \ge 1$ and let us
prove it for $k+1$. We want to apply
Corollary~\ref{c:inkspots-leakage} of the growing ink spots
theorem~\ref{t:inkspots-slanted} with $\mu = \delta$, some integer
$m \ge 1$ (to be fixed later, only depending on $\delta$), and 
\[\bar M = M^m\] 
where $\delta$ and $M$ given by the
propagation lemma~\ref{lem:growth}, and 
\[ E = \{ f > \bar M^{k+1} \} \cap Q^- \quad \text{ and } \quad F = \{ f >\bar M^k \} \cap Q_1. \]

The sets $E$ and $F$ are bounded and measurable and
$E \subset F \subset Q_1$. We consider a cylinder $Q=Q_r(z_0) \subset Q^-$  (in particular $r \in (0,r_0)$) such that
$|Q \cap E | > (1-\delta) |Q|$, that is to say
\begin{equation}\label{e:assum}
 | \{ f > \bar M^{k+1} \} \cap Q^- | > (1- \delta)|Q|. 
\end{equation}
We now prove that $r$ is small. Since we have $\inf_{Q^+} f \leq 1$
and $S(t_0,x_0,v_0)$ contains $Q_+$, 
Corollary~\ref{cor:smoothpropagation} yields 
\[ \bar M^{k+1} \left( 1 + \frac{1 - 2r_0^{2s}}{r^{2s}} \right)^{-p} \lesssim 1.\]
Therefore $r \lesssim \bar M^{-k/(2sp)}$. In particular $\bar Q^m \subset Q_1$ (at least for $k$ large).

Now, we want to prove that, $\bar Q^m \subset F$, that is to say, 
\begin{equation}\label{e:conc}
 \bar Q^m \subset \{ f > \bar M^k \}.
\end{equation}
This follows simply from Corollary \ref{cor:stacked}, with $k=m$ to the function
$\tilde f = \bar M^{-k} f \circ \mathcal{T}_{z_0}$.

Applying Corollary~\ref{c:inkspots-leakage} to $E$ and $F$ with $\mu = \delta$ and $r_0 = \bar M^{-k/(2sp)}$, we get 
\[
|\{ f > \bar M^{k+1} \} \cap Q^-| \le \frac{m+1}m(1-c \delta) \left\{
  | \{ f > \bar M^k \} \cap Q^- | + Cm \bar M^{-k/p} \right\}
\]
where $C= C (s,d)$.
We now use the induction hypothesis and get 
\[ |\{ f > \bar M^{k+1} \} \cap Q^-| \le \frac{m+1}m(1-c \delta) \left\{
  C_{\text{w.h.i.}} (1-\delta')^k + C m \bar M^{-k/p} \right\} .\]
Choosing $\delta'$ smaller than $\bar M^{-1/p}$, we have
\[ |\{ f > \bar M^{k+1} \} \cap Q^-| \le C_{\text{w.h.i.}} \frac{m+1}m(1-c
\delta) \left\{ 1 + C_{\text{w.h.i.}}^{-1} Cm \right\}
(1-\delta')^k. \]
We next pick $m$ large enough (depending on $\delta$) and then $C_{\text{w.h.i.}}$ large enough
(depending on $\delta$ and $m$) so that
\[ \frac{m+1}m (1-c\delta) \left\{ 1 + C_{\text{w.h.i.}}^{-1}
  C m \right\} \le 1 - (c/2)\delta. \]
Now imposing $\delta' < (c/2)\delta$, we get the desired inequality:
 \[ |\{ f > \bar M^{k+1} \} \cap Q^-| \le C_{\text{w.h.i.}} (1-\delta')^{k+1}.\]
This achieves the proof of Estimate~\eqref{eq:Mk} and of the theorem. 
\end{proof}

\subsection{The H\"older estimate}
\label{sec:holder}

In order to prove Theorem~\ref{thm:holder}, we first prove two
preparatory results, Lemma~\ref{l:weak-harnak-with-some-negative} has
the flavor of a weak Harnack inequality, but for supersolutions that
can take (controled) negative values. The lemma then implies
Corollary~\ref{c:improvement_of_oscillation} which is concerned with the
improvement of oscillation of solutions with small forcing terms.
\begin{lemma} \label{l:weak-harnak-with-some-negative}
Let $r_0$ and $R_1$ as in Theorem \ref{thm:whi} and $\rho = r_0/R_1$. Let $\tilde Q_-$ be
\[ \tilde Q_- := Q_{\rho} (-R_1^{-2s}+\rho^{2s},0,0).\]
Let $f: (-1,0] \times B_1 \times \R^d \to \R$ be a function satisfying the following assumptions.
\begin{itemize}
\item $f_t + v \cdot \nabla_x f \geq \Lv f  + h$ in $Q_1$, with $h \geq -\eps_0$;
\item For $t \in (-1,0]$, $x \in B_1$, $v \in B_2$, $f(t,x,v) \in [0,1]$;
\item For  $t \in (-1,0]$, $x \in B_1$ and $v \in \R^d \setminus B_2$, $f(t,x,v) \geq -\left(\frac{|v|}2\right)^{\alpha_0}  +1$;
\item $|\{f \geq 1/2\} \cap \tilde Q_-| \geq \frac 12 |\tilde Q_-|.$
\end{itemize}
If $\alpha_0>0$, $\eps_0>0$ and $\delta>0$ are sufficiently
small, then
\[ f \geq \delta \text{ in } Q_\rho.\]
\end{lemma}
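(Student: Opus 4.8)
The plan is to reduce Lemma~\ref{l:weak-harnak-with-some-negative} to the weak Harnack inequality of Theorem~\ref{thm:whi} by constructing an auxiliary nonnegative supersolution that agrees with $f$ on the relevant cylinders and absorbs both the controlled negativity of $f$ outside $B_2$ and the negative forcing term $h$. First I would introduce a fixed smooth cutoff profile in the velocity variable: let $\psi(v)$ be a function with $\psi \equiv 0$ on $B_2$ and $\psi(v) \geq \left(\tfrac{|v|}{2}\right)^{\alpha_0}$ for $|v| \geq 2$, chosen so that $\psi$ grows slowly at infinity (this is where $\alpha_0 < 2s$ matters, so that $\Lv \psi$ makes sense and is bounded on $B_1$ by the first line of \eqref{e:Kabove}, using an estimate in the spirit of \eqref{e:lip} adapted to the slow growth). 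Then $g := f + \psi$ is nonnegative on all of $(-1,0] \times B_1 \times \R^d$ by the third hypothesis, and it coincides with $f$ on $v \in B_2$, in particular on $Q_1$ and on the cylinders $Q^\pm$ that appear (after rescaling) in Theorem~\ref{thm:whi}.

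\textbf{Key steps.} The next step is to check that $g$ is a supersolution of \eqref{e:main} in $Q_1$ with a forcing term that is controlled from below by a small constant. Indeed, on $Q_1$ we have $v \in B_1 \subset B_2$, so $\psi$ and $\nabla_x \psi$ vanish there and $\partial_t g + v\cdot\nabla_x g = \partial_t f + v\cdot\nabla_x f$; meanwhile $\Lv g = \Lv f + \Lv \psi$, so
\[
\partial_t g + v\cdot\nabla_x g - \Lv g \geq h - \Lv \psi \geq -\eps_0 - \|\Lv \psi\|_{L^\infty(Q_1)} =: -\eps_1.
\]
Since $\Lv\psi$ is bounded by a universal constant times $\Lambda$ (independent of the free parameter $\eps_0$), $\eps_1$ can be made as small as we like by taking $\eps_0$ small — but in fact the cleaner route is to note $\|\Lv\psi\|_\infty$ is simply \emph{some} universal constant; then, exactly as in the first reduction in the proof of Theorem~\ref{thm:whi}, replace $g$ by $\tilde g(t,x,v) := g(t,x,v) + \eps_1(t+1)$, which is a nonnegative supersolution of \eqref{e:main} with $h \equiv 0$ in $Q_1$. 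Rescaling from $Q_1$ to the cylinders $(-1,0]\times B_{R_1^{1+2s}}\times B_{R_1}$ using the invariance transformations of Section~2.3 (the scaling $f_\rho$ with $\rho = r_0/R_1$, which is exactly why $\tilde Q_-$ and $Q_\rho$ are defined with these radii), the hypotheses put us precisely in the setting of Theorem~\ref{thm:whi}: $\tilde g$ is a nonnegative supersolution on $(-1,0]\times B_{R_1^{1+2s}}\times B_{R_1}$ and, by the fourth hypothesis,
\[
\left(\int_{Q^-} \tilde g^{\,\eps}\dv\dx\dt\right)^{1/\eps} \geq c\,\bigl|\{f \geq 1/2\}\cap \tilde Q_-\bigr|^{1/\eps}\cdot \tfrac12 \geq c' > 0
\]
for a universal $c'$, since $\tilde g \geq g \geq f$ on the relevant region and $|\{f\geq 1/2\}\cap\tilde Q_-| \geq \tfrac12|\tilde Q_-|$ is a universal lower bound on the measure.

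\textbf{Conclusion and main obstacle.} Theorem~\ref{thm:whi} then gives $\inf_{Q^+}\tilde g \geq c''$ for a universal $c'' > 0$ (after absorbing the $\|h\|_{L^\infty}=0$ term), hence $\inf_{Q_\rho} f = \inf_{Q_\rho} g - \eps_1(t+1)|_{\max} - \psi|_{B_2} \geq c'' - C\eps_0 - 0 \geq \delta$ once $\eps_0$ and $\delta$ are chosen small enough, noting $Q_\rho \subset \{v\in B_2\}$ so $\psi=0$ there and $\tilde g = f + \eps_1(t+1)$ there with $\eps_1(t+1) \leq \eps_1 \leq C\eps_0$. This yields the claimed bound $f \geq \delta$ in $Q_\rho$. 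The main obstacle I anticipate is the bookkeeping of radii and cylinder placements — verifying that $\tilde Q_-$ and $Q_\rho$, after the $\rho = r_0/R_1$ rescaling, map exactly onto the $Q^-$ and $Q^+$ of Theorem~\ref{thm:whi} (up to the allowed translation invariance), and confirming that all the smallness thresholds ($\alpha_0 < 2s$ for $\Lv\psi$ to be defined and bounded, $\eps_0$ small to keep the corrected supersolution's forcing negligible, $\delta$ small relative to the Harnack constant $c''$) are mutually consistent and only depend on the universal data $d, s, \lambda, \Lambda$. The estimate on $\Lv\psi$ for a slowly-growing $\psi$ requires a small argument: split the integral defining $\Lv\psi(v)$ over dyadic annuli and use that on $B_2$ we have $\psi(v)=0$ while $\psi(v') \lesssim |v'|^{\alpha_0} \lesssim |v-v'|^{\alpha_0}$ for $|v-v'|$ large, so $\int (\psi(v')-\psi(v))K(v,v')\dv' \lesssim \sum_k 2^{k\alpha_0}\cdot\Lambda 2^{-2sk} < \infty$ provided $\alpha_0 < 2s$.
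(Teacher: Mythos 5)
Your overall strategy — reduce to Theorem \ref{thm:whi} by repairing the negativity of $f$ for $|v|\geq 2$ and absorbing the forcing — is the same as the paper's (the paper truncates to $\tilde f_+=\max(\tilde f,0)$ after rescaling and treats $\int_{|w|\geq 2R_1}\tilde f_-(w)K(v,w)\dd w$ as an error, rather than adding a corrector, but these are the same computation). However, there is a genuine gap in your version: your corrector $\psi$ is required to satisfy $\psi(v)\geq(|v|/2)^{\alpha_0}$ for $|v|\geq 2$, hence $\psi\geq 1$ on all of $\R^d\setminus B_2$. For $v\in B_1$ this forces $\Lv\psi(v)=\int_{\R^d\setminus B_2}\psi(v')K(v,v')\dd v'\geq\int_{\R^d\setminus B_2}K(v,v')\dd v'$, a fixed quantity of order $\Lambda$ that does \emph{not} tend to zero as $\alpha_0\to 0$ or $\eps_0\to 0$ (your own dyadic bound $\sum_k 2^{k\alpha_0}\Lambda 2^{-2sk}$ has a $k=0$ term of size $\Lambda$). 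Consequently $\eps_1=\eps_0+\|\Lv\psi\|_{L^\infty}$ is \emph{not} $\leq C\eps_0$ as you assert in the conclusion, and your final inequality $\inf_{Q_\rho}f\geq c''-\eps_1\geq\delta$ has no sign: $c''$ is a fixed universal constant produced by the weak Harnack inequality, and there is no mechanism guaranteeing $c''>\eps_1$ when $\eps_1\gtrsim\Lambda$. The sentence claiming $\eps_1$ can be made small by shrinking $\eps_0$ while simultaneously conceding that $\|\Lv\psi\|_\infty$ is a fixed universal constant is the point where the argument breaks.

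The fix is exactly the cancellation the paper exploits. The third hypothesis gives $f\geq 1-(|v|/2)^{\alpha_0}$, so the negative part satisfies $f_-\leq\bigl((|v|/2)^{\alpha_0}-1\bigr)_+$; you therefore only need $\psi(v)=\bigl((|v|/2)^{\alpha_0}-1\bigr)_+$, which vanishes on $B_2$ and is \emph{small} on every compact set when $\alpha_0$ is small. The dyadic estimate then reads
\[
\|\Lv\psi\|_{L^\infty(B_1)}\;\lesssim\;\Lambda\sum_{k\geq 0}\bigl(2^{(k+1)\alpha_0}-1\bigr)2^{-2sk}\;\xrightarrow[\alpha_0\to 0]{}\;0,
\]
by dominated convergence (uniform convergence of the series for $\alpha_0<s$, say). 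With this choice, $\eps_1\leq 2\eps_0$ after first fixing $\alpha_0$ small, and the rest of your argument — the shift $\tilde g=g+\eps_1(t+1)$, the rescaling identifying $\tilde Q_-$ with $Q^-$ and $Q_\rho$ with $Q^+$, and the application of Theorem \ref{thm:whi} — goes through and coincides with the paper's proof. The subtracted ``$-1$'' in the growth hypothesis is not cosmetic; it is the entire reason the lemma's conclusion can be reached.
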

\begin{proof}
We can assume that $h \le 0$ without loss of generality. 
  Let us first scale the function by defining
  $\tilde f(t,x,v) = f(R_1^{-2s} t, R_1^{-1-2s} x, R_1^{-1} v)$. This
  function satisfies the equation
  \[ \partial_t \tilde f + v \cdot \nabla_x \tilde f - \tilde \Lv f
  \geq -\eps_0 R_1^{-2s} \geq -\eps_0,\]
  in $Q_{R_1}$. The rescaled kernel in $\tilde \Lv$ satisfies the same
  assumptions \eqref{e:Klower}, \eqref{e:Kabove},
  \eqref{e:Kcancellation0}, \eqref{e:Knondegeneracy} if $s < 1/2$, and
  \eqref{e:Kcancellation1} if $s \geq 1/2$. Note that $Q_{R_1}$
  contains $(-1,0] \times B_{R_1^{1+2s}} \times B_{R_1}$.

 Let $\tilde f_+ = \max(\tilde f,0)$. This function satisfies the following equation in $Q_{R_1}$,
\begin{align*} 
 \partial_t \tilde f_+ + v \cdot \nabla_x \tilde f_+ &\geq \int_{\R^d} (\tilde f_+(w) - \tilde f_+(v)) K(v,w) \dd w 
+  h - \int_{|w|\ge 2R_1} \tilde f_-(w) K(v,w) \dd w, \\
&\geq \int_{\R^d} (\tilde f_+(w) - \tilde f_+(v)) K(v,w) \dd w  - 2\eps_0
\end{align*}
provided $\alpha_0$ is small.

Applying Theorem \ref{thm:whi} (the weak Harnack inequality), we get
\begin{align*} 
\tilde  f_+ &\geq \left( \int_{\tilde{Q}_-} \tilde f^\eps \right)^{1/\eps} - 2 \eps_0 && \text{in } Q_+ \\ 
 &\geq \frac 12 |\tilde{Q}_-|^{1/\eps} - 2 \eps_0 && \\ 
 &\geq \delta && \text{ for $\eps_0$ and $\delta$ sufficiently small.}
\end{align*}
Rescaling back to $f$, we finish the proof.
\end{proof}
\begin{cor} \label{c:improvement_of_oscillation}
Let $f$ be a solution of \eqref{e:main} in $Q_1$ with $|h| \leq \eps_0$. Assume that
\[ \osc_{(-1,0] \times B_1 \times B_R} f \leq \left(\frac R 2\right)^{\alpha_0} \quad \text{ for all } R \ge 2.\]
Then
\[ \osc_{Q_\rho} f \leq 1-\delta.\]
Here, $\eps_0>0$, $\delta>0$, $\alpha_0>0$ and $\rho>0$ are the same
constants as in Lemma \ref{l:weak-harnak-with-some-negative}.
\end{cor}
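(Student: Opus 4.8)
The plan is to deduce Corollary~\ref{c:improvement_of_oscillation} from Lemma~\ref{l:weak-harnak-with-some-negative} by the standard dichotomy argument, after normalizing $f$. Since the oscillation of $f$ over $(-1,0]\times B_1\times B_2$ is at most $1$, we may replace $f$ by a translate and rescale so that $0\le f\le 1$ on $(-1,0]\times B_1\times B_2$; we keep the forcing control $|h|\le\eps_0$ (note this bound is scale-invariant in the relevant direction after the normalization, since shifting $f$ by a constant does not change $h$, and the oscillation assumption forces the vertical rescaling factor to be $\le 1$, hence the new $h$ still satisfies $|h|\le\eps_0$). The growth assumption $\osc_{(-1,0]\times B_1\times B_R} f\le (R/2)^{\alpha_0}$ translates, for the normalized function, into the two-sided tail bound
\[
-\left(\tfrac{|v|}2\right)^{\alpha_0}\le f(t,x,v)-c \le \left(\tfrac{|v|}2\right)^{\alpha_0}+1 \qquad\text{for } v\notin B_2,
\]
for a suitable reference constant $c$; in particular one of $f-c+\tfrac12(\tfrac{|v|}{2})^{\alpha_0}$ type quantities lies above $-(\tfrac{|v|}2)^{\alpha_0}+1$ up to the required error.

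Next I would apply the dichotomy. Consider the set $\tilde Q_-$ from Lemma~\ref{l:weak-harnak-with-some-negative}. Either
\[
\bigl|\{f\ge \tfrac12\}\cap\tilde Q_-\bigr|\ge \tfrac12|\tilde Q_-|
\qquad\text{or}\qquad
\bigl|\{f\le \tfrac12\}\cap\tilde Q_-\bigr|\ge \tfrac12|\tilde Q_-|,
\]
i.e. $|\{1-f\ge\tfrac12\}\cap\tilde Q_-|\ge\tfrac12|\tilde Q_-|$. In the first case, $f$ itself is a supersolution of \eqref{e:main} with $h\ge-\eps_0$, takes values in $[0,1]$ on $(-1,0]\times B_1\times B_2$, and satisfies the tail lower bound $f(t,x,v)\ge -(\tfrac{|v|}2)^{\alpha_0}+1$ for $v\notin B_2$ coming from the oscillation hypothesis (after the normalization, choosing the translation constant so that this holds). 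Lemma~\ref{l:weak-harnak-with-some-negative} then yields $f\ge\delta$ on $Q_\rho$, hence $\osc_{Q_\rho}f\le 1-\delta$. In the second case we run the identical argument on $1-f$, which is a supersolution with forcing $-h\ge-\eps_0$ and also satisfies all four hypotheses of the lemma (its tail bound comes from the upper part of the oscillation control), giving $1-f\ge\delta$ on $Q_\rho$, hence again $\osc_{Q_\rho}f\le 1-\delta$.

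The only genuinely delicate point is bookkeeping: one must choose the affine normalization of $f$ (an additive shift plus a scaling factor $\le 1$) so that \emph{simultaneously} (i) $0\le f\le 1$ on $(-1,0]\times B_1\times B_2$, (ii) the rescaled forcing still obeys $|h|\le\eps_0$, and (iii) the tail hypothesis of Lemma~\ref{l:weak-harnak-with-some-negative} holds for whichever of $f$ or $1-f$ is selected by the dichotomy. Point (iii) is where the hypothesis $\osc_{(-1,0]\times B_1\times B_R}f\le(R/2)^{\alpha_0}$ for \emph{all} $R\ge 2$ is used precisely: it controls how far $f$ can stray from its values on $B_2$ as $|v|\to\infty$, which is exactly the form of the admissible-tail condition in the lemma. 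Everything else is a direct citation of Lemma~\ref{l:weak-harnak-with-some-negative}, so no further estimates are needed; I expect this matching-of-constants step to be the main (mild) obstacle, and it is routine.
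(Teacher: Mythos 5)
Your proposal is correct and follows essentially the same route as the paper: a dichotomy on $\tilde Q_-$ at the mid-level, an affine normalization so that the selected function (either $f$ or $1-f$, suitably shifted) satisfies all four hypotheses of Lemma~\ref{l:weak-harnak-with-some-negative}, and then a direct application of that lemma. The only superfluous (and slightly confusing) element is the vertical rescaling: since the oscillation over $(-1,0]\times B_1\times B_2$ is already at most $1$, an additive shift alone (e.g.\ $\bar f = 1-b+f$ with $b=\esssup_{(-1,0]\times B_1\times B_2} f$, as in the paper) puts the function in $[0,1]$ on $B_2$ and yields the tail bound, with no risk of amplifying $h$.
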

\begin{proof}
  Let $a = \essinf_{(-1,0] \times B_1 \times B_2} f$ and
  $b = \esssup_{(-1,0] \times B_1 \times B_2} f$. The values of
  $f(t,x,v)$ are either above or below the middle value $(a+b)/2$ in
  at last half of the points in $\tilde Q_-$. Thus, one of the following
  inequalities holds.
\[ \left\vert \left\{ f  \geq \frac{a+b}2 \right\} \cap \tilde Q_- \right\vert \geq \frac 12 |\tilde Q_-| 
\qquad \text{or} \qquad \left\vert \left\{ f  \leq \frac{a+b}2 \right\} \cap \tilde Q_- \right\vert \geq \frac 12 |\tilde Q_-|.\]
Assume the former. The opposite case would follow from the same proof upside down.

Consider the function 
\[ \bar f(t,x,v) = 1-b + f(t,x,v).\]
This choice is made so that $\esssup_{(-1,0] \times B_1 \times B_2} \bar f = 1$.

Since $\osc_{(-1,0] \times B_1 \times B_2} \bar f \leq 1$, then
$\bar f \in [0,1]$ for $(t,x,v) \in (-1,0] \times B_1 \times B_2$.

Since
$\osc_{(-1,0] \times B_1 \times B_R} \bar f \leq \left(\frac R2
\right)^{\alpha_0}$
for $R \geq 2$ and
$\esssup_{(-1,0] \times B_1 \times B_2} \bar f = 1$, then
$\bar f(t,x,v) \geq 1 - \left(\frac {|v|} 2 \right)^{\alpha_0} $ for
$t \in (-1,0]$, $x\in B_1$ and $v \in \R^d \setminus B_2$.

Thus, $\bar f$ satisfies the hypothesis of
Lemma~\ref{l:weak-harnak-with-some-negative}, $\bar f \in [\delta,1]$
in $B_\rho$, and the corollary follows.
\end{proof}
\begin{proof}[Proof of Theorem~\ref{thm:holder}]
  Without loss of generality we assume $\|f\|_{L^\infty((-1,0] \times B_{1} \times \R^d )} \leq 1$ and
  $\|h\|_{L^\infty(Q_1)} \leq \eps_0$, where $\eps_0$ is the constant from
  Lemma \ref{l:weak-harnak-with-some-negative}. Otherwise, we replace
  $f$ by
\[\tilde f(t,x,v) =  \frac 1 {\|f\|_{L^\infty((-1,0] \times B_{1} \times \R^d )} + \|h\|_{L^\infty(Q_1)} / \eps_0} f(t,x,v).\]
We want to prove that there exists some universal constant $C$ so that for all $r >0$, 
\[ \osc_{Q_r} f \leq C r^\alpha.\]
We choose $\alpha < \min( \alpha_0, \ln(1-\delta)/\ln(\rho/2))$,
where $\rho$, $\delta$ and $\alpha_0$ are the constants from Lemma
\ref{l:weak-harnak-with-some-negative}.

Let $A(r) := \osc_{Q_r} f = \esssup_{Q_r} f - \essinf_{Q_r} f$. It is
a monotone increasing function. We cannot assume a priori that $A$ is
a continuous function, but it is always left continuous. Since $|f|\le 1$,
we also have $A (r) \le 2$ for all $r>0$. Hence, we can choose $C$ large
enough so that $A(r) \le C r^{\alpha}$ for all $r \ge \rho$.  

Assume the theorem is not true, then let
\[ r_0 := \sup\{ r : A(r) > C r^\alpha\} \in (0,\rho).\]
Since $A(r)$ is left continuous, $A(r_0) \geq C r_0^\alpha$.

Let $f_0$ be the rescaled function
\[ f_0(t,x,v) = \frac 1C \left( \frac{\rho}{2r_0}\right)^\alpha
f\left( \left(\frac{r_0}\rho  \right)^{2s} t, \left(\frac{r_0}\rho 
  \right)^{2s+1} x, \frac{r_0}\rho v \right).\]

Since $A(r) \leq C r^\alpha$ for $r > r_0$, 
\[ \osc_{Q_R} f_0 \leq (R/2)^\alpha \qquad \text{ for } R > \rho.\]
In particular, since $\alpha \leq \alpha_0$, we can apply Corollary
\ref{c:improvement_of_oscillation} and obtain that
\[ \osc_{Q_\rho} f_0 \leq 1-\delta.\]
Therefore, in terms of the original function $f$,
\[ A(r_0) \leq C \left( \frac{2r_0}{\rho} \right)^\alpha (1-\delta).\]
This contradicts that $A(r_0) \geq C r_0^\alpha$ since
$\alpha < \ln(1-\delta)/\ln(\rho/2)$, and we finish the proof.
\end{proof}

\appendix
\section{New proofs of known estimates and technical lemmas}
\label{sec:appendix}

\subsection{The coercivity estimate for the Boltzmann kernel}
\label{subsec:lowerbound}

In this appendix we give a geometric proof of the following coercivity estimate for the Boltzmann equation. It says that the Boltzmann kernel satisfies the assumption \eqref{e:Klower}.

\begin{prop} \label{p:lowerbound-app}
Assume that the function $f$ satisfies the inequalities
\begin{align*}
M_1 \leq \int_{\R^d} f(v) \dd v &\leq M_0, \\
\int_{\R^d} |v|^2 f(v) \dd v &\leq E_0, \\
\int_{\R^d}  f(v) \ln f(v) \dd v &\leq H_0.
\end{align*}
Assume also that $f \ast |\cdot|^\gamma$ is bounded by some constant $K_0$. This bound is controlled by $M_0$ and $E_0$ if $\gamma \in [0,2]$, and it is an extra assumption when $\gamma<0$.

Let $g: \R^d \to \R$ be a function supported in $B_R$. Then
\[ -\int_{\R^d} \Lv g(v) g(v) \dd v \geq \lambda \|g\|_{\dot H^s}^2 - \Lambda \|g\|_{L^2}^2.\]
The constants $\lambda$ and $\Lambda$ depend only on $M_0$, $M_1$, $E_0$, $H_0$, $K_0$ the dimension $d$ and the radius $R$.

In particular, for an appropriately larger constant $\Lambda$,
\[ \int_{\R^d} Q(f,g)(v) g(v) \dd v \leq -\lambda \|g\|_{\dot H^s}^2 + \Lambda \|g\|_{L^2}^2.\]
\end{prop}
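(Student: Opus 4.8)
The plan is to prove the coercivity estimate by working directly in physical variables, exploiting the geometric form of the Boltzmann kernel $K_f$ given in Lemma~\ref{l:Kformula}. The key idea is the classical one: the bilinear form $-\int \Lv g\, g$ controls a localized fractional Sobolev seminorm, which we extract by a ``cone of non-degeneracy'' argument. Concretely, first I would use the formula
\[
-\int_{\R^d} \Lv g(v)\, g(v) \dv = \frac12 \iint_{\R^d \times \R^d} (g(v)-g(v'))^2 K_f(v,v') \dv' \dv + \frac12 \int_{\R^d} g(v)^2 \left( PV\int_{\R^d} (K_f(v,v') - K_f(v',v)) \dv' \right) \dv,
\]
where the second term is controlled in absolute value by $C\|g\|_{L^2}^2$ thanks to the cancellation Lemma~\ref{l:Kcancellation0} (using $\|f\star|\cdot|^\gamma\|_{L^\infty}\le K_0$). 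So it suffices to bound below the symmetric part $\Esym(g,g) = \frac12\iint (g(v)-g(v'))^2 K_f(v,v')\dv'\dv$ by $\lambda\|g\|_{\dot H^s}^2 - C\|g\|_{L^2}^2$.

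For the symmetric part, the main step is a pointwise-in-direction lower bound on $K_f$. Using \eqref{defi:kf} and the cross-section assumption \eqref{assum:B}--\eqref{assum:B-bis}, one has $K_f(v,v') \approx |v-v'|^{-d-2s}\int_{w\perp(v-v')} f(v+w)|w|^{\gamma+2s+1}\dd w$. The integral here, as a function of the hyperplane through $v$ orthogonal to the direction $\sigma=\widehat{v-v'}$, is a kind of weighted Radon transform of $f$. The plan is to show that, because $f$ has mass bounded below by $M_1$, bounded energy $E_0$, and bounded entropy $H_0$, there is a set of directions $\sigma$ of positive measure (uniformly), and for each such $\sigma$ a positive-measure set of base points, where this weighted integral is bounded below by a positive constant depending only on $M_1, E_0, H_0, d, R$. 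The entropy bound is what prevents $f$ from concentrating on a lower-dimensional set and thereby making all these slice-integrals small; this is the standard mechanism (cf.\ \cite{advw}), and here it is carried out geometrically rather than via Fourier/Bobylev. Once we have $K_f(v,v') \geq c_0 |v-v'|^{-d-2s}$ for $(v,\sigma)$ ranging over such a set $\mathcal{G}$ with the property that $\mathcal{G}$ ``sees'' enough pairs, we get
\[
\Esym(g,g) \;\geq\; \frac{c_0}{2} \iint_{(v,\widehat{v-v'})\in \mathcal G} \frac{(g(v)-g(v'))^2}{|v-v'|^{d+2s}} \dv' \dv,
\]
and a covering/averaging argument over rotations (since $\mathcal G$ has directions of positive measure and $g$ is supported in $B_R$) upgrades the right-hand side to $c_1\|g\|_{\dot H^s(\R^d)}^2$, at the cost of a term $-C\|g\|_{L^2}^2$ absorbing the contribution of pairs not covered by $\mathcal G$ (those are handled by the trivial bound $\int g^2 |v-v'|^{-d-2s}$ over $|v-v'|\ge \delta$, plus near-diagonal pairs which are nonnegative since $K_f\ge0$).

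The hardest step will be the uniform directional non-degeneracy of the weighted slice-integrals of $f$: quantifying, using only mass (below and above), energy, and entropy, that the weighted Radon-type transform $\sigma \mapsto \int_{w\perp\sigma} f(v+w)|w|^{\gamma+2s+1}\dd w$ cannot be small for $v$ in a large set and $\sigma$ in a large set of directions simultaneously. This is precisely where the entropy bound must be used in an essential, quantitative way — one typically splits $f = f\chi_{f\le A} + f\chi_{f>A}$, controls the large part in $L^1$ by $H_0/\ln A$ (so it is negligible for $A$ large), and controls the truncated part from below on a good set using the lower mass bound and upper energy bound to localize where the mass lives. The remaining steps (the algebraic identity, the cancellation estimate for the antisymmetric part, the rotation-averaging to recover the full $\dot H^s$ norm, and the final passage from $\Lv$ to $Q$ via adding the lower-order term $Q_2$ from Lemma~\ref{l:classic-cancellation} whose contribution is $\le C\|g\|_{L^2}^2$) are routine once this key lemma is in place.
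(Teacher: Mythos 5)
Your overall architecture matches the paper's: split $-\int \Lv g\, g$ into the symmetric Dirichlet form plus an antisymmetric remainder controlled by the cancellation lemma (Lemma~\ref{l:Kcancellation0}), and obtain the lower bound on the symmetric part from a pointwise lower bound $K_f(v,v')\gtrsim |v-v'|^{-d-2s}$ valid on a cone of directions extracted from the mass/energy/entropy bounds (this is Lemma~\ref{l:lifted-set} and Lemma~\ref{l:cone_of_directions} in the paper, and your truncation-by-entropy mechanism is the right one there).

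However, there is a genuine gap in the step where you upgrade the cone-restricted form to the full $\dot H^s$ seminorm. You propose to handle the pairs $(v,v')$ whose direction is \emph{not} in the good set by ``absorbing'' them into $-C\|g\|_{L^2}^2$, keeping only the far-apart ones via the trivial bound and discarding the near-diagonal ones ``since $K_f\ge 0$.'' Nonnegativity of $K_f$ only lets you drop those pairs from the \emph{left-hand side}; it does nothing to recover their contribution $\iint_{|v-v'|<\delta}(g(v)-g(v'))^2|v-v'|^{-d-2s}$ on the right-hand side, and that contribution is genuinely part of $\|g\|_{\dot H^s}^2$ and is not controlled by $\|g\|_{L^2}^2$. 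A ``rotation-averaging'' argument is also not available, because the good direction set $A(v)$ depends on the base point $v$ (it is determined by where the mass of $f$ sits relative to $v$) and one cannot rotate the kernel. The missing idea is the paper's two-cone chaining argument: for arbitrary $v_1,v_2$ one writes $|g(v_1)-g(v_2)|^2\le 2|g(v_1)-g(z)|^2+2|g(v_2)-g(z)|^2$ for intermediate points $z\in \cone(v_1)\cap\cone(v_2)$ at distance comparable to $|v_1-v_2|$, and then integrates over $z$. For this to work one needs $|\cone(v_1)\cap\cone(v_2)\cap B_{C|v_1-v_2|}(v_2)|\gtrsim |v_1-v_2|^d$ (Lemma~\ref{l:cone_to_cone}), which in turn requires the quantitatively stronger property that \emph{every great circle} of $\Sp$ meets $A(v)$ in one-dimensional measure bounded below — not merely that $A(v)$ has positive surface measure, which is all your good-set construction delivers. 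Without that refinement of the non-degeneracy lemma, the intersection of two cones with distant vertices could be too small and the chaining fails; this is precisely the extra content of Lemma~\ref{l:cone_of_directions} compared with Lemma 4.8 of \cite{luis}, and it is the step your proposal does not supply.
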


Note that the extra assumptions about the boundedness of $f \ast |\cdot|^\gamma$ comes from the usual condition for the classical cancellation Lemma to give us a bounded function. It is the same assumption as in Lemma \ref{l:Kcancellation0}.

The constant $c$ above may go to zero as $R \to \infty$ depending on the value of $\gamma$. The precise optimal rate for this can be easily deduced from the proof. We explain this in remark \ref{r:rate}

The proofs of Proposition \ref{p:lowerbound-app} that can be found in
the Boltzmann literature are done using Fourier analysis. Here, we
present a relatively elementary proof based on a direct computation
and a geometric argument in physical variables.

We define $K_f$, $Q_1(f,g) = \Lv g$ and $Q_2(f,g) = c (f\ast |\cdot|^\gamma) \, g$ as described in Section \ref{sec:boltzmann}.


In \cite{luis}, there is an estimate for $K_f$ in terms of a simplified integral expression. It says that 
\begin{equation} \label{e:Kfsimplified}
 \begin{aligned}
K_f(v,v') &\approx \left( \int_{\left\{w \cdot (v'-v) = 0 \right \}} f(v+w)\ |w|^{\gamma + \angulars + 1}  \dd w \right) |v'-v|^{-d-\angulars}. 
\end{aligned} 
\end{equation}

\subsubsection{Lower bounds for $K_f$ in a cone of directions}

We obtain a lower bound for $K_f(v,v')$ in a symmetric cone of
directions with vertex $v$. This was done in \cite{luis}. It follows
essentially from the following lemma.

\begin{lemma} \label{l:lifted-set}
Let $f :\R^N \to R$ be as in Proposition \ref{p:lowerbound-app}.
There exists an $r>0$, $\ell>0$ and $m>0$ depending on $M_1$, $E_0$ and $H_0$ such that
\[ |\{v : f(v)>\ell\} \cap B_r| \geq m \]
\end{lemma}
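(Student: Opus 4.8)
The plan is to combine three standard ingredients: a Chebyshev-type bound coming from the energy to localize the mass in a fixed ball, the entropy bound to rule out concentration of mass on a set where $f$ is enormous, and a pigeonhole argument on the remaining ``intermediate'' range of values of $f$. The radius $r$ will be fixed by the first step, and $\ell$ and $m$ by the last two.

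First I would use the energy bound. Set $r := (2E_0/M_1)^{1/2}$. Chebyshev's inequality gives $\int_{\{|v|>r\}} f \dv \le r^{-2}\int_{\R^d} |v|^2 f \dv \le E_0/r^2 = M_1/2$, and since $\int_{\R^d} f \dv \ge M_1$ this yields $\int_{B_r} f \dv \ge M_1/2$.

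Second, I would control the contribution to $\int_{B_r} f\dv$ of the set where $f$ is very large. For a threshold $L \ge e$, on $\{f \ge L\}$ one has $f\ln f \ge (\ln L)\, f \ge 0$, so $\int_{B_r \cap \{f\ge L\}} f \dv \le (\ln L)^{-1}\int_{B_r \cap \{f\ge L\}} f\ln f \dv$. To bound the latter integral I would write it as $\int_{\R^d} f\ln f \dv - \int_{E} f\ln f\dv$ with $E := \R^d \setminus (B_r \cap \{f \ge L\})$, and estimate $\int_E f\ln f\dv$ from below. This is the only genuinely delicate point: $f\ln f$ changes sign and $|E| = +\infty$, so the crude bound $t\ln t \ge -1/e$ is useless. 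Instead I would use the elementary inequality $t\ln t \ge ta - e^{a-1}$, valid for all $t \ge 0$ and all real $a$ (minimize $t \mapsto t(\ln t - a)$), applied pointwise with $a = -|v|^2$: this gives $f(v)\ln f(v) \ge -f(v)|v|^2 - e^{-|v|^2-1}$, and integrating over $E \subset \R^d$ gives $\int_E f\ln f\dv \ge -E_0 - c_d$ with $c_d := e^{-1}\pi^{d/2}$. Together with $\int_{\R^d} f\ln f\dv \le H_0$ this yields $\int_{B_r \cap \{f\ge L\}} f\dv \le (H_0 + E_0 + c_d)/\ln L$, so choosing $L$ large (depending only on $M_1$, $E_0$, $H_0$, $d$) makes this at most $M_1/8$.

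Finally, I would fix $\ell>0$ so small that $\ell\,|B_r| \le M_1/8$, and split $\int_{B_r} f\dv = \int_{B_r\cap\{f\le\ell\}} f\dv + \int_{B_r\cap\{\ell<f<L\}} f\dv + \int_{B_r\cap\{f\ge L\}} f\dv$. The first term is at most $\ell\,|B_r| \le M_1/8$ and the third at most $M_1/8$, so the middle term is at least $M_1/2 - M_1/4 = M_1/4$. On that middle set $f < L$, hence $M_1/4 \le L\,|\{f>\ell\}\cap B_r|$, which gives the claim with $m := M_1/(4L)$. The main obstacle is the entropy estimate in the second step — handling the sign of $f\ln f$ on an unbounded region — which the inequality $t\ln t \ge ta - e^{a-1}$ with $a = -|v|^2$ resolves by borrowing integrability from the energy bound; everything else is bookkeeping.
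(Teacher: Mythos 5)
Your proof is correct and complete: the Chebyshev localization via the energy, the entropy bound on $\{f\ge L\}$ using $t\ln t \ge ta - e^{a-1}$ with $a=-|v|^2$ to handle the negative part of $f\ln f$ on an unbounded set, and the final pigeonhole on the intermediate level set all check out (including the arithmetic $M_1/2 - M_1/8 - M_1/8 = M_1/4$ and the choice $m = M_1/(4L)$). The paper states this lemma without proof, treating it as a classical non-concentration estimate; your argument is precisely the standard one, so there is nothing to add.
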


Combining Lemma \ref{l:lifted-set} with the expression
\eqref{e:Kfsimplified}, we deduce the following statement. It is
essentially the same as Lemma 4.8 in \cite{luis}, but with a more
detailed description of the cone of directions where the lower bound
holds.

\begin{lemma} \label{l:cone_of_directions}
Let $f :\R^N \to R$ be non-negative and
\begin{align*}
M_1 \leq \int_{\R^N} f(v) \dd v &\leq M_0, \\
\int_{\R^N} |v|^2 f(v) \dd v &\leq E_0, \\
\int_{\R^N} f(v) \ln f(v) \dd v &\leq H_0.
\end{align*}	

For any $v \in \R^d$, there exists a set of directions $A = A(v) \in\partial B_1$, so that $K_f(v,v') \geq \lambda (1+|v|)^{1+\angulars+\gamma} |v-v'|^{-d-\angulars}$ for all $v'$ so that $(v'-v)/|v'-v| \in A$.

Moreover, this set of directions $A \subset S^{d-1}$ satisfies the following properties.
\begin{itemize}
\item $A$ is symmetric: $A = -A$.
\item Any big circle in $S^{d-1}$ 
intersects $A$ on a set of (one dimensional) measure at least $c (1+|v|)^{-1}$. In particular, the $(d-1)$ dimensional measure of $A$ is at least $\mu(v) := c(1+|v|)^{-1}$.
\item $A$ is contained on a strip of width $\leq C(1+|v|)^{-1}$ around the equator perpendicular to $v$.
\end{itemize}
\end{lemma}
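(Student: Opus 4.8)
The statement to prove is Lemma~\ref{l:cone_of_directions}: given $f$ with controlled mass, energy and entropy, for each $v$ there is a symmetric set of directions $A(v)\subset\partial B_1$ such that $K_f(v,v')\gtrsim (1+|v|)^{1+2s+\gamma}|v-v'|^{-d-2s}$ whenever $(v'-v)/|v'-v|\in A$, with $A$ meeting every great circle in measure $\gtrsim(1+|v|)^{-1}$, of total measure $\gtrsim(1+|v|)^{-1}$, and contained in a strip of width $\lesssim(1+|v|)^{-1}$ around the equator perpendicular to $v$.

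\begin{proof}[Proof plan]
The plan is to start from the simplified expression \eqref{e:Kfsimplified}, namely $K_f(v,v')\approx |v'-v|^{-d-2s}\int_{\{w\cdot(v'-v)=0\}}f(v+w)|w|^{\gamma+2s+1}\,\mathrm dw$, so that the whole problem reduces to bounding below, for a direction $e=(v'-v)/|v'-v|$, the quantity $I(v,e):=\int_{\{w\perp e\}}f(v+w)|w|^{\gamma+2s+1}\,\mathrm dw$ over the hyperplane through $v$ perpendicular to $e$. First I would invoke Lemma~\ref{l:lifted-set} to get a fixed ball $B_r$ and constants $\ell,m>0$ (depending only on $M_1,E_0,H_0$) such that $|\{f>\ell\}\cap B_r|\ge m$. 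Call this set $G:=\{f>\ell\}\cap B_r$; it has positive measure and sits in a fixed ball near the origin.

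The next step is the geometric heart of the argument: I want to define $A(v)$ to be (a symmetrized version of) the set of unit vectors $e$ such that the hyperplane $v+e^\perp$ passes sufficiently close to a substantial portion of $G$. Quantitatively, for the hyperplane $\Pi_e=v+e^\perp=\{v+w:w\perp e\}$ to contribute, I need points $v+w\in G$, i.e. $w=z-v$ for $z\in G$ with $(z-v)\perp e$; equivalently $e\perp(z-v)$. So $e$ is a ``good'' direction precisely when $e$ is (nearly) orthogonal to $z-v$ for a non-negligible set of $z\in G$. Since $G$ is bounded and has measure $\ge m$, the vectors $\{z-v:z\in G\}$ all have length comparable to $|v|$ when $|v|$ is large (and bounded when $|v|$ is small). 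I would let $A(v)$ be the set of $e\in\partial B_1$ with $\mathrm{dist}(e, (\mathrm{span}\{z-v\})^\perp)$ small for a fixed fraction of $z\in G$ — more concretely, a thickened ``equatorial'' band of directions perpendicular to the average displacement direction $\overline{v-z}$. For such $e$, on the slab of width $\sim |v|^{-1}$ around $\Pi_e$ we still capture a fixed fraction of $G$, so $I(v,e)\gtrsim \ell\cdot |v|^{\gamma+2s+1}\cdot m'$ for some $m'\gtrsim 1$ (the factor $|w|^{\gamma+2s+1}\approx|z-v|^{\gamma+2s+1}\approx(1+|v|)^{\gamma+2s+1}$ since $z\in B_r$). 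Plugging into \eqref{e:Kfsimplified} gives the claimed pointwise lower bound on $K_f$.

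For the three structural properties of $A(v)$: symmetry $A=-A$ is built in since $\Pi_e=\Pi_{-e}$, so I would symmetrize by hand. The strip property — $A$ lies in a band of width $\lesssim(1+|v|)^{-1}$ around the equator $\perp v$ — follows because any $e$ perpendicular to some $z-v$ with $z\in B_r$ differs from being perpendicular to $v$ by an angle $\lesssim r/|v|\lesssim(1+|v|)^{-1}$; so all of $A(v)$ clusters near $v^\perp\cap\partial B_1$. The great-circle property — every great circle of $\partial B_1$ meets $A(v)$ in one-dimensional measure $\gtrsim(1+|v|)^{-1}$ — is the subtler point: a great circle is $\partial B_1\cap P$ for a 2-plane $P$; it intersects the equatorial band $v^\perp$ in two antipodal points (generically), and I need the band to have enough ``thickness'' along the circle, which is exactly the $(1+|v|)^{-1}$ width, giving arclength $\gtrsim(1+|v|)^{-1}$. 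The total measure lower bound then follows by integrating, or directly from the band having width $\gtrsim(1+|v|)^{-1}$ in the directions transverse to the equator while wrapping all the way around. I expect the main obstacle to be making the great-circle intersection estimate uniform: one must rule out degenerate great circles nearly tangent to the equatorial band, which requires that $A(v)$ be a genuine neighborhood (a true codimension-one band, not a lower-dimensional set) — this is why one needs $|\{f>\ell\}\cap B_r|\ge m$ with positive $d$-dimensional measure rather than, say, mass concentrated on a sphere, and it is where Lemma~\ref{l:lifted-set} is essential. Once the band is shown to have uniform width $\sim(1+|v|)^{-1}$ in every transverse direction, the three properties and the $K_f$ bound all follow by elementary spherical geometry, so I would not grind through those estimates in detail.
\end{proof}
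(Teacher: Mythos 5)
Your skeleton --- reduce to the hyperplane integral via \eqref{e:Kfsimplified}, extract the set $F=\{f>\ell\}\cap B_r$ of measure at least $m$ from Lemma \ref{l:lifted-set}, and then argue geometrically about which hyperplanes through $v$ meet $F$ --- is the same as the paper's. But the step where you pass from ``on the slab of width $\sim|v|^{-1}$ around $\Pi_e$ we still capture a fixed fraction of $G$'' to the lower bound $I(v,e)\gtrsim \ell\,(1+|v|)^{\gamma+2s+1}m'$ is a genuine gap. The quantity $I(v,e)$ is a $(d-1)$-dimensional integral over the single hyperplane $\Pi_e=v+e^\perp$; what you need is a lower bound on $\mathcal{H}^{d-1}(G\cap\Pi_e)$, and $d$-dimensional mass of $G$ inside a slab says nothing about any one slice of that slab. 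Since $G$ is only known to have positive Lebesgue measure (it could be a union of many tiny balls, or a fat Cantor-type set), a given hyperplane in your band can thread through the slab with $\mathcal{H}^{d-1}(G\cap\Pi_e)=0$, and then $K_f(v,v')$ has no lower bound in that direction. For the same reason your plan to take $A(v)$ to be an honest band cannot work: there is in general no band of directions all of whose perpendicular hyperplanes slice $G$ substantially. You correctly sensed that proving $A$ is ``a genuine neighborhood'' is the sticking point, but this is not a technicality to be ground through --- it is false, and it is an artifact of the wrong definition of $A(v)$.

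The paper's fix is to define $A(v)$ tautologically as the set of $\sigma$ for which $\mathcal{H}^{d-1}(F\cap\Pi_\sigma)$ exceeds a threshold of order $m/(1+|v|)$, so that the pointwise bound on $K_f$ is immediate from \eqref{e:Kfsimplified}, and to obtain the great-circle property by a Fubini/coarea argument carried out on each great circle separately: as $\sigma$ runs over a great circle, each point $z\in F$ lies on $\Pi_\sigma$ for two antipodal values of $\sigma$, and since $F\subset B_r$ forces $|z-v|\le r+|v|$, one gets $\int_{\mathrm{circle}}\mathcal{H}^{d-1}(F\cap\Pi_\sigma)\,d\sigma\gtrsim m/(1+|v|)$; combined with the trivial upper bound $\mathcal{H}^{d-1}(F\cap\Pi_\sigma)\le C r^{d-1}$, Chebyshev's inequality yields a set of good $\sigma$ of one-dimensional measure $\gtrsim(1+|v|)^{-1}$ on that circle. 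This produces a measurable set $A(v)$ with all three stated properties without its ever being a neighborhood. Your observations on the symmetry $A=-A$ and on the strip of width $\lesssim(1+|v|)^{-1}$ around the equator perpendicular to $v$ are correct and coincide with the paper's.
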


By a big circle, we mean a closed geodesic in $S^{d-1}$. They are the intersection of $S^{d-2}$ with any $2$-dimensional subspace.

The proof of Lemma \ref{l:cone_of_directions} is similar to the one of Lemma 4.8 in \cite{luis}. Here we have a more precise description than in that paper because we add a lower bound of the measure of the intersection of $A$ with any big circle instead of only its total measure. The proof is relatively easy to explain with a picture on the blackboard, but perhaps somewhat cumbersome to write down.

\begin{proof}
Let $F = \{v : f(v)>\ell\} \cap B_r$ be the set described in Lemma \ref{l:lifted-set}, which has measure at least $m$.

From the formula for $K_f$ given in \eqref{e:Kfsimplified}, one immediately sees that $\sigma \in A(v)$ when the hyperplane perpendicular to $\sigma$ intersects $F$ in a set of measure at least  $cm/r$, with $\lambda = c \ell m / r$.

The three properties described in the lemma are simple geometric consequences of this construction using only that the measure of $F$ is bounded below and $F \subset B_r$ for some given constant $r$. Indeed, as $\sigma$ takes all values on a big circle in $\partial B_1$, its perpendicular hyperplanes swipe the space $\R^d$ (see Figure \ref{fig:swipe}). 

\begin{figure}[ht]
\setlength{\unitlength}{1in} 
\begin{picture}(1.647 ,2.000)
\put(0,0){\includegraphics[height=2.000in]{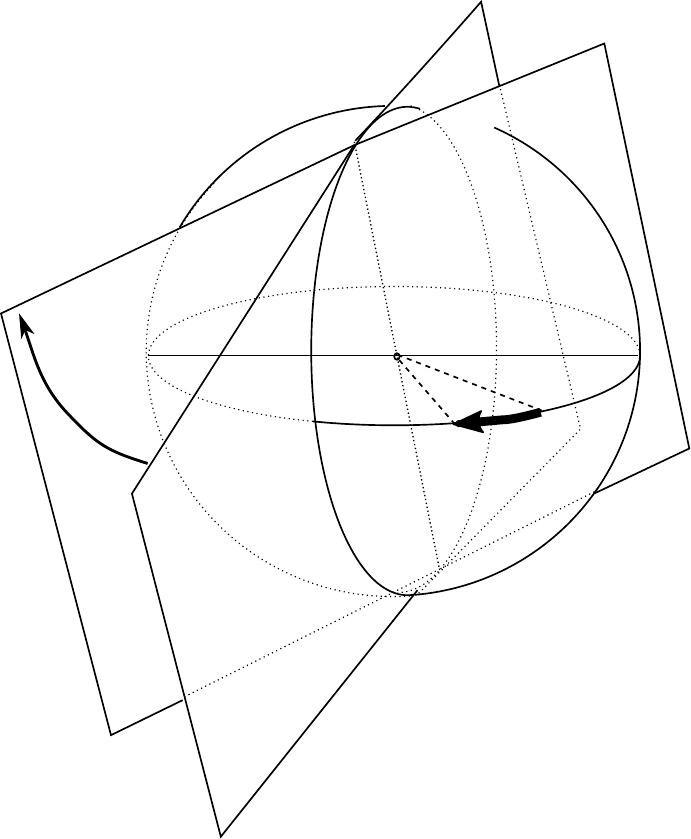}}
\put(1.21,0.9){$\sigma$}
\end{picture}
\caption{As $\sigma$ moves along a big circle in $\partial B_1$, its perpendicular planes swipe the space}
\label{fig:swipe}
\end{figure}
Because of Fubini's theorem, the points $\sigma$ on that big circle for which its perpendicular hyperplane intersects $F$ in a set of measure at least  $cm/(1+|v|)$ has to be at least of measure $cm / (1+|v|)$. 

Note that depending on the direction of the big circle, the lower bound on its intersection with $A(v)$ could be improved. For example, if the big circle is perpendicular to $v$, the measure of its intersection with $A(v)$ is bounded below independently of $v$. This fact will not be relevant to any of the computations below.
\end{proof}

Figure~\ref{fig:cone} is taken from \cite{luis} and shows all the
elements in Lemma \ref{l:cone_of_directions}.
\begin{figure}
\setlength{\unitlength}{1in} 
\begin{picture}(2.11111,2.2)
\put(0,0){\includegraphics[height=2in]{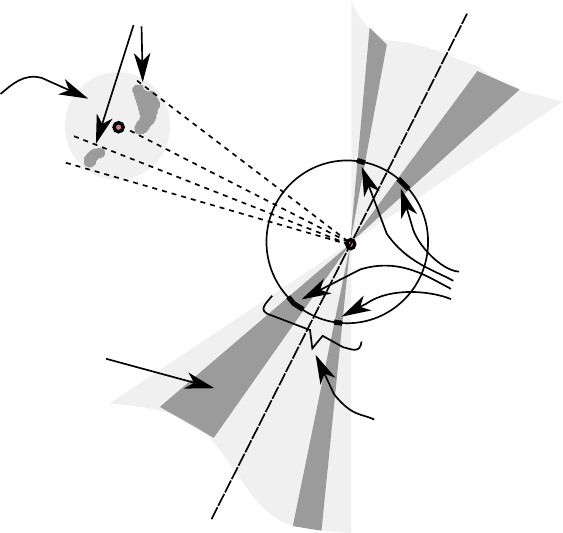}}
\put(0.35,1.94){$\{f \geq \ell\}$}
\put(-0.174,1.54){$B_r$}
\put(1.74,0.882){The set $A$ on $\partial B_1$}
\put(1.46,0.41){$A$ is contained inside a band}
\put(1.46,0.216){of width at most $C/|v|$}
\put(-1.58,0.66){Here $K_f(v,v')$ is bounded below}
\end{picture}
\caption{The geometric setting of
  Lemma~\ref{l:cone_of_directions}. The cone $\cone(v)$ is generated
  by the set $\{ f \ge l \}$.}
\label{fig:cone}
\end{figure}

We may call $\cone(v)$ the symmetric cone of values of $v'$ so that $(v'-v)/|v'-v| \in A$. In particular, the lower bound $K_f(v,v') \geq \lambda (1+|v|)^{1+\angulars+\gamma} |v-v'|^{-d-\angulars}$ holds when $v' \in \cone(v)$. 

The second item in Lemma \ref{l:cone_of_directions} says that there is a universal lower bound on the density of $\Xi(v)$ inside the cone of $v'$ given by \eqref{e:filled_cone}. This is all we will use in order to prove the coercivity estimate below.

The third item in the properties of $A$ says that for any $v' \in \cone(v)$, 
\begin{equation} \label{e:filled_cone}
 |v \cdot (v'-v) | \leq C|v'-v|.
\end{equation}
This third point plays no role in the local version of the coercivity
estimate. It is useful to understand the global coercivity estimate as
explained in Remark \ref{r:rate}.

\subsubsection{Proof of the lower bound}
\label{s:lb}

It turns out that the conditions on the kernel $K_f$ given by Lemma \ref{l:cone_of_directions} plus the cancellation lemma is all we need to obtain the bound from below of Proposition \ref{p:lowerbound-app}. For some arbitrary $R>0$, let us call $\mu := c(1+R)^{-1}$ to the lower bound on the one-dimensional measure of the intersection of $A(v)$ with big circles as in the last item of Lemma \ref{l:cone_of_directions}.

We start with a few preparatory lemmas.

\begin{lemma} \label{l:line_and_cone}
(See Figure \ref{fig:line-cone}) Let $\cone(v)$ be the cones corresponding to the sets of directions $A(v)$ as in Lemma \ref{l:cone_of_directions}. Let $\linenotell$ be a line in $\R^d$ at distance $\rho>0$ from a point $v \in \R^d$. Then, for some constants $c$ and $C$ depending only on $\mu(v)$, 
\[ |\cone(v) \cap \linenotell \cap B_{C \rho}| \geq c \rho.\]
\end{lemma}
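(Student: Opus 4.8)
\textbf{Plan for the proof of Lemma \ref{l:line_and_cone}.}

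The plan is to reduce the statement to a two-dimensional picture. Let $\rho>0$ be the distance from $v$ to the line $\linenotell$, and let $m$ be the point on $\linenotell$ closest to $v$, so that $(m-v)\perp \linenotell$ and $|m-v|=\rho$. Consider the $2$-plane $\Pi$ containing $v$ and $\linenotell$ (this is well-defined since $\linenotell$ does not pass through $v$); it contains the segment from $v$ to $m$ and the whole line $\linenotell$. I would parametrize the points of $\cone(v)$ lying in $\Pi$ by the half-lines emanating from $v$ inside $\Pi$: a direction $\sigma\in\partial B_1$ lies in the $2$-plane through $v$ parallel to $\Pi$, and the relevant set of such directions forms an arc of the ``big circle'' $C_\Pi := \partial B_1 \cap (\Pi - v)$. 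By the second bullet of Lemma \ref{l:cone_of_directions}, the arc $A(v)\cap C_\Pi$ has one-dimensional measure at least $\mu(v)$. Because $A(v)=-A(v)$, we may pick a sub-arc of length $\ge \mu(v)/2$ contained in one of the two open half-circles of $C_\Pi$ determined by the line through $v$ in the direction of $m-v$; call $I$ the corresponding set of directions, all making an angle in $[\mu_0,\pi-\mu_0]$ — wait, more carefully: a positive-measure arc of $C_\Pi$ must contain directions $\sigma$ whose angle $\alpha$ with $\widehat{m-v}$ ranges over an interval of length $\ge \mu(v)/4$ bounded away from $0$ and $\pi$ by a quantity $c(\mu(v))>0$ (otherwise the whole arc would be squeezed near the two poles $\pm\widehat{m-v}$, which can only happen for a very short arc; I would make this elementary angular estimate precise).

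Next I would carry out the elementary planar computation. For a direction $\sigma\in\Pi-v$ making angle $\alpha$ with $\widehat{m-v}$, the half-line $v + t\sigma$, $t>0$, meets the line $\linenotell$ at the unique parameter $t(\alpha) = \rho/\cos\alpha$ (valid precisely when $\alpha\in(-\pi/2,\pi/2)$); the intersection point is $p(\alpha)$ with $|p(\alpha)-v| = \rho/\cos\alpha$ and $|p(\alpha)-m| = \rho\tan\alpha$. Since $\alpha$ is restricted to the interval described above, $\cos\alpha \ge c(\mu(v))$, hence $|p(\alpha)-v| \le C\rho$ for $C = C(\mu(v))$; this gives the containment $p(\alpha)\in B_{C\rho}(v)$, so that $p(\alpha)\in \cone(v)\cap\linenotell\cap B_{C\rho}$. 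Finally, the map $\alpha \mapsto p(\alpha)$ is a smooth parametrization of an arc of $\linenotell$ with $|p'(\alpha)| = \rho/\cos^2\alpha \ge \rho$, so the image of the interval of allowed $\alpha$'s, which has length $\ge c(\mu(v))$, has one-dimensional (arclength) measure at least $c(\mu(v))\,\rho$. Since every such $p(\alpha)$ lies on $\linenotell$ and in $\cone(v)$ and in $B_{C\rho}(v)$, this yields $|\cone(v)\cap\linenotell\cap B_{C\rho}| \ge c\rho$ with $c=c(\mu(v))$, as claimed.

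The one point requiring care — the main (mild) obstacle — is the angular bound: I must ensure that a positive-length arc of the big circle $C_\Pi$ cannot be concentrated entirely near the two antipodal directions $\pm\widehat{m-v}$ where $\cos\alpha$ degenerates. This is true because $A(v)$ is symmetric and has total arc-length $\ge \mu(v)$ on $C_\Pi$; even if $A(v)\cap C_\Pi$ does cluster near $\pm\widehat{m-v}$, symmetry plus positivity of the measure force it to occupy a sub-interval of $\alpha$-values of length $\ge \mu(v)/4$ bounded away from $\pm\pi/2$ by a quantity depending only on $\mu(v)$ — e.g.\ at least half of the mass of a symmetric set of total length $\mu(v)$ on a circle of length $2\pi$ lies outside the $\mu(v)/4$-neighborhoods of the two poles, so it sits in the region $|\alpha|\le \pi/2 - \mu(v)/8$, whence $\cos\alpha \ge \sin(\mu(v)/8)\ge c(\mu(v))$. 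Everything else is routine trigonometry and a change of variables, and all constants track back to $\mu(v)$ only, matching the statement.
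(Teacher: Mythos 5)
Your proof is correct and follows essentially the same route as the paper: restrict to the big circle of directions in the plane spanned by $v$ and $\linenotell$, use the second bullet of Lemma \ref{l:cone_of_directions} plus the symmetry $A(v)=-A(v)$ to find a set of directions of angular measure $\gtrsim \mu(v)$ pointing toward the line and bounded away from being parallel to it, and then push angular measure forward to arclength on $\linenotell$ via the map $\alpha\mapsto \rho/\cos\alpha$, whose derivative is at least $\rho$. The only blemish is a mislabeling: the directions where $\cos\alpha$ degenerates are $\pm\hat\linenotell$ (parallel to the line), not $\pm\widehat{m-v}$, but the quantitative estimate you actually use ($|\alpha|\le \pi/2 - c(\mu(v))$, hence $\cos\alpha\ge c(\mu(v))$) is the correct one.
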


\begin{proof} 
The projection of the line $\linenotell-v$ on the sphere $\Sp$ is half of a big circle. According to Lemma \ref{l:cone_of_directions}, the intersection of this projection with the set of directions $A = A(v)$ has (one-dimensional) measure at least $\mu(v)/2$ (recall that $A(v)$ is symmetric). At least half of these directions form an angle with $\linenotell$ of at least $\mu(v)/8$. For each of these points $z \in A(v) \subset \Sp$, there corresponds an actual intersection point in $v+\alpha z \in \cone(v) \cap \linenotell$, with $\alpha \in [\rho, 8 \mu(v)^{-1} \rho]$. Thus, the one dimensional measure of the points $v+\alpha z \in \cone(v) \cap \linenotell \cap B_{C \rho}$ is bounded below by $c \rho$, where $C = 8 \mu(v)^{-1}$ and $c = \mu(v)/4$.

\begin{figure}[ht]
\setlength{\unitlength}{1in} 
\begin{picture}(2.287 ,2.500)
\put(0,0){\includegraphics[height=2.500in]{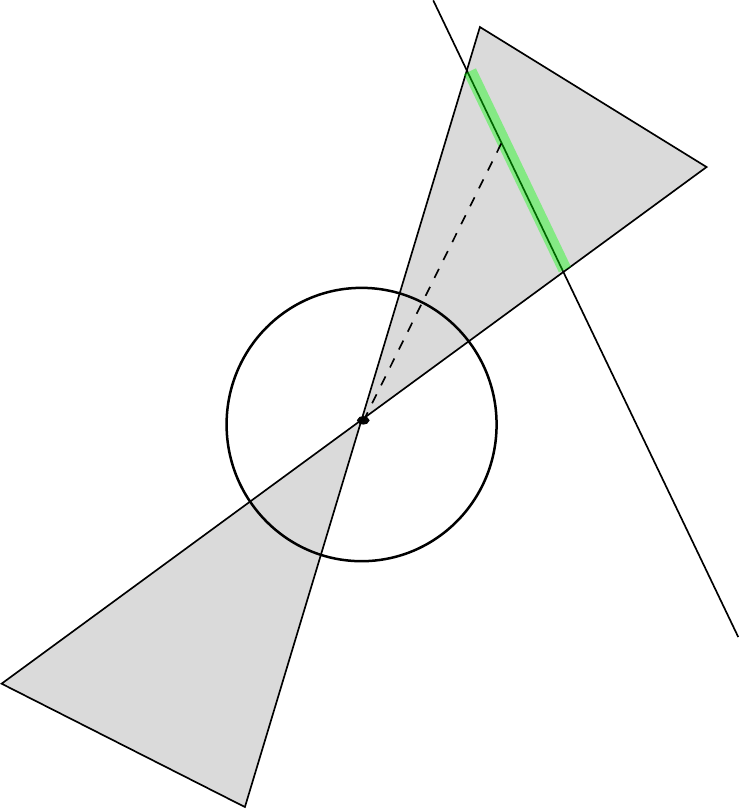}}
\put(1.156,1.138){$v$}
\put(2,1.138){$\linenotell$}
\put(0.403,0.391){$A(v)$}
\put(1.345,1.548){$v+z$}
\put(1.574,2.051){$v+\alpha z$}
\end{picture}
\caption{Intersection of a line $\linenotell$ with a cone $\cone(v)$.}
\label{fig:line-cone}
\end{figure}
\end{proof}

\begin{lemma} \label{l:cone_to_cone}
Let $\cone(v)$ be the cones corresponding to the directions $A(v)$ as in Lemma \ref{l:cone_of_directions}. Let $v_1$ and $v_2$ be two points in $\R^d$. Assume $|v_1| \geq |v_2|$. Let $\mu(v_1) \geq \mu_0$ and $\mu(v_2) \geq \mu_0$ for some $\mu_0 > 0$ .  We have
\[ \left\vert \cone(v_1) \cap \cone(v_2) \cap B_r(v_2) \right\vert \geq c |v_1-v_2|^d,\]
where $r = C |v_1-v_2|$, and $c$ and $C$ depend on $\mu_0$ only.
\end{lemma}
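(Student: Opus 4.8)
\textbf{Proof plan for Lemma \ref{l:cone_to_cone}.}

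The plan is to estimate the measure of $\cone(v_1) \cap \cone(v_2) \cap B_r(v_2)$ from below by slicing along lines and applying Lemma \ref{l:line_and_cone} twice. First I would reduce to the case where $|v_1-v_2| = 1$ by scaling (the cones $\cone(v)$ are invariant under dilations centered at their vertex, and the hypothesis $\mu(v_i) \geq \mu_0$ is scale-independent), so I may as well prove $|\cone(v_1) \cap \cone(v_2) \cap B_r(v_2)| \geq c$ with $r = C$. Note that $\cone(v_1) \cap \cone(v_2)$ is a set whose ``thick'' part near $v_2$ has diameter comparable to $|v_1-v_2|$, which is why the ball $B_{C|v_1-v_2|}(v_2)$ is the natural place to look.

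The key geometric observation is that both cones are ``fat'' in a band around the equators perpendicular to $v_1$ and $v_2$ respectively. Since $|v_1| \geq |v_2|$, the two vertices $v_1, v_2$ and the segment between them are not too far apart in direction, so there is a whole $(d-1)$-dimensional family of lines $\linenotell$ passing through a neighborhood of the midpoint (or of $v_2$) that make a definite angle with both $v_1$ and $v_2$. Concretely, I would fix a unit vector $e$ that is nearly perpendicular to both $v_1$ and $v_2$ — such $e$ exists because of the strip structure in the third item of Lemma \ref{l:cone_of_directions}, combined with $|v_1| \geq |v_2|$ controlling how much the two equatorial bands can differ — and consider the family of lines $\linenotell_y = \{y + t e : t \in \R\}$ for $y$ ranging over a small $(d-1)$-dimensional disk $D$ transverse to $e$ and centered near $v_2$. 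Each such line is at distance $\rho \approx 1$ from $v_1$ and from $v_2$ (after the normalization $|v_1-v_2|=1$ and choosing $D$ small), so Lemma \ref{l:line_and_cone} applied at $v_1$ gives $|\cone(v_1) \cap \linenotell_y \cap B_{C}(v_1)| \geq c$, and similarly at $v_2$ gives $|\cone(v_2) \cap \linenotell_y \cap B_{C}(v_2)| \geq c$.

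The remaining point is to combine these two one-dimensional lower bounds on the same line into a lower bound for the intersection $\cone(v_1) \cap \cone(v_2)$ along $\linenotell_y$. This is where I expect the main obstacle to lie: two subsets of a line each of measure $\geq c$ need not overlap at all. I would get around this by using more than just the measure — I would use that on each line $\linenotell_y$ chosen as above, the set $\cone(v_i) \cap \linenotell_y$ actually contains an \emph{interval} (or a controlled union of intervals) of length $\geq c$ located within distance $C$ of the foot of the perpendicular from $v_i$; this follows by inspecting the proof of Lemma \ref{l:line_and_cone}, where the intersection points $v_i + \alpha z$ fill an interval of $\alpha$-values of definite length once one restricts to directions $z \in A(v_i)$ making a definite angle with $\linenotell_y$, and the band structure of $A(v_i)$ forces these to cluster. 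Since the feet of the two perpendiculars (from $v_1$ and from $v_2$ onto $\linenotell_y$) are within distance $O(1)$ of each other, I can arrange — by translating the disk $D$ by an $O(1)$ amount if necessary, or simply by choosing the basepoint of $\linenotell_y$ near $v_2$ — that these two intervals overlap in a subinterval of length $\geq c'$. Integrating this uniform lower bound over $y \in D$ via Fubini gives $|\cone(v_1) \cap \cone(v_2) \cap B_C(v_2)| \geq c' |D| \geq c''$, and undoing the scaling yields the claimed bound $\geq c|v_1-v_2|^d$ with $r = C|v_1-v_2|$. The constants depend only on $\mu_0$ and the dimension, as required.
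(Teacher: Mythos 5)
You correctly identified the crux — two subsets of a line each of one-dimensional measure $\geq c$ need not overlap — but your proposed fix does not hold up. The claim that $\cone(v_i)\cap\linenotell_y$ contains an \emph{interval} of definite length in a controlled location is not a consequence of Lemma \ref{l:line_and_cone} or of the structure of $A(v)$ given in Lemma \ref{l:cone_of_directions}. The set $A(v)$ is built from the level set $F=\{f>\ell\}\cap B_r$, which is merely a measurable set of positive measure; all that is known about $A(v)$ is a lower bound on the measure of its intersection with big circles, plus containment in a band. Nothing prevents $A(v)$ from being a scattered, nowhere-dense set, in which case the trace of $\cone(v_i)$ on a line contains no interval at all, and the two traces on $\linenotell_y$ can be disjoint for every $y$ in your disk $D$. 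So the "overlap of intervals" step is a genuine gap, not a technicality.

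The paper's proof sidesteps the double-application problem entirely by choosing the lines to pass \emph{through the vertex} $v_1$ in directions $e\in A(v_1)$: since $A(v_1)$ is symmetric, each such line is wholly contained in $\cone(v_1)$, so there is nothing to intersect on the $v_1$ side. One then discards the directions within a small angle $\theta(\mu_0)$ of $\pm(v_2-v_1)$ (at most half of $A(v_1)$ by the measure lower bound), so that the remaining lines stay at distance $\gtrsim|v_1-v_2|$ from $v_2$, and applies Lemma \ref{l:line_and_cone} \emph{once}, at $v_2$, to get a set of one-dimensional measure $\geq c|v_1-v_2|$ on each such line lying in $\cone(v_2)\cap B_{C|v_1-v_2|}(v_2)$ — automatically also in $\cone(v_1)$. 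Integrating in polar coordinates centered at $v_1$ over the surviving directions of $A(v_1)$ then yields the $d$-dimensional bound $c|v_1-v_2|^d$. If you want to keep a Fubini-over-parallel-lines structure, you would need some genuine input forcing the two one-dimensional sets to meet; the only such input available here is the vertex trick above.
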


\begin{figure}[ht]
\setlength{\unitlength}{1in} 
\begin{picture}(2.000 ,1.009)
\put(0,0){\includegraphics[width=2.000in]{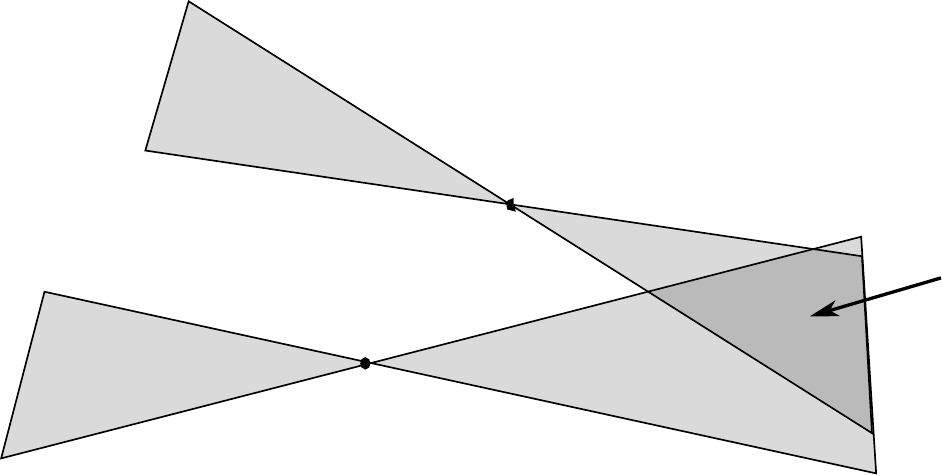}}
\put(2.006,0.368){$\cone(v_1) \cap \cone(v_2)$}
\put(1.056,0.627){$v_1$}
\put(0.701,0.120){$v_2$}
\end{picture}
\caption{The intersection of two cones inside a ball.}
\end{figure}

\begin{proof}
The lines $\linenotell$ contained in $\cone(v_1)$ are indexed by their directions $e \in A(v_1)$. At least half of these lines form an angle $\theta(\mu)>0$ with the vector $v_2-v_1$. In particular, all such lines are at distance at least $c|v_1 - v_2|$ from $v_2$, where $c$ depends on $\mu$ and $d$ only. According to Lemma \ref{l:line_and_cone}, for all such $\linenotell$,
\[ |\cone(v_2) \cap \linenotell \cap B_r (v_2)| \geq c |v_1-v_2|.\]

Integrating over all directions $e \in A(v_1)$ we conclude the lemma.
\end{proof}

The following lemma is the main estimate in the context of integro-differential equations, which implies Proposition \ref{p:lowerbound-app} when combined with the cancellation Lemma.

\begin{lemma} \label{l:genericlowerbound}
Let $K: \R^d \times \R^d \to \R$ be a non-negative kernel like in Lemma \ref{l:cone_of_directions}. Let $\mu = \inf \{ \mu(v) : v \in B_R\}$.

Then, there is a constant $c>0$, depending only on $\mu$ , $\lambda$ , $d$ and $s$, so that for any function $g$ supported in $B_R$,
\[ \iint_{\R^d \times \R^d} (g(v) - g(v'))^2 K(v,v') \dd v \dd v' \geq c \|g\|_{\dot H^s}^2.\]
\end{lemma}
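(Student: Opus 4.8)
\textbf{Proof plan for Lemma \ref{l:genericlowerbound}.}
The strategy is to compare the bilinear form $\iint (g(v)-g(v'))^2 K(v,v')\dd v\dd v'$ with the Gagliardo seminorm by exploiting the fact that the lower bound on $K$ holds, for each $v$, on a whole cone $\cone(v)$ of directions whose geometry is controlled by Lemma \ref{l:cone_of_directions}. The difficulty with the Boltzmann kernel is that $K(v,v')\gtrsim |v-v'|^{-d-\angulars}$ fails for most pairs $(v,v')$ --- only those in the cone are covered. So a direct pointwise comparison is hopeless, and one must introduce an intermediate point $w$ lying in $\cone(v)\cap\cone(v')$, so that the triangle inequality $|g(v)-g(v')|^2 \le 2|g(v)-g(w)|^2 + 2|g(w)-g(v')|^2$ replaces a ``bad'' pair by two ``good'' ones, and then integrate $w$ over a suitable region. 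This is the same philosophy as in the proof of Lemma \ref{lem:upper-sym}, but run in the opposite direction and with the cone constraint as the new ingredient.

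Concretely, I would proceed as follows. Fix $v,v'\in B_R$ and set $\rho=|v-v'|$. Lemma \ref{l:cone_to_cone} gives a region $\Omega_{v,v'}\subset\cone(v)\cap\cone(v')\cap B_{C\rho}(v')$ of measure $\gtrsim \rho^d$, on which for every $w\in\Omega_{v,v'}$ one has both $|v-w|\le C'\rho$, $|v'-w|\le C'\rho$, and (from Lemma \ref{l:cone_of_directions}) $K(v,w)\gtrsim |v-w|^{-d-\angulars}\gtrsim \rho^{-d-\angulars}$ and likewise $K(v',w)\gtrsim\rho^{-d-\angulars}$. Then, for a dyadic scale $\rho\approx 2^{-k}$, write
\[
\iint_{2^{-k}\le|v-v'|<2^{-k+1}} \frac{|g(v)-g(v')|^2}{|v-v'|^{d+\angulars}}\dd v\dd v'
\lesssim 2^{k(d+\angulars)}\iint_{2^{-k}\le|v-v'|<2^{-k+1}} \fint_{\Omega_{v,v'}} \big(|g(v)-g(w)|^2 + |g(w)-g(v')|^2\big)\dd w\dd v\dd v',
\]
using that $|\Omega_{v,v'}|^{-1}\lesssim 2^{kd}$. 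Then change the order of integration: for the first term, freeze the pair $(v,w)$ with $|v-w|\lesssim 2^{-k}$, and note that the set of admissible $v'$ has measure $\lesssim 2^{-kd}$, so the $v'$-integral contributes a factor $\lesssim 2^{-kd}$; the remaining factor $2^{k(d+\angulars)}\cdot 2^{-kd} = 2^{k\angulars}\approx |v-w|^{-\angulars}$ combines with the $(v,w)$-integration to give $\lesssim \iint_{|v-w|\lesssim 2^{-k}} |g(v)-g(w)|^2 |v-w|^{-d-\angulars} K(v,w)^{-1}\cdots$ --- wait, more cleanly: since on $\Omega$ we have $K(v,w)\gtrsim \lambda 2^{k(d+\angulars)}$, we bound $2^{k(d+\angulars)}\lesssim \lambda^{-1}K(v,w)$ and absorb it, obtaining $\lesssim \lambda^{-1}\iint_{|v-w|\lesssim 2^{-k}} |g(v)-g(w)|^2 K(v,w)\dd v\dd w$ after also using $|\Omega_{v,v'}|^{-1}\cdot(\text{measure of }v')\lesssim 1$. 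Summing over $k\in\mathbb Z$ and treating the second term symmetrically (using the upper-bound assumption \eqref{e:Kabove}-(ii), i.e. the second line of \eqref{e:Kabove-signed}, to control the $v'$-integration when $w$ is frozen with $v'$), yields $\|g\|_{\dot H^s}^2 \lesssim \lambda^{-1}\iint (g(v)-g(v'))^2 K(v,v')\dd v\dd v'$, which is the claim after renaming constants. Throughout, all implied constants depend only on $\mu$ (hence on $R$), on $d$, on $s$, and on $\lambda$; since $g$ is supported in $B_R$, only values $v,v'\in B_{CR}$ and $w\in B_{CR}$ enter, so the uniform lower bound $\mu=\inf\{\mu(v):v\in B_R\}$ and the cone lower bound $\lambda(1+|v|)^{1+\angulars+\gamma}$ restricted to $|v|\le R$ are all that is needed.

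The main obstacle I anticipate is the bookkeeping in the change-of-order-of-integration step: one must verify carefully that, after fixing $(v,w)$, the admissible set of second points $v'$ (those for which $w\in\Omega_{v,v'}$ and $|v-v'|\approx 2^{-k}$) genuinely has measure $\lesssim 2^{-kd}$ with a constant independent of $k$, and that the averaging factor $\fint_{\Omega_{v,v'}}$ does not destroy this; this is where the precise statement of Lemma \ref{l:cone_to_cone} ($|\cone(v_1)\cap\cone(v_2)\cap B_r(v_2)|\ge c|v_1-v_2|^d$ with $r=C|v_1-v_2|$) is essential, because it is what makes the region $\Omega_{v,v'}$ both large enough (measure $\gtrsim\rho^d$) and localized (inside $B_{C\rho}(v')$) at the \emph{same} scale. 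A secondary technical point is the second term $|g(w)-g(v')|^2$: here $w$ plays the role of the ``far'' point and $v'$ the ``center'', so when freezing $(w,v')$ one needs a bound on the measure of admissible $v$, and for this one should either appeal again to Lemma \ref{l:cone_to_cone} with the roles of $v$ and $v'$ swapped (legitimate since the cone lower bound is available at every point and $\Omega$ can be taken symmetric in $v\leftrightarrow v'$ up to constants) or simply note that $v\in B_{C\rho}(v')$ forces the $v$-region to have measure $\lesssim\rho^d$ directly. Once this combinatorial/geometric estimate is in place, the dyadic summation is immediate because the exponent $2^{k\angulars}$ is exactly matched by the scaling of both the Gagliardo kernel and the kernel lower bound, so there is no loss.
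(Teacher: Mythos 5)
Your overall strategy is exactly the paper's: introduce an intermediate point $w\in\cone(v)\cap\cone(v')$ via Lemma \ref{l:cone_to_cone}, use the triangle inequality to trade the ``bad'' pair $(v,v')$ for two ``good'' pairs, and then change the order of integration. However, there is a genuine gap in the dyadic summation, and it sits precisely where you did \emph{not} flag it. At scale $2^{-k}$ your region $\Omega_{v,v'}\subset\cone(v)\cap\cone(v')\cap B_{C\rho}(v')$ only constrains the intermediate point by $|v-w|\le C'\rho$ and $|v'-w|\le C'\rho$, i.e.\ from above. After Fubini you therefore bound the scale-$k$ piece by $\lambda^{-1}\iint_{\{|v-w|\lesssim 2^{-k},\,w\in\cone(v)\}}|g(v)-g(w)|^2K(v,w)\dd v\dd w$, and when you sum over $k$ a fixed good pair $(v,w)$ with $|v-w|=\delta$ is counted at every scale $k$ with $\delta\lesssim 2^{-k}\lesssim R$, that is, roughly $\log(R/\delta)$ times. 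The sum therefore produces $\iint \log(CR/|v-w|)\,|g(v)-g(w)|^2K(v,w)\dd v\dd w$ rather than the bilinear form itself; the exponents match scale by scale, as you say, but the overlap across scales is unbounded and the argument does not close. The concern you did raise (that the admissible $v'$ for fixed $(v,w)$ at a fixed scale have measure $\lesssim 2^{-kd}$) is fine and is not where the difficulty lies.

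The fix is to impose a matching \emph{lower} bound on the intermediate point: take $w$ in $\cone(v)\cap\cone(v')\cap B_{C_0\rho}(v')\setminus B_{c_0\rho}(v)\setminus B_{c_0\rho}(v')$, which still has measure $\gtrsim\rho^d$ for $c_0$ small (this is exactly how the paper states the consequence of Lemma \ref{l:cone_to_cone} inside the proof). Then each pair $(v,w)$ satisfies $|v-w|\approx 2^{-k}$ and contributes to only $O(1)$ dyadic scales; in the paper's continuous formulation this is visible as the inner integral $\int_{C_0^{-1}|v_1-z|}^{c_0^{-1}|v_1-z|}r^{-1}\dd r=\log(C_0/c_0)$, which is bounded precisely because the ratio of the two radii is fixed. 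Two minor further remarks: your appeal to \eqref{e:Kabove}-(ii) for the second term $|g(w)-g(v')|^2$ is a red herring --- no upper bound on $K$ is needed anywhere in this lemma, since $w\in\cone(v')$ gives the lower bound $K(v',w)\gtrsim\rho^{-d-2s}$ and the measure of admissible $v$ is $\lesssim\rho^d$ directly, which is the alternative you yourself propose; and the paper instead symmetrizes $K$ at the outset so that only one of the two terms needs to be treated.
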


\begin{proof}
Symmetrizing the integral, we can replace $K(v,v')$ by $\frac 12 ( K(v,v') + K(v',v))$. Thus, we assume that $K$ is symmetric.

From Lemma \ref{l:cone_to_cone}, we have that for all $v_1, v_2 \in B_{2R}$, there is a constant $C_0$ (sufficiently large depending on $R$ and the various constants involving $f$) so that
\[ |\cone(v_1) \cap \cone(v_2) \cap B_{C_0|v_1-v_2|}(v_2)| \geq c |v_1-v_2|^d.\]
Note that $B_{C_0|v_1-v_2|}(v_2) \subset B_{(C_0+1)|v_1-v_2|}(v_1)$. Moreover, we can choose $c_0$ small enough so that
\[ |\cone(v_1) \cap \cone(v_2) \cap B_{C_0|v_1-v_2|}(v_2) \setminus B_{c_0|v_1-v_2|}(v_1) \setminus B_{c_0|v_1-v_2|}(v_2)| \geq c |v_1-v_2|^d.\]
For the same choice of constants $c_0$ and $C_0$, let
\[ N_r(v) := \int_{B_{C_0r} (v)\setminus B_{c_0r}(v)} |g(v) - g(w)|^2 K(v,w) \dd w.\]

Therefore, for any $v_1, v_2 \in B_{2R}$, using that $|g(v_1)-g(z)|^2 + |g(v_2)-g(z)|^2 \geq |g(v_1) -g(v_2)|^2/2$, if we let $r = |v_1-v_2|$,
\begin{align*} 
 N_r(v_1) + N_r(v_2) &\geq c \left( \int_{\cone(v_1) \cap B_{C_0r}(v_1) \setminus B_{c_0r}(v_1)} |g(v_1) - g(z)|^2 |v_1-z|^{-d-\angulars} \dd z \right. \\
&\phantom{\geq} + \left. \int_{\cone(v_2) \cap B_{C_0r}(v_2) \setminus B_{c_0r}(v_2)} |g(v_1) - g(z)|^2 |v_2-z|^{-d-\angulars} \dd z \right),\\
&\geq c \left( \int_{\cone(v_1) \cap \cone(v_2)  \cap B_{C_0r}(v_2) \setminus B_{c_0r}(v_1) \setminus B_{c_0r}(v_2) } |g(v_1) - g(v_2)|^2 r^{-d-\angulars} \dd z \right),\\
&\geq c |g(v_1) - g(v_2)|^2 |v_1-v_2|^{-\angulars}.
\end{align*}
Therefore
\begin{align*} 
 \|g\|_{H^s}^2 &\leq C \iint_{B_2 \times B_2} |g(v_1) - g(v_2)|^2 |v_1-v_2|^{-d-2s} \dd v_1 \dd v_2, \\
&\leq C \iint_{B_2 \times B_2} (N_r(v_1) + N_r(v_2)) |v_1-v_2|^{-d} \dd v_1 \dd v_2 \qquad \text{ here } r = |v_1 - v_2|,\\
&= 2 C \iint_{B_2 \times B_2} N_r(v_1) |v_1-v_2|^{-d} \dd v_1 \dd v_2, \\
&\leq C \int_{B_2} \int_{r=0}^\infty \int_{\Sp} N_r(v_1) r^{-1} \dd \sigma \dd r \dd v_1, \qquad \text{ using polar coordinates for } v_2,\\ 
               &= C \int_{B_2} \int_{\R^d} |g(v_1) - g(z)|^2 K(v_1,z) \left( \int_{C_0^{-1}|v_1-z|}^{c_0^{-1} |v_1-z|} r^{-1} \dd r \right) \dd z \dd v_1, \\
&= C \int_{B_2} \int_{\R^d} |g(v_1) - g(z)|^2 K(v_1,z) \dd z \dd v_1.
\end{align*}
This finishes the proof.
\end{proof}

Once we have Lemma \ref{l:genericlowerbound}, we can derive Proposition \ref{p:lowerbound-app} as a corollary.

\begin{proof}[Proof of Proposition \ref{p:lowerbound-app}]
  It follows from a simple computation using Lemma
  \ref{l:genericlowerbound} and Lemma \ref{l:Kcancellation0}.
\begin{align*}
- \int \Lv g(v) g(v) \dd v =& \iint_{\R^d \times \R^d} (g(v) - g(v')) g(v) K_f(v,v') \dd v' \dd v, \\
=& \frac 12 \iint_{\R^d \times \R^d} (g(v) - g(v'))^2 K_f(v,v') \dd v' \dd v \\
& +  \frac 12 \int_{\R^d} g(v)^2 \left( \int_{\R^d} (K(v',v)-K(v,v')) \dd v' \right) \dd v, \\
\geq& \; \lambda \|g\|_{H^s}^2 - \Lambda \|g\|_{L^2}^2.
\end{align*}
The first term was bounded using Lemma \ref{l:genericlowerbound} and the second term using Lemma \ref{l:Kcancellation0}.
\end{proof}

\begin{remark} \label{r:rate}
We sketch the precise asymptotics of the coercity estimate for large velocities. This computation plays no role in this paper, but it is interesting to see how the metric introduced in \cite{gressman2011global} arises naturally from the geometry described above. We only analyze the symmetric part of the bilinear form as in Lemma \ref{l:genericlowerbound}. Analyzing the full bilinear form requires another similar computation for the cancellation estimate.

For large values of $v$, the cone $A(v)$ is
  approximately of width $1/|v|$ and perpendicular to $v$. The lower
  bound in Lemma \ref{l:genericlowerbound} depends only on a lower
  bound for $\mu(v)$ in $B_R$ and the lower bound for $K(v,v')$ for
  $v' \in \cone(v)$. It is easy to see how the estimate behaves for
  large velocities from a scaling argument. Indeed, let $v_0 \in \R^d$. For every
  $v \in B_1(v_0)$, the cone $\cone(v)$ has measure
  $\mu(v) \gtrsim (1+|v_0|)^{-1}$ and it is approximately
  perpendicular to $v_0$ in the sense described above. Let $T$ be the
  linear change of variables
\[ Tv = (1+|v_0|)^{-1} P v + P^\perp v , \qquad \text{where } Pv = \frac {\langle v, v_0 \rangle}{|v_0|^2} v_0, \qquad P^\perp v = v - Pv.\]
Let $\tilde g(v) = g(T v)$. So that
\[ \iint_{\R^d \times \R^d} (g(v) - g(v'))^2 K(v,v') \dd v \dd v' = \iint_{\R^d \times \R^d} (\tilde g(v) - \tilde g(v'))^2 \tilde K(v,v') \dd v \dd v',\]
where
\[ \tilde K(v,v') = |\det(DT)|^{2} K(T v, T v') = (1+|v_0|)^{-2} K(T v, T v').\]
The point of this change of variables is to make the non-degeneracy cone $\tilde K$ bounded below in measure for all $v \in B_1(T^{-1}(v_0))$, uniformly in $v_0$, i.e. $\tilde \mu(v) \gtrsim 1$ for all $v \in B_1(T^{-1}v_0)$. Moreover, for $v'$ in this nondegeneracy cone $\tilde \cone(v)$, we have
\begin{align*} 
 \tilde K(v,v') &=(1+|v_0|)^{-2} K(Tv, Tv'),\\
&\geq \lambda (1+|v_0|)^{-1+\gamma+2s}  |T v - T v'|^{-d-2s}.
\end{align*}
Therefore, from the computation in the proof of Lemma \ref{l:genericlowerbound}, for some universal constant $r>0$, and $D_r = T(B_r(T^{-1}v_0))$, we get
\begin{align*}
 \iint_{D_1 \times D_1} (g(v) - g(v'))^2 K(v,v') \dd v \dd v' &= \iint_{B_1(T^{-1}v_0) \times B_1(T^{-1}v_0)} (\tilde g(v) - \tilde g(v'))^2 \tilde K(v,v') \dd v \dd v', \\
&\geq c (1+|v_0|)^{-2} \iint_{B_r(T^{-1}v_0) \times B_r(T^{-1}v_0)} \frac{(\tilde g(v) - \tilde g(v'))^2}{|v-v'|^{d+2s}} \dd v \dd v', \\
&= c (1+|v_0|)^{-1+\gamma+2s} \iint_{B_r(T^{-1}v_0) \times B_r(T^{-1}v_0)} \frac{|\tilde g(v) - \tilde g(v')|^2}{|v-v'|^{d+2s}} \dd v \dd v', \\
&= c (1+|v_0|)^{1+\gamma+2s} \iint_{D_r \times D_r} \frac{|g(v) - g(v')|^2}{|T^{-1}v-T^{-1}v'|^{d+2s}}  \dd v \dd v'.
\end{align*}
Note that $|T^{-1}v-T^{-1}v'|$ is equivalent to the metric $d(v,v')$ introduced in \cite{gressman2011global}. The set $D_r$ is exactly the ball of radius $r$ centered at $v_0$. Therefore, covering $\R^d$ with these balls $D_r(v_0)$ and adding up, we get
\[ \iint_{d(v,v') < 1} (g(v) - g(v'))^2 K(v,v') \dd v \dd v' \geq c (1+|v_0|)^{1+\gamma+2s} \iint_{d(v,v') < r} \frac{|g(v) - g(v')|^2}{d(v,v')^{d+2s}}  \dd v \dd v'.\]
The right hand side is the same as the norm $\| g \|_{N^{s,\gamma}}^2$ introduced in \cite{gressman2011global} minus a lower order correction corresponding to the tails of the integral.
\end{remark}

\begin{remark} \label{r:coercivity}
  It is interesting to notice that the estimate of Lemma
  \ref{l:genericlowerbound} depends only on the structure of the
  kernel described in Lemma \ref{l:cone_of_directions}. It would be
  interesting to see whether the result of Lemma
  \ref{l:genericlowerbound} holds for general kernels $K$ (not
  necessarily arising from the Boltzmann equation) under less
  restrictive conditions on the cones $A(v)$. There is an interesting conjecture mentioned in \cite{dyda2015regularity} which is related to our condition \eqref{e:Knondegeneracy}.
\end{remark}

\subsection{Technical lemmas}
\label{subsec:technical}

\subsubsection{Change of variables}

We recall here a change of variables from \cite{luis}. 
\begin{lemma}[Change of variables  {\cite[Lemma~A.1]{luis}}]\label{lem:cdv}
For any non-negative function of $(v,v_*,v',v'_*)$, 
\begin{align}
\label{cdv:v'} \int_{\R^d} \int_{\Sp} F \dd \sigma dv_* &= 2^{d-1} \int_{\R^d} \frac1{|v'-v|} 
\int_{w \perp v'-v} F \frac1{r^{d-2}} \dd w \dd v'\\
\label{cdv:v'*}
& = \int_{\R^d} \frac1{|v'_*-v|} 
\int_{w \perp v'_*-v} F \frac1{r^{d-2}} \dd w \dd v'_*.
\end{align} 
\end{lemma}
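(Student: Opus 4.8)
The plan is to derive \eqref{cdv:v'} from a single application of the coarea formula, keeping $v$ fixed throughout (all quantities depend only on $v_*-v$, $v'-v$, $v'_*-v$, so one could harmlessly translate to $v=0$). First I would record the elementary geometry. Writing $u:=v_*-v$, the definitions of $v'$ and $v'_*$ give $v'-v=\tfrac12 u+\tfrac{|u|}2\sigma$ and $v'_*-v=\tfrac12 u-\tfrac{|u|}2\sigma$, hence $(v'-v)\cdot(v'_*-v)=\tfrac14|u|^2-\tfrac14|u|^2=0$, i.e. $v'-v\perp v'_*-v$; moreover $v'+v'_*=v+v_*$ and, by orthogonality, $|v'-v|^2+|v'_*-v|^2=|u|^2=r^2$. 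For fixed $u\neq0$ the map $\sigma\mapsto v'$ is a bijection from $\Sp$ onto the sphere $\partial B_{|u|/2}(v+\tfrac u2)$ with $\dd\sigma=(2/|u|)^{d-1}\dd S(v')$, and $v'$ lies on that sphere exactly when $\psi(u,v'):=|v'-v|^2-(v'-v)\cdot u=0$. Therefore
\[
\int_{\R^d}\int_{\Sp}F\dd\sigma\dd v_*=\int_{\R^d}\Big(\tfrac2{|u|}\Big)^{d-1}\int_{\{v'\,:\,\psi(u,v')=0\}}F\dd S(v')\dd u .
\]

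The main step is to interchange the roles of $u$ and $v'$. By the coarea formula applied in $v'$ for each fixed $u$, $\int_{\{\psi(u,\cdot)=0\}}G\dd S(v')=\int_{\R^d}G\,|\nabla_{v'}\psi|\,\delta(\psi)\dd v'$, and since $F\ge0$ I may then use Fubini to exchange the $u$- and $v'$-integrals. For fixed $v'\neq v$ the map $u\mapsto\psi(u,v')$ is affine with $\nabla_u\psi\equiv-(v'-v)$, so $\int_{\R^d}H(u)\,\delta(\psi)\dd u=\tfrac1{|v'-v|}\int_{\{u:\psi(u,v')=0\}}H\dd S(u)$, and the hyperplane $\{u:\psi(u,v')=0\}$ is precisely $\{(v'-v)+w:\ w\perp v'-v\}$, along which $\dd S(u)=\dd w$. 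After the substitution $u=(v'-v)+w$ one has $|u|=r=\sqrt{|v'-v|^2+|w|^2}$, $\nabla_{v'}\psi=2(v'-v)-u=(v'-v)-w$ so that $|\nabla_{v'}\psi|=r$, and $w=u-(v'-v)=v_*-v'=v'_*-v$, so $F$ is exactly the same function reexpressed through $(v',w)$. Collecting the three factors $(2/|u|)^{d-1}=2^{d-1}r^{-(d-1)}$, $|\nabla_{v'}\psi|=r$ and $|v'-v|^{-1}$ gives the weight $2^{d-1}|v'-v|^{-1}r^{-(d-2)}$, which is \eqref{cdv:v'}. Then \eqref{cdv:v'*} follows by the same computation with $v'_*$ in the role of $v'$ — equivalently, from \eqref{cdv:v'} via the change $\sigma\mapsto-\sigma$, which fixes $v$, $v_*$ and $\int_{\Sp}\cdot\dd\sigma$ while exchanging $v'$ and $v'_*$.

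The only point requiring genuine care is the behaviour near the grazing configuration $v'=v$ (which corresponds to a single value of $\sigma$, hence is $\dd\sigma$-null) and the use of the coarea formula where $\nabla_{v'}\psi$ could degenerate; because $F\ge0$ this is harmless — at worst both sides are $+\infty$ simultaneously — so one runs the argument on $\{|v'-v|>\eps\}$ and lets $\eps\downarrow0$ by monotone convergence. Everything else is the bijection $\sigma\leftrightarrow v'$ on the sphere $\partial B_{|u|/2}(v+\tfrac u2)$ and the two elementary Jacobian identities displayed above, so I expect no serious obstacle.
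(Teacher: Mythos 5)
Your argument is correct, and it is worth noting that the paper does not actually prove this lemma: it is quoted from \cite{luis} with the remark that ``the proof is simply a change of variables to Carleman coordinates.'' Your coarea/Fubini derivation is precisely that computation, written out in full, and all the individual steps check out: the identities $v'-v=\tfrac12 u+\tfrac{|u|}2\sigma$, $(v'-v)\perp(v'_*-v)$, $|v'-v|^2+|v'_*-v|^2=|u|^2=r^2$; the Jacobian $\dd\sigma=(2/|u|)^{d-1}\dd S(v')$ for the bijection of $\Sp$ onto $\partial B_{|u|/2}\bigl(v+\tfrac u2\bigr)$; the identification of that sphere as $\{\psi(u,v')=0\}$ with $\psi(u,v')=|v'-v|^2-(v'-v)\cdot u$; the values $|\nabla_{v'}\psi|=r$ and $|\nabla_u\psi|=|v'-v|$ on the level set; and the bookkeeping $w=u-(v'-v)=v_*-v'=v'_*-v$. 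The product of the three weights is indeed $2^{d-1}|v'-v|^{-1}r^{-(d-2)}$, which is \eqref{cdv:v'}, and your handling of the grazing set $v'=v$ and of the degeneracy of $\nabla_{v'}\psi$ (which in fact only occurs off the zero level set, except on the $u$-null set $u=0$) is adequate for a non-negative integrand.

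One point deserves attention. Your symmetry argument ($\sigma\mapsto-\sigma$ is measure-preserving on $\Sp$ and exchanges $v'$ with $v'_*$) proves that the right-hand side of \eqref{cdv:v'*} must carry exactly the same constant $2^{d-1}$ as \eqref{cdv:v'} --- the two parametrizations are completely symmetric, both being bijections onto the same sphere $\partial B_{|u|/2}\bigl(\tfrac{v+v_*}2\bigr)$. As displayed in the statement, \eqref{cdv:v'*} omits that factor, so your proof does not establish the identity literally as printed; rather, it shows the printed second line is off by $2^{d-1}$, which appears to be a typo in the statement (only \eqref{cdv:v'}, with its $2^{d-1}$, is used elsewhere in the paper, e.g.\ in Lemma \ref{l:Kformula} and in the proof of Lemma \ref{l:Kcancellation0}). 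You should state explicitly that your argument yields the common constant $2^{d-1}$ in both representations.
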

Other changes of variables were used in proofs. 
\begin{lemma}[Change of variables -- II] \label{l:change-of-variables}
Let $F : \R^d \to \R$ be any integrable function. Then, the following identities hold.
\begin{align}
\label{e:cdv1}
\int_{\partial B_r} \int_{ \{w : w \perp \sigma \} } F(w) \dd w \dd \sigma&
= \omega_{d-2} r^{d-1} \int_{\R^d} \frac{F(z)}{|z|} \dd z, \\
\label{e:cdv2}
\int_{\partial B_r} \int_{ \{w : w \perp \sigma \} } F(\sigma+w) \dd w \dd \sigma &
= \omega_{d-2} r^{d-1} \int_{\R^d \setminus B_r} F(z) \frac{(|z|^2-r^2)^{\frac{d-3}2}}{|z|^{d-2}} \dd z, \\
\label{e:cdv3}
\int_{\partial B_r} \int_{ \{w : w \perp \sigma \} } \sigma F(\sigma+w) \dd w \dd \sigma &
= \omega_{d-2} r^{d+1} \int_{\R^d \setminus B_r} z F(z) \frac{(|z|^2-r^2)^{\frac{d-3}2}}{|z|^{d}} \dd z.
\end{align}
Here the constant $\omega_{d-2}$ stands for the surface area of
$\mathcal{S}^{d-2}$. Note that the integrals on the left hand side are on
spheres and hyperplanes, thus $\dd w$ and $\dd \sigma$ stand for
differential of surface.
\end{lemma}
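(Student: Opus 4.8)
The three formulas have a common structure and the plan is to prove them together. First I would reduce to the case where $F$ is nonnegative, continuous and compactly supported; the general integrable case then follows by writing $F=F_+-F_-$, approximating each part from below by $C_c$ functions and using monotone convergence on both sides (each of the three right-hand weights is locally integrable away from $\partial B_r$ and decays like $|x|^{-1}$ or $|x|^{-2}$ at infinity, so the right-hand sides are finite whenever, say, $F$ is compactly supported).

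The key geometric observation is that, for fixed $\sigma\in\partial B_r$, the inner integral $\int_{w\perp\sigma}F(c\sigma+w)\,\dd w$ — with $c=0$ for \eqref{e:cdv1} and $c=1$ for \eqref{e:cdv2} and \eqref{e:cdv3} — is the surface integral of $F$ (resp.\ of $\sigma F$) over the affine hyperplane $H_\sigma^c:=\{x:\langle x,\hat\sigma\rangle=cr\}$, where $\hat\sigma:=\sigma/r$; for $c=1$ this is the hyperplane tangent to $\partial B_r$ at $\sigma$, while for $c=0$ it passes through the origin, which is why \eqref{e:cdv1} integrates over all of $\R^d$ whereas \eqref{e:cdv2} and \eqref{e:cdv3} only over $\R^d\setminus B_r$. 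I would then rewrite this surface integral as a slab limit,
\[
\int_{H_\sigma^c}F\,\dd\mathcal H^{d-1}=\lim_{\eps\to0^+}\frac1{2\eps}\int_{\{|\langle x,\hat\sigma\rangle-cr|<\eps\}}F(x)\,\dx,
\]
a limit that is uniform in $\sigma$ when $F\in C_c(\R^d)$. Exchanging the order of integration (now legitimate, since the domains are compact and the integrand bounded) reduces everything to computing the scalar and vector ``band weights''
\[
g_\eps(x):=\int_{\partial B_r}\big[\,|\langle x,\hat\sigma\rangle-cr|<\eps\,\big]\,\dd\sigma,\qquad
G_\eps(x):=\int_{\partial B_r}\sigma\,\big[\,|\langle x,\hat\sigma\rangle-cr|<\eps\,\big]\,\dd\sigma,
\]
where $[\,\cdot\,]$ denotes the indicator of the indicated set.

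These are evaluated with the substitution $\sigma=r\omega$, $\omega\in\Sp$, $\dd\sigma=r^{d-1}\dd\omega$, together with the slicing identity $\dd\mathcal H^{d-1}(\omega)=(1-u^2)^{(d-3)/2}\,\dd u\,\dd\mathcal H^{d-2}(\theta)$ for $\Sp$ along the axis $\hat x$, where $u=\langle\omega,\hat x\rangle$ and $\int\dd\mathcal H^{d-2}(\theta)=\omega_{d-2}$. The band condition becomes $u\in\big(\tfrac{cr-\eps}{|x|},\tfrac{cr+\eps}{|x|}\big)$, so
\[
\frac1{2\eps}\,g_\eps(x)\ \xrightarrow[\eps\to0]{}\ \omega_{d-2}\,r^{d-1}\,\frac1{|x|}\Big(1-\frac{c^2r^2}{|x|^2}\Big)^{(d-3)/2},
\]
which is $\omega_{d-2}r^{d-1}|x|^{-1}$ when $c=0$, and $\omega_{d-2}r^{d-1}(|x|^2-r^2)^{(d-3)/2}|x|^{-(d-2)}$ (and $0$ for $|x|<r$) when $c=1$; multiplying by $F(x)$ and integrating yields \eqref{e:cdv1} and \eqref{e:cdv2}. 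For \eqref{e:cdv3} I would use that $G_\eps(x)$ is invariant under rotations fixing the axis $\hat x$, hence $G_\eps(x)=\big(\int_{\partial B_r}\langle\sigma,\hat x\rangle\,[\cdots]\,\dd\sigma\big)\hat x$; on the band one has $\langle\sigma,\hat x\rangle=r\langle x,\hat\sigma\rangle/|x|=cr^2/|x|+O(\eps)$, so $\tfrac1{2\eps}G_\eps(x)\to(r^2/|x|)\big(\lim\tfrac1{2\eps}g_\eps(x)\big)\hat x$, which produces exactly the extra factor $r^2/|x|$ and the vector $x/|x|$, i.e.\ the claimed $r^{d+1}z/|z|^{d}$ in place of $r^{d-1}/|z|^{d-2}$.

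The only step that is not pure bookkeeping is this band-weight computation: reading off the exponent $(|z|^2-r^2)^{(d-3)/2}$ and the precise power of $|z|$ from the sphere-slicing Jacobian $(1-u^2)^{(d-3)/2}$ evaluated at $u=cr/|z|$, and, for \eqref{e:cdv3}, identifying the mean of $\sigma$ over the thin band as $\approx r^2 z/|z|^2$ — this last fact being exactly the cancellation highlighted in the remark following Lemma~\ref{l:Kcancellation1}. The passage to the limit in $\eps$ inside the $\dx$-integral is justified by dominated convergence, since $\tfrac1{2\eps}g_\eps(x)$ is bounded uniformly in $\eps$ by a constant times the relevant right-hand weight (one checks $(1-u^2)^{(d-3)/2}$ stays controlled near $u=cr/|x|$) and $F$ has compact support.
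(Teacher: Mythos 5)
Your argument is correct; note that the paper states Lemma~\ref{l:change-of-variables} in the appendix without proof, so there is no authorial argument to compare against. Your slab-limit strategy is a legitimate substitute for the more common route (applying the coarea formula to $(\sigma,w)\mapsto\sigma+w$ and computing the Jacobian of the resulting fibration of $\R^d\setminus B_r$ by the $(d-2)$-spheres $\{\sigma\in\partial B_r:\langle z,\sigma\rangle=r^2\}$): the band-weight $\tfrac1{2\eps}g_\eps$ computation encodes exactly that Jacobian, and your evaluation via the sphere-slicing identity correctly produces the factor $(1-u^2)^{(d-3)/2}\big|_{u=cr/|z|}=(|z|^2-r^2)^{(d-3)/2}|z|^{-(d-3)}$ together with the width $2\eps/|z|$, giving the weights in \eqref{e:cdv1} and \eqref{e:cdv2}; and the identification of the mean of $\sigma$ over the band as $(r^2/|z|)\hat z$ is precisely the cancellation the authors point out in the remark after Lemma~\ref{l:Kcancellation1}, yielding \eqref{e:cdv3}. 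Two small points of care. First, your final domination claim is stated too strongly: near $|z|=r$ the averaged band weight is \emph{not} bounded by a constant multiple of the limiting weight $(|z|^2-r^2)^{(d-3)/2}|z|^{-(d-2)}$ (the limit vanishes there while the average does not); for $d\ge3$ this is harmless because $(1-u^2)^{(d-3)/2}\le1$ gives the crude bound $\tfrac1{2\eps}g_\eps(z)\le\omega_{d-2}r^{d-1}/|z|$, which is an integrable dominant on the compact support of $F$. Second, for $d=2$ the exponent $(d-3)/2=-1/2$ makes the slicing Jacobian singular at the poles and even that crude bound fails on the $O(\eps)$-neighbourhood of $\partial B_r$; there one should either estimate that exceptional set separately (its contribution is $O(\eps^{1/2})$) or use Vitali/generalized dominated convergence. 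Neither issue affects the validity of the identities or the structure of your proof.
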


\subsubsection{Positive part of subsolutions}

\begin{lemma}[Positive part of subsolutions]\label{lem:convex}
  Let $f$ be a subsolution of \eqref{e:main} in a cylinder $Q$.  Then
  $f_+=\max(f,0)$ is still a subsolution of \eqref{e:main} (where $h$
  is replaced with $h \un_{f \ge 0}$) in $Q$.
\end{lemma}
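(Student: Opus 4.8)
The statement to prove is Lemma~\ref{lem:convex}: the positive part $f_+ = \max(f,0)$ of a subsolution of \eqref{e:main} is again a subsolution, with forcing term $h \un_{f \ge 0}$. The natural approach is the standard convexity/Kato-type argument adapted to the weak formulation \eqref{e:weak-subsolution}. The plan is to test the equation against $\varphi \un_{\{f > 0\}}$ (suitably regularized) for a nonnegative admissible test function $\varphi$, and to exploit the fact that the transport operator $\partial_t + v\cdot\nabla_x$ commutes with the truncation in the distributional sense, while the nonlocal term only \emph{gains} from dropping to the positive part.

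\textbf{Key steps in order.} First I would recall that for the first-order part, $(\partial_t + v\cdot\nabla_x) f_+ = \un_{\{f>0\}}(\partial_t + v\cdot\nabla_x) f$ holds in the sense of distributions (this is the classical chain rule for the kinetic transport operator, valid since $t\mapsto a_+$ is Lipschitz and convex; it is purely a statement along characteristics $t \mapsto x + tv$, so no diffusion is involved). Consequently $(\partial_t + v\cdot\nabla_x) f_+$ lies in the same dual space as $(\partial_t+v\cdot\nabla_x)f$, and $f_+$ inherits the required functional-analytic regularity ($f_+ \in C^0((0,T),L^2)$, $f_+ \in L^2_{t,x} H^s_v$ locally, etc.) because truncation is bounded on $L^2$ and on $H^s$ (the latter since $a \mapsto a_+$ is $1$-Lipschitz, so $\|f_+\|_{\dot H^s} \le \|f\|_{\dot H^s}$ pointwise in $(t,x)$). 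Second, the decisive pointwise inequality for the bilinear form: for any nonnegative test function $\varphi$ supported where we want it, and at points $(t,x)$,
\[
\E(f_+, \varphi\, \un_{\{f(t,x,\cdot)>0\}}) \;\le\; \E(f, \varphi\, \un_{\{f(t,x,\cdot)>0\}}) \quad\text{wherever } f>0,
\]
because $f_+(v') - f_+(v) \le f(v') - f(v)$ when $f(v) > 0$ (equality if $f(v')\ge 0$, strict gain if $f(v') < 0$, using $K \ge 0$); here I would phrase this carefully through the symmetric/antisymmetric splitting $\E = \Esym + \Eskew$ if needed, though since the test function is $\varphi$ restricted to $\{f>0\}$ and $K\ge 0$, the one-sided comparison is cleanest applied directly to the defining integral $\iint (g(v)-g(v'))\psi(v) K(v,v')$.

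\textbf{Carrying it out.} Given a nonnegative admissible test function $\varphi$, I would use $\psi = \varphi \un_{\{f>0\}}$ (formally; rigorously one mollifies $\un_{\{f>0\}}$ by $\eta_\epsilon(f)$ with $\eta_\epsilon$ a smooth nondecreasing approximation of the Heaviside function, tests \eqref{e:weak-subsolution}, and passes to the limit $\epsilon \to 0$ using dominated convergence — the $H^s$-boundedness of truncations makes all terms converge). The transport term contributes $\iiint (\partial_t + v\cdot\nabla_x) f_+ \cdot \varphi$; the bilinear term contributes $\iint \E(f,\varphi\un_{\{f>0\}}) \ge \iint \E(f_+, \varphi\un_{\{f>0\}}) = \iint \E(f_+,\varphi)$ (the last equality because $f_+ = 0$ on $\{f \le 0\}$, so the test function can be replaced by $\varphi$ itself in the part that sees $f_+$ — more precisely $\E(f_+,\varphi) = \E(f_+, \varphi\un_{\{f>0\}})$ needs the observation that $\E(f_+, \varphi\un_{\{f\le 0\}}) \le 0$ since there $f_+(v)=0 \le f_+(v')$ and $K\ge0$); and the forcing contributes $\iiint h\,\varphi\,\un_{\{f>0\}} = \iiint h\un_{\{f\ge0\}}\varphi$ up to the null set $\{f=0\}$. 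Chaining these through \eqref{e:weak-subsolution} for $f$ gives \eqref{e:weak-subsolution} for $f_+$ with forcing $h\un_{\{f\ge 0\}}$.

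\textbf{Main obstacle.} The only genuinely delicate point is the rigorous justification of testing against $\varphi\un_{\{f>0\}}$, which is not itself in the test-function class (it lacks smoothness/Sobolev regularity in $v$ — $\un_{\{f>0\}}$ need not be in $H^s_v$). I expect to handle this exactly as in the classical De Giorgi setting: approximate by $\beta_\epsilon(f)$ with $\beta_\epsilon$ smooth, nondecreasing, bounded, $0 \le \beta_\epsilon \le 1$, $\beta_\epsilon \uparrow \un_{\{f>0\}}$; then $\varphi\beta_\epsilon(f)$ is admissible (product of $H^s_v$ functions with a bounded smooth function stays in $H^s_v$), the convexity inequality $\E(f, \varphi\beta_\epsilon(f)) \ge \E(\Phi_\epsilon(f), \varphi)$ holds for the primitive $\Phi_\epsilon$ of $\beta_\epsilon$, and $\Phi_\epsilon(f) \to f_+$ in $H^s_v$ uniformly enough to pass to the limit. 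This is standard but should be written with care because of the nonsymmetric kernel; the antisymmetric contribution $\Eskew$ is controlled uniformly by Lemma~\ref{lem:g-cancellation} applied to $\varphi\beta_\epsilon(f)$ (whose $C^2$ norm in $v$ we do not control — so instead one keeps $\Eskew$ paired as $\int g(v)\{PV\int(\varphi(v)-\varphi(v'))(K-K^t)\}$ with $g = f_+ \ge 0$, exactly the maneuver used in the proof of Lemma~\ref{lem:upper-2}, which needs only $\varphi \in C^2$ and $g \ge 0$). With that bookkeeping the limit passes and the lemma follows.
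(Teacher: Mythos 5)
Your proposal is correct and rests on the same core mechanism as the paper's proof: the convexity of a smooth approximation of $r \mapsto r_+$ combined with the nonnegativity of $K$, plus the distributional chain rule for the transport operator. The only difference is packaging: the paper mollifies $f$ in $v$, applies a smooth convex truncation $\gamma_\delta$, proves the pointwise operator inequality $\Lv[\gamma_\delta(f_\eps)] \ge \gamma_\delta'(f_\eps)\,\Lv f_\eps$, and passes to the distributional limit using the $H^s\times H^s$ bound of Theorem~\ref{t:upperbound}; you prove the dual statement at the level of the bilinear form, $\E(f,\varphi\beta_\eps(f)) \ge \E(\Phi_\eps(f),\varphi)$, by renormalizing the test function in \eqref{e:weak-subsolution}. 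Your worry about the antisymmetric part is unnecessary: as in the paper's pointwise version, the difference of the two sides of the convexity inequality has an integrand that is nonnegative and of size $O(|f(v)-f(v')|^2)K(v,v')$, hence absolutely integrable, so no splitting into $\Esym$ and $\Eskew$ (and no appeal to Lemma~\ref{lem:g-cancellation} or Lemma~\ref{lem:upper-2}) is needed.
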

\begin{proof}
Since we assume that $f_t + v\cdot \nabla_x f \in L^2$, then 
\begin{equation} \label{e:c1}
 \partial_t f_+ + v\cdot \nabla_x f_+ = \begin{cases}
f_t + v\cdot \nabla_x f & \text{where $f>0$}, \\
0 &\text{elsewhere}.
\end{cases} 
\end{equation}
The equality holds in the sense of distributions.

In order to conclude that $f_+$ is a subsolution of \eqref{e:main}, we
need to prove the following inequality in the sense of distributions.
\begin{equation} \label{e:c2}
 \Lv f_+ \geq \begin{cases}
\Lv f & \text{where $f>0$},\\
0 & \text{elsewhere}.
\end{cases} 
\end{equation}

  Let $\gamma (r) = r_+$ and let $\{\gamma_\delta\}_\delta$ be a
  smooth approximation of $\gamma$ such that $|\gamma'_\delta| \le 1$
  and $\gamma_\delta$ is convex. Let $\rho_\eps$ be an even mollifier and
  $f_\eps = \rho_\eps \ast f$. Here the mollification is done with respect to the variable $v$ only.

Since $f \in L^2([0,T],\R^d,H^s(\R^d))$, it is not hard to see that 
\[ \lim_{\delta \to 0} \lim_{\eps \to 0} \gamma_\delta (f_\eps) \rightharpoonup f_+ \qquad \text{ weakly in } L^2([0,T],\R^d,H^s(\R^d)).\]

Therefore, because of Theorem \ref{t:upperbound}, for any smooth test function $\varphi$, we have
\begin{align*} 
 \lim_{\delta \to 0} \lim_{\eps \to 0} \iiint \Lv [\gamma_\delta (f_\eps)] \varphi \dv \dx \dt  &= \lim_{\delta \to 0} \lim_{\eps \to 0} \iint \E(\gamma_\delta (f_\eps),\varphi) \dd x \dd t \\ &= \iint \E(f_+,\varphi) \dd x \dd t , \\ 
 &= \iiint \Lv [f_+] \varphi \dv \dx \dt.
\end{align*}

This proves that $\Lv [\gamma_\delta (f_\eps)]$ converges to $\Lv [f_+]$ in the sense of distributions.

Thus, in order to obtain \eqref{e:c2} and finish the proof, we need to prove that for all $\delta>0$ and $\eps>0$,
\begin{equation} \label{e:c3}
 \Lv [\gamma_\delta (f_\eps)] \geq \gamma_\delta'(f_\eps)\Lv[f_\eps]. 
\end{equation}
In fact, we can check by a direct computation that the inequality holds pointwise. Indeed,
\begin{align*}
 \Lv [\gamma_\delta (f_\eps)](v) &= \int (\gamma_\delta(f_\eps(v')) - \gamma_\delta(f_\eps(v))) K(v,v') \dd v', \\
&\geq \int \gamma_\delta'(f_\eps(v))   (f_\eps(v') - f_\eps(v)) K(v,v') \dd v' \qquad \text{ using the convexity of } \gamma_\delta, \\
&= \gamma_\delta'(f_\eps(v)) \Lv f_\eps(v).
\end{align*}

Taking the limit as $\delta \to 0$ and $\eps \to 0$ in \eqref{e:c3} we obtain \eqref{e:c2}. Combining it with \eqref{e:c1} we finish the proof.
\end{proof}
\begin{lemma} [Maximum principle] \label{l:max-pple} If $f$ is a weak
  subsolution of \eqref{e:main}, with $h = 0$, in
  $Q = (a,b] \times \Omega_x \times \Omega_v$, then
\[ \esssup_Q f \leq \esssup \big\{f(t,x,v) : t \in ([a,b] \times \overline{\Omega_x} \times \R^d) 
\setminus (a,b] \times \Omega_x \times \Omega_v\big\}.\]
\end{lemma}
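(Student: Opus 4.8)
The statement is a comparison/maximum principle, so the plan is to reduce it to an energy estimate for the truncated function $g:=(f-M)_+$, where $M:=\esssup\{f(t,x,v):(t,x,v)\in([a,b]\times\overline{\Omega_x}\times\R^d)\setminus((a,b]\times\Omega_x\times\Omega_v)\}$ (if $M=+\infty$ there is nothing to prove, so assume $M<\infty$). First I would note that $f-M$ is again a weak subsolution of \eqref{e:main} with $h=0$: the operator $\partial_t+v\cdot\nabla_x$ kills the constant $M$, and so does $\Lv$ since $\int_{\R^d}(M-M)K(v,v')\dd v'=0$, i.e. $\E(M,\varphi)=0$ for every admissible $\varphi$. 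Hence Lemma~\ref{lem:convex} applies and gives that $g=(f-M)_+$ is a weak subsolution of \eqref{e:main} with right-hand side $0\cdot\mathbf 1_{\{f\ge M\}}=0$ in $Q$, and $g\ge 0$ everywhere. By the very definition of $M$, $f\le M$ a.e. on $([a,b]\times\overline{\Omega_x}\times\R^d)\setminus((a,b]\times\Omega_x\times\Omega_v)$, so $g$ vanishes identically there. In particular, for each $(t,x)$ the function $g(t,x,\cdot)$ is supported in $\overline{\Omega_v}$ (so $g$ is an admissible test function), $g=0$ on $\partial\Omega_x\times\R^d$, and the $L^2$-trace $g(a,\cdot,\cdot)$ vanishes.

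Next I would run the standard energy argument. Using $\varphi=g$ as a test function in the weak formulation \eqref{e:weak-subsolution} for $g$ on $(a,b]\times\Omega_x\times\Omega_v$, and using the pairing $\langle\partial_t g+v\cdot\nabla_x g,g\rangle_{H^{-s},H^s}$ together with $\iint_{\Omega_x\times\Omega_v}v\cdot\nabla_x g\,g\dd x\dv=\tfrac12\iint v\cdot\nabla_x(g^2)=0$ (integration by parts in $x$; the boundary term vanishes since $g=0$ on $\partial\Omega_x$ and $\mathrm{div}_x v=0$), one obtains, for a.e. $t$,
\[
\frac{d}{dt}\left(\frac12\iint_{\Omega_x\times\Omega_v}g(t,x,v)^2\dd x\dv\right)+\int_{\Omega_x}\E(g,g)\dd x\le 0 .
\]
I then bound $\E(g,g)$ from below: writing $\E(g,g)=\Esym(g,g)+\Eskew(g,g)$, the symmetric part $\tfrac12\iint(g(v)-g(v'))^2K(v,v')\dd v'\dd v\ge 0$ because $K\ge 0$, while $\Eskew(g,g)=\tfrac12\int g(v)^2\left(PV\!\int(K(v,v')-K(v',v))\dd v'\right)\dd v\ge-\tfrac{\Lambda}{2}\|g(t,x,\cdot)\|_{L^2(\R^d)}^2$ by \eqref{e:Kcancellation0} (alternatively one may invoke the coercivity \eqref{e:Klower}). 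Setting $E(t):=\iint_{\Omega_x\times\Omega_v}g(t,x,v)^2\dd x\dv$, this yields $E'(t)\le\Lambda E(t)$ for a.e. $t\in(a,b]$.

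Finally, since $E(a)=0$ (the trace of $g$ at $t=a$ vanishes), Grönwall's inequality forces $E(t)=0$ for all $t\in[a,b]$, hence $g\equiv 0$, i.e. $f\le M$ a.e. in $Q$, which is exactly the claim. The only genuinely delicate points are the routine ones for weak solutions: checking that $g$ is an admissible test function, and justifying the chain-rule identity $\langle\partial_t g+v\cdot\nabla_x g,g\rangle=\tfrac12\tfrac{d}{dt}\|g(t)\|_{L^2}^2$ together with the attainment of the value at $t=a$. These follow from the regularity built into the definition of weak solution ($g\in C^0((a,b],L^2)\cap L^2_{t,x}H^s_v$ with $\partial_t g+v\cdot\nabla_x g\in L^2_{t,x}H^{-s}_v$) together with Theorem~\ref{t:upperbound}, exactly as in the proofs of Lemma~\ref{lem:caccio} and Lemma~\ref{lem:convex}; I expect this bookkeeping, rather than any new idea, to be the main obstacle.
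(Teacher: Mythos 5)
Your argument is correct and is exactly the route the paper takes: its entire proof of Lemma~\ref{l:max-pple} is the single sentence that one tests the weak formulation \eqref{e:weak-subsolution} with $(f-m)_+$, which is precisely your truncation-plus-energy-estimate argument spelled out in full (including the sign of $\Esym$, the $\Eskew$ bound via \eqref{e:Kcancellation0}, and Gr\"onwall). The bookkeeping issues you flag at the end are real but are exactly the ones the paper also leaves implicit.
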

\begin{proof}
  Let
  $m = \esssup \{f(t,x,v) : t \in ([a,b] \times \overline{\Omega_x}
  \times \R^d) \setminus (a,b] \times \Omega_x \times \Omega_v\}$.
  The proof follows using $(f-m)_+$ as a test function in
  \eqref{e:weak-subsolution}.
\end{proof}

 \bibliographystyle{plain}
\bibliography{kinetic-DG}

\end{document}